% % % working version started 04 November 2011

\documentclass[10pt]{article}
%%%%%%%%%%%%%%%%%%%%%%%%%%%%%%%%%%%%%%%%%%%%%%%%%%%%%%%%%%%%%%%%%%%%%%%%%%%%%%%%%%%%%%%%%%%%%%%%%%%%%%%%%%%%%%%%%%%%%%%%%%%%%%%%%%%%%%%%%%%%%%%%%%%%%%%%%%%%%%%%%%%%%%%%%%%%%%%%%%%%%%%%%%%%%%%%%%%%%%%%%%%%%%%%%%%%%%%%%%%%%%%%%%%%%%%%%%%%%%%%%%%%%%%%%%%%
\usepackage{lmodern}
\usepackage[english]{babel}
\usepackage[T1]{fontenc}
\usepackage{babelbib}
\usepackage{cancel}
\usepackage{amsmath}
\usepackage{amssymb}
\usepackage{indentfirst}
\usepackage[latin1]{inputenc}
\usepackage{geometry}
\usepackage{amsfonts}
\usepackage{graphicx}
\usepackage{float}
\usepackage{color}
\usepackage{hyperref}
\hypersetup{colorlinks,linktocpage}
\usepackage{verbatim}
\usepackage{enumerate}
 \usepackage[normalem]{ulem}

\setcounter{MaxMatrixCols}{10}
%TCIDATA{OutputFilter=LATEX.DLL}
%TCIDATA{Version=5.50.0.2953}
%TCIDATA{<META NAME="SaveForMode" CONTENT="1">}
%TCIDATA{BibliographyScheme=Manual}
%TCIDATA{LastRevised=Saturday, August 16, 2008 11:14:42}
%TCIDATA{<META NAME="GraphicsSave" CONTENT="32">}

% \newcommand{\Fit}{\mbox{\rm Fit}}
% \newcommand{\Frat}{\mbox{\rm Frat}}
% \newcommand{\Aut}{\mbox{\rm Aut}}
% \newcommand{\Autp}{\mbox{\rm Paut}}
% \newcommand{\Inn}{\mbox{\rm Inn}}
% \newcommand{\Ker}{\mbox{\rm Ker }}
% \newcommand{\Imag}{\mbox{\rm Im }}
% \newcommand{\Gr}{\mbox{\rm Gr}}
% \newcommand{\StGr}{\mbox{\rm StGr}}
% \newcommand{\Out}{\mbox{\rm Out}}
% \newcommand{\Nor}{{\cal N}}
% \newcommand{\Cent}{{\cal C}}
%\newcommand{\K}{{\mathbb{K}}}
\newcommand{\M}{\mathcal{M}}
\newcommand{\V}{\mathcal{V}}
\renewcommand{\S}{\mathcal{S}}

\newcommand{\Z}{{\mathbb Z}}

\newcommand{\Q}{{\mathbb Q}}
\newcommand{\R}{{\mathbb R}}
\newcommand{\C}{{\mathbb C}}

\newcommand{\F}{{\mathcal F}}

\newcommand{\X}{{\mathbb X}}

\newcommand{\Hy}{{\mathbb H}}
\newcommand{\HTwo}{\Hy^2\times \Hy^2}

\newcommand{\h}{{\Gamma}}   % quaternion algebra
   % q algebra
   % iso ball

\newcommand{\bc}{\begin{center}}
\newcommand{\ec}{\end{center}}

\newcommand{\U}{{\cal U}}

\newcommand{\GL}{{\rm GL}}
\newcommand{\SL}{{\rm SL}}

\newcommand{\PSL}{{\rm PSL}}

\newcommand{\GEN}[1]{\langle #1 \rangle}

\renewcommand{\O}{\mathcal{O}}
\renewcommand{\P}{\mathbb{P}}

\newcommand{\te}{{\mathcal{T}}}

% \'{A}ngel Commands

\newcommand{\inv}{^{-1}}

\newcommand{\matriz}[1]{\begin{array} #1 \end{array}}
\newcommand{\pmatriz}[1]{\left(\begin{array} #1 \end{array}\right)}

\newcommand{\qed}{\enspace\vrule  height6pt  width4pt  depth2pt}
\newenvironment{proof}{\par\noindent{\bf Proof.}}{$\qed$\par\bigskip}

\newtheorem{theorem}{Theorem}[section]
\newtheorem{definition}[theorem]{Definition}
\newtheorem{lemma}[theorem]{Lemma}
\newtheorem{corollary}[theorem]{Corollary}
\newtheorem{proposition}[theorem]{Proposition}
\newtheorem{remark}[theorem]{Remark}

\newtheorem{notation}[theorem]{Notation}

\title{Presentations of Groups Acting Discontinuously on Direct Products of Hyperbolic Spaces\thanks{ Mathematics subject Classification Primary [20G20, 22E40,
%20H15,
16S34, 16U60].
Keywords and phrases: Hilbert Modular Group, Discontinuous Action on Direct Product of Hyperbolic $2$-space,  Presentation, Fundamental Domain, Group Rings, Unit Group.
\newline The first author is supported in part by Onderzoeksraad of
Vrije Universiteit Brussel and Fonds voor
Wetenschappelijk Onderzoek (Flanders). The second author is supported by Fonds voor
Wetenschappelijk Onderzoek (Flanders)-Belgium.  The third author is partially supported by the Spanish Government under Grant MTM2012-35240 with "Fondos FEDER". }}

\author{E. Jespers \and A. Kiefer \and \'{A}. del R\'{\i}o }

\begin{document}

\maketitle

\begin{abstract}
The problem of describing the group of units $\U(\Z G)$ of the integral group ring $\Z G$  of a finite group $G$ has attracted a lot of attention and providing presentations for such groups is a fundamental problem. 
Within the context of orders, a central problem is to describe a presentation of the unit group of an order $\O$ in the simple epimorphic images $A$ of the rational group algebra $\Q G$.
Making use of the presentation part of Poincar\'e's Polyhedron  Theorem, Pita, del R\'{\i}o and Ruiz proposed such a method for a large family of finite groups $G$ and consequently Jespers, Pita, del R\'{\i}o, Ruiz and Zalesskii  described the structure  of $\U(\Z G)$ for a large family of finite groups $G$.
In order to handle many more groups, one  would like to extend Poincar\'e's Method to  discontinuous subgroups of the group of  isometries of a  direct product of hyperbolic spaces.
If the algebra $A$ has degree 2 then via the Galois embeddings of the centre of the algebra  $A$ one considers the group of reduced norm one elements of the order $\O$ as such a group and thus one would obtain a solution to the mentioned problem.
This would provide presentations of the unit group of orders in the simple components of degree 2 of $\Q G$ and in particular describe the unit group of $\Z G$ for every group $G$ with irreducible character degrees less than or equal to 2.
The aim of this paper is to initiate this approach by executing this method   on the Hilbert modular group, i.e. the projective linear group of degree two over the ring of integers in a real quadratic extension of the rationals. This group acts discontinuously on a direct product of two hyperbolic spaces of dimension two.
The fundamental domain constructed is an analogue of the Ford domain of a Fuchsian or a Kleinian group.
\end{abstract}

\section{Introduction}

The aim of this work is to generalize the presentation part of Poincar\'e's Polyhedron Theorem to a discontinuous group acting on a direct product of two copies of hyperbolic $2$-space.
Our motivation  comes from the investigations on the unit group $\U (\Z G)$ of the integral group ring $\Z G$.
One of the important  problems is to determine a presentation of this group (see \cite{SehgalBook}).
In order to make further progress, there  is a need  for finding new methods to determine  generators and next to deduce  a presentation.
In \cite{PRR}, Pita, del R\'{\i}o and Ruiz initiated in these investigations the use of   actions on hyperbolic spaces. This allowed them to obtain presentations for subgroups of finite index in $\U (\Z G)$ for some new class of finite groups $G$, called groups  of Kleinian type. The basic idea can be explained as  follows. If $G$ is a finite group then $\Z G$ is an order in the rational group algebra $\Q G$ and it is well known that
$\Q G = \prod_{i=1}^{n} M_{n_i}(D_i)$,
where each $D_i$ is a division algebra.
If $\O_i$ is an order in $D_i$, for each $i$, then $\O=\prod_{i=1}^{n} M_{n_i}(\O_i)$ is an order in $\Q G$ and the group of units $\U(\O)$ of $\O$ is commensurable with $\U(\Z G)$.
Recall that two subgroups of a given  group are said to be commensurable if they have a common subgroup that is of   finite index in both.
The group $\U(\O)$ is simply $\Pi_{i=1}^{n} \GL_{n_i}(\O_i)$, the direct product of the groups $\U(M_{n_i}(\O_i))=\GL_{n_i}(\O_i)$.
Moreover $\U(\mathcal{Z}(\O_i)) \times \SL_{n_i}(\O_i)$, the direct product of the central units in $\O_{i}$ and  the group consisting of the reduced norm one elements in $M_{n_{i}}(\O_{i})$, contains a subgroup of finite index isomorphic to a subgroup of finite index of $\GL_{n_i}(\O_i)$.
The group $\prod_{i=1}^{n} \U(\mathcal{Z}(\O_i))$ may be determined from Dirichlet's Unit Theorem and  it is commensurable with $\mathcal{Z} (\U (\Z G))$, the group of central units of $\Z G$.
For a large class of finite groups $G$ one can describe generators of a subgroup of finite index in  $\mathcal{Z} (\U (\Z G))$ \cite{JOdRV} (see also \cite{JdRV}).
Hence, up to commensurability, the problem of finding generators and relations for $\U(\Z G)$ reduces to finding a presentation of $\SL_{n_i}(\O_i)$ for every $1 \leq i \leq n$.
The congruence theorems allow to compute generators up to finite index for $\SL_{n_i}(\O_i)$ 
when   $n_i \geq 3$ (without any further restrictions) and also  for $n_i=2$ but then  provided   $D_{i}$ is neither a totally definite rational quaternion algebra, nor a quadratic imaginary extension of $\Q$ and also not $\Q$.
The case $n_i=1$ can also be dealt with in case  $D_i$ is commutative or when it is a  a totally definite quaternion algebra.
In case each $M_{n_i}(D_i)$ is of one of these types and if, moreover, $G$ does not have non-commutative fixed point free epimorphic images, then concrete generators, the so called  Bass units and bicyclic units,  for a subgroup of finite index in $\U (\Z G)$ have been determined \cite{SehgalBook,RS1991,1991RitterSehgal, RS1989, JespersLeal1993}.
A finite group $G$ is said to be of Kleinian type if each non-commutative simple factor $M_{n_i}(D_i)$ of $\Q G$ is a quaternion algebra over its centre, i.e. $n_{i}\leq 2$ and the natural image of $\SL_{n_i}(\O_i)$ in $\PSL_2(\C)$ (obtained by extending some embedding of the centre of $D_i$ in $\C$) is a Kleinian group.
A Kleinian group is a subgroup of $\PSL_2(\C)$ which is discrete for the natural topology, or equivalently,  its action on the 3-dimensional hyperbolic space via the Poincar\'{e} extension of the action by M\"{o}bius transformations is discontinuous \cite[Theorem~5.3.2]{Beardon}.
Poincar\'{e} introduced a technique to determine  presentations  for Kleinian groups via fundamental polyhedra (see e.g. \cite{Beardon,EGM,Maskit}).
An alternative method, due to Swan \cite{Swan}, gives a presentation from a connected open subset containing a fundamental domain.
Thus, if $G$ is a finite group of Kleinian type then, in theory, one can obtain a presentation of a group commensurable with $\U(\Z G)$.
In practice, it is difficult to execute  this procedure because it is usually hard to compute a fundamental Poincar\'{e} polyhedron of a Kleinian group.
Groups of Kleinian type have been classified in \cite{JPRRZ} and examples of how to find presentations of $\U(\Z G)$ for some groups of Kleinian type of small order are given in \cite{PRR} and \cite{PR}.
A generalization to group rings over commutative orders is given in \cite{OR}. Recently, in \cite{jesjurkie}, an algorithm is given to compute a fundamental domain, and hence generators, for subgroups of finite index in groups that are contained in the unit group of orders in quaternion algebras over quadratic imaginary extensions of $\Q$, in particular for Bianchi groups.
% of Bianchi groups as well as for the unit group of an order in a non-totally definite quaternion algebras over quadratic imaginary extension of $\Q$ and subgroups of these.
This work was in fact a generalization of \cite{corretall}.

In the investigations on determining presentations for $\U(\Z G)$ it is now natural to deal with finite groups
$G$ that are such that $\Q G$ has  simple components of  which the unit group
of some order does not necessarily act discontinuously on one hyperbolic space, but does so on a direct product of hyperbolic $2$ or $3$-spaces.
More precisely, one admits  simple components  $A$ of $\Q G$ that are quaternion algebras over a number field, say $F$.
Every field homomorphism $\sigma:F\rightarrow \C$ naturally extends to an isomorphism $\C\otimes_{\sigma(F)} A \cong M_2(\C)$ and thus to a homomorphism $\sigma:A\rightarrow M_2(\C)$, via $a\mapsto 1\otimes a$.
This homomorphism maps $\SL_1(A)$, the group consisting of reduced norm one elements, into $\SL_2(\C)$.
Composing this  with the action of $\SL_{2}(\C)$ on $\Hy^{3}$, by the Poincar\'{e} extension of M\"{o}bius transformations, yields   an action of $\SL_1(A)$ on $\Hy^3$.
Note that if $\sigma(F)\subseteq \R$ and $A$ is unramified in $\sigma$ (i.e. $\R\otimes_{\sigma(F)} A\cong M_2(\R)$), then $\sigma$ restricts to an action on $\Hy^2$.
Let $\sigma_1,\dots,\sigma_t$ be representatives (modulo complex conjugation) of the homomorphisms from $F$ to $\C$.
If $r$ is the number of real embeddings of $F$ on which $A$ is unramified and $s=t-r$, then the action of $\sigma_i$ on the $i$-th component gives an action of $\SL_1(A)$ on
	$$\Hy_{r,s}=\Hy^2\times \stackrel{(r)}{\dots} \times \Hy^2 \times \Hy^3\times \stackrel{(s)}{\dots} \times \Hy^3.$$
More precisely the action is given by
    $$a\cdot (x_1,\dots,x_t)=(\sigma_1(a)\cdot x_1,\dots,\sigma_t(a)\cdot x_t),$$
where $a\in \SL_1(A)$ and $x_1,\ldots ,x_r \in \Hy^{2}$, $x_{r+1},\ldots , x_{r+s} \in \Hy^3$.
If $\O$ is an order in $A$ then the image of $\O$ in  $M_2(\R)^r\times M_2(\C)^{(t-r)}$ is discrete.
%\ChAnn{Is it sure that the image of an order is always discrete in the porjection?}
%\ChAngel{Yes! This is algebraic number theory folklore. If $v_1,\dots,v_n$ is a $\Z$-basis
%of $\O$ then it is an $\R$-basis of $\R\otimes_{\Q} A$ and a basis of the image of $\O$ in a lattice in
%$\R\otimes_{\Q} A$. Thus this image of $\O$ by the action of the different embeddings of $F$
%is a lattice (a discrete additive subgroup) of $M_2(\R)^r\times M_2(\C)^s$ and in particular the image
%of $\U(\O)$ is a discrete subgroup of $\GL_2(\R)^r\times \GL_2(\C)$.}
This implies that the action of $\SL_1(\O)$ on $\Hy_{r,s}$ is discontinuous (see
Proposition~\ref{DiscreteDiscontinuous}).

This suggests the following program to obtain a presentation of $\U(\Z G)$ (more precisely of a group commensurable with $\U(\Z G)$) for the finite groups $G$ such that $\Q G$ is a direct product of fields and quaternion algebras.
(Equivalently, the degrees of the irreducible characters of $G$ are all either 1 or 2, or equivalently, by a result of Amitsur in \cite{amitsur}, $G$ is either abelian, or has an abelian subgroup of index 2 or $G/\mathcal{Z}(G)$ is elementary abelian of order 8.)
First determine the Wedderburn decomposition $\Q G \cong \prod_{i=1}^k A_i$.
For each $i=1,\dots,k$, let  $\O_i$ be an order in $A_i$ and calculate $\U(\mathcal{Z}(\O_i))$, using  for example the Dirichlet Unit Theorem.
Next, if   $A_i$ is  non-commutative and $\O =\O_{i}$ (in fact it is enough to consider the components which are not totally definite quaternion algebras) then calculate a fundamental domain for  the action of $\SL_1(\O)$ on $\Hy_{r,s}$ and determine from this a presentation of $\SL_1(\O)$.

For this program to be successful one needs to solve at least the following  problem.
\begin{enumerate}
\item[(1)] Does Poincar\'{e}'s Method remain valid  for discontinuous actions on $\Hy_{r,s}$?
\end{enumerate}
The following problem is one of the first issues to deal with in order to answer the question.
\begin{enumerate}
\item[(2)] Determine methods to calculate fundamental domains effectively.
\end{enumerate}

Once this is done, the following problem arises.
\begin{enumerate}
\item[(3)] The sides of the well-known fundamental polyhedra in $\Hy^2$ and $\Hy^3$ are geodesic hyperplanes i.e. lines and circles (respectively, planes and spheres) orthogonal to the border. How should one define the ``sides'' of some potential fundamental domain in $\Hy_{r,s}$?
\end{enumerate}

In this paper we show that the three problems can be controlled  in case  the considered group acts  on a direct product of two copies of $\Hy^2$. More precisely, we complete the outlined program for the Hilbert modular group $\PSL_2(R)$, where $R$ is the ring of integers in $\Q(\sqrt{d})$, with $d>0$, and  $R$ is a principal ideal domain.
The group  $\PSL_2(R)$ acts  discontinuously   on $\HTwo$, the direct product of $2$ copies of hyperbolic $2$-space.
Hence, this group constitutes the ``easiest'' case of the outlined program and as such can be considered a relevant test case. For the construction  of a fundamental domain
we will make use of some   ideas in \cite{HarveyMathComp} and \cite{HarveyProc}. 
The construction is based on the classical construction of a Ford fundamental domain. For more details we refer the reader to  
\cite{Voight} or to  \cite[Section 9.5]{Beardon}, where the author calls such domains Generalized Dirichlet domains. The largest part of the paper is devoted to set up the machinery needed to prove an  extension of  the presentation part of Poincar\'e's Polyhedron theorem so that we can  extract a presentation from this domain.
The assumption that $R$ is a principal ideal domain  allows us to get a description of the fundamental domain $\F$ in terms of finitely many varieties.
Note that this restriction  is essentially only used in Lemma~\ref{minh}, Theorem~\ref{FiniteSides}, Lemma~\ref{esscompact} and Corollary~\ref{generators}.

The  method outlined allows computing a description of the unit group $\U (\Z G)$ for several classes of finite groups $G$. As a matter of example, we mention that this now easily can be done for the dihedral groups $D_{10}$ and $D_{16}$ of order 10 and 16 respectively, and the quaternion group $Q_{32}$ of order 32.  The group  $\U(\Z D_{10})$ is commensurable with $\SL_2\left(\Z[\frac{1+\sqrt{5}}{2}]\right)$, the group
$\U(\Z D_{16})$ is commensurable with a direct product of a non-abelian free group and the group $\SL_2(\Z[\sqrt{2}])$. The group $\U (\Z Q_{32})$  is commensurable with a direct product of $\U( \Z D_{16})$ and an abelian group. Corollary~\ref{generators} gives generators of the groups $\SL_2(\Z[\frac{1+\sqrt{5}}{2}])$ and $\SL_2(\Z[\sqrt{2}])$ and the   algorithm  described in Theorem~\ref{Poincarerelations} yields a presentation of these groups. If one does not want to work up to commensurability then  the Reidemeister-Schreier method allows to go down to the exact subgroup of the previous groups needed for the computation of the corresponding unit groups. In the case of $\U(\Z D_{16})$ this exact subgroup is described in \cite{JesPar}.

An alternative method for computing a presentation for the group of units of an order in a semi-simple algebra over $\Q$ was recently given by Coulangeon, Nebe, Braun and Sch\"onnenbeck in \cite{CNBS}. The authors present a powerful algorithm that is based on a generalisation  of  Vorono\"{\i}'s algorithm for computing perfect forms and is combined with Bass-Serre theory.
The method differs essentially from the one presented here, as the authors use an action on a Euclidean space. 
To illustrate their algorithm, several computations are carried out completely in \cite{CNBS}; including orders in division algebras of degree $3$.

As mentioned above, in this paper we focus on groups that act discontinuously on direct products of two hyperbolic spaces
and we  extend the  presentation part of Poincar\'e's Polyhedron  Theorem into this context.
Developing such  new methods are relevant in the bigger scheme of discovering new generic constructions of units that generate large subgroups in the unit group of an  order of a rational division algebra and hence solving completely the problem of describing finitely many generic generators for a subgroup of finite index in $\U (\Z G)$ for any finite group (hence without any restriction on the rational group algebra $\Q G$).

The paper is organized as follows.
In Section 2, we give some background on discontinuous group actions on direct products of copies of $\Hy^2$ and $\Hy^3$ and introduce some notation on the group of our interest, namely $\PSL_2(R)$ with $R$ the ring of integers of a real quadratic field, and its action on $\HTwo$.
In Section 3, we give a description of a fundamental domain $\F$ for this action, see Theorem~\ref{FiniteSides}.
In Section 4, we prove some topological lemmas about the fundamental domain and they will be used in the two following sections.
Finally, in Section 5, we describe the sides of the fundamental domain $\F$. In Theorem~\ref{Poincare}, we generalize the generating part of Poincar\'e's Polyhedron Theorem to the case of $\PSL_2(R)$ acting on $\HTwo$ and in Corollary~\ref{generators}, we give an effective description of the generators of $\PSL_2(R)$.
In Section 6, we describe the edges of $\F$ and in Theorem~\ref{Poincarerelations}, we generalize the presentation part of Poincar\'e's Polyhedron Theorem. Note that Theorem~\ref{Poincare} and Theorem~\ref{Poincarerelations} could potentially have been deduced from \cite[Corollary of Theorem 2]{Macbeath}.
However, in order to get an effective set of generators and relations as an application of this theorem one needs  all the lemmas in Section 5 and 6  anyway. Moreover, we think that the proofs of Theorem~\ref{Poincare} and Theorem~\ref{Poincarerelations}, presented in this paper, are more intuitive from a geometric point of view. Furthermore they  are algorithmic in nature.
For the convenience of the reader we include an appendix on some needed algebraic topological results.

\section{Background}\label{SectionBackground}
The following definitions are mostly taken from \cite{Beardon, Rat}. Let $\widehat{\C}= \C \cup \lbrace \infty \rbrace$ and let $\Hy^n$ denote the upper half Poincar\'e model of the hyperbolic space of dimension $n$. We consider the group $\SL_2(\C)$ acting on $\widehat{\C}$ by M\"obius transformations. We also let $\SL_2(\C)$ act by orientation-preserving isometries on $\Hy^3$, by the Poincar\'e extension of M\"obius transformations. The hyperbolic plane $\Hy^2$ is invariant under the action of $\SL_2(\R)$. This defines an isomorphism between $\PSL_2(\C)$ (respectively $\PSL_2(\R)$) and the group of orientation-preserving isometries of $\Hy^3$ (respectively $\Hy^2$).

Consider for non negative integers $r$ and $s$ the metric space $\Hy_{r,s}=\Hy^2 \times \stackrel{(r)}{\ldots}\times \Hy^2 \times \Hy^3 \times \stackrel{(s)}{\ldots}  \times \Hy^3$ and the group $\mathcal{G}_{r,s}=\SL_2(\R)\times \stackrel{(r)}{\ldots} \times \SL_2(\R) \times \SL_2(\C) \times \stackrel{(s)}{\ldots} \times \SL_2(\C)$, a direct product of $r$ copies of $\SL_2(\R)$ and $s$ copies of $\SL_2(\C)$.
An element $g \in \mathcal{G}_{r,s}$ will be written as an $(r+s)$-tuple $(g^{(1)}, \ldots , g^{(r+s)})$ and elements of $\Hy_{r,s}$ will be written as  $(r+s)$-tuples, say
$Z=(Z_{1}, \ldots , Z_{r+s})$. The metric $\rho$ on this space is given
by
\begin{equation*}
\rho(X,Y)^2 = \sum_{i=1}^{n} \rho^{(i)}(X_{i},Y_{i})^2,
\end{equation*}
where $\rho^{i}$ is the hyperbolic metric on  the hyperbolic 2 or 3-space (see \cite[Paragraph 2]{Maass}).
The action of $\SL_2(\R)$ and $\SL_2(\C)$ on $\Hy^2$ and $\Hy^3$ induces componentwise an action of $\mathcal{G}_{r,s}$ by isometries on $\Hy_{r,s}$:
	$$g\cdot Z = (g^{(1)}\cdot Z_1, \dots, g^{(r+s)}\cdot Z_{r+s}).$$

We consider $M_n(\C)$ as an Euclidean $2n^2$-dimensional real space. This induces a structure of topological group on $\mathcal{G}_{r,s}$.
Let $G$ be a subgroup of $\mathcal{G}_{r,s}$. One says that $G$ is \emph{discrete} if it is discrete with respect to the induced topology. One says that $G$ acts \emph{discontinuously} on $\Hy_{r,s}$ if and only if, for every compact subset $K$ of $\Hy_{r,s}$,  the set  $\{ g\in G \; : \; g(K) \cap K \neq  \emptyset \}  $ is finite.

The following proposition is well known and extends the fact that a subgroup of
$\SL_2(\C)$ is discrete if and only if its action on
$\Hy^3$ is discontinuous.

\begin{proposition}\cite[Theorem 5.3.5.]{Rat}\label{DiscreteDiscontinuous}
A subgroup of $\mathcal{G}_{r,s}$ is discrete if and only if it acts discontinuously on $\Hy_{r,s}$.
\end{proposition}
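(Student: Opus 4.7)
The plan is to prove the two implications separately. The forward direction, discontinuous implies discrete, is a direct contradiction argument; the reverse is the substantive one and will rest on the fact that the action of $\mathcal{G}_{r,s}$ on $\Hy_{r,s}$ is proper in the sense of Bourbaki. I expect the main obstacle to be precisely this properness statement, because even though each factor $\SL_2(\R)\curvearrowright \Hy^2$ and $\SL_2(\C)\curvearrowright \Hy^3$ has a well-known proper action (it factors through $\PSL_2(\R)$ or $\PSL_2(\C)$ acting with compact stabilizer on a homogeneous space), one has to check that properness is preserved componentwise for the product action on $\Hy_{r,s}$.

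First I would prove that discontinuous implies discrete. Assume $G\subseteq \mathcal{G}_{r,s}$ is not discrete. Then there exists a sequence of pairwise distinct elements $(g_n)$ in $G$ converging to some $g\in \mathcal{G}_{r,s}$; replacing $g_n$ by $g^{-1}g_n$ (which preserves the dichotomy) we may assume $g_n\to I$. Fix any $Z\in \Hy_{r,s}$ and let $K$ be a closed hyperbolic ball of positive radius around $Z$, which is compact. Since the action $\mathcal{G}_{r,s}\times \Hy_{r,s}\to\Hy_{r,s}$ is continuous, $g_n\cdot Z \to Z$, so $g_n\cdot Z\in K$ for all sufficiently large $n$. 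Thus infinitely many distinct $g_n$ satisfy $g_n(K)\cap K\neq\emptyset$, contradicting discontinuity.

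For the converse direction I would first establish the key lemma: for every compact subset $K\subseteq \Hy_{r,s}$, the set $\Omega_K=\{g\in \mathcal{G}_{r,s}\colon g(K)\cap K\neq\emptyset\}$ is relatively compact in $\mathcal{G}_{r,s}$. Using the product structure it suffices to prove the analogous statement in each factor, i.e.\ that $\{h\in \SL_2(\R)\colon h(K_i)\cap K_i\neq\emptyset\}$ is relatively compact in $\SL_2(\R)$ for compact $K_i\subseteq \Hy^2$, and similarly for $\SL_2(\C)\curvearrowright \Hy^3$. This reduces to the classical fact that the orbit map $h\mapsto h\cdot Z_0$ is proper: the preimage of a compact set is compact because $\SL_2(\R)$ acts transitively on $\Hy^2$ with stabilizer $\{\pm I\}\cdot \SO(2)$, which is compact, so one gets a KAN-type decomposition in which the $K$-factor is compact and the $AN$-factor maps homeomorphically onto $\Hy^2$. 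The same argument works for $\SL_2(\C)$ and $\Hy^3$ with stabilizer $\{\pm I\}\cdot \SU(2)$. Taking products, $\Omega_K$ is contained in the product of such compact sets (one per factor), hence has compact closure.

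Given the lemma, the implication discrete implies discontinuous is immediate: if $G$ is discrete in $\mathcal{G}_{r,s}$ and $K$ is compact, then $\{g\in G\colon g(K)\cap K\neq\emptyset\}=G\cap \Omega_K$, which is the intersection of a discrete subset with a relatively compact set, hence finite. This proves discontinuity and completes the proof. The only delicate point is making sure the KAN-decomposition argument in each factor really yields compactness of $\Omega_K$ and not merely closedness; this is guaranteed because compact stabilizers combined with the properness of the orbit map in each factor produce genuine compactness.
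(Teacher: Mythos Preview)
The paper does not give its own proof of this proposition; it is stated with a direct citation to \cite[Theorem 5.3.5]{Rat} and used as a black box. Your proposal is a correct outline of the standard argument, and there is nothing in the paper to compare it against.

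Two small remarks on your write-up. First, in the ``discontinuous $\Rightarrow$ discrete'' direction, when you replace $g_n$ by $g^{-1}g_n$ you are implicitly using that the limit $g$ lies in $G$; this is fine because non-discreteness of the induced topology means precisely that some $g\in G$ is not isolated in $G$, so you may take the accumulation point inside $G$. Second, your description of the point stabilizers as $\{\pm I\}\cdot \SO(2)$ and $\{\pm I\}\cdot \SU(2)$ is harmless but redundant, since $-I$ already lies in $\SO(2)$ and in $\SU(2)$; the stabilizers are simply $\SO(2)$ and $\SU(2)$, which are compact, and that is all you need for the properness argument.
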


%\begin{proof}
%Let $\rho$ denote the hyperbolic metric in the Poincar\'{e} model $\Hy^{n}$ of the
%$n$-dimensional hyperbolic space.
%For $a\in \R^n$ and $\mu>0$, let $B(a,\mu)$ denote the Euclidean ball centred in $a$ with radius $\mu$ and for  $a\in \Hy^n$  let $B_{\rho}(a,\mu)$ denote the hyperbolic open ball with centre $a$ and radius $\mu$.
%Let $j_n=(0,0,\dots,0,1)$, considered as a point of  the Poincar\'{e}  model of $\Hy^n$.
%Recall (see \cite[Theorem~4.2.1]{Beardon}) that for $g\in \SL_{2}(\C)$ and $n=3$ (respectively, $g\in \SL_2(\R)$ and $n=2$) we have
%    \begin{equation}\label{NormDistance}
%      \Vert g\Vert^2=2\cosh \rho(j_n,g(j_n)).
%    \end{equation}
%Fix $a=(a_1,\dots,a_r,a_{r+1},\dots,a_{r+s})\in M_2(\R)\times \stackrel{(r)}{\dots} \times M_2(\R) \times M_2(\C)\times \stackrel{(s)}{\dots}\times M_2(\C)$, considered as an Euclidean space, and $b=(b_1,\dots,b_r,b_{r+1},\dots,b_{r+s}) \in \Hy_{r,s}$.
%For every $\mu>0$, let $B'(a,\mu)=B(a_1,\mu)\times \dots \times B(a_{r+s},\mu)$ and $B'_{\rho}(b,\mu)=B_{\rho}(b_1,\mu)\times \dots \times B_{\rho}(b_{r+s},\mu)$.
%Clearly $G$ is discrete if and only if $G\cap B(a,\mu)$ is finite for every $\mu>0$.
%Also the group $G$ acts discontinuously on $\Hy_{r,s}$ if and only if
%$\{g\in G : g(B'_{\rho}(b,\mu))\cap B'_{\rho}(b,\mu) \ne \emptyset \}$ is finite for every $\mu$.
%Taking  $a=(0,0,\dots,0)$ and $b=(j_2,\stackrel{(r)}{\dots},j_2,j_3,\stackrel{(s)}{\dots},j_3)$ and using (\ref{NormDistance}), the result easily follows.
%\end{proof}

Let $G$ be a group of isometries of $\Hy_{r,s}$. A \emph{fundamental domain} of $G$ is a subset $\F$ of $\Hy_{r,s}$ whose boundary has Lebesgue measure $0$, such that $\Hy_{r,s}=\bigcup_{g\in G} g(\F)$ and if $1\ne g\in G$ and $Z\in \Hy_{r,s}$ then $\{Z,g(Z)\}$ is not contained in the interior of $\F$. The different sets $g(\F)$, for $g \in G$, are called \emph{tiles} given by $G$ and $\F$ and the set of tiles $\te=\{g(\F)\ :\ g\in G\}$ is called the \emph{tessellation} given by $G$ and $\F$.

In this paper we study a presentation for the special linear group of degree $2$ of the ring of integers of real quadratic extensions. So throughout $k$ is a square-free positive integer greater than 1 and
    \begin{equation}\label{omega}
     \omega=\frac{1+\sqrt{k}}{k_0}, \text{ with } k_0=\left\{\begin{array}{ll} 1, & \text{if } k \not\equiv 1 \mod  4; \\ 2, & \text{if } k \equiv 1 \mod  4.
    \end{array}\right.
    \end{equation}
Moreover, $K=\Q\left(\sqrt{k}\right)$ and $R=\Z[\omega]$, the ring of integers of $K$.
For $\alpha\in K$, let $\alpha'$ denote the algebraic conjugate of $\alpha$.
Then $\omega'=\frac{1-\sqrt{k}}{k_0}$ and if $\alpha=\alpha_0+\alpha_1 \omega$ with $\alpha_0,\alpha_1\in \Q$, then $\alpha'=\alpha_0+\alpha_1 \omega'$.
Let $N$ denote the norm map of $K$ over $\Q$ and $\epsilon_0$ denote the fundamental unit of $K$, i.e. $\epsilon_0$ is the smallest unit $\epsilon_0$ of $R$ greater than 1.
Then $\U(R)=\{\alpha\in R : N(\alpha)=\pm 1\}=\pm \GEN{\epsilon_0}$, by Dirichlet's Unit Theorem.

We consider $\SL_2(R)$ as a discrete subgroup of $\SL_2(\R)\times \SL_2(\R)$ by identifying a matrix $A\in \SL_2(R)$ with the pair $(A,A')$, where $A'$ is the result of applying the algebraic conjugate $'$ in each entry of $A$.
Thus we consider $\SL_2(R)$ acting on $\HTwo$.
More precisely, if $\gamma=\pmatriz{{cc} a & b \\ c & d}\in \SL_2(K)$, $Z=(x_1+y_1i,x_2+y_2i)$ and $\gamma(Z)=(\hat{x}_1+\hat{y}_1i,\hat{x}_2+\hat{y}_2i)$ then a straightforward calculation yields
%     \begin{eqnarray*}
%     \hat{x}_1+\hat{y}_1i&=&\frac{a(x_1+y_1i)+b}{c(x_1+y_1i)+d} = \frac{((ax_1+b)+ay_1i)((cx_1+d)-cy_1i)}{(cx_1+d)^2+c^2y_1^2} \\
%     &=& \frac{((ax_1+b)(cx_1+d)+acy_1^2)+y_1i}{(cx_1+d)^2+c^2y_1^2}.
%     % \quad x'_2=a'x_2+b, \quad y'_1=ay_1, \quad y'_2=
%     \end{eqnarray*}
% Thus
    \begin{equation}\label{chapeau1}
    \hat{x}_1 = \frac{(ax_1+b)(cx_1+d)+acy_1^2}{(cx_1+d)^2+c^2y_1^2}, \quad \hat{y}_1 = \frac{y_1}{(cx_1+d)^2+c^2y_1^2}
    \end{equation}
and
    \begin{equation}\label{chapeau2}
    \hat{x}_2 = \frac{(a'x_2+b')(c'x_2+d')+a'c'y_2^2}{(c'x_2+d')^2+c'^2y_2^2}, \quad \hat{y}_2 = \frac{y_2}{(c'x_2+d')^2+c'^2y_2^2}.
    \end{equation}
If $Z=(Z_1,Z_2)\in \Hy^2\times \Hy^2$ then we write
\begin{equation}
Z_j=x_j+y_ji, \quad \text{where } x_j,y_j\in \R \text{ and } y_j>0 \quad (j=1,2).
\end{equation}
Then the four tuples $(x_1,x_2,y_1,y_2)\in \R^2\times (\R^+)^2$ form a system of coordinates of elements of $\HTwo$.

If $Z=(x_1+y_1i,x_2+y_2i)$ then we use the following notation
	$$\Vert cZ+d \Vert = ((cx_1+d)^2+c^2y_1^2)((c'x_2+d')^2+{c'}^2y_2^2).$$

We introduce another system of coordinates $(s_1,s_2,r,h)\in \R^2\times (\R^+)^2$ by setting
	\begin{equation}\label{Rtox}
	x_1  =  s_1+s_2\omega, \quad x_2  =  s_1+s_2\omega', \quad y_1^2=\frac{h}{r}, \quad y_2^2= rh
	\end{equation}
% \begin{equation}\label{Rtox}
% x_1  =  s_1+s_2\omega, \quad
% x_2  =  s_1+s_2\omega',
% \end{equation}
or equivalently
\begin{equation} \label{formulaRi}
s_1 = \frac{\omega' x_1 - \omega x_2}{\omega'-\omega}, \quad
s_2 = \frac{x_1-x_2}{\omega-\omega'}, \quad
r = \frac{y_1}{y_2}, \quad h = y_1 y_2.
\end{equation}
So, each element $Z$ of $\HTwo$ is represented by either $(x_1,x_2,y_1,y_2) \in \R^2 \times (\R^{+})^2$, or $(s_1,s_2,r,h) \in \R^2 \times (\R^{+})^2$, or $(x_1,x_2,r,h)\in \R^2 \times (\R^+)^2$.
Then the norm, ratio and height of $Z$ are defined respectively by
$$\Vert Z \Vert = (x_1^2+y_1^2)(x_2^2+y_2^2), \quad r(Z)=r=\frac{y_1}{y_2}, \quad h(Z) = h=y_1y_2.$$
% \begin{equation}\label{norm}
% \Vert Z \Vert = (x_1^2+y_1^2)(x_2^2+y_2^2)
% \end{equation}
% and the height of $Z$ by
% \begin{equation}\label{height}
% h(Z)=h=y_1y_2.
% \end{equation}

Using (\ref{chapeau1}) and (\ref{chapeau2}), we get
\begin{equation}\label{Height}
\quad h(\gamma(Z)) = \frac{h(Z)}{\Vert cZ+d \Vert}.
\end{equation}
% Where, by definition,
% 	\begin{equation}\label{NormaGeneralized}
% 	 \Vert cZ+d\Vert = |cZ_1+d|^2|c'Z_2+d'|^2 , \quad \text{if } Z=(Z_1,Z_2).
% 	\end{equation}
% (compare with (\ref{norm})).

Let
\begin{equation}\label{defgamma}
\Gamma=\PSL_2(R),
\end{equation}
the Hilbert modular group, and we consider it as a discontinuous group of isometries of $\HTwo$.
Let $\Gamma_{\infty}$ denote the stabilizer of $\infty$ by the action of $\Gamma$ (on $\widehat{\C}$).
The elements of $\Gamma_{\infty}$ are represented by the matrices $\pmatriz{{cc} \epsilon_0^m & b \\ 0 & \epsilon_0^{-m}}$ with $m\in \Z$ and $b\in R$.
Abusing notation, if $\gamma\in \Gamma$ is represented by $\pmatriz{{cc} a & b \\ c & d}$ then we simply write $\gamma=\pmatriz{{cc} a & b \\ c & d}$.
Hence, (\ref{chapeau1}), (\ref{chapeau2}) and (\ref{Height}) imply
\begin{equation}\label{HeightInfinity}
\text{if } \gamma = \begin{pmatrix}
 \epsilon_0^m & b \\ 0 & \epsilon_0^{-m} \end{pmatrix} \in \Gamma_{\infty} \text{ then } h(\gamma(Z)) = h(Z) \text{ and } r(\gamma(Z))=\epsilon_0^{4m} r(Z).
\end{equation}

The second rows of the elements of $\Gamma$ form the following set
$$\S=\{(c,d)\in R^2 : cR+dR=R\}.$$

\section{Fundamental Domain}

In this section we compute a fundamental domain for the group $\Gamma=\PSL_2(R)$ (\ref{defgamma}) via  methods analogue to those used for computing a Ford fundamental domain of a discrete group acting on the hyperbolic $2$-space. This part of our work is based on the ideas of H. Cohn in \cite{HarveyProc,HarveyMathComp}.

We start introducing the following subsets of $\HTwo$, expressed in the $(s_1,s_2,r,h)$ coordinates:
\begin{eqnarray*} %\label{varietiesM}
V_{i}^{+,\ge} & = & \left\{ (s_1,s_2,r,h) \in \HTwo \; : \; s_i\ge \frac{1}{2}\right\} , \\
V_{i}^{-,\ge} & = & \left\{ (s_1,s_2,r,h) \in  \HTwo\; : \; s_i\ge -\frac{1}{2}\right\},
\end{eqnarray*}
for $i=1,2$ and
\begin{eqnarray*}
V_{3}^{+,\ge} & = & \left\{ (s_1,s_2,r,h)\ \in  \HTwo; : \; r\ge \epsilon_0^{2} \right\} , \\
V_{3}^{-,\ge} & = & \left\{ (s_1,s_2,r,h)\in  \HTwo \; : \; r\ge \epsilon_0^{-2}  \right\}.
\end{eqnarray*}
Moreover, for every $c,d\in \S$ with $c\ne 0$, let
\begin{equation*}
V_{c,d}^{\ge} = \lbrace Z \in \HTwo : \Vert cZ+d \Vert \ge 1\rbrace.
\end{equation*}
We also define the sets $V_i^{\pm,\le}$ and $V_{c,d}^{\le}$ (respectively, $V_i^{\pm}$ and $V_{c,d}$) by replacing $\ge$ by $\le$ (respectively, $=$) in the previous definitions.

For every $(c,d) \in \S$ with $c \neq 0$ and $\gamma = \begin{pmatrix} a & b \\ c & d \end{pmatrix} \in \Gamma$ we have
\begin{equation}\label{equationfraction}
\Vert -c\gamma(Z) + a \Vert = \frac{1}{\Vert cZ+d\Vert }
\end{equation}
and therefore
\begin{equation*}
 \gamma\left(V_{c,d}^{\ge}\right)=V_{-c,a}^{\le}.
\end{equation*}

We define
	\begin{eqnarray*}
	 \mathcal{F}_{\infty}&=&V_1^{+,\le}\cap V_1^{-,\ge} \cap V_2^{+,\le}\cap V_2^{-,\ge} \cap V_3^{+,\le}\cap V_3^{-,\ge} \\
											 &=& \{(s_1,s_2,r,h)\in \R^2\times (\R^+)^2 : \vert s_1 \vert, \vert s_2 \vert \le \frac{1}{2}, \epsilon_0^{-2}\le r \le \epsilon_0^2\}; \\
   \mathcal{F}_0 &=& \underset{(c,d)\in \S}{\bigcap} V^{\ge}_{c,d} = \{Z\in \HTwo : \Vert cZ+d \Vert \ge 1 \text{ for all } (c,d)\in \S\}; \\
	\mathcal{F} &=& \mathcal{F}_{\infty} \cap \mathcal{F}_0.
	\end{eqnarray*}
	
\begin{lemma}\label{FundDomainInfinity}
$\mathcal{F}_{\infty}$ is a fundamental domain of $\Gamma_{\infty}$.
\end{lemma}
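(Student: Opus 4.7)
The plan is to verify the three defining properties of a fundamental domain: (i) the boundary of $\mathcal{F}_{\infty}$ has Lebesgue measure zero; (ii) $\Gamma_{\infty}\cdot\mathcal{F}_{\infty}=\HTwo$; and (iii) no two distinct points in the interior of $\mathcal{F}_{\infty}$ are $\Gamma_{\infty}$-equivalent. Property (i) is immediate, since $\partial\mathcal{F}_{\infty}$ is a finite union of coordinate hyperplanes ($s_i=\pm 1/2$ for $i=1,2$, and $r=\epsilon_0^{\pm 2}$), each of measure zero in $\HTwo$.

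The key step is to unpack the action of $\Gamma_{\infty}$ in the $(s_1,s_2,r,h)$-coordinates. Representatives for the elements of $\Gamma_{\infty}$ are the matrices $\pmatriz{{cc}\epsilon_0^m & b \\ 0 & \epsilon_0^{-m}}$ with $m\in\Z$ and $b\in R$, and every such matrix can be written as a product $\tau_t\delta_m$, where $\delta_m=\pmatriz{{cc}\epsilon_0^m & 0 \\ 0 & \epsilon_0^{-m}}$ and $\tau_t=\pmatriz{{cc}1 & t \\ 0 & 1}$; since $\epsilon_0\in\U(R)$, the off-diagonal entry of the product runs over all of $R$ as $t$ does. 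It therefore suffices to describe how the $\delta_m$ and the $\tau_t$ act separately. Using (\ref{chapeau1})--(\ref{chapeau2}) and (\ref{HeightInfinity}), a direct computation shows that $\delta_m$ fixes $h$ and sends $r\mapsto\epsilon_0^{4m}r$ (valid in either case $N(\epsilon_0)=\pm 1$, because $(\epsilon_0^m)'^2=\epsilon_0^{-2m}>0$ in both), while $\tau_t$ with $t=t_1+t_2\omega\in R$ preserves $r$ and $h$ and acts on the horizontal factor by the integer translation $(s_1,s_2)\mapsto(s_1+t_1,s_2+t_2)$. This last identity follows from (\ref{formulaRi}) via the elementary relations $\omega' t-\omega t'=t_1(\omega'-\omega)$ and $t-t'=t_2(\omega-\omega')$.

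For property (ii), given $Z=(s_1,s_2,r,h)\in\HTwo$, I would first choose $m\in\Z$ with $\epsilon_0^{-2}\le\epsilon_0^{4m}r\le\epsilon_0^2$ (such $m$ exists because the admissible window for $m$ has length one on the logarithmic scale) and apply $\delta_m$ to place the new value of $r$ inside $[\epsilon_0^{-2},\epsilon_0^2]$. Then I would pick $t_1,t_2\in\Z$ so that the resulting $(s_1,s_2)$ lies in $[-1/2,1/2]^2$ and apply $\tau_t$ with $t=t_1+t_2\omega$; since $\tau_t$ fixes both $r$ and $h$, the composite element of $\Gamma_{\infty}$ carries $Z$ into $\mathcal{F}_{\infty}$.

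For property (iii), suppose $\gamma\cdot Z=Z'$ with $\gamma\in\Gamma_{\infty}$ and both $Z,Z'$ in the interior of $\mathcal{F}_{\infty}$. Writing $\gamma=\tau_t\delta_m$, the identity $r(Z')=\epsilon_0^{4m}r(Z)$ combined with the strict bounds $r(Z),r(Z')\in(\epsilon_0^{-2},\epsilon_0^2)$ forces $\epsilon_0^{4m}\in(\epsilon_0^{-4},\epsilon_0^4)$ and hence $m=0$. So $\gamma$ reduces to a pure translation $\tau_t$, and $s_i,s_i+t_i\in(-1/2,1/2)$ gives $t_i\in(-1,1)\cap\Z=\{0\}$ for $i=1,2$; hence $t=0$ and $\gamma$ represents the identity in $\PSL_2(R)$. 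The only real obstacle in the whole argument is the coordinate unpacking of paragraph two---once the actions of the $\delta_m$ and $\tau_t$ are explicit, properties (ii) and (iii) reduce to short one-dimensional interval arguments in the $r$- and $(s_1,s_2)$-directions.
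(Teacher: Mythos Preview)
Your proof is correct and follows essentially the same approach as the paper: both arguments use the formula $r(\gamma(Z))=\epsilon_0^{4m}r(Z)$ from (\ref{HeightInfinity}) to first reduce to $m=0$ (respectively, to bring $r$ into range), and then handle the $(s_1,s_2)$-coordinates by integer translations. Your explicit factorisation $\gamma=\tau_t\delta_m$ and the accompanying coordinate verification add a bit of expository detail, but the logical structure of all three parts---measure-zero boundary, surjectivity of the orbit map onto $\mathcal{F}_\infty$, and triviality of interior self-overlaps---is identical to the paper's.
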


\begin{proof}
We first prove that if $1\ne \gamma \in \Gamma_{\infty}$ and $Z=(Z_1,Z_2)=(x_1+y_1i,x_2+y_2i)=(s_1,s_2,r,h)$ then $\{ Z,\gamma (Z)\}$ cannot be contained in the interior of $\mathcal{F}_{\infty}$.
Let
\begin{equation*}
\gamma=\begin{pmatrix} \epsilon_0^m & b \\ 0 & \epsilon_0 ^{-m} \end{pmatrix}.
\end{equation*}
By (\ref{HeightInfinity}), $r(\gamma(Z))=\epsilon_0^{4m}r$.
If $Z$ and $\gamma(Z)$ belong to the interior of $\Gamma_{\infty}$ then $\epsilon_0^{-2} < r,\epsilon_0^{4m}r < \epsilon_0^2$ and therefore $m=0$.
Therefore the transformation $\gamma$ is simply a translation by the parameter $(b,b')$ with $b \in R$.
Now $b=b_1+b_2\omega$, with $b_1,b_2\in \Z$.
As $Z$ and $\gamma(Z)$
belong to the interior of $\mathcal{F}_{\infty}$, then $\vert s_1 \vert ,\vert s_2 \vert , \vert s_1+b_1 \vert,
\vert s_2+b_2 \vert < \frac{1}{2}$. Thus $b=0$ and hence $\gamma=1$, as desired.

Let $Z=(x_1+y_1\omega,x_2+y_2\omega')$ be an arbitrary point in  $ \Hy^2\times \Hy^2$, with $x_{i},y_{i}\in \R$.
We will show that there exists $\gamma \in \Gamma_{\infty}$ such that $\gamma(Z) \in \mathcal{F}_{\infty}$. As $\epsilon_0 > 1$, $\lim\limits_{n\rightarrow +\infty}\epsilon_0^n = \infty$ and $\lim\limits_{n\rightarrow -\infty}\epsilon_0^n = 0$ and hence there exists $n \in \Z$ such that
\begin{equation*}
\epsilon_0^{4n-2} \leq r \leq \epsilon_0^{4n+2}.
\end{equation*}
Let $\gamma=\begin{pmatrix}
\epsilon_0^{-n} & 0 \\
0 & \epsilon_0^{n} \end{pmatrix}$.
By (\ref{HeightInfinity}), $\epsilon_0^{-2}\le r(\gamma(Z))=\epsilon_0^{-4n}r \le \epsilon_0^2$.
So we may assume that $\epsilon_0^{-2}\le r \le \epsilon_0^2$.
Now consider $\gamma=\pmatriz{{cc} 1 & b \\ 0 & 1}$, with $b=b_1+b_2\omega$, $b_1,b_2\in \Z$ and
$\vert x_i+b_i \vert \le \frac{1}{2}$. Again by (\ref{HeightInfinity}), the $(s_1,s_2,r,h)$ coordinates of $\gamma(Z)$ are $(s_1+b_1,s_2+b_2,r,h)$ and hence $\gamma(Z)\in \F_{\infty}$, as desired.

Moreover the boundary of $\F_{\infty}$ is included in $\bigcup_{i=1}^3 V_i^+ \cup \bigcup_{i=1}^3 V_i^{-}$.
This clearly has Lebesgue measure $0$.
\end{proof}

For a subset $C$ of $\HTwo$ and $\mu>0$ let
	\begin{equation}\label{VsDefinition}
	 V_{C,\mu}=\{(c,d)\in R\times R : \Vert cZ+d\Vert \le \mu \text{ for some } Z\in C\}.
	\end{equation}

We say that a subset $C$ of $\HTwo$ is \emph{hyperbolically bounded} if it is bounded in the metric of $\HTwo$ or equivalently if it is bounded in the Euclidean metric and there is a positive number $\epsilon$ such that $y_1,y_2>\epsilon$ for every $(x_1+y_1 i,x_2+y_2i)\in C$.

\begin{lemma}\label{BoundedFinite}
Let $C$ be a  hyperbolically bounded  subset of $\HTwo$ and let $\mu>0$.
% and let $V_{C,R}=\{(c,d)\in R^2 : \Vert cZ+d\Vert \le r: \text{ for some } Z\in C\}$.
%has a minimum.
Then there are finitely many elements $(c_1,d_1),\dots,(c_k,d_k)$ of $R\times R$ such that
$V_{C,\mu}=\{(uc_i,ud_i) :
i=1,\dots,k, u\in \U(R)\}$.
\end{lemma}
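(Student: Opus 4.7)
The plan is to verify that $\U(R)$ acts on $V_{C,\mu}$ by scalar multiplication and then to partition $V_{C,\mu}$ into finitely many such orbits. Since $N(u)=\pm 1$ for every $u\in\U(R)$, a direct computation from the factored formula for $\|cZ+d\|$ yields $\|ucZ+ud\|=N(u)^2\|cZ+d\|=\|cZ+d\|$, so $\U(R)\cdot V_{C,\mu}=V_{C,\mu}$. It therefore suffices to produce finitely many pairs $(c_i,d_i)$ whose $\U(R)$-orbits exhaust $V_{C,\mu}$: I would first bound the possible $c$'s modulo units using the norm, and then, for each fixed representative $c_i$, bound the possible $d$'s.

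The first step uses the estimate $\|cZ+d\|\ge c^2 c'^2 y_1^2 y_2^2 = N(c)^2 h(Z)^2$, obtained by discarding the $(cx_1+d)^2$ and $(c'x_2+d')^2$ summands from the two factors. Because $C$ is hyperbolically bounded, there is $\epsilon>0$ with $y_1,y_2>\epsilon$ on $C$, hence $h(Z)\ge\epsilon^2$ there; so every $(c,d)\in V_{C,\mu}$ with $c\ne 0$ satisfies $|N(c)|\le\sqrt{\mu}/\epsilon^2$. Standard Dedekind-domain theory (finiteness of the set of ideals of $R$ of norm below a given bound, combined with the fact that each principal ideal forms a single $\U(R)$-orbit of generators) now implies that the set of nonzero $c\in R$ of bounded norm splits into finitely many $\U(R)$-orbits; I pick one representative $c_1,\dots,c_\ell$ from each.

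The second step bounds $d$ for each fixed $c_i$. If $c_i\ne 0$ and $(c_i,d)\in V_{C,\mu}$, choose $Z\in C$ with $\|c_iZ+d\|\le\mu$. The second factor of $\|c_iZ+d\|$ is at least $c_i'^2 y_2^2\ge c_i'^2\epsilon^2$, so the first factor is at most $\mu/(c_i'^2\epsilon^2)$, giving $(c_ix_1+d)^2\le\mu/(c_i'^2\epsilon^2)$; symmetrically, $(c_i'x_2+d')^2\le\mu/(c_i^2\epsilon^2)$. Since $C$ is also Euclidean-bounded, $x_1$ and $x_2$ stay in a bounded interval, so both $d$ and $d'$ lie in bounded intervals, and as $R$ embeds discretely in $\R^2$ via $d\mapsto(d,d')$, only finitely many $d\in R$ qualify. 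The remaining case $c=0$ is handled separately: $\|cZ+d\|=N(d)^2\le\mu$ forces $|N(d)|\le\sqrt{\mu}$, leaving finitely many $d$ modulo units. Assembling the resulting pairs produces the required finite list. The only subtlety worth flagging is the use of finiteness of $\U(R)$-orbits of elements of bounded norm, which follows from counting ideals of bounded norm in a Dedekind domain and does not rely on $R$ being a principal ideal domain; so the lemma holds in the stated generality.
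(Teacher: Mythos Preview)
Your proof is correct and follows essentially the same approach as the paper: bound $|N(c)|$ via $\|cZ+d\|\ge N(c)^2 h(Z)^2$, use finiteness of elements of bounded norm modulo units to fix representatives $c_i$, then for each $c_i$ bound $(d,d')$ inside a compact subset of $\R^2$ and invoke discreteness of $R$, treating $c=0$ separately. The only cosmetic difference is that you bound $d$ by estimating one factor of $\|cZ+d\|$ from below to control the other, whereas the paper picks out the cross terms $(c_ix_1+d)^2 c_i'^2 y_2^2$ and $(c_i' x_2+d')^2 c_i^2 y_1^2$ in the expansion; both arrive at the same conclusion.
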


\begin{proof}
As $\{d\in R : N(d)\le \mu \}$ is finite up to units, it is clear that $V_{C,\mu}$ has finitely many elements of the form $(0,d)$ with $d\in R$.
As $C$ is hyperbolically bounded there is $s>0$ such that $h(Z)\ge s$ for every $Z\in C$.
As the number of elements of $R$ of a given norm is finite modulo units, there are $c_1,\dots,c_k \in
R\setminus \{0\}$ such that if $c\in R\setminus \{0\}$ with $N(c)^2\le \frac{\mu}{s^2}$ then $c=uc_i$ for some $i=1,\dots,k$ and some
$u\in \U(R)$. Moreover, for every $i=1,\dots,k$, the set $C_i=\{(x,y) \in \R^{2} \; : \; (c_ix_1+x)^2c_i'^2
y_2^2 + (c_i'x_2+y)^2 c_i^2 y_1^2 \le r \text{ for some  } Z=(x_1+y_1i,x_2+y_2i)\in C\}$ is a bounded subset of $\R^2$. As $\{(d,d'):d\in R\}$ is a discrete subset of $\R^2$, there are
$e_{i1},\dots,e_{ij_i} \in R$ such that if $d\in R$ and $(d,d')\in C_i$ then
$d=e_{il_{i}}$ for some $1\leq l_{i}\leq j_{i}$.
Assume that $\Vert cZ +d \Vert\le \mu$ for some $Z=(x_1+y_1i,x_2+y_2i)\in C$.
Then $N(c)^2s^2 \le N(c)^2h(Z)^2 = c^2{c'}^2y_1^2y_2^2 \le ((cx_1+d)^2+c^2y_1^2)((c'x_2+d')^2+c'^2y_2^2)=
\Vert cZ+d \Vert \le \mu$. Hence $c=uc_i$ for some $i=1,\dots,k$ and $u\in \U(R)$.
Moreover, as $N(u)=uu'= \pm 1$, we have
$$(c_ix_1+u^{-1}d)^2c_i'^2 y_2^2 + (c_i'x_2+(u^{-1}d)')^2 c_i^2 y_1^2=
(cx_1+d)^2c'^2 y_2^2 +
(c'x_2+d')^2 c^2 y_1^2 \le \Vert cZ+d \Vert \le \mu.$$
Therefore $(u^{-1} d,(u^{-1} d)')\in C_i$. Thus $u^{-1} d=e_{il_{i}}$ for some $1\leq l_{i}\leq j_i$.
This proves that there are finitely many elements $(c_1,d_1),\dots,(c_k,d_k)\in (R\setminus \{0\})\times R$ such
that $V_{C,\mu}$ is contained in $\{u(c_i,d_i) : i=1,\dots,k, u\in \U(R)\}$.
Note that if $u\in \U(R)$ then $\Vert uc_iZ+ud_i\Vert = \Vert c_iZ+d_i \Vert$. Hence, the result
follows.
\end{proof}

\begin{lemma}\label{maxhorbit}
$\F_0 = \lbrace Z \in \HTwo :\ Z \textrm{ has maximal height in its } \Gamma\textrm{-orbit} \rbrace$.
\end{lemma}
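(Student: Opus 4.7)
The plan is to observe that the definition of $\F_0$ and the height transformation formula (\ref{Height}) essentially encode the same information, differing only by a division. The proof then reduces to a direct translation between the two conditions, using the definition of $\S$ as the set of second rows of elements of $\Gamma$.

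More concretely, I would proceed as follows. Fix $Z \in \HTwo$ and observe that $h(Z) > 0$, so for any $\gamma = \pmatriz{{cc} a & b \\ c & d} \in \Gamma$ the formula $h(\gamma(Z)) = h(Z)/\Vert cZ+d\Vert$ from (\ref{Height}) shows that the inequality $\Vert cZ+d\Vert \ge 1$ is equivalent to $h(\gamma(Z)) \le h(Z)$. For the inclusion $\F_0 \subseteq \{Z : h(Z) \text{ is maximal in its orbit}\}$, assume $Z \in \F_0$ and let $\gamma \in \Gamma$ be arbitrary; its second row $(c,d)$ lies in $\S$ by definition, so $\Vert cZ+d\Vert \ge 1$, and hence $h(\gamma(Z)) \le h(Z)$. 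Conversely, if $Z$ has maximal height in its $\Gamma$-orbit and $(c,d) \in \S$ is arbitrary, then by the definition of $\S$ there exists some $\gamma \in \Gamma$ with second row $(c,d)$; applying the maximality assumption to this $\gamma$ yields $h(\gamma(Z)) \le h(Z)$, which via (\ref{Height}) is exactly $\Vert cZ+d\Vert \ge 1$, so $Z \in V_{c,d}^{\ge}$. Intersecting over all $(c,d) \in \S$ gives $Z \in \F_0$.

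There is essentially no obstacle here: the lemma is a direct reformulation of the defining inequalities of $\F_0$, so the only things to check are that the equivalence is reversible (which is guaranteed since $\Vert cZ+d\Vert > 0$ on $\HTwo$, as $y_1,y_2 > 0$ preclude simultaneous vanishing unless $(c,d)=(0,0) \notin \S$) and that the correspondence between pairs in $\S$ and elements of $\Gamma$ goes both ways, both of which are immediate from the definition of $\S$.
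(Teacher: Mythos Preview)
Your argument is correct and is essentially the same as the paper's for the set equality: both directions come down to the equivalence $\Vert cZ+d\Vert \ge 1 \Leftrightarrow h(\gamma(Z)) \le h(Z)$ via (\ref{Height}), together with the fact that $\S$ is exactly the set of second rows of elements of $\Gamma$.

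The one thing the paper adds, which you do not, is a preliminary paragraph showing that for every $Z$ the infimum of $\{\Vert cZ+d\Vert : (c,d)\in\S\}$ is actually attained (using Lemma~\ref{BoundedFinite}), hence every $\Gamma$-orbit contains an element of maximal height. This is not needed for the bare set equality you were asked to prove, but it is precisely what makes the right-hand side nonempty on each orbit, and it is used immediately afterwards in Lemma~\ref{funddomain} to show that $\F$ meets every orbit. So your proof of the lemma as stated is complete; just be aware that the existence of the maximum is a separate fact that the paper slips in here and relies on later.
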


\begin{proof}
We first claim that for a fixed $Z\in \HTwo$, the set $\{\Vert cZ+d\Vert : (c,d)\in \S \}$ has a minimum.
Indeed, let $\pi=\Vert Z \Vert$. Clearly $V_{\{Z\},\pi}\cap \S \neq \emptyset$ as it contains
$(1,0)$. By Lemma~\ref{BoundedFinite}, $V_{\{Z\},\pi}\cap \S=\{u(c_i,d_i) : i=1,\dots,k, u\in \U(R)\}$ for
some $(c_1,d_1),\dots,(c_k,d_k)\in \S$. Let $0\neq m = \min \{\Vert c_i Z + d_i \Vert : i=1,\dots, k\}$.
If $\Vert cZ+d \Vert < m\le \pi$, with $(c,d)\in \S$ then $(c,d)=u(c_i,d_i)$ for some $i$ and
some $u\in \U(R)$. Then $m>\Vert cZ +d \Vert = N(u)^2 \Vert c_i Z + d_i \Vert \geq  m$, a contradiction.
Hence the claim follows.
Consequently, by (\ref{Height}), the $\Gamma$-orbit of $Z$ has an element of maximum height.

Consider a $\Gamma$-orbit and let $Z$ be an element in this orbit with maximal height. Hence for every $g \in \h$, $h(g(Z)) \leq h(Z)$ and hence by (\ref{Height}), $\Vert cZ + d \Vert \geq 1$, for every $(c,d) \in \S$. Thus $Z\in \mathcal{F}_0$.

To prove the other inclusion, let $Z \in \F_0$. Then $\Vert cZ +d \Vert \geq 1$, for every $(c,d) \in \S$ and hence for every $g \in \h$, $h(g(Z)) \leq h(Z)$. Thus every element of $\F_0$ reaches the maximum height in its orbit.
\end{proof}

\begin{lemma} \cite{HarveyProc,HarveyMathComp} \label{funddomain}
$\mathcal{F}$ is fundamental domain for $\Gamma$.
\end{lemma}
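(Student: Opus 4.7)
The aim is to verify the three defining conditions of a fundamental domain: the boundary of $\F$ has Lebesgue measure zero, the $\Gamma$-translates of $\F$ cover $\HTwo$, and no pair $\{Z,g(Z)\}$ with $1\ne g\in\Gamma$ lies in the interior of $\F$. The plan is to combine Lemma~\ref{FundDomainInfinity}, which handles the $\Gamma_\infty$-action via $\F_\infty$, with Lemma~\ref{maxhorbit}, which characterises $\F_0$ as the set of points of maximal height in their $\Gamma$-orbit.

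For the covering property, start with arbitrary $Z\in\HTwo$. By Lemma~\ref{maxhorbit} there exists $g_0\in\Gamma$ with $g_0(Z)\in\F_0$, and then by Lemma~\ref{FundDomainInfinity} there exists $\gamma\in\Gamma_\infty$ with $\gamma g_0(Z)\in\F_\infty$. Since elements of $\Gamma_\infty$ preserve the height coordinate by (\ref{HeightInfinity}), $\gamma g_0(Z)$ still realises the maximum height in its orbit, so it belongs to $\F_0$ as well, hence to $\F=\F_0\cap\F_\infty$.

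For the interior non-overlap, suppose $Z$ and $g(Z)$ are both interior points of $\F$ for some $1\ne g\in\Gamma$. Since $\F\subseteq\F_\infty$, the interior of $\F$ lies in the interior of $\F_\infty$, so Lemma~\ref{FundDomainInfinity} forces $g\notin\Gamma_\infty$. Writing $g=\pmatriz{{cc} a & b \\ c & d}$ with $c\ne 0$, both $Z$ and $g(Z)$ lie in $\F_0$, so by Lemma~\ref{maxhorbit} both attain the maximal height in their common orbit; thus $h(Z)=h(g(Z))$ and (\ref{Height}) yields $\Vert cZ+d\Vert=1$, i.e.\ $Z\in V_{c,d}$. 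A direct partial-derivative computation gives $\partial_{y_1}\Vert cZ+d\Vert = 2c^2 y_1\bigl((c'x_2+d')^2+c'^2 y_2^2\bigr)$, which is strictly positive throughout $\HTwo$ because $c\ne 0$ forces $c'\ne 0$ (algebraic conjugation being injective on $K$). Hence $V_{c,d}$ is a smooth codimension-one hypersurface locally separating $\HTwo$ into the regions $\Vert cZ+d\Vert<1$ and $\Vert cZ+d\Vert>1$, so arbitrarily close to $Z$ there are points outside $\F_0$, contradicting the assumption that $Z$ is in the interior of $\F$.

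Finally, $\partial\F\subseteq\partial\F_\infty\cup\partial\F_0$; the first set has measure zero by Lemma~\ref{FundDomainInfinity}, while $\partial\F_0\subseteq\bigcup_{(c,d)\in\S,\,c\ne 0} V_{c,d}$ is a countable union of smooth three-dimensional submanifolds of $\HTwo$ by the derivative calculation just made, so also has measure zero. The genuinely delicate step is the non-overlap argument when $g\notin\Gamma_\infty$, specifically verifying that $V_{c,d}$ sits on the topological boundary of $\F_0$ rather than in its interior; the non-vanishing of $c'$ coming from the real quadratic setting is exactly what makes this work, and the remaining steps are a formal assembly of the two previous lemmas.
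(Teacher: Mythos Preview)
Your proof is correct and follows essentially the same route as the paper's: first cover using Lemma~\ref{maxhorbit} to reach $\F_0$ and Lemma~\ref{FundDomainInfinity} to reach $\F_\infty$ (the paper applies these in the opposite order, which is immaterial), then for the non-overlap split into $g\in\Gamma_\infty$ versus $c\ne 0$ and use (\ref{Height}) to force $\Vert cZ+d\Vert=1$ in the latter case. Your explicit partial-derivative computation is a welcome addition: the paper simply asserts that $\Vert cZ+d\Vert=1$ places $Z$ on the border of $\F_0$, whereas you actually justify why $V_{c,d}$ cannot lie in the interior of $V_{c,d}^{\ge}$, which is exactly the point that needs the non-vanishing of $c'$.
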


\begin{proof}
We first show that $\mathcal{F}$ contains a point of every $\Gamma$-orbit. To prove this, consider an orbit and let $Z$ be an element in this orbit with maximal height.
By Lemma~\ref{FundDomainInfinity}, there is $W\in \F_{\infty}$ and $g\in \h_{\infty}$ such that  $W=g(Z)$.
By (\ref{HeightInfinity}), $h(Z)=h(W)$ and hence we may assume that $Z\in \F_{\infty}$. Thus,
by Lemma~\ref{maxhorbit},
$Z\in \mathcal{F}_0$ and so $Z \in \F$.

Now we will show that if two points of the same orbit are in $\mathcal{F}$ then they necessarily are on the border of $\mathcal{F}$. Suppose that $Z,\gamma(Z)\in \F$, for some $1 \neq \gamma \in \h$. If $\gamma\in \h_{\infty}$ then, by Lemma~\ref{FundDomainInfinity}, $Z$ belongs to the border of $\F_{\infty}$ and hence it belongs to the border of $\F$. Otherwise $\gamma= \pmatriz{{cc} a&b\\c&d}$ with $c\ne 0$ and, by (\ref{Height}) we have
\begin{equation*}
h(Z) = h(\gamma(Z))=\frac{h(Z)}{\Vert cZ+d \Vert}.
\end{equation*}
Therefore $\Vert cZ+d \Vert=1$ and thus $Z$ lies on the border of $\mathcal{F}_0$ and thus on the border of $\mathcal{F}$.

Finally the boundary of $\F$ is contained in $\bigcup_{i=1}^3 V_i^{+} \cup \bigcup_{i=1}^3 V_i^{-} \cup \bigcup_{(c,d) \in \S} V_{c,d}$ with $c \neq 0$. As $\S$ is countable, the boundary of $\F$ has measure $0$.
\end{proof}

Let
	$$B= \left\{(s_1,s_2,r) \; : \; \vert s_1 \vert, \vert s_2 \vert \le \frac{1}{2},\; \epsilon_0^{-2} \le r \le \epsilon_0^2\right\}$$
Observe that
	$$\mathcal{F}_{\infty}=B \times \R^+.$$
We can think of $\F$ as the region above a ``floor'', which is given by the sets $V_{c,d}$ for $(c,d) \in \S$ and limited by ``six walls'', which are given by the sets $V_i^{\pm}$ for $i=1,2,3$.

We now give an alternative description of $\F$ on which the ``floor'' is given by the graph of a function $h_0$ defined on $B$.

For each $c,d\in R$ we define the function $f_{c,d}:\R^4 \rightarrow \R$ by
	$$f_{c,d}(x_1,x_2,y_1,y_2)  =  \left[(cx_1+d)^2+c^2y_1^2 \right] \left[ ({c}' x_2+d')^2+{c}'^2y_2^2 \right].$$
Observe that if $Z\in \HTwo$ then $f_{c,d}(Z)=\Vert cZ+d \Vert$ and if $d\in \U(R)$ then $f_{0,d}(Z)=1$.
Thus
	\begin{equation}\label{Ff}
	 \mathcal{F}_0 = \{Z\in \HTwo :f_{c,d}(Z)\ge 1, \text{ for all } (c,d)\in \S\}.
	\end{equation}
We use the mixed coordinate system described in Section~\ref{SectionBackground} and write
	$$f_{c,d}(x_1,x_2,r,h)=\left[ (cx_1+d)(c'x_2+d') \right]^2+\left[(cx_1+d)^2c'^2r+(c'x_2+d')^2\frac{c^2}{r} \right]v + N(c)^2v^2.$$
If $c,d\in R$, $(x_1,x_2,r) \in \R^2\times \R^+$ and $v\in \R$, then we define
\begin{equation}\label{function}
f_{c,d,x_1,x_2,r}(v)=\left[ (cx_1+d)(c'x_2+d') \right]^2+\left[(cx_1+d)^2c'^2r+(c'x_2+d')^2\frac{c^2}{r} \right]v + N(c)^2v^2.
\end{equation}
and if $c\ne 0$ then we set
    \begin{eqnarray*}
    &&\hspace{-1.2cm} h_1(c,d,x_1,x_2,r)= \\
    &&
    \sqrt{\frac{1}{N(c)^2}+\frac{1}{4}\left[\left(x_1+\frac{d}{c}\right)^2r-\left(x_2+\frac{d'}{c'}\right)^2\frac{1}{r} \right]^2}-
    \frac{1}{2}\left[\left(x_1+\frac{d}{c}\right)^2r+\left(x_2+\frac{d'}{c'}\right)^2\frac{1}{r} \right],
    \end{eqnarray*}
If $u\in \U(R)$ then the following statements hold (throughout the paper we will use these without explicit reference):
\begin{eqnarray}
f_{uc,ud}&=&f_{c,d}, \label{independent-unit1}\\
h_{1}(uc,ud,x_1,x_2,r)&=&h_{1}(c,d,x_1,x_2,r), \label{independent-unit2}\\
(c,d)\in \S & \Leftrightarrow & (uc,ud)\in \S \label{independent-unit3}\\
%B_{uc,ud}&=&B_{c,d} \text{ and} \label{independent-unit4}  \\
V_{uc,ud}^{\ge}&=&V_{c,d}^{\ge}, \label{independent-unit4} \\
V_{uc,ud}&=&V_{c,d}. \label{independent-unit5}
\end{eqnarray}

Set $\mathcal{C}=\lbrace (x_1,x_2) \in \R^2 \ :\ -1< (cx_1+d)(c'x_2+d')< 1 \rbrace$.
An easy calculation shows that if $c\ne 0$ then
    $$V_{c,d}=\{(x_1,x_2,r,h_1(c,d,x_1,x_2,r)): (x_1,x_2,r)\in \mathcal{C} \times \R^+\}.$$
As $\mathcal{C}$ is path-connected, $V_{c,d}$ is path-connected.
Moreover, if $f=f_{c,d,x_1,x_2,r}$ then
\begin{eqnarray*}
f'(h)& = & \left[(cx_1+d)^2c'^2r+(c'x_2+d')^2\frac{c^2}{r} \right] + 2N(c)^2h.
\end{eqnarray*}
and hence $f$ is strictly increasing on $[\min(0,h_1),\infty)$.
% Therefore, if $(s_1,s_2)\in B_{c,d}$ then $f(h)\ge 1$ if and only if $h\ge h_1$. Otherwise, i.e. if $(s_1,s_2)\not \in B_{c,d}$, we have $f(h)>1$ for every $h>0$.

For $(s_1,s_2,r)\in B$ let
    \begin{eqnarray*}
    h_0(s_1,s_2,r) &=& \sup \{ h_1>0 : f_{c,d,x_1,x_2,r}(h_1)=1, \text{ for some } (c,d)\in \S, \text{ with } c\ne 0\} ,
% \\
%     &=& \sup \{ h_1(c,d,s_1,s_2,r) : (s_1,s_2)\in B_{c,d}\text{ for some } (c,d)\in \S, \text{ with } c\ne 0 \},
    \end{eqnarray*}
where we understand that the supremum of the empty set is $0$.
If $(c,d)\in \S$, $c\ne 0$, $h_1>0$ and $f_{c,d,x_1,x_2,r}(h_1)=1$ then $h_1^2 \le \frac{1}{N(c)^2}\le 1$.
Hence the supremum defining $h_0(s_1,s_2,r)$ exists and
\begin{equation}\label{h0atmost1}
h_0(s_1,s_2,r) \leq 1.
\end{equation}

By (\ref{Ff}) and the monotonicity $f_{c,d,x_1,x_2,r}$ we have
\begin{eqnarray}
\mathcal{F}_0 & = & \{(s_1,s_2,r,h)\in \R^2\times (\R^+)^2 : %(s_1,s_2,r) \in B \quad \text{and} \quad
	h\ge h_0(s_1,s_2,r)\} \textrm{ and } \label{fzero}\\
\mathcal{F} & = & \{(s_1,s_2,r,h)\in \R^2\times (\R^+)^2 : (s_1,s_2,r) \in B \quad \text{and} \quad
	h\ge h_0(s_1,s_2,r)\}. \label{fnotzero}
\end{eqnarray}

From (\ref{h0atmost1}) and (\ref{fnotzero}) the following lemmas easily follow.

\begin{lemma}\label{secureheight}
If $Z=(s_1,s_2,r,h) \in \HTwo$ with $\vert s_1 \vert \leq \frac{1}{2}$, $\vert s_2 \vert \leq \frac{1}{2}$, $\epsilon_0^{-2} \leq r \leq \epsilon_0^2$ and $h \geq 1$, then $Z \in \F$. Moreover, if the inequalities are strict then $Z \in \F^{\circ}$.
\end{lemma}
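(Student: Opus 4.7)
The proof is essentially a direct combination of the description of $\F_\infty$ given in Lemma~\ref{FundDomainInfinity} with the description of $\F_0$ in equation~(\ref{fzero}) and the bound~(\ref{h0atmost1}).

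The plan is as follows. First I would observe that the inequalities $|s_1|\le \tfrac12$, $|s_2|\le\tfrac12$ and $\epsilon_0^{-2}\le r\le\epsilon_0^2$ are precisely the conditions defining $\F_\infty$, so $Z\in\F_\infty$. Next, by~(\ref{h0atmost1}) we have $h_0(s_1,s_2,r)\le 1\le h$, so by~(\ref{fzero}) we obtain $Z\in\F_0$. Therefore $Z\in\F_\infty\cap\F_0 = \F$.

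For the second assertion, assume that all four inequalities are strict. I would argue that any point $Z'=(s_1',s_2',r',h')$ sufficiently close to $Z$ (in the Euclidean sense on the coordinates $(s_1,s_2,r,h)$) still satisfies $|s_1'|\le\tfrac12$, $|s_2'|\le\tfrac12$, $\epsilon_0^{-2}\le r'\le\epsilon_0^2$ and $h'\ge 1$, simply because these are non-strict inequalities and $Z$ satisfies the corresponding strict versions. By the first part of the lemma applied to $Z'$, such $Z'$ lies in $\F$. Hence a whole open neighbourhood of $Z$ is contained in $\F$, showing $Z\in\F^{\circ}$.

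There is really no hard step here: the only subtle point is that one must not invoke continuity of $h_0$ (which has not been established at this stage of the paper). The trick is to bypass $h_0$ entirely for the interior statement by noting that the hypotheses of the first part are stable under small perturbations whenever they hold strictly, so the first part can be reapplied to every nearby point.
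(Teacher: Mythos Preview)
Your proof is correct and follows the same approach as the paper, which simply states that the lemma follows directly from (\ref{h0atmost1}) and (\ref{fnotzero}) without further elaboration. Your added remark about avoiding continuity of $h_0$ for the interior statement is a sensible clarification, though the paper does not comment on this point.
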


\begin{lemma}\label{fpathconnected}
$\F$ is path-connected.
\end{lemma}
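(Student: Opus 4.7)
The plan is to construct an explicit path within $\F$ from an arbitrary point $Z\in\F$ to the fixed basepoint $Z_0=(0,0,1,1)$, which lies in $\F^{\circ}$ by Lemma~\ref{secureheight}. The two essential ingredients are the description (\ref{fnotzero}) of $\F$ as the region of $B\times\R^+$ on or above the graph of $h_0$, together with the uniform bound $h_0\le 1$ from (\ref{h0atmost1}).

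Given $Z=(s_1,s_2,r,h)\in\F$, the first leg of the path travels vertically toward height $1$ via $\gamma_1(t)=(s_1,s_2,r,(1-t)h+t)$, $t\in[0,1]$. The projection to the first three coordinates remains the point $(s_1,s_2,r)\in B$, and the height varies monotonically between $h$ and $1$, so at every $t$ it is at least $\min(h,1)\ge\min(h_0(s_1,s_2,r),1)=h_0(s_1,s_2,r)$, where the last equality uses (\ref{h0atmost1}). Hence $\gamma_1$ stays in $\F$ by (\ref{fnotzero}).

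For the second leg, I would join $(s_1,s_2,r,1)$ to $Z_0$ by the straight line $\gamma_2(t)=((1-t)s_1,(1-t)s_2,(1-t)r+t,1)$, $t\in[0,1]$. Since $B$ is the product of closed intervals $[-\tfrac12,\tfrac12]\times[-\tfrac12,\tfrac12]\times[\epsilon_0^{-2},\epsilon_0^2]$ and contains $(0,0,1)$, the projection of $\gamma_2$ to $\R^2\times\R^+$ stays in $B$ by convexity. The height is constantly $1\ge h_0$, so $\gamma_2\subseteq\F$ by (\ref{fnotzero}). Concatenating $\gamma_1$ and $\gamma_2$ yields a path in $\F$ from $Z$ to $Z_0$, proving path-connectedness.

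The only subtlety in the argument is ensuring that the vertical interpolation does not drop below the graph of $h_0$ when $h>1$; the bound $h_0\le 1$ from (\ref{h0atmost1}) is precisely what rules this out. Beyond that observation the proof reduces to the convexity of $B$, so there is no genuine obstacle.
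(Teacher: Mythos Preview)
Your proof is correct and follows exactly the approach indicated in the paper, which simply states that the lemma follows easily from (\ref{h0atmost1}) and (\ref{fnotzero}); you have spelled out the details of the two-leg path (first raise the height to $1$, then move inside the convex set $B$ at constant height $1$) that those two facts make possible.
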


In order to have a finite procedure to calculate the fundamental domain $\F$ of Lemma~\ref{funddomain} we need to replace $\S$ in the definition of $\F$ by a suitable finite set.
In our next result we obtain this for $R$ a principal ideal domain (PID, for short). For that we need the following lemma, which is proved in \cite[Paragraph 5]{HarveyProc}.

\begin{lemma}\label{minh}
 If $R$ is a PID and $(s_1,s_2,r,h)\in \mathcal{F}_0$ then $h>\frac{k_0^2}{2k}$.
\end{lemma}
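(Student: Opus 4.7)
The plan is to argue by contrapositive: given $Z=(s_1,s_2,r,h)\in \HTwo$ with $h\le \tfrac{k_0^2}{2k}$, I will exhibit a pair $(c,d)\in \S$ with $c\neq 0$ and $\Vert cZ+d\Vert<1$, thereby showing $Z\notin \mathcal{F}_0$. Since $\mathcal{F}_0$ is $\Gamma_\infty$-invariant and the height is preserved by $\Gamma_\infty$ by (\ref{HeightInfinity}), Lemma~\ref{FundDomainInfinity} lets me first translate $Z$ so that additionally $(s_1,s_2,r)\in B$, i.e.\ $|s_1|,|s_2|\le \tfrac12$ and $\epsilon_0^{-2}\le r\le \epsilon_0^2$.

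The PID hypothesis enters here crucially: it identifies $\S$ with the set of coprime pairs $\{(c,d)\in R^2:\gcd(c,d)=1\}$, so every Euclidean ``smallest representative'' is available. Setting $z=s_1+s_2\omega\in K\otimes_\Q\R$ (so that $x_1=z$ and $x_2=z'$), the central identity one expands is
\[
\Vert cZ+d\Vert = N(cz+d)^2 + h\left[(cx_1+d)^2 c'^2 r + (c'x_2+d')^2 \tfrac{c^2}{r}\right] + N(c)^2 h^2.
\]
Under the assumption $h\le \tfrac{k_0^2}{2k}$, the last term is bounded by $\tfrac{N(c)^2 k_0^4}{4k^2}$ and the middle term is linear in $h$; so, for $c$ of controlled norm, the task reduces to finding a coprime $(c,d)$ for which $|N(cz+d)|$ and the factors $|cx_1+d|$, $|c'x_2+d'|$ are jointly small enough.

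Observe that $\tfrac{k_0^2}{2k}=\tfrac{2}{|D_K|}$, where $|D_K|=\tfrac{4k}{k_0^2}$ is the absolute discriminant of $K=\Q(\sqrt{k})$ and also the squared covolume of the embedding $R\hookrightarrow \R^2$, $d\mapsto(d,d')$. The statement is thus Minkowski-flavoured: on this lattice, a coprime pair $(c,d)$ that approximates $(-cz,-c'z')$ well in norm must exist, and the PID structure lets one produce such a pair explicitly via a Euclidean-style reduction in $R$.

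The principal obstacle is that no single pair $(c,d)$ yields $\Vert cZ+d\Vert<1$ uniformly on all of $B$; a case analysis on the position of $(s_1,s_2,r)$ is unavoidable. The natural scheme is to partition $B$ into subregions and test, in each, a short list of candidate pairs: typically $(c,d)=(1,n)$ for $n\in R$ small (e.g.\ $n\in\{0,\pm 1,\pm\omega,\pm(1+\omega)\}$) and, in the corner subregions where every $(1,n)$ leaves $|N(z+n)|$ close to $1$, pairs whose first coordinate is a non-unit of small norm such as $c=\omega-\omega'$ (of norm $-|D_K|$), with $d$ chosen so that $cz+d$ has small Galois components. Verifying that this finite list of candidates covers $B$ and that in each subregion at least one of them drives $\Vert cZ+d\Vert$ strictly below $1$ whenever $h\le \tfrac{k_0^2}{2k}$ is the computational heart of the argument; the explicit verification is carried out by H.~Cohn in \cite[Paragraph~5]{HarveyProc}.
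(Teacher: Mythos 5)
The paper does not actually prove this lemma: it is stated and immediately attributed to \cite[Paragraph 5]{HarveyProc}, so there is no internal argument to compare yours against. Your preparatory reductions are sound and consistent with Cohn's method: $\F_0$ is indeed $\Gamma_{\infty}$-invariant (by Lemma~\ref{maxhorbit} it is the set of points of maximal height in their $\Gamma$-orbit, and $\Gamma_{\infty}$ preserves height by (\ref{HeightInfinity})), so via Lemma~\ref{FundDomainInfinity} one may assume $(s_1,s_2,r)\in B$; the expansion of $\Vert cZ+d\Vert$ you write is exactly the paper's (\ref{function}); and the identity $\frac{k_0^2}{2k}=\frac{2}{|D_K|}$ with $|D_K|=\frac{4k}{k_0^2}$ is correct.

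Nevertheless, judged as a blind proof the proposal has a genuine gap at precisely the decisive point: the existence, for every $(s_1,s_2,r)\in B$ and every $h\le\frac{k_0^2}{2k}$, of a pair $(c,d)\in\S$ with $c\ne 0$ and $\Vert cZ+d\Vert<1$ is only asserted, via a conjectural finite candidate list, and then delegated wholesale to \cite[Paragraph 5]{HarveyProc}. Nothing in the proposal verifies that the list $\{(1,n)\}\cup\{(\omega-\omega',d)\}$ suffices, nor even establishes that some finite list must work; that verification \emph{is} the lemma, the surrounding reductions being routine. Two smaller points: ``Euclidean-style reduction in $R$'' overstates what the hypothesis gives, since a PID need not be Euclidean; and the real role of class number one is that every cusp of $\Gamma$ is equivalent to $\infty$, so the floor of $\F_0$ cannot dip arbitrarily low near an inequivalent cusp --- your proposal gestures at the lattice-covolume heuristic but never uses the cusp structure. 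In fairness, the paper itself supplies no more than the citation, so your treatment reproduces the paper's; but as a self-contained argument the computational heart is missing.
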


% Let $h_{0}(k)= \inf \{ h(Z) \; : \; Z\in \mathcal{F}_{0}\}$. In
% \cite[Paragraph 5]{HarveyProc} it is shown that
% \begin{equation}\label{minh}
% h_0(k) > \frac{k_0^2}{2k}.
% \end{equation}

\begin{theorem}\label{FiniteSides}
Let $k$ be a square-free integer greater than 1.
Let $k_0$ and $\omega$ be as in (\ref{omega}), $R=\Z[\omega]$, $\Gamma=\PSL_2(R)$ and $\F$ the fundamental domain of $\Gamma$ given in Lemma~\ref{funddomain}.
Let $\S_{1}$ be a set of representatives, up to multiplication by units in $R$, of the couples $(c,d)\in R^{2}$ satisfying the following conditions:
\begin{equation}\label{Conditionscd}
\begin{aligned}
& c\ne0, \quad c R +d R = R, \quad
\vert N(c)\vert \le \frac{2k}{k_{0}^{2}}, \\
& \vert \frac{d}{c} \vert < \epsilon_0\sqrt{\frac{2k}{N(c)^2k_{0}^{2}}-\frac{k_{0}^{2}}{2k}}+\frac{1+\omega}{2} \quad \text{and} \quad
\vert \frac{d'}{c'} \vert < \epsilon_0\sqrt{\frac{2k}{N(c)^2k_{0}^{2}}-\frac{k_{0}^{2}}{2k}}+\frac{1- \omega'}{2} .
\end{aligned}
\end{equation}
Then $\S_1$ is finite and if $R$ is a principal ideal domain then $\F=\F_{\infty}\cap \bigcap_{(c,d)\in \S_1} V^{\ge}_{c,d}$. In particular, $V_{c,d} \cap \F \neq \emptyset$ if and only if $(c,d) \in \S_1$.
\end{theorem}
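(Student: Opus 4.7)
The plan is to prove the theorem in three steps: finiteness of $\S_1$; a ``necessary conditions'' lemma asserting that $V_{c,d}\cap\F\ne\emptyset$ forces $(c,d)$ to satisfy (\ref{Conditionscd}); and the main equality $\F=\F_\infty\cap\bigcap_{(c,d)\in\S_1}V_{c,d}^{\ge}$, deduced from the lemma by a vertical-lift argument. Finiteness of $\S_1$ is straightforward: the bound $|N(c)|\le 2k/k_0^2$ restricts $c$ to finitely many orbits under $\U(R)$ (the nonzero ideals of $R$ of bounded norm are finitely many and each principal ideal determines $c$ up to units), and for each such representative $c$ the conditions on $|d/c|$ and $|d'/c'|$ confine $(d,d')$ to a bounded subset of $\R^2$, which meets the discrete lattice $\{(d,d'):d\in R\}\subset\R^2$ in finitely many points.

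For the necessary-conditions lemma, I would take $Z=(s_1,s_2,r,h)\in V_{c,d}\cap\F$; then $\|cZ+d\|=1$ and $Z\in\F_0$, so Lemma~\ref{minh} (where the PID hypothesis enters) yields $h>k_0^2/(2k)$. Expanding
\[
\|cZ+d\|=\bigl[(cx_1+d)(c'x_2+d')\bigr]^2+(cx_1+d)^2{c'}^2rh+(c'x_2+d')^2 c^2 h/r+N(c)^2h^2,
\]
each of the four nonnegative summands is at most $1$. From $N(c)^2h^2\le 1$ and $h>k_0^2/(2k)$ one obtains $|N(c)|\le 2k/k_0^2$. The third summand yields $(x_1+d/c)^2\le\tfrac{1}{r}\bigl(\tfrac{1}{N(c)^2h}-h\bigr)$; since $h\mapsto\tfrac{1}{N(c)^2h}-h$ is strictly decreasing, its value at any $h>k_0^2/(2k)$ is less than $\tfrac{2k}{N(c)^2k_0^2}-\tfrac{k_0^2}{2k}$, and with $r\ge\epsilon_0^{-2}$ (since $Z\in\F_\infty$) this gives $|x_1+d/c|<\epsilon_0\sqrt{\tfrac{2k}{N(c)^2k_0^2}-\tfrac{k_0^2}{2k}}$. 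The triangle inequality together with $|x_1|\le(1+\omega)/2$ (from $|s_1|,|s_2|\le 1/2$ and $\omega>0$) then produces the stated bound on $|d/c|$; the bound on $|d'/c'|$ follows symmetrically from the second summand, using $|x_2|\le(1-\omega')/2$ and $r\le\epsilon_0^2$.

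For the main equality, the inclusion $\subseteq$ is immediate since $\S_1\subseteq\S$ up to units. For $\supseteq$, suppose $Z\in\F_\infty\setminus\F$, so $Z\notin\F_0$, and lift vertically via $Z_s:=(s_1,s_2,r,s)$, which stays in $\F_\infty$ and by Lemma~\ref{secureheight} lies in $\F$ once $s\ge 1$. Since $\F_0$ is closed and $Z\notin\F_0$, the infimum $s^*:=\inf\{s\ge h(Z):Z_s\in\F_0\}$ is attained with $s^*>h(Z)$ and $Z_{s^*}\in\F_\infty\cap\F_0=\F$. I would then pick $s_n\nearrow s^*$ with $Z_{s_n}\notin\F_0$ and $(c_n,d_n)\in\S$ satisfying $\|c_n Z_{s_n}+d_n\|<1$. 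The set $\{Z_s:s\in[h(Z),s^*]\}$ is hyperbolically bounded, so Lemma~\ref{BoundedFinite} implies the $(c_n,d_n)$ lie in finitely many $\U(R)$-orbits; after passing to a subsequence and adjusting each $(c_n,d_n)$ by a unit (which preserves $\|cZ+d\|$), we may take $(c_n,d_n)=(c^*,d^*)$ fixed. Taking the limit and combining with $Z_{s^*}\in\F_0$ forces $\|c^*Z_{s^*}+d^*\|=1$, with $c^*\ne 0$; thus $Z_{s^*}\in V_{c^*,d^*}\cap\F$, and the necessary-conditions lemma then gives $(c^*,d^*)\in\S_1$ up to units. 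Finally, the polynomial $h\mapsto f_{c^*,d^*,x_1,x_2,r}(h)$ in (\ref{function}) has nonnegative coefficients and positive leading coefficient $N(c^*)^2$, so it is strictly increasing on $h\ge 0$; since it equals $1$ at $h=s^*$ and $h(Z)<s^*$, we get $\|c^*Z+d^*\|<1$, contradicting $Z\in V_{c^*,d^*}^{\ge}$.

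The ``iff'' for $V_{c,d}\cap\F\ne\emptyset$ follows: the forward direction is the necessary-conditions lemma, and the converse can be argued by applying the same vertical-lift construction to a judiciously chosen starting point in $\F_\infty$ strictly below the surface $V_{c,d}$. The main technical obstacle is the extraction of the limiting pair $(c^*,d^*)$ in the vertical-lift step, which rests precisely on the combined use of Lemmas~\ref{BoundedFinite} and~\ref{minh}.
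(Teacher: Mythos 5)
Your argument is correct and reaches the stated conclusion, but it is organised quite differently from the paper's proof. The computation in your ``necessary conditions'' lemma --- expanding $\Vert cZ+d\Vert$ in the $(x_1,x_2,r,h)$ coordinates into four nonnegative summands, bounding suitable partial sums by $1$, and feeding in $h>\frac{k_0^2}{2k}$ from Lemma~\ref{minh} together with $\epsilon_0^{-2}\le r\le\epsilon_0^2$ and $\vert s_1\vert,\vert s_2\vert\le\frac{1}{2}$ --- is essentially the paper's central claim, except that the paper applies it to any $(c,d)$ with $h_1(c,d,s_1,s_2,r)\ge \frac{k_0^2}{2k}$ rather than only to pairs with an actual point in $V_{c,d}\cap\F$. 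Where you genuinely diverge is in deducing $\F=\F_\infty\cap\bigcap_{(c,d)\in\S_1}V_{c,d}^{\ge}$. The paper works globally with the floor function $h_0(s_1,s_2,r)$: Lemma~\ref{minh} restricts the supremum defining $h_0$ to pairs satisfying (\ref{Conditionscd}), finiteness of $\S_1$ turns the supremum into a maximum attained at some $(c_0,d_0)\in\S_1$, and (\ref{fnotzero}) then gives the equality at once. You instead run a pointwise first-crossing argument: lift a hypothetical $Z\in\F_\infty\cap\bigcap_{(c,d)\in\S_1}V_{c,d}^{\ge}$ with $Z\notin\F_0$ vertically to the first point $Z_{s^*}$ of the closed set $\F_0$, extract a fixed $(c^*,d^*)$ from the violating pairs via Lemma~\ref{BoundedFinite} applied to the compact (hence hyperbolically bounded) vertical segment, identify $Z_{s^*}\in V_{c^*,d^*}\cap\F$ so that $(c^*,d^*)\in\S_1$ up to units, and use the strict monotonicity of $f_{c^*,d^*,x_1,x_2,r}$ on $(0,\infty)$ to contradict $Z\in V_{c^*,d^*}^{\ge}$. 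I checked the attainment of $s^*$, the exclusion of $c^*=0$ (since $(0,d^*)\in\S$ forces $d^*\in\U(R)$ and $\Vert d^*\Vert=1$), and the limit extraction; all are sound. Both routes rest on the same pillars (Lemma~\ref{minh}, the monotonicity of $f_{c,d,x_1,x_2,r}$ in $h$, and a finiteness statement), but the paper's version exhibits the floor of $\F$ explicitly as the graph of a maximum of finitely many explicit functions $h_1(c,d,\cdot)$, which is what makes the domain computable in practice, while yours is more elementary and avoids introducing $h_0$ altogether.

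The one place your writeup is too thin is the converse of the final ``in particular'', namely that every $(c,d)\in\S_1$ has $V_{c,d}\cap\F\ne\emptyset$. Starting ``strictly below the surface $V_{c,d}$'' does not obviously help: the first crossing of $\partial\F_0$ along the vertical lift may occur on a different $V_{c',d'}$, and to force it to occur on $V_{c,d}$ you would need a point of $B$ at which $h_1(c,d,\cdot)$ realises the maximum $h_0$ --- which is essentially what is to be proved. To be fair, the paper's own treatment of this direction (``by the same reasoning as above'') is equally laconic; what both arguments actually establish is the forward implication, which is the only one used later in the paper.
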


\begin{proof}
That $\S_1$ is finite is a consequence of the well known fact that the set of elements of $R$ with a given norm is finite modulo multiplication by units and that the image of $R$ by the map $x\mapsto (x,x')$ is a discrete additive subgroup of $\R^2$, so that it intersects every compact subset in finitely many elements.

Assume now that $R$ is a PID. Then, by Lemma~\ref{minh}, %if $(s_1,s_2,r)\in B$ then
$h_0(s_1,s_2,r)\ge \frac{k_0^2}{2k}>0$.
Therefore the set
	$$\S_{s_1,s_2,r} = \left\{(c,d) \in \S : c\ne 0 \text{ and } h_1(c,d,s_1,s_2,r) \ge \frac{k_0^2}{2k}\right\}$$
is not empty and
	\begin{equation}\label{h0Cota}
	 h_0(s_1,s_2,r)=\sup \{h_1(c,d,s_1,s_2,r) : (c,d)\in \S_{s_1,s_2,r}\}.
	\end{equation}

We claim that if $(s_1,s_2,r)\in B$ and $(c,d)\in \S_{s_1,s_2,r}$ then $(c,d)$ satisfies the conditions of (\ref{Conditionscd}).
Indeed, clearly $(c,d)$ satisfies the first two conditions.
It satisfies the third condition since
	\begin{equation*}
	 N(c)^2\left(\frac{k_0^2}{2k}\right)^2 \le N(c)^2h_1^2 \le f_{c,d,x_1,x_2,r}(h_1)=1.
	\end{equation*}
To prove that it satisfies the last two conditions of (\ref{Conditionscd}) let $x_1=s_1+s_2\omega$, $x_2=s_1+s_2\omega'$ and $h_1=h_1(c,d,s_1,s_2,r)$ Recall that $\vert s_1\vert ,\vert s_2 \vert \le \frac{1}{2}$ and hence
	\begin{equation}\label{Restrictionsx1x2}
	 \frac{-1-\omega}{2} \leq x_{1} \leq \frac{1+\omega}{2} \quad \text{and} \quad \frac{-1+\omega'}{2} \leq x_{2} \leq \frac{1-\omega'}{2}.
	\end{equation}
Furthermore
\begin{eqnarray*}
N(c)^{2}\frac{k_0^2}{2k}\left(\frac{k_0^2}{2k}+ \left(x_1+\frac{d}{c}\right)^2 r+\left(x_2+\frac{d'}{c'}\right)^2\frac{1}{r}\right)&=& \\
N(c)^{2}\left(\frac{k_0^2}{2k}\right)^2+ \left((cx_1+d)^2{c'}^2r+(c'x_2+d')^2c^2\frac{1}{r}\right)\frac{k_0^2}{2k}&\le& \\
N(c)^{2}h_{1}^{2}+ \left((cx_1+d)^2{c'}^2r+(c'x_2+d')^2c^2\frac{1}{r}\right)h_{1}+(cx_1+d)^2(c'x_2+d')^2&=& f_{c,d,x_1,x_2,r}(h_1)=1,
\end{eqnarray*}
This, together with $\epsilon_0^{-2}\le r \le \epsilon_0^2$ implies
\begin{equation*}\begin{aligned}
\begin{cases}
\vert x_{1}+\frac{d}{c} \vert < \frac{1}{\sqrt{r}}\sqrt{\frac{2k}{N(c)^2k_{0}^{2}}-\frac{k_{0}^{2}}{2k}} \le
	\epsilon_0\sqrt{\frac{1}{N(c)^2k_{1}}-k_{1}},\\
\vert x_{2}+\frac{d'}{c'} \vert < \sqrt{r}\sqrt{\frac{2k}{N(c)^2k_{0}^{2}}-\frac{k_{0}^{2}}{2k}} \le \epsilon_0\sqrt{\frac{1}{N(c)^2k_{1}}-k_{1}}.
\end{cases}\end{aligned}\end{equation*}
Combining this with (\ref{Restrictionsx1x2}) we also have
\begin{equation*}\begin{aligned}
\begin{cases}
\vert \frac{d}{c} \vert < \epsilon_0\sqrt{\frac{2k}{N(c)^2k_{0}^{2}}-\frac{k_{0}^{2}}{2k}}+\frac{1+\omega}{2},\\
\vert \frac{d'}{c'} \vert <\epsilon_0\sqrt{\frac{2k}{N(c)^2k_{0}^{2}}-\frac{k_{0}^{2}}{2k}}+\frac{1- \omega'}{2}.
\end{cases}\end{aligned}\end{equation*}
This proves the claim.

Combining the claim with (\ref{independent-unit2}) and (\ref{h0Cota}) we deduce that if $(s_1,s_2,r)\in B$ then
$h_0(s_1,s_2,r)=\sup \{h_1(c,d,s_1,s_2,r) : (c,d)\in \S_1 \cap \S_{s_1,s_2,r}\}$. As $\S_1$ is finite, this implies that $h_0(s_1,s_2,r)=h_1(c,d,s_1,s_2,r)$ for some $(c_0,d_0)\in \S_1$.
Then
$(s_1,s_2,r,h)\in \F$ if and only if $(s_1,s_2,r,h)\in V^{\ge}_{c_0,d_0}$ if and only if $(s_1,s_2,r,h)\in V^{\ge}_{c,d}$ for every $(c,d)\in \S$. Therefore
	$\F = \F_{\infty} \cap \bigcap_{(c,d)\in \S_1} V^{\ge}_{c,d}$
as desired.

To get the second part of the theorem, notice that $\F \cap V_{c,d} \subseteq \partial \F_0$ for $(c,d) \in \S$, $c \neq 0$. Moreover, similarly as in (\ref{fzero}),
\begin{equation*}
\partial \mathcal{F}_0 = \{(s_1,s_2,r,h)\in \R^2\times (\R^+)^2 : %(s_1,s_2,r) \in B \quad \text{and} \quad
	h= h_0(s_1,s_2,r)\}.
\end{equation*}
Thus by the same reasoning as above, $\F_0 \cap V_{c,d} \neq \emptyset$ if and only if $(c,d) \in \S_1$ and hence the result follows.
\end{proof}

%The following lemma is an easy consequence of Theorem \ref{FiniteSides}.

Recall that a collection of subsets of a topological space $X$ is said to be locally finite if every point
of $X$ has a neighbourhood intersecting only finitely many elements of the collection. As $\Hy_{r,s}$ is
locally compact, a collection of subsets of $\Hy_{r,s}$ is locally finite if every compact subset of
$\Hy_{r,s}$ intersects only finitely many elements of the collection.
A fundamental domain $F$ of a group $\h$ acting on $\Hy_{r,s}$ is said to be locally finite if
$\{g(F):g\in \h\}$ is locally finite.

\begin{lemma}\label{locallyfinite}
The fundamental domains $\mathcal{F}$ of $\Gamma$ and $\mathcal{F}_{\infty}$ of $\Gamma_{\infty}$ are
locally finite.
\end{lemma}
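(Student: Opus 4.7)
The plan is to prove both assertions in sequence: local finiteness of $\mathcal{F}_{\infty}$ by a direct check using the explicit action of $\Gamma_{\infty}$ on the coordinates $(s_1,s_2,r,h)$, and then deduce local finiteness of $\mathcal{F}$ by splitting $\gamma=\begin{pmatrix}a&b\\c&d\end{pmatrix}\in\Gamma$ according to whether $c$ vanishes, using the previous step for $c=0$ and reducing to the discontinuity of $\Gamma$ for $c\ne 0$.

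For $\mathcal{F}_{\infty}$, if $\eta=\begin{pmatrix}\epsilon_0^m & b \\ 0 & \epsilon_0^{-m}\end{pmatrix}\in\Gamma_{\infty}$ with $b=b_1+b_2\omega$ and $b_1,b_2\in\Z$, then (\ref{chapeau1}), (\ref{chapeau2}) and (\ref{HeightInfinity}) imply that $\eta$ sends $(s_1,s_2,r,h)$ to $(s_1+b_1,s_2+b_2,\epsilon_0^{4m}r,h)$, so $\eta(\mathcal{F}_{\infty})$ equals the product
\[\left[b_1-\tfrac{1}{2},b_1+\tfrac{1}{2}\right]\times\left[b_2-\tfrac{1}{2},b_2+\tfrac{1}{2}\right]\times\left[\epsilon_0^{4m-2},\epsilon_0^{4m+2}\right]\times\R^{+}.\]
Any compact $K\subseteq\HTwo$ is bounded in each of the four coordinates (with its $h$-coordinate bounded away from $0$), so only finitely many triples $(b_1,b_2,m)$ can produce $\eta(\mathcal{F}_{\infty})\cap K\ne\emptyset$; this yields the local finiteness of $\mathcal{F}_{\infty}$.

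For $\mathcal{F}$, fix a compact $K\subseteq\HTwo$, let $\epsilon_K=\min\{h(W):W\in K\}>0$, and suppose $\gamma(\mathcal{F})\cap K\ne\emptyset$ with $W=\gamma(Z)\in K$ and $Z\in\mathcal{F}$. If $c=0$, then $\gamma\in\Gamma_{\infty}$ and $\gamma(\mathcal{F}_{\infty})\supseteq\gamma(\mathcal{F})$ already meets $K$, so the previous step restricts $\gamma$ to a finite set. If $c\ne 0$, then Lemma~\ref{maxhorbit} gives $h(Z)\ge h(W)\ge\epsilon_K$, while (\ref{Height}) combined with the trivial lower bound $\Vert cZ+d\Vert\ge c^2y_1^2\cdot {c'}^2y_2^2 = N(c)^2 h(Z)^2$ yields
\[\epsilon_K\;\le\;h(W)\;=\;\frac{h(Z)}{\Vert cZ+d\Vert}\;\le\;\frac{1}{N(c)^2\,h(Z)}\;\le\;\frac{1}{h(Z)},\]
so $h(Z)\le 1/\epsilon_K$. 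Together with $Z\in\mathcal{F}_{\infty}$ this forces $Z$ into the compact set
\[K' \;=\; \bigl\{(s_1,s_2,r,h)\;:\;|s_1|,|s_2|\le\tfrac{1}{2},\ \epsilon_0^{-2}\le r\le\epsilon_0^2,\ \epsilon_K\le h\le 1/\epsilon_K\bigr\},\]
which depends only on $K$. Hence $\gamma(K')\cap K\ne\emptyset$, and applying Proposition~\ref{DiscreteDiscontinuous} to the compact set $K\cup K'$ shows that only finitely many such $\gamma$ exist; combined with the $c=0$ case this finishes the argument. The main point to appreciate is that although $\mathcal{F}$ itself is unbounded in the $h$-direction, the normalizing factor $\Vert cZ+d\Vert\ge N(c)^2h(Z)^2$, which is available precisely when $c\ne 0$, automatically caps $h(Z)$ once $h(W)$ is bounded below, so the relevant portion of $\mathcal{F}$ collapses to a compact region and the local finiteness of $\mathcal{F}$ reduces cleanly to the discontinuity of $\Gamma$.
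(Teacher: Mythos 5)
Your argument for $\mathcal{F}_{\infty}$ follows the paper's, but the explicit formula you base it on is wrong when $m\neq 0$: the element $\eta=\begin{pmatrix}\epsilon_0^m & b\\ 0 & \epsilon_0^{-m}\end{pmatrix}$ acts on the first coordinate by $x_1\mapsto \epsilon_0^{2m}x_1+\epsilon_0^m b$ (and via the algebraic conjugates on $x_2$), so in the $(s_1,s_2,r,h)$ coordinates it is a translation by $(b_1,b_2)$ only for $m=0$; for $m\neq 0$ the $s$-coordinates are also dilated and mixed, and $\eta(\mathcal{F}_{\infty})$ is not the product of intervals you display. The conclusion survives and the repair is exactly what the paper does in $(x_1,x_2,y_1,y_2)$ coordinates: the $r$-coordinate does transform as $r\mapsto\epsilon_0^{4m}r$, so boundedness of $r$ on $K$ confines $m$ to a finite set, and then, for each fixed $m$, discreteness of $\{(b,b'):b\in R\}$ in $\R^2$ together with boundedness of the $x$-coordinates on $K$ leaves only finitely many $b$. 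So this is a computational slip rather than a structural gap, but as written the description of $\eta(\mathcal{F}_{\infty})$ is false.

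Your treatment of $\mathcal{F}$ in the case $c\neq 0$ is correct and genuinely different from the paper's. The paper notes that $Z\in C\cap\gamma(\F)$ forces $\Vert -cZ+a\Vert\le 1$, invokes Lemma~\ref{BoundedFinite} to confine $(-c,a)$ to a finite set up to units, and then disposes of the remaining unit ambiguity and the choice of $(b,d)$ by a second, somewhat laborious reduction to the $\Gamma_{\infty}$ case. You instead use $h(Z)\ge h(W)\ge\epsilon_K$ (via Lemma~\ref{maxhorbit}) together with $\Vert cZ+d\Vert\ge N(c)^2h(Z)^2\ge h(Z)^2$ to cap $h(Z)\le 1/\epsilon_K$, so every point of $\F$ that some $\gamma$ with $c\neq 0$ maps into $K$ lies in a single compact set $K'$ depending only on $K$; local finiteness then drops out of the discontinuity of the action (Proposition~\ref{DiscreteDiscontinuous}) applied to $K\cup K'$. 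This is shorter, avoids Lemma~\ref{BoundedFinite} and all bookkeeping with units, and isolates the real point, namely that the unbounded $h$-direction of $\F$ is irrelevant once $h(W)$ is bounded below; the paper's route has the mild advantage of staying within the machinery of the sets $V_{C,\mu}$ that it reuses elsewhere.
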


\begin{proof}
Let $C$ be a compact subset of $\HTwo$ and let $\gamma \in \Gamma_{\infty}$ such that $C \cap \gamma(\F) \neq \emptyset$. Let $Z=(x_1,x_2,y_1,y_2) \in C \cap \gamma(\F_{\infty})$. As $C$ is compact, the coordinates of $Z$ are bounded. As $\gamma^{-1} \in \Gamma_{\infty}$, $\gamma^{-1}=\begin{pmatrix} \epsilon_0^m & b \\ 0 & \epsilon_0^{-m} \end{pmatrix}$ for some $m \in \Z$ and $b \in R$, and hence $\gamma^{-1}(Z)=(\epsilon^{2m}x_1+b, \epsilon^{-2m}x_2+b', \epsilon^{2m}y_1, \epsilon^{-2m}y_2)$. As $y_1$ and $y_2$ are bounded, there are only finitely many $m \in \Z$ such that $\gamma^{-1}(Z) \in \F_{\infty}$, or equivalently $Z \in \gamma^{-1}(\F_{\infty})$, and this for every $Z \in C$. Moreover as the first two coordinates of $Z$ are bounded and $\{(b,b'):b\in R\}$ is a discrete subset of $R^2$ for each $m$ only finitely many $b$'s in $R$ satisfy that $\epsilon^{2m}x_1+b$ and $\epsilon^{-2m}x_2+b'$ satisfy the conditions imposed on $x_1$ and $x_2$ for $Z$ to be in $\F_{\infty}$. Thus there are only finitely many $\gamma \in \Gamma_{\infty}$ such that $C \cap \gamma(\F_{\infty}) \neq \emptyset$ and therefore $\F_{\infty}$ is locally finite.

Now suppose $C \cap \gamma(\F) \neq \emptyset$ for $C$ a compact subset of $\HTwo$ and $\gamma \in \Gamma$. If $\gamma \in \Gamma_{\infty}$, then we are done by the first part. So we may suppose that $\gamma=\begin{pmatrix} a & b \\ c & d \end{pmatrix}$ with $c \neq 0$. If $Z\in C\cap \gamma(\mathcal{F})$ and $c\ne 0$ then, by (\ref{equationfraction}), $\frac{1}{\Vert -cZ+a \Vert}=\Vert c\gamma^{-1}(Z)+d \Vert\ge 1$ and therefore $\Vert
-cZ+a \Vert\le 1$, in other words $(-c,a)\in V_{C,1}$, where $V_{C,1}$ is defined as in Lemma~\ref{BoundedFinite}. Using Lemma~\ref{BoundedFinite} we deduce that $(-c,a)$ belongs to a finite subset, up to units in $R$. So suppose $\gamma=\begin{pmatrix} a & b \\ c & d \end{pmatrix}$ and $\gamma_{u}= \begin{pmatrix} ua & u^{-1}b \\ u c & u^{-1}d \end{pmatrix}$ are such that $C \cap \gamma(\F) \neq \emptyset$ and $C \cap \gamma_u(\F) \neq \emptyset$ respectively for some $u \in \mathcal{U}(R)$. Then
$$ \gamma_u^{-1}\gamma=\begin{pmatrix} u^{-1} & * \\ 0 & u \end{pmatrix},$$
where $*$ denotes some element of $R$. Denote the latter matrix by $U$. Then $\gamma=\gamma_uU$ and $U \in \Gamma_{\infty}$. Hence we also have $C \cap \gamma_uU(\F) \neq \emptyset$ or equivalently $\gamma_u^{-1}(C) \cap U(\F) \neq \emptyset$. As $\gamma_u^{-1}(C)$ is still a compact subset of $\HTwo$ and $U(\F) \subseteq U(\F_{\infty})$, by the first part of the proof there are only finitely many units $u$ such that $\gamma_u$ satisfies $C \cap \gamma_u(\F) \neq \emptyset$ for every fixed $\gamma=\begin{pmatrix} a & b \\ c & d \end{pmatrix}$. For every $(-c,a)\in V_{C,1}$, fix a matrix $\gamma_{-c,a}=\begin{pmatrix}a & b_{-c,a} \\ c & d_{-c,a} \end{pmatrix}\in \Gamma$. Now consider an arbitrary matrix $\gamma=\begin{pmatrix} a & b \\ c & d \end{pmatrix} \in \Gamma$ and set $h=\gamma_{-c,a}\inv \gamma$. Then $h \in \Gamma_{\infty}$ and $h^{-1}(\gamma_{-c,a}^{-1}(C)) \cap \F_{\infty} \neq \emptyset$ if and only if $(\gamma_{-c,a}^{-1}(C) \cap h(\F_{\infty}) \neq \emptyset$. As $\gamma_{-c,a}^{-1}(C)$ is compact, by the first part of the proof there are only finitely many $h \in \Gamma_{\infty}$ that satisfy this and hence there are also only finitely many $\gamma \in \Gamma$ satisfying $C \cap \gamma(\F) \neq \emptyset$.
\end{proof}

\begin{corollary}\label{boule}
For every $Z \in \HTwo$, there exists $\lambda >0$ such that if $B(Z,\lambda) \cap g(\F) \neq \emptyset$, then $Z \in g(\F)$ for $g \in \h$.
\end{corollary}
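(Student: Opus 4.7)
The plan is to deduce this directly from the local finiteness established in Lemma~\ref{locallyfinite}, together with the fact that $\F$ is a closed subset of $\HTwo$.

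First I would observe that $\F$ is closed. Indeed, $\F_\infty$ is defined by the non-strict inequalities $|s_1|,|s_2|\leq \frac{1}{2}$ and $\epsilon_0^{-2}\leq r\leq \epsilon_0^2$, and each $V_{c,d}^{\ge}$ is closed since $f_{c,d}$ is continuous. Hence $\F=\F_\infty\cap\bigcap_{(c,d)\in\S}V_{c,d}^{\ge}$ is an intersection of closed sets, and therefore closed. Since every $g\in\Gamma$ acts as an isometry (in particular a homeomorphism) of $\HTwo$, each tile $g(\F)$ is closed.

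Next, since $\HTwo$ is locally compact, I would pick some $\lambda_0>0$ such that the closed ball $\overline{B(Z,\lambda_0)}$ is compact. Applying Lemma~\ref{locallyfinite} to this compact set yields finitely many elements $g_1,\ldots,g_n\in\Gamma$ such that
\[
 \{g\in\Gamma : g(\F)\cap \overline{B(Z,\lambda_0)}\neq\emptyset\}\subseteq\{g_1,\ldots,g_n\}.
\]
Let $I=\{i\in\{1,\ldots,n\} : Z\notin g_i(\F)\}$. For each $i\in I$, the set $g_i(\F)$ is closed and does not contain $Z$, so the distance $\delta_i=\rho(Z,g_i(\F))$ is strictly positive. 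Setting $\delta=\min_{i\in I}\delta_i$ (with the convention $\delta=+\infty$ if $I=\emptyset$), I then define
\[
 \lambda=\min\!\left(\lambda_0,\tfrac{\delta}{2}\right)>0.
\]

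Finally I would check that this $\lambda$ works. Assume $B(Z,\lambda)\cap g(\F)\neq\emptyset$ for some $g\in\Gamma$. Since $B(Z,\lambda)\subseteq \overline{B(Z,\lambda_0)}$, the tile $g(\F)$ meets $\overline{B(Z,\lambda_0)}$, so $g\in\{g_1,\ldots,g_n\}$, say $g=g_i$. The hypothesis gives a point of $g_i(\F)$ at distance strictly less than $\lambda\leq \delta/2<\delta$ from $Z$, so $\rho(Z,g_i(\F))<\delta$. If $i\in I$ this would contradict the definition of $\delta$; hence $i\notin I$, which means precisely $Z\in g_i(\F)=g(\F)$, as required.

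The proof is essentially a routine packaging of local finiteness together with closedness of the tiles; the only mild subtlety is ensuring that $\F$ is closed so that the distance from $Z$ to a tile not containing $Z$ is strictly positive, but this is immediate from the defining inequalities.
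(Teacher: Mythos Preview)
Your proof is correct and follows essentially the same approach as the paper's: use local finiteness (Lemma~\ref{locallyfinite}) to reduce to finitely many tiles meeting a small ball, then take $\lambda$ smaller than the distance from $Z$ to each of those finitely many tiles not containing $Z$. Your version is in fact slightly more careful than the paper's, since you explicitly justify that each $g(\F)$ is closed (so the distance $\rho(Z,g_i(\F))$ is strictly positive for $i\in I$) and you take the minimum with $\lambda_0$ to guarantee $B(Z,\lambda)\subseteq\overline{B(Z,\lambda_0)}$.
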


\begin{proof}
Let $Z \in \HTwo$. Take some $\lambda' >0$ randomly. As the closed ball $\overline{B(Z,\lambda')}$ is compact, $\overline{B(Z,\lambda')} \cap g(\F) \neq \emptyset$ for only finitely many $g \in \h$. Hence also $B(Z,\lambda') \cap g(\F) \neq \emptyset$ for only finitely many $g \in \h$. Set $\lambda_0= \min \lbrace d(Z,g(\F)) \ : \ g \in \h \textrm{ such that } B(Z,\lambda') \cap g(\F) \neq \emptyset \textrm{ and } Z \not \in g(\F) \rbrace$, where $d(Z,g(\F))$ denotes the Euclidean distance from $Z$ to the the set $g(\F)$. Take $\lambda=\frac{\lambda_0}{2}$ and the corollary is proven.
\end{proof}

The following corollary is now easy to prove.

\begin{corollary}
$\partial \F = \bigcup_{g \neq 1} \F \cap g(\F)$.
\end{corollary}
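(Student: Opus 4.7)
The plan is to prove the two inclusions of the corollary separately. I first note that $\F$ is an intersection of closed sets in $\HTwo$, hence closed, so $\partial\F=\F\setminus \F^\circ$. For the inclusion $\partial\F\subseteq \bigcup_{g\ne 1}\F\cap g(\F)$, I would take $Z\in \partial\F$ and apply Corollary~\ref{boule} to obtain $\lambda>0$ such that every tile $g(\F)$ meeting $B(Z,\lambda)$ contains $Z$. Since $Z$ is a boundary point of the closed set $\F$, some $W\in B(Z,\lambda)$ lies outside $\F$; by the tessellation of Lemma~\ref{funddomain}, $W\in g_0(\F)$ for some $g_0\ne 1$, and a second application of Corollary~\ref{boule} then forces $Z\in g_0(\F)$, yielding $Z\in \F\cap g_0(\F)$.

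For the reverse inclusion, I would take $Z\in \F\cap g(\F)$ with $g\ne 1$ and argue by contradiction, assuming $Z\in \F^\circ$. Choosing $\epsilon>0$ with $B(Z,\epsilon)\subseteq \F^\circ$, the set $V:=g^{-1}(B(Z,\epsilon))$ is an open neighborhood of $Z':=g^{-1}(Z)\in \F$. For any $W\in V\cap \F$ one has $g(W)\in \F^\circ$, so the fundamental domain axiom (``$\{W,g(W)\}$ is not contained in $\F^\circ$'' when $g\ne 1$) forces $W\in \partial\F$. Thus $V\cap \F^\circ=\emptyset$. Combined with the density $\F=\overline{\F^\circ}$, which gives $Z'\in \overline{\F^\circ}$ and hence $V\cap \F^\circ\ne\emptyset$, this is a contradiction.

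The density $\F=\overline{\F^\circ}$ is the main technical point. By (\ref{fnotzero}), $\F=\{(s_1,s_2,r,h)\in B\times \R^+: h\ge h_0(s_1,s_2,r)\}$; local finiteness (Lemma~\ref{locallyfinite}) ensures that every compact subset of $\HTwo$ meets only finitely many of the sets $V_{c,d}$, so locally $h_0$ coincides with the maximum of finitely many continuous functions $h_1(c,d,\cdot,\cdot,\cdot)$ and is therefore continuous. Given continuity of $h_0$, any $Z_0=(s_1,s_2,r,h)\in \F$ can be approximated by points of the form $(s_1^n,s_2^n,r^n,h+1/n)$ with $(s_1^n,s_2^n,r^n)\in B^\circ$ tending to $(s_1,s_2,r)$; these satisfy all the defining strict inequalities for $n$ large and thus lie in $\F^\circ$. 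Turning local finiteness into continuity of $h_0$ is the hardest step; once density is in hand, both inclusions follow immediately from Corollary~\ref{boule} and the fundamental domain axiom.
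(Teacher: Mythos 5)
Your proof is correct. The first inclusion ($\partial\F\subseteq\bigcup_{g\ne 1}\F\cap g(\F)$) is exactly the argument the paper has in mind: $\F$ is closed, Corollary~\ref{boule} gives the radius $\lambda$, the tessellation supplies a tile $g_0(\F)\ne\F$ meeting $B(Z,\lambda)$, and \ref{boule} forces $Z\in g_0(\F)$. For the reverse inclusion you take a genuinely different and more roundabout route. The paper's intended argument is the one already written out in the proof of Lemma~\ref{funddomain}: if $Z\in\F$ and $\gamma(Z)\in\F$ with $\gamma\ne 1$, then either $\gamma\in\Gamma_\infty$ and $Z\in\partial\F_\infty$, or $\Vert cZ+d\Vert=1$ and $Z\in V_{c,d}\subseteq\partial\F_0$; applying this with $\gamma=g^{-1}$ gives $\F\cap g(\F)\subseteq\partial\F$ in one line, using the concrete height computation rather than the abstract fundamental-domain axiom. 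You instead combine the axiom ``$\{W,g(W)\}\not\subseteq\F^\circ$'' with the density $\F=\overline{\F^\circ}$, which you then have to establish; this is valid and has the virtue of working for any closed, locally finite fundamental domain equal to the closure of its interior, but it imports the hardest step (continuity, or at least upper semicontinuity, of $h_0$) that the concrete argument avoids. Two small points on that step: the finiteness you need is that of Lemma~\ref{BoundedFinite} (or Lemma~\ref{Icdlocallyfinite}, which appears only in the next section), not Lemma~\ref{locallyfinite}, which concerns the tiles $\gamma(\F)$ rather than the hypersurfaces $V_{c,d}$; and ``locally a maximum of finitely many $h_1(c,d,\cdot)$'' needs the caveat that the infinitely many remaining terms are merely uniformly small on a neighbourhood (below any prescribed $\delta$), which still yields upper semicontinuity and hence the openness of $\{h>h_0\}\cap(B^\circ\times\R^+)$ that your density argument uses. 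Note also that the density $\F=\overline{\F^\circ}$ is exactly what the first part of the proof of Lemma~\ref{Floc4boundloc3} establishes in Section~4, so within the paper's logical order your key auxiliary fact is proved slightly later than the corollary itself.
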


%\begin{proof}
%If $1 \neq g \in \h$, we have that $\F^{\circ} \cap g(\F) = \emptyset$ and hence $\F \cap g(\F) \subseteq \partial \F$. This proves that $\bigcup_{g \neq 1} \F \cap g(\F) \subseteq \partial \F$. If $Z \in \F \setminus \bigcup_{g\neq 1} g(\F)$, then by Corollary~\ref{boule} there is a neighbourhood $U$ of $Z$ such that $U \cap g(\F) = \emptyset$ for every $1 \neq g \in \h$. Thus $U \subset \F$ and hence $Z \in \F^{\circ}$. This proves the other inclusion.
%\end{proof}}

%The last lemma will be used later. We state it here, because its proof uses properties of $\F$ seen in this section.

\section{Some topological and geometrical properties of the fundamental domain}\label{topo}

In order to determine a presentation of $\Gamma$, we need to obtain more information on the fundamental domain $\F$ of $\Gamma$ constructed in the previous section. Thus in this section we will study some properties of $\F$. First we recall some notions from real algebraic geometry (for more details see \cite{BCR}).
Recall that a real algebraic set is the set of zeros in $\R^n$ of some subset of $\R\left[X_1,\ldots , X_n \right]$. An algebraic variety is an irreducible algebraic set, i.e. one which is not the union of two proper real algebraic subsets. Moreover, a real semi-algebraic set is a set of the form $\bigcup_{i=1}^s \bigcap_{j=1}^{n_i} \left\{ x \in \R^n :\ f_{i,j}(x) *_{i,j} 0 \right\}$, where $f_{i,j}(x) \in \R\left[X_1,\ldots , X_n \right]$ and $*_{i,j}$ is either $=$ or $<$, for $i=1, \ldots , s$ and $j=1, \ldots, n_i$. We will use the notion of dimension and local dimension of semi-algebraic sets as given in \cite{BCR}.
For example the sets $V_i^{\pm,\ge}$ and $V_{c,d}^{\ge}$ are real semi-algebraic sets and the sets $V_i^{\pm}$ and $V_{c,d}$ are real algebraic sets and in fact, we will prove that they are real algebraic varieties.

The following lemma describes when two sets $V_{c_1,d_1}$ and $V_{c_2,d_2}$ are equal.

\begin{lemma}\label{intermani}
Let $(c_1,d_1), (c_2,d_2) \in \S$. Then $V_{c_1,d_1} = V_{c_2,d_3}$ if and only if $(c_2,d_2)=(u c_1, u d_1)$ for some $u \in \U(R)$. Moreover if $c_1d_2=c_2d_1$ and $N(c_1)^2\ne N(c_2)^2$ then $V_{c_1,d_1}\cap V_{c_2,d_2}=\emptyset$.
\end{lemma}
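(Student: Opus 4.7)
The $(\Leftarrow)$ direction of the first equivalence is already recorded as (\ref{independent-unit5}). The whole proof rests on a slice-by-slice description of $V_{c,d}$: freezing $(x_1,y_1)$ with $y_1>0$ and $c\ne 0$, the set of $(x_2,y_2)$ with $y_2>0$ such that $(x_1+y_1i,\,x_2+y_2i)\in V_{c,d}$ is the upper half of the circle
\[\left(x_2+\tfrac{d'}{c'}\right)^{2}+y_2^{2}=\frac{1}{c'^{2}A},\qquad A:=(cx_1+d)^{2}+c^{2}y_1^{2}>0,\]
centered at $(-d'/c',0)$ and of positive radius $1/\bigl(|c'|\sqrt{A}\bigr)$. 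Since this upper half-circle determines the whole circle, equality of the fibers over $(x_1,y_1)$ pins down both the center and the radius.

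For $(\Rightarrow)$, assume $V_{c_1,d_1}=V_{c_2,d_2}$. The degenerate case is handled first: if $c_1=0$ then $(0,d_1)\in \S$ forces $d_1\in\U(R)$, so $\|0\cdot Z+d_1\|=N(d_1)^{2}=1$ and $V_{0,d_1}=\HTwo$; since for $c\ne 0$ the set $V_{c,d}$ is a proper $3$-dimensional subset, this forces $c_2=0$, and then $u:=d_2 d_1^{-1}\in\U(R)$ gives the desired relation. So suppose $c_1,c_2\ne 0$. Equating the centers of the fibers at any $(x_1,y_1)$ with $y_1>0$ gives $d_1'/c_1'=d_2'/c_2'$, i.e. $c_2d_1=c_1d_2$, and hence $(c_2,d_2)=\mu(c_1,d_1)$ for $\mu:=c_2/c_1\in K$. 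A B\'ezout identity $\alpha c_1+\beta d_1=1$ with $\alpha,\beta\in R$ (which holds because $(c_1,d_1)\in\S$) yields $\mu=\alpha c_2+\beta d_2\in R$. Finally, equating radii and using $(c_2x_1+d_2)^{2}+c_2^{2}y_1^{2}=\mu^{2}A_1$ gives
\[c_1'^{2}A_1=c_2'^{2}\mu^{2}A_1=N(\mu)^{2}c_1'^{2}A_1,\]
so $N(\mu)^{2}=1$ and $\mu\in\U(R)$.

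For the ``moreover'' part, the hypotheses $c_1d_2=c_2d_1$ and $N(c_1)^{2}\ne N(c_2)^{2}$ exclude $c_1=0$ or $c_2=0$ (either would force both $c_i=0$ via the $\S$-condition, giving $N(c_1)^{2}=N(c_2)^{2}$). The same B\'ezout argument then produces $\mu\in R$ with $(c_2,d_2)=\mu(c_1,d_1)$, and $N(\mu)^{2}=N(c_2)^{2}/N(c_1)^{2}\ne 1$ by assumption. Hence for every $Z\in\HTwo$,
\[\|c_2Z+d_2\|=N(\mu)^{2}\|c_1Z+d_1\|,\]
so the equations $\|c_1Z+d_1\|=1$ and $\|c_2Z+d_2\|=1$ cannot hold simultaneously, proving $V_{c_1,d_1}\cap V_{c_2,d_2}=\emptyset$. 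The main delicacy I anticipate is the bookkeeping of the degenerate cases where some $c_i$ or $d_i$ vanishes; once the circle-slice picture is in place, the remaining work is the standard B\'ezout-and-norm computation above.
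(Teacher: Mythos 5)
Your proof is correct, and its geometric core --- recovering the centre and radius of a circle obtained by slicing $V_{c,d}$ --- is the same as the paper's; the paper slices the other way, fixing $(x_2,y_2)=(0,1)$ and reading off a circle in the $(x_1,y_1)$-plane, but that is immaterial. Where you genuinely diverge is in how the unit is extracted once $c_1d_2=c_2d_1$ is known. The paper lifts $(c_1,d_1)$ and $(c_2,d_2)$ to matrices $g_1,g_2\in\Gamma$ and observes that $g_2g_1^{-1}$ has vanishing lower-left entry, hence lies in $\Gamma_{\infty}$ and has the form $\pmatriz{{cc} u^{-1} & b \\ 0 & u}$ with $u\in\U(R)$; this yields $(c_2,d_2)=(uc_1,ud_1)$ from the determinant condition alone, without ever invoking the equality of radii. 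You instead stay at the level of the pair $(c,d)$: B\'ezout gives $\mu=c_2/c_1\in R$, and the radius equality forces $N(\mu)^2=1$, hence $\mu\in\U(R)$. Both are valid; the paper's route is slightly shorter because the $\SL_2$ structure encodes the norm information for free, while yours has the advantage that the ``moreover'' clause falls out as an immediate byproduct (the same $\mu$, now with $N(\mu)^2\neq 1$, rescales $\Vert c_1Z+d_1\Vert$ globally), a part the paper dismisses as ``follows easily''. Your explicit handling of the degenerate case $c_1c_2=0$ also matches what the paper only sketches.
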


\begin{proof}
One implication has already been given in (\ref{independent-unit5}). For the other implication, suppose that $V_{c_1,d_1}=V_{c_2,d_2}$. The case $c_1c_2=0$ follows from the fact that if $(c,d)\in \S$ then $c=0$ if and only if $V_{c,d}=\HTwo$. So assume $c_1c_2\ne 0$.
We can then rewrite $f_{c,d}$ as
\begin{equation*}
f_{c,d}(x_1,x_2,y_1,y_2)=
N(c)^2\left[\left(x_1+\frac{d}{c}\right)^2+y_1^2 \right]
\left[\left(x_2+\frac{d'}{c'}\right)^2+y_2^2 \right].
\end{equation*}
In particular, if $c\ne 0$ then the intersection of $V_{c,d}$ with $A=\{(x,0,y,1):x\in \R, y\in \R^+\}$ is formed by the points $(x,0,y,1)$ such that $(x,y)$ belongs to the ball with centre $\left(\frac{d}{c},0\right)$ and radius $\frac{1}{N(c)}\sqrt{1+\left(\frac{d'}{c'}\right)^2}$. Therefore, if $V_{c_1,d_1}=V_{c_2,d_2}$ then $\frac{d_1}{c_1}=\frac{d_2}{c_2}$, or equivalently, $c_1d_2=c_2d_1$, and $N(c_1)=N(c_2)$.
As $(c_1,d_1), (c_2,d_2) \in \S$, $g_1 = \begin{pmatrix} a_1 & b_1 \\ c_1 & d_1 \end{pmatrix} \in \h$ and $g_2=\begin{pmatrix} a_2 & b_2 \\ c_2 & d_2 \end{pmatrix} \in \h$ and $g_2g_1^{-1} \in \h_{\infty}$. Thus $g_2= \begin{pmatrix} u^{-1} & b \\ 0 & u \end{pmatrix} g_1$ for some $u \in \U(R)$ and $b \in R$ and so $(c_2,d_2)=(u c_1, u d_1)$.

The second part follows easily.
\end{proof}

Let
\begin{eqnarray*}
\V_{\infty} & = & \lbrace V_i^{+}, V_i^{-} :\  1 \leq i \leq 3 \rbrace, \\
\V & = & \left\{V_{c,d} :\  (c,d) \in \S, c \neq 0 \right\} \text{ and } \\
 \M &=& \left\{ \gamma(M) :  \gamma \in \Gamma \textrm{ and } M \in \V \cup \V_{\infty} \right\}.
\end{eqnarray*}

Observe that
\begin{equation}\label{Borders}
\partial \F_0 \subseteq \bigcup_{V \in \mathcal{V}} V, \quad  \partial \F_{\infty} \subseteq \bigcup_{V \in \mathcal{V}_{\infty}} V \quad \text{and} \quad \partial \F \subseteq \bigcup_{V\in \V \cup \V_{\infty}} V.
\end{equation}
Let $V\in \V\cup \V^{\infty}$. If $V=V_i^{\pm}$ (respectively, $V=V_{c,d}$) then let $V^{\ge}=V_i^{\pm,\ge}$ (respectively, $V^{\ge}= V_{c,d}^{\ge}$).
Define $V^{\le}$ in a similar way. Clearly, each $V^{\ge}$ and $V^{\ge}$ are real semi-algebraic sets, $V$ is the intersection with $\HTwo$ of a real algebraic set and it is the boundary of $V^{\le}$ and $V^{\ge}$ and its intersection.
Thus $\F_{\infty}, \F_0$ and $\F$ and their boundaries are real semi-algebraic sets.

Let $\gamma=\pmatriz{{cc} a&b\\c&d} \in \Gamma\setminus \{1\}$ such that if $c=0$ then $a>0$ and if moreover $a=1$ then $b=s_1+s_2\omega$ with $s_1,s_2\in \Z$. Then let
\begin{eqnarray}
E_{\gamma} = \left\{ \matriz{{ll}
			V_{c,d}^{\ge}, & \text{if } c\ne 0; \\
			V_3^{+,\le}, & \text{if } c=0 \text{ and } a<1; \\
			V_3^{-,\ge}, & \text{if } c=0 \text{ and } a>1; \\
			V_1^{+,\le}, & \text{if } c=0, a=1 \text{ and } b=s_1+s_2\omega \text{ with } s_1<0; \\
			V_1^{-,\ge}, & \text{if } c=0, a=1 \text{ and } b=s_1+s_2\omega \text{ with } s_1>0; \\
			V_2^{+,\le}, & \text{if } c=0, a=1 \text{ and } b=s_2\omega \text{ with } s_2<0; \\
			V_2^{-,\le}, & \text{if } c=0, a=1 \text{ and } b=s_2\omega \text{ with } s_2>0.}\right. \label{Egamma}
			\end{eqnarray}
Let $E'_{\gamma}$ be the set obtained by interchanging in the definition of $E_{\gamma}$ the roles of $\le$ and $\ge$ and let
	$$V_\gamma=\partial E_\gamma = E_\gamma\cap E'_\gamma.$$
The following lemma follows by straightforward calculations.

\begin{lemma}\label{ImageDFAlt}
 If $\gamma\in \Gamma\setminus \{1\}$ then $V_{\gamma}\in \V_{\infty}\cup \V$, $\F\subseteq E_\gamma$, $\gamma\inv(\F) \subseteq E'_{\gamma}$ and hence $\F\cap \gamma\inv(\F)\subseteq V_{\gamma}$.
\end{lemma}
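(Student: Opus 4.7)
The plan is to verify the three listed containments in turn; the fourth then follows by intersection. The claim $V_\gamma \in \V_\infty \cup \V$ is immediate from the definition of $E_\gamma$: when $c \neq 0$ one has $V_\gamma = V_{c,d} \in \V$ since $(c,d) \in \S$, and in each of the six subcases with $c = 0$ the set $V_\gamma$ is one of the $V_i^\pm$ for $i \in \{1,2,3\}$, hence in $\V_\infty$.

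For $\F \subseteq E_\gamma$, I would split $\F = \F_\infty \cap \F_0$. If $c \neq 0$, the inclusion $\F_0 \subseteq V_{c,d}^\geq$ is just the definition of $\F_0$. If $c = 0$, the matrix $\gamma$ lies in $\Gamma_\infty$ and the conditions $|s_1|, |s_2| \leq 1/2$ and $\epsilon_0^{-2} \leq r \leq \epsilon_0^2$ carving out $\F_\infty$ each entail the one-sided inequality selected by the corresponding subcase of (\ref{Egamma}).

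The main technical step is $\gamma^{-1}(\F) \subseteq E'_\gamma$. For $c \neq 0$ I would apply (\ref{equationfraction}) with $W = \gamma(Z)$ to rewrite $\Vert c\gamma^{-1}(W) + d \Vert = 1/\Vert -cW + a \Vert$, so that the desired $\Vert c\gamma^{-1}(Z) + d \Vert \leq 1$ becomes $\Vert -cZ + a \Vert \geq 1$. Since $ad - bc = 1$ rearranges to $ad + (-c)b = 1$, the pair $(-c, a)$ lies in $\S$, and $Z \in \F_0$ then delivers the required bound. For $c = 0$, I would write $\gamma = \begin{pmatrix} \epsilon_0^m & b \\ 0 & \epsilon_0^{-m} \end{pmatrix}$ with $b = b_1 + b_2\omega \in R$; then (\ref{HeightInfinity}) gives $r(\gamma^{-1}(Z)) = \epsilon_0^{-4m}r(Z)$, while $\gamma^{-1}$ acts on $(s_1, s_2)$ by translation by $(-b_1, -b_2)$. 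A short check in each of the six subcases---$|m| \geq 1$ forces $r(\gamma^{-1}(Z))$ to the opposite side of $\{r = \epsilon_0^{\pm 2}\}$, while $b_i \in \Z \setminus \{0\}$ pushes $s_i - b_i$ past $\pm 1/2$---places $\gamma^{-1}(Z)$ in $E'_\gamma$. Intersecting the two inclusions then yields $\F \cap \gamma^{-1}(\F) \subseteq E_\gamma \cap E'_\gamma = V_\gamma$.

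I do not expect a genuine obstacle: once the formulas from Section~\ref{SectionBackground} are unpacked, each inclusion reduces to a short algebraic inequality. The single subtlety is checking that $(-c, a) \in \S$ in the $c \neq 0$ case, which is forced by the determinant relation $ad - bc = 1$.
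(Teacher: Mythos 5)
Your proof is correct and is precisely the "straightforward calculation" the paper alludes to without writing out: the $c\neq 0$ case via $(-c,a)\in\S$ and (\ref{equationfraction}), and the $c=0$ case via (\ref{HeightInfinity}) and the translation action on $(s_1,s_2)$, are exactly the intended verifications. No gaps.
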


Using (\ref{independent-unit5}) it is easy to prove the following lemma.

\begin{lemma}\label{Icdlocallyfinite}
The set $\V \cup \V_{\infty}$ is locally finite.
\end{lemma}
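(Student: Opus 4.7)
The plan is to reduce local finiteness of $\V\cup\V_\infty$ to the counting estimate already established in Lemma~\ref{BoundedFinite}. Since $\V_\infty$ is finite (it has six elements), it suffices to show that $\V$ itself is locally finite.

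I would take an arbitrary compact subset $C$ of $\HTwo$ and show that only finitely many members of $\V$ meet $C$. First observe that a compact subset of $\HTwo$ is hyperbolically bounded in the sense defined just before Lemma~\ref{BoundedFinite}: compactness in $\HTwo$ implies boundedness in the Euclidean metric, and the continuous positive functions $y_1,y_2$ attain a positive minimum on $C$. Now suppose $(c,d)\in\S$ with $c\ne 0$ and $V_{c,d}\cap C\ne\emptyset$. Picking $Z\in V_{c,d}\cap C$ we have $\Vert cZ+d\Vert=1\le 1$, so $(c,d)\in V_{C,1}$ in the notation of (\ref{VsDefinition}).

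By Lemma~\ref{BoundedFinite} there exist finitely many elements $(c_1,d_1),\dots,(c_k,d_k)\in R\times R$ such that every element of $V_{C,1}$ is of the form $(uc_i,ud_i)$ for some $i$ and some $u\in\U(R)$. Applying (\ref{independent-unit5}) yields $V_{uc_i,ud_i}=V_{c_i,d_i}$, so the set
\[
\{V_{c,d}\in\V : V_{c,d}\cap C\ne\emptyset\}
\]
is contained in the finite collection $\{V_{c_1,d_1},\dots,V_{c_k,d_k}\}$. Combined with the finiteness of $\V_\infty$, this shows that $\V\cup\V_\infty$ is locally finite.

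There is no real obstacle here; the only point to verify carefully is that compact subsets of $\HTwo$ are hyperbolically bounded, which is precisely what allows us to invoke Lemma~\ref{BoundedFinite}, and the use of (\ref{independent-unit5}) to collapse the unit orbits into single members of $\V$.
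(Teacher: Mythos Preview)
Your proof is correct and follows exactly the approach the paper intends: it invokes Lemma~\ref{BoundedFinite} on a compact (hence hyperbolically bounded) set to get finiteness up to units, and then uses (\ref{independent-unit5}) to collapse unit orbits into single elements of $\V$. The paper itself only records the hint ``using (\ref{independent-unit5}) it is easy to prove'' and leaves the details to the reader; your write-up is a faithful expansion of that hint.
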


We first calculate the local dimensions of $\F$ and $\partial \F$.

\begin{lemma}\label{Floc4boundloc3}
The fundamental domain $\F$ has local dimension $4$ at every point and its boundary $\partial \F$ has local dimension $3$ at every point.
\end{lemma}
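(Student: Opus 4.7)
Plan: I would rely on the description of $\F$ in (\ref{fnotzero}), which writes $\F$ as the region above the graph of $h_0$ over the compact 3-dimensional box $B$. A key preliminary is that $h_0\colon B\to\R$ is continuous: by (\ref{h0Cota}) combined with Theorem~\ref{FiniteSides} and (\ref{independent-unit2}), $h_0$ is a maximum over the finite set $\S_1$ of the continuous functions $h_1(c,d,\cdot)$, and it is bounded below by $\tfrac{k_0^2}{2k}>0$ by Lemma~\ref{minh}.

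For the local dimension $4$ of $\F$, my strategy is to prove the stronger statement $\F=\overline{\F^\circ}$; the conclusion then follows because $\F^\circ$ is a non-empty open subset of the 4-dimensional manifold $\HTwo$, so any $Z\in\F$ is a limit of points of local dimension $4$. Given $Z=(s_1,s_2,r,h)\in\F$, I would consider
$$Z_n=\left(s_1(1-\tfrac{1}{n}),\; s_2(1-\tfrac{1}{n}),\; r^{\,1-1/n},\; h+\tfrac{1}{n}\right).$$
Since $|s_i|\le\tfrac{1}{2}$ and $\epsilon_0^{-2}\le r\le\epsilon_0^2$, the first three coordinates of $Z_n$ lie in the interior $B^\circ$ (for $r$, one checks each of the cases $r=1$, $r>1$, $r<1$ separately). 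Continuity of $h_0$ then forces $h+\tfrac{1}{n}>h_0$ at the perturbed point for large $n$, so $Z_n\in\F^\circ$, and $Z_n\to Z$ gives local dimension $4$ at $Z$.

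For the local dimension $3$ of $\partial\F$, the upper bound follows from (\ref{Borders}) together with Lemma~\ref{Icdlocallyfinite}: locally $\partial\F$ is contained in a finite union of sets $V\in\V\cup\V_\infty$, and each such $V$ is the zero locus in $\HTwo$ of a non-constant polynomial in the four coordinates (one of $s_i\pm\tfrac{1}{2}$, $r-\epsilon_0^{\pm 2}$, or $f_{c,d}-1$), hence has dimension at most $3$. For the lower bound, any $Z=(s_1,s_2,r,h)\in\partial\F$ either lies on the ``floor'' $h=h_0(s_1,s_2,r)$ or has $(s_1,s_2,r)\in\partial B$. The graph of $h_0$ over $B$ is a 3-dimensional subset of $\partial\F$ containing all floor points, and each ``wall'' $\{Z\in\F:s_i=\pm\tfrac{1}{2}\}$ or $\{Z\in\F:r=\epsilon_0^{\pm 2}\}$ is 3-dimensional by direct inspection of its parametrisation. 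A perturbation argument analogous to the one in the previous paragraph (sliding along the graph when $Z$ is a floor point, or sliding within a wall when $Z$ is a wall point) exhibits a 3-dimensional subset of $\partial\F$ whose closure contains $Z$.

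The principal delicate point is the lower bound at corner points where several walls and the floor meet simultaneously. There, continuity of $h_0$ (to slide along the graph) and the product structure of each wall (to move inside it while remaining in $\F$) together resolve the issue by producing a 3-dimensional stratum of $\partial\F$ whose closure passes through $Z$; otherwise the proof is purely structural, relying on the already-established continuity of $h_0$ and the finiteness in Theorem~\ref{FiniteSides}.
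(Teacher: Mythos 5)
Your strategy for the first claim (approximate every point of $\F$ from $\F^{\circ}$) is the same as the paper's in spirit, but two steps do not hold up as written. First, the assertion that ``continuity of $h_0$ forces $h+\tfrac{1}{n}>h_0$ at the perturbed point for large $n$'' fails precisely in the case that matters, namely when $Z$ lies on the floor, $h=h_0(s_1,s_2,r)$: you then need $h_0(p_n)-h_0(p)<\tfrac{1}{n}$ for base points $p_n$ at distance of order $C/n$ from $p$, and neither continuity nor even a local Lipschitz bound (whose constant you do not control) gives a modulus of continuity beating $\tfrac{1}{n}$. The repair is easy --- perturb $(s_1,s_2,r)$ into $B^{\circ}$ first and only then choose the $h$-increment to exceed $h_0(\text{perturbed})-h$, which is how the paper proceeds (it moves up by $\lambda/2$ in the $h$-direction to land in $\F_0^{\circ}$ and then, if needed, applies the wall perturbation to the new point) --- but the implication as stated is false. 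Second, your continuity and positivity of $h_0$ are derived from Theorem~\ref{FiniteSides} and Lemma~\ref{minh}, both of which assume $R$ is a PID; Lemma~\ref{Floc4boundloc3} carries no such hypothesis and is not among the results the authors flag as needing it. (Continuity of $h_0$ does hold in general, but one must get it from local finiteness, as in Lemmas~\ref{BoundedFinite} and~\ref{Icdlocallyfinite}, rather than from finiteness of $\S_1$.) As written your argument proves the lemma only for $R$ a PID.

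For $\partial\F$ your route genuinely differs from the paper's, and that is where the remaining gap sits. The paper obtains the lower bound $\dim\ge 3$ at every boundary point in one stroke from the appendix Lemma~\ref{connected}: writing $\overline{B(Z,\lambda)}=\bigl(\overline{B}\cap\F\bigr)\cup\bigl(\overline{B}\cap(\F^{c}\cup\partial\F)\bigr)$ as a union of two proper closed semi-algebraic sets forces their intersection, which is $\overline{B}\cap\partial\F$, to have dimension at least $3$ --- no case analysis, no continuity of $h_0$. Your explicit strata work for pure floor or pure wall points, but at corners you only assert that ``continuity of $h_0$ and the product structure of each wall together resolve the issue''; this is exactly the configuration requiring a quantitative check (e.g.\ that $\{s_1=\tfrac{1}{2}\}\cap\F$ still has a $3$-dimensional trace in every ball about a point where that wall meets the floor, which uses that the fibre $[h_0(\tfrac{1}{2},s_2,r),\,h_0(Z)+\lambda)$ has positive length on a $2$-dimensional set of nearby $(s_2,r)$). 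Either carry that computation out, or replace the second half of your argument by the Lemma~\ref{connected} device, which also removes the dependence on $h_0$ altogether.
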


\begin{proof}
Let $Z \in \F$. We have to prove that for every $\lambda > 0$, $B(Z,\lambda) \cap \F$ is of dimension $4$.
This is obvious for $Z \in \F^{\circ}$.
So suppose $Z \in \partial \F$. If $Z \not \in \partial \F_0$, then $Z \in V$ for some $V \in \V_{\infty}$.
As $\partial \F_0$ is closed, there is $\lambda_0>0$ such that $B(Z,\lambda_0)\cap \partial F_0=\emptyset$. We may assume without loss of generality that $\lambda_0$ is smaller than 1 and $2\epsilon_0$.
If $Z=(s_1,s_2,r=s_3,h)$ then let $Z'=(s_1',s_2',r'=s_3',h)$ where for $i=1,2,3$ we have
	$$s_i'=\left\{ \matriz{{ll}
	s_i-\frac{\lambda_0}{\sqrt{3}}, & \text{if } Z \in V_i^+; \\
	s_i+\frac{\lambda_0}{\sqrt{3}}, & \text{if } Z \in V_i^-; \\
	s_i, & \text{otherwise}.}\right.$$
Then $Z' \in B(Z, \lambda_0) \cap \F_{\infty}^{\circ} \cap \F_0^{\circ}$ and thus there exists $\lambda' >0$ such that $B(Z',\lambda') \subseteq \F^{\circ} \cap B(Z,\lambda_0)$. Hence $B(Z,\lambda_0) \cap \F$ is of dimension $4$. By the choice of $\lambda_0$, this is true for any $\lambda < \lambda_0$ and of course also for any $\lambda > \lambda_0$. Finally suppose $Z \in \partial \F \cap \partial \F_0$. By (\ref{fzero}) and (\ref{fnotzero}) $Z$ is of the form $(s_1,s_2,r,h_0(s_1,s_2,r))$ with $(s_1,s_2,r) \in B$.
Let $\lambda >0$. By (\ref{fzero}) the point $Z'= (s_1,s_2,r,h_0(s_1,s_2,r)+\frac{\lambda}{2})$ is in $B(Z,\lambda) \cap \F_0^{\circ}$. If $Z' \in \F^{\circ}$, then there exists $\lambda' >0$ such that $B(Z',\lambda') \subseteq B(Z,\lambda) \cap \F^{\circ}$ and hence $B(Z,\lambda) \cap \F$ has dimension $4$. If not, then $Z' \in \partial \F \cap \F_0^{\circ}$ and hence by the above there exists $\lambda' >0$ such that $B(Z',\lambda') \subseteq B(Z,\lambda)$ and $B(Z',\lambda') \cap \F$ has dimension $4$. Thus also $B(Z,\lambda) \cap \F$ has dimension $4$.

To prove the second part, take $Z \in \partial \F$, $\lambda >0$ and set $\overline{B}=\overline{B(Z,\lambda)}$, $U_1=\overline{B} \cap \F$ and $U_2=\overline{B} \cap (\F^{c} \cup \partial \F)$, where $\F^{c}$ denotes the complementary of $\F$ in $\HTwo$. Then $\overline{B}$, $U_1$ and $U_2$ satisfy the conditions of Lemma~\ref{connected} and hence $B(Z,\lambda) \cap \partial \F = U_1 \cap U_2$ has dimension $3$.
\end{proof}

The next three lemmas give more details on the elements of $\V \cup \V_{\infty}$.

\begin{lemma}\label{sidemani1}
The elements of $\V \cup \V_{\infty}$ are non-singular irreducible real algebraic varieties of dimension $3$. Moreover if two different varieties $M_1$ and $M_2$ intersect non-trivially, with $M_1, M_2 \in \V \cup \V_{\infty}$, then  their intersection has local dimension $2$ at every point.
\end{lemma}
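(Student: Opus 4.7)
The plan is to exhibit each $V\in\V\cup\V_\infty$ as the smooth zero set of an explicit defining polynomial, verify irreducibility through that polynomial, and then control intersections via the implicit function theorem. In the $(x_1,x_2,y_1,y_2)$-coordinates every $V_i^{\pm}$ is an affine hyperplane cut out by a single linear equation (e.g.\ $\omega' x_1-\omega x_2=\pm(\omega'-\omega)/2$ for $V_1^{\pm}$ and $y_1-\epsilon_0^{\pm 2}y_2=0$ for $V_3^{\pm}$), so it is manifestly a non-singular real algebraic variety of dimension $3$. For $V_{c,d}$ with $c\ne 0$, its defining polynomial is $f_{c,d}-1$, and a direct computation gives $\partial f_{c,d}/\partial y_1=2c^{2}y_1 B$ and $\partial f_{c,d}/\partial y_2=2c'^{2}y_2 A$, where $A=(cx_1+d)^{2}+c^{2}y_1^{2}$ and $B=(c'x_2+d')^{2}+c'^{2}y_2^{2}$. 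On $V_{c,d}$ one has $AB=1$, forcing $A,B>0$, and since $c,c',y_1,y_2\ne 0$ both partials are nonzero, so $\nabla f_{c,d}\ne 0$ on $V_{c,d}$ and $V_{c,d}$ is smooth of codimension $1$, hence of dimension $3$.

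For irreducibility I would show that $f_{c,d}-1$ is irreducible in $\R[x_1,x_2,y_1,y_2]$. The invertible affine change of variables $u_1=cx_1+d$, $u_2=y_1$, $v_1=c'x_2+d'$, $v_2=y_2$ turns it into $(u_1^{2}+c^{2}u_2^{2})(v_1^{2}+c'^{2}v_2^{2})-1$, and the further $\C$-linear substitution $U_\pm=u_1\pm i c u_2$, $V_\pm=v_1\pm i c' v_2$ brings this to $U_+U_-V_+V_--1\in\C[U_+,U_-,V_+,V_-]$. Viewed as a degree-one polynomial in $U_+$ over $\C[U_-,V_+,V_-]$, its leading coefficient $U_-V_+V_-$ and constant term $-1$ are coprime (since $-1$ is a unit), so the polynomial is irreducible in $\C[U_+,U_-,V_+,V_-]$; hence $f_{c,d}-1$ is irreducible over $\C$ and therefore over $\R$. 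Combined with the non-vanishing gradient (which forces the vanishing ideal $\mathcal{I}(V_{c,d})$ to equal $(f_{c,d}-1)$) and the path-connectedness noted in the excerpt, this yields that $V_{c,d}$ is an irreducible real algebraic variety.

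For the intersection statement, pick distinct $M_1,M_2\in\V\cup\V_\infty$ and $Z\in M_1\cap M_2$. It suffices to show that the gradients of the defining polynomials at $Z$ are linearly independent: the implicit function theorem will then realise $M_1\cap M_2$ locally near $Z$ as the transverse intersection of two smooth $3$-submanifolds of $\HTwo$, and hence as a $2$-submanifold. Two distinct elements of $\V_\infty$ have constant, visibly independent gradients (using $\omega\ne\omega'$ to separate pairs from $\{V_1^{\pm},V_2^{\pm}\}$). For $V_i^{\pm}\cap V_{c,d}$ with $i\in\{1,2\}$ independence is automatic because the gradient of $V_i^{\pm}$ vanishes in the $y$-directions whereas the $y$-components of $\nabla f_{c,d}$ are nonzero. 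For $V_3^{\pm}\cap V_{c,d}$ proportionality would force $\partial f_{c,d}/\partial x_i=0$ for $i=1,2$, and then the ratio of the $y_1$- and $y_2$-components of $\nabla f_{c,d}$ simplifies to $y_2:y_1=1:r$, which would have to equal $1:-\epsilon_0^{\pm 2}$, impossible since $r>0$.

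The main case is $M_1=V_{c_1,d_1}\ne V_{c_2,d_2}=M_2$. Writing $A_j,B_j$ for the corresponding factors, the putative proportionality $\nabla f_{c_1,d_1}=\lambda\nabla f_{c_2,d_2}$ at $Z$ gives from the $y$-components $c_1^{2}B_1=\lambda c_2^{2}B_2$ and $c_1'^{2}A_1=\lambda c_2'^{2}A_2$; substituting these into the $x_1$- and $x_2$-component equations and cancelling $B_1$ and $A_1$ yields $c_2 d_1=c_1 d_2$ (together with its Galois conjugate). By Lemma~\ref{intermani}, this either gives $V_{c_1,d_1}\cap V_{c_2,d_2}=\emptyset$, contradicting $Z\in M_1\cap M_2$, or forces $N(c_1)^{2}=N(c_2)^{2}$; in the latter case the alternative description $V_{c,d}=\{Z:N(c)^{2}[(x_1+d/c)^{2}+y_1^{2}][(x_2+d'/c')^{2}+y_2^{2}]=1\}$ shows $V_{c_1,d_1}=V_{c_2,d_2}$, contradicting $M_1\ne M_2$. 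The delicate step is precisely this last bookkeeping with the four gradient components to extract $c_2 d_1=c_1 d_2$ so that Lemma~\ref{intermani} can be invoked; the rest of the argument is routine.
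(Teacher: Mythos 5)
Your proposal is correct, and its core is the same as the paper's: both arguments rest on the implicit function theorem applied to $f_{c,d}$ (for the dimension-$3$ claim) and to the pair $(f_{c_1,d_1},f_{c_2,d_2})$ (for the dimension-$2$ claim about intersections). Where you genuinely diverge is on irreducibility: the paper gets it for free from path-connectedness of the $V_{c,d}$ together with \cite[Proposition 3.3.10]{BCR} (a connected non-singular semi-algebraic manifold has irreducible Zariski closure), whereas you prove outright that $f_{c,d}-1$ is irreducible by passing to $U_+U_-V_+V_--1$ over $\C$ and invoking primitivity in degree one, then recover $\mathcal{I}(V_{c,d})=(f_{c,d}-1)$ from the existence of a non-singular zero. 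Your route is more self-contained and explicit; the paper's is shorter and uniform across $\V$ and $\V_\infty$, but leans on an external result. The other difference is one of completeness rather than substance: the paper merely asserts that the implicit function theorem applies, while you actually verify the rank conditions, and your case analysis (the reduction of gradient proportionality to $c_1d_2=c_2d_1$ followed by the dichotomy from Lemma~\ref{intermani}, and the sign obstruction $y_2:y_1>0$ versus $1:-\epsilon_0^{\pm 2}$ for $V_3^{\pm}\cap V_{c,d}$) is exactly the kind of computation the paper only carries out explicitly later, for triple intersections, in Lemma~\ref{3inter}. The one point worth flagging is cosmetic and shared with the paper: $V_{c,d}$ is the intersection of the zero set of $f_{c,d}-1$ with $\HTwo$, so ``irreducible variety'' should be read as a statement about its Zariski closure, which your dimension count ($V_{c,d}$ is $3$-dimensional inside an irreducible $3$-dimensional algebraic set) does justify.
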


\begin{proof}
Applying the general implicit function theorem to the following functions
\begin{eqnarray*}
f_{c,d}(x_1,x_2,y_1,y_2) & = & \left[(cx_1+d)^2+c^2y_1^2 \right] \left[ ({c}' x_2+d')^2+{c}'^2y_2^2 \right],\\
F(x_1,x_2,y_1,y_2) & = & (f_{c_1,d_1}(x_1,x_2,y_1,y_2),f_{c_2,d_2}(x_1,x_2,y_1,y_2)),
\end{eqnarray*}
for $(c,d),(c_1,d_1),(c_2,d_2)\in \S$ with $c,c_1,c_2\ne 0$ and $V_{c_1,d_1}\ne V_{c_2,d_2}$, one gets that $V_{c,d}$  and $V_{c_1,d_1}\cap V_{c_2,d_2}$ are $\mathcal{C}^{\infty}$-manifolds of dimension 3 and 2 respectively. The fact that the elements of $\V$ are path-connected together with \cite[Proposition 3.3.10]{BCR} yield the desired property.
The same argument works for the elements of $\V_{\infty}$ or the combination of an element of $\V$ and an element of $\V_{\infty}$.
\end{proof}

\begin{lemma}\label{sidemani2}
The elements of $\M$ are non-singular irreducible real algebraic varieties of dimension $3$ and the intersection of two different elements of $\M$ is of dimension at most $2$.
\end{lemma}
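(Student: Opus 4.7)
The plan is to propagate the conclusions of Lemma~\ref{sidemani1} through the $\Gamma$-action, exploiting that every $\gamma\in\Gamma$ acts on $\HTwo$ as a $C^\infty$-diffeomorphism which is moreover given, in the coordinates $(x_1,x_2,y_1,y_2)$, by rational functions whose denominators do not vanish anywhere on $\HTwo$. This is a direct consequence of formulas (\ref{chapeau1}) and (\ref{chapeau2}), where the denominators $(cx_i+d)^2+c^2y_i^2$ are strictly positive for $y_i>0$; the same holds for $\gamma^{-1}\in\Gamma$.

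First I treat the claim that each $\gamma(V)$, with $\gamma\in\Gamma$ and $V\in\V\cup\V_{\infty}$, is a non-singular irreducible real algebraic variety of dimension $3$. The defining condition of $V$ is polynomial: after rewriting the $(s_1,s_2,r,h)$ coordinates in terms of $(x_1,x_2,y_1,y_2)$ via (\ref{formulaRi}), the sets $V_i^{\pm}$ are cut out by linear equations, while $V_{c,d}$ is cut out by the polynomial $f_{c,d}(Z)-1$. Writing $V=\{p=0\}\cap\HTwo$ and clearing the non-vanishing denominators in $p\circ\gamma^{-1}$ produces a polynomial $q$ such that $\gamma(V)=\{q=0\}\cap\HTwo$, so $\gamma(V)$ is a real algebraic set in $\HTwo$. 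Because $\gamma$ is a diffeomorphism and $V$ is a connected, non-singular $C^\infty$-submanifold of dimension $3$ by Lemma~\ref{sidemani1}, so is $\gamma(V)$. Irreducibility then follows from \cite[Proposition 3.3.10]{BCR}, exactly as in the proof of Lemma~\ref{sidemani1}.

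For the intersection statement, take distinct $M_1,M_2\in\M$. Since $M_2$ is a real algebraic set, $M_1\cap M_2$ is a real algebraic subset of $M_1$. If $M_1\cap M_2=M_1$ then $M_1\subseteq M_2$; since both are irreducible varieties of the same dimension $3$, this would force $M_1=M_2$, contradicting $M_1\neq M_2$. Hence $M_1\cap M_2$ is a proper real algebraic subset of the irreducible variety $M_1$, and by the standard dimension theory of real algebraic varieties \cite{BCR} its dimension is strictly less than $3$, i.e.\ at most $2$.

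The only mildly delicate point is confirming that $\gamma(V)$ is itself a real algebraic variety rather than merely a subset of one; this is precisely where the non-vanishing of the denominators in the M\"obius formulas is used, guaranteeing that no spurious components are introduced when passing from the rational function $p\circ\gamma^{-1}$ to the polynomial $q$, and that the algebraic set obtained coincides set-theoretically with $\gamma(V)$ inside $\HTwo$.
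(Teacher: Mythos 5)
Your proof is correct and follows essentially the same route as the paper: both propagate Lemma~\ref{sidemani1} through the $\Gamma$-action using that each $\gamma$ acts as a bijective semi-algebraic (indeed birational, with non-vanishing denominators) map, and both derive the intersection statement from the irreducibility of two distinct varieties of equal dimension. The paper merely compresses your denominator-clearing and diffeomorphism arguments into a citation of \cite[Theorem 2.8.8]{BCR}.
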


\begin{proof}
The first part follows easily from Lemma \ref{sidemani1} and \cite[Theorem 2.8.8]{BCR} and the fact that the action of $\PSL_2(R)$ on $\HTwo$ is a bijective semi-algebraic map. The second part follows trivially from the irreducibility.
\end{proof}

\begin{lemma}\label{3inter}
Let $M_1,M_2,M_3$ be pairwise different elements of $\V \cup \V_{\infty}$ with $M_1\cap M_2\cap M_3 \ne \emptyset$. Then $M_1 \cap M_2 \cap M_3$ is a real algebraic set of local dimension $1$ at every point and with at most one singular point.
\end{lemma}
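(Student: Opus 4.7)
Each element of $\V\cup\V_\infty$ is the zero set of a single real polynomial in the coordinates $(x_1,x_2,y_1,y_2)$: a linear one for the walls $V_i^{\pm}$ and the quartic $f_{c,d}-1$ for the spheres $V_{c,d}$. Hence $M_1\cap M_2\cap M_3$ is automatically a real algebraic set, cut out by three polynomials $P_1,P_2,P_3$, and my plan is to apply the implicit function theorem to $(P_1,P_2,P_3)$. Lemma~\ref{sidemani1} already guarantees that for every $Z\in M_i\cap M_j$ the two gradients $\nabla P_i(Z),\nabla P_j(Z)$ are linearly independent, so the Jacobian of $(P_1,P_2,P_3)$ has rank either $2$ or $3$ at every point of $M_1\cap M_2\cap M_3$. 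At a point where the rank is $3$, a neighbourhood in $M_1\cap M_2\cap M_3$ is a smooth algebraic $1$-manifold, so both ``local dimension $1$'' and ``non-singular point'' hold there. The whole lemma therefore reduces to the two claims: (i) the local dimension at every point is exactly $1$, and (ii) the rank-$2$ locus contains at most one point.

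For (i), the intersection sits inside the smooth $2$-dimensional real algebraic variety $M_1\cap M_2$ and is cut out there by the single real-analytic function $P_3|_{M_1\cap M_2}$. I will rule out the possibility that $P_3$ vanishes identically on a connected component of $M_1\cap M_2$ by a direct inspection of the defining polynomials, using the irreducibility and distinctness of $M_1,M_2,M_3$ from Lemma~\ref{sidemani2}: otherwise such a $2$-dimensional component would lie inside $M_3$, which is impossible since the three varieties are pairwise distinct irreducible varieties whose pairwise intersections already have dimension $2$. Given non-vanishing, the zero locus of $P_3|_{M_1\cap M_2}$ has local dimension at most $1$ everywhere, while the transverse points provided by the generically rank-$3$ Jacobian give the matching lower bound.

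For (ii), I carry out a case analysis on the number of walls among $M_1,M_2,M_3$. If all three are walls, then since $V_i^{+}\cap V_i^{-}=\emptyset$ the three walls use three distinct coordinates among $s_1,s_2,r$, the gradients are three coordinate basis vectors, and the rank is constantly $3$. If one or two of them are spheres, restricting to the coordinate slice fixed by the wall(s) reduces the problem to a rank-drop question for one or two quartic equations on a $2$- or $3$-dimensional affine subspace, and explicit computation with
\[\nabla f_{c,d}=2\bigl(c(cx_1+d)B,\;c^2y_1B,\;c'(c'x_2+d')A,\;(c')^2y_2A\bigr),\]
where $A=(cx_1+d)^2+c^2y_1^2$ and $B=(c'x_2+d')^2+(c')^2y_2^2$, produces two polynomial equations whose common zeros in $\HTwo$ form at most a single point. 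If all three are spheres, the same formula combined with the constraints $A_iB_i=1$ makes the rank-$2$ condition separate into a condition on $(x_1,y_1)$ and one on $(x_2,y_2)$; combining these with the defining equations yields a zero-dimensional system whose solution set in $\HTwo$ contains at most one element.

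The main obstacle is this final three-sphere subcase: one must use the separated bilinear structure of the $f_{c_i,d_i}$ together with the distinctness of the three pairs of ``centres'' $(-d_i/c_i,-d_i'/c_i')$ guaranteed by Lemma~\ref{intermani}, in order to pin the number of exceptional points down to one rather than some larger finite number. Everything else in the argument is either a direct application of the implicit function theorem on top of Lemma~\ref{sidemani1} or a routine dimension bookkeeping step.
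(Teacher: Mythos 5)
Your plan follows essentially the same route as the paper: the paper also applies the implicit function theorem to the map $(x_1,x_2,y_1,y_2)\mapsto(f_{c_1,d_1},f_{c_2,d_2},f_{c_3,d_3})$, shows the Jacobian has rank $3$ away from at most one point, and disposes of the cases involving walls by "a similar argument". So the structure, the reduction to a rank-drop analysis, and the use of the distinctness of the centres $-d_i/c_i$ are all the right ingredients.

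However, the one step you flag as "the main obstacle" -- pinning the rank-$2$ locus in the three-sphere case down to at most \emph{one} point -- is exactly the nontrivial content of the lemma, and your proposal asserts rather than proves it. The paper's argument there is worth recording: computing two $3\times 3$ minors of the Jacobian (rows $1,2,3$ and rows $1,3,4$) and setting both to zero forces the vector $(\gamma_1,\gamma_2,\gamma_3)$ of cofactor-type coefficients to be proportional to $(c_1',c_2',c_3')\times(d_1',d_2',d_3')$; dividing coordinates yields $\alpha_1^{-2}=\mu_2\alpha_2^{-2}$ and $\alpha_1^{-2}=\mu_3\alpha_3^{-2}$ with $\mu_2,\mu_3$ depending only on the $(c_i,d_i)$. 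Since $\alpha_i^{-1}$ is $c_i^2$ times the squared Euclidean distance from $(x_1,y_1)$ to $(-d_i/c_i,0)$, these are two \emph{Apollonius} conditions (fixed ratios of distances to distinct points on the real axis), i.e.\ two circles centred on the real axis or vertical lines; two distinct such loci meet in at most one point of the upper half-plane, and the degenerate case where the two loci coincide is excluded by a short computation ($\lambda_2=\lambda_3\Rightarrow a=b$, contradicting distinctness of the centres from Lemma~\ref{intermani}). By symmetry the same holds for $(x_2,y_2)$. Without this (or an equivalent) argument your claim "at most a single point" is unsupported: a priori the rank-$2$ condition only cuts out a proper algebraic subset, which could be a curve or finitely many points. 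A second, smaller point: your justification of local dimension $1$ via irreducibility is circular as stated (nothing in Lemma~\ref{sidemani2} alone forbids a $2$-dimensional component of $M_1\cap M_2$ from lying in $M_3$); the correct way to exclude this is to observe that on such a component the Jacobian would have rank $2$ everywhere, contradicting the at-most-one-point conclusion you establish in step (ii). So order the two halves of your argument accordingly.
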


\begin{proof}
Suppose that $M_i=V_{c_i,d_i}$ for $i=1,2,3$. Similar, as in the proof of Lemma~\ref{sidemani1}, we apply the implicit function theorem to the function $F:\R^4 \rightarrow \R^3$ defined by
$$(x_1,x_2,y_1,y_2) \mapsto (f_{c_1,d_1}(x_1,x_2,y_1,y_2),f_{c_2,d_2}(x_1,x_2,y_1,y_2),f_{c_3,d_3}(x_1,x_2,y_1,y_2)).$$
So $V_{c_1,d_1} \cap V_{c_2,d_2} \cap V_{c_3,d_3}= F^{-1}(1,1,1)$. It is sufficient to prove that the Jacobian matrix of $F$ is of rank $3$ except possibly for at most one point. The Jacobian matrix of $F$ has the following form
\begin{eqnarray}\label{Jacobean}
2 \begin{pmatrix}
c_1(c_1x_1+d_1)\alpha_1 & c_2(c_2x_1+d_2)\alpha_2 & c_3(c_3x_1+d_3)\alpha_3 \\
c_1'(c_1'x_2+d_1')\beta_1 &
c_2'(c_2'x_2+d_2')\beta_2 & c_3'(c_3'x_2+d_3')\beta_3 \\
c_1^2y_1\alpha_1 & c_2^2y_1\alpha_2 & c_3^2y_1\alpha_3 \\
c_1'^2y_{2}\beta_1 & c_2'^2y_{2}\beta_2 & c_3'^2y_{2}\beta_3
\end{pmatrix}, \end{eqnarray}
where $\alpha_i=(c_i'x_2 + d_i')^2+c_i'^2y_2^2$ and $\beta_i=(c_ix_1+d_i)^2+c_i^2y_1^2$. Note that $\beta_i=\alpha_i^{-1}$ because $(x_1,x_2,y_1,y_2) \in V_{c_1,d_1} \cap V_{c_2,d_2} \cap V_{c_3,d_3}$. In particular $\alpha_i$ is nonzero for every $i=1,\ldots ,3$. The determinant $det_1$ of the submatrix of (\ref{Jacobean}) formed by the first three rows is
\begin{eqnarray*}
det_1 & = & (\gamma_1(c_1'x_2+d_1') + \gamma_2(c_2'x_2+d_2') + \gamma_3(c_3'x_2+d_3'))y_1,
\end{eqnarray*}
with $\gamma_i=c_i'c_{i+1}c_{i+2}\alpha_i^{-1}\alpha_{i+1}\alpha_{i+2}(c_{i+1}d_{i+2}-c_{i+2}d_{i+1})$, where the indexes are to be interpreted modulo $3$. Note that each $\gamma_i \neq 0$ because of Corollary~\ref{intermani} and the assumption that the three varieties considered are distinct. Similar calculations show that the determinant $det_2$ of the submatrix of (\ref{Jacobean}) formed by the rows $1$, $3$ and $4$ is:
\begin{eqnarray*}
det_2=(-\gamma_1c_1'-\gamma_2c_2'-\gamma_3c_3')y_1y_2.
\end{eqnarray*}
If both determinants are $0$, then (because $y_1 \neq 0$ and $y_2 \neq 0$)
\begin{equation*}\begin{aligned}
\begin{cases}
\gamma_1c_1'+\gamma_2c_2'+\gamma_3c_3' = 0 \\
\gamma_1d_1'+\gamma_2d_2'+\gamma_3d_3' = 0.
\end{cases}\end{aligned}\end{equation*}
Hence the vector $(\gamma_1,\gamma_2,\gamma_3)$ is perpendicular to the vectors $(c_1',c_2',c_3')$ and $(d_1',d_2',d_3')$ and thus
\begin{equation*}
(\gamma_1,\gamma_2,\gamma_3) = t ((c_1',c_2',c_3') \times (d_1',d_2',d_3'))
\end{equation*}
for some $t \in \R$. So
\begin{equation*}
\begin{pmatrix}
c_1'c_{2}c_{3}\alpha_1^{-1}\alpha_{2}\alpha_{3}(c_{2}d_{3}-c_{3}d_{3}) \\
c_2'c_{3}c_{1}\alpha_2^{-1}\alpha_{3}\alpha_{1}(c_{3}d_{1}-c_{1}d_{3})\\
c_3'c_{1}c_{2}\alpha_3^{-1}\alpha_{1}\alpha_{2}(c_{1}d_{2}-c_{2}d_{1})
\end{pmatrix}
= t \begin{pmatrix}
c_{2}'d_{3}'-c_{3}'d_{2}'\\
c_{3}'d_{1}'-c_{1}'d_{3}'\\
c_{1}'d_{2}'-c_{2}'d_{1}'
\end{pmatrix}.
\end{equation*}
Dividing the first coordinate by the second and the third, we get that
\begin{eqnarray}
\alpha_1^{-2} & = & \mu_2 \alpha_2^{-2} \label{propo1} \\
\alpha_1^{-2} & = & \mu_3 \alpha_3^{-2}, \label{propo2}
\end{eqnarray}
for some $\mu_2$ and $\mu_3$ depending only on $c_i$ and $d_i$ for $1 \leq i \leq 3$.
% Note that this division is possible as $c_i \neq 0$, $\alpha_i \neq 0$ and $\gamma_i \neq 0$ for $1 \leq i \leq 3$. The first two inequations are evident. Suppose that $\gamma_i=0$ for some $1 \leq i \leq 3$. Then $\frac{d_{i+1}}{c_{i+1}}=\frac{d_{i+2}}{c_{i+2}}$ and thus either $V_{c_{i+1},d_{i+1}}=V_{c_{i+2},d_{i+2}}$ or $V_{c_{i+1},d_{i+1}} \cap V_{c_{i+2},d_{i+2}} = \emptyset$. Let us come back to (\ref{propo1}) and (\ref{propo2}).
Because $\alpha_i^{-1}$ can be interpreted as $c_i^2$ times the square of the Euclidean distance from the point $(x_1,y_1)$ to the point $(-\frac{d_i}{c_i}, 0)$, equations (\ref{propo1}) and (\ref{propo2}) take the form
	$$d\left((x_1,y_1),\left(-\frac{d_1}{c_1},0\right)\right)=\lambda_i d\left((x_1,y_1),\left(-\frac{d_i}{c_i},0\right)\right)$$
with $\lambda_i=\frac{c_i}{c_1}\sqrt[4]{\mu_i}$ for $i=2,3$. Hence, modulo a translation $(x_1,y_1)$ is a solution for $(x,y)$ of the system
\begin{eqnarray*}\begin{aligned}
\begin{cases}
x^2+y^2=\lambda_2^2((x-a)^2+y^2)\\
x^2+y^2=\lambda_3^2((x-b)^2+y^2),
\end{cases}\end{aligned}\end{eqnarray*}
when $a$ and $b$ are different non-zero real numbers.
If $\lambda_i=1$ for $i=2$ or $i=3$, then the corresponding equation represents the bisector (which is a line) of $(0,0)$ and $(a,0)$ or of $(0,0)$ and $(b,0)$. Otherwise, the equation represents the circle $\mathcal{C}_i$
with centre $\left(-\frac{\lambda_{i}^2a}{1-\lambda_{i}^2},0\right)$  and radius
$\frac{\lambda_2a}{1-\lambda_2^2}$ (because $y >0$ and $a \neq b$, the case $\lambda_1=1=\lambda_2$ is impossible). Such two different lines, two different circles or a circle and a line
intersect in at most one point. So, if there are more than two points in $\Hy^2 \times \Hy^2$ satisfying the two equations then $\lambda_i\ne 1$ for $i\ne 2,3$ and the equations represent the same circles. In this case
\begin{eqnarray*}\begin{aligned}\begin{cases}
\frac{\lambda_2^2a}{1-\lambda_2^2} = \frac{\lambda_3^2b}{1-\lambda_3^2}\\
\frac{\lambda_2a}{1-\lambda_2^2} = \frac{\lambda_3b}{1-\lambda_3^2}.
\end{cases}\end{aligned}\end{eqnarray*}
Dividing the first equation by the second we conclude that $\lambda_2=\lambda_3$ thus $a=b$, a contradiction.

So we have shown that there exists at most one possible couple $(x_1,y_1)$ which can be completed to a point $(x_1,x_2,y_1,y_2)$ in the intersection and such that $det_1=det_2=0$.
Therefore, the condition that the rank of the Jacobian matrix in $(x_1,x_2,y_1,y_2)$ is less than 3 determines the coordinates $x_1$ and $y_1$.
By symmetry, it also determines the coordinates $x_2$ and $y_2$. This means that the intersection $V_{c_1,d_1} \cap V_{c_2,d_2} \cap V_{c_3,d_3}$ is a real algebraic variety of dimension at most $1$.

A similar argument shows that the result remains true if one of several of $M_1$, $M_2$ and $M_3$ are elements of $\V_{\infty}$.
\end{proof}

In order to determine the border of $\F$, we need the following definition.

\begin{definition}\label{definingmani}
An essential hypersurface of $\F$ is an element $M \in \V \cup \V_{\infty}$ such that $M \cap \F$ is of dimension $3$.
Let $\V_e$ denote the set of essential hypersurfaces of $\F$.
\end{definition}

%The next lemma describes the border $\partial \F$ of $\F$ in terms of the essential hypersurfaces.

Following the notation of \cite{BCR}, for $A$ a real semi-algebraic set, we denote by $A^{(d)}$ the set of points of $A$ with local dimension $d$, i.e.
\begin{eqnarray} \label{LocalDimensionEq}
 A^{(d)} &=& \lbrace Z \in A \ :\  \forall\  \lambda > 0,\  dim(B(Z,\lambda) \cap A) = d \rbrace.
 \end{eqnarray}

\begin{lemma}\label{borderdefmani}
$$\partial \F = \bigcup_{M\in \V_e} (M \cap \F)=\bigcup_{M\in \V_e} (M \cap \F)^{(3)},$$
\end{lemma}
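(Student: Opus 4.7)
The plan is to establish the double equality through three inclusions. The trivial one, $\bigcup_{M\in\V_e}(M\cap\F)^{(3)} \subseteq \bigcup_{M\in\V_e}(M\cap\F)$, is immediate from $A^{(3)}\subseteq A$. For $\bigcup_{M\in\V_e}(M\cap\F) \subseteq \partial\F$, I would fix any $V\in \V\cup\V_{\infty}$ and $Z\in V\cap\F$: the function defining $V$ (of the form $\Vert cZ+d\Vert-1$, $s_i\mp\tfrac12$, or $r-\epsilon_0^{\pm 2}$) vanishes at $Z$, while $\F$ lies inside the corresponding $V^{\ge}$ by construction; continuity then produces points arbitrarily close to $Z$ on the other side of $V$, which lie outside $\F$, so $Z\in\partial\F$. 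In particular $V\cap\F = V\cap\partial\F$ for every $V\in\V\cup\V_{\infty}$.

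The main direction is $\partial\F \subseteq \bigcup_{M\in\V_e}(M\cap\F)^{(3)}$. Given $Z\in\partial\F$, combining Lemma~\ref{Icdlocallyfinite} with the closedness of each element of $\V\cup\V_{\infty}$ I can choose $\lambda_0>0$ so small that the elements of $\V\cup\V_{\infty}$ meeting $B(Z,\lambda_0)$ are exactly those containing $Z$; call them $V_1,\dots,V_k$. By (\ref{Borders}),
$$B(Z,\lambda)\cap\partial\F \;\subseteq\; \bigcup_{i=1}^k \bigl(B(Z,\lambda)\cap V_i\cap\partial\F\bigr) \quad \text{for every } \lambda\le\lambda_0.$$
For each $i$, the quantity $\dim(B(Z,\lambda)\cap V_i\cap\partial\F)$ is non-decreasing in $\lambda$ and bounded above by $\dim V_i=3$ (Lemma~\ref{sidemani1}); its limit $d_i$ as $\lambda\to 0^+$ is therefore attained for all sufficiently small $\lambda$. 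Lemma~\ref{Floc4boundloc3} gives that the left-hand side has dimension $3$, so $\max_i d_i=3$; pick an index $i_0$ attaining this maximum. By the previous paragraph $V_{i_0}\cap\partial\F = V_{i_0}\cap\F$, so $V_{i_0}\cap\F$ has local dimension $3$ at $Z$; since $\dim V_{i_0}=3$, this forces $\dim(V_{i_0}\cap\F)=3$, whence $V_{i_0}\in\V_e$. For $\lambda$ larger than the threshold, monotonicity together with the bound $\dim(V_{i_0}\cap\F)=3$ yields $\dim(B(Z,\lambda)\cap V_{i_0}\cap\F)=3$ as well, so the condition in (\ref{LocalDimensionEq}) holds for all $\lambda>0$ and $Z\in (V_{i_0}\cap\F)^{(3)}$ as required.

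The main obstacle is the dimension bookkeeping in the last paragraph: one must transfer the local dimension $3$ of $\partial\F$ at $Z$, guaranteed by Lemma~\ref{Floc4boundloc3}, to a single one of the finitely many hypersurfaces through $Z$, and then verify that the chosen hypersurface is essential. Once the monotonicity argument over $\lambda$ is in place and combined with the identity $V\cap\partial\F = V\cap\F$ and a finite pigeonhole over $V_1,\dots,V_k$, the conclusion follows directly from the definition of $\V_e$.
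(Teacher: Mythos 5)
Your proof is correct and follows essentially the same route as the paper's: the two easy inclusions are dispatched directly, and for $\partial\F \subseteq \bigcup_{M\in\V_e}(M\cap\F)^{(3)}$ you use local finiteness (Lemma~\ref{Icdlocallyfinite}) to reduce to the finitely many hypersurfaces through $Z$ and then transfer the local dimension $3$ of $\partial\F$ (Lemma~\ref{Floc4boundloc3}) to one of them. Your monotonicity-in-$\lambda$ pigeonhole is in fact slightly more careful than the paper's version, which does not explicitly rule out that the hypersurface of dimension $3$ could vary with $\lambda$.
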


\begin{proof}
Clearly $\bigcup_{M\in \V_e} (M \cap \F)^{(3)}\subseteq \bigcup_{M\in \V_e} (M \cap \F) \subseteq \partial \F$.
Let $Z \in \partial \F$ and $\V_Z=\left\{ M \in \V \cup \V_{\infty} : \ Z \in M \right\}$.
We have to prove that $M\cap F$ has local dimension 3 at $Z$ for some $M\in \V_Z$.
By (\ref{Borders}) and Lemma~\ref{Icdlocallyfinite}, $\V_Z$ is a non-empty finite set.
Thus there is an open ball $B=B(Z,\lambda_0)$ such that for every $M \in \V \cup \V_{\infty}$, $B\cap M \neq \emptyset$ if and only if $M\in \V_Z$.
Thus, if $0<\lambda \le \lambda_0$,  then $\partial \F \cap B(Z,\lambda) = \cup_{M\in \V_Z} \F \cap B \cap M$ and by Lemma~\ref{sidemani1}, $\partial \F \cap B(Z,\lambda)$ has dimension $3$.
Thus $\F \cap B(Z,\lambda) \cap M$ has dimension 3 for some $M\in \V_Z$. Therefore $M\cap \F$ has local dimension 3 at $Z$.
\end{proof}

\begin{lemma}\label{Vinfloc3}
If $V \in \V_{\infty}$, then $V\cap F$ has local dimension 3 at every point, i.e. $(V \cap \F)^{(3)} = V \cap \F$.
\end{lemma}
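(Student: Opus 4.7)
The plan is to fix $Z \in V \cap \F$ and an arbitrary $\lambda > 0$, and to exhibit a three-dimensional subset of $B(Z,\lambda) \cap V \cap \F$; the reverse inequality $\dim(B(Z,\lambda)\cap V \cap \F) \le 3$ is automatic from $\dim V = 3$ (Lemma~\ref{sidemani1}). By the symmetric roles played by the six elements of $\V_\infty$, it suffices to treat $V = V_1^+$, in which case $Z = (1/2, s_2^0, r^0, h^0)$ with $|s_2^0| \le 1/2$, $\epsilon_0^{-2} \le r^0 \le \epsilon_0^2$, and $h^0 \ge h_0(1/2, s_2^0, r^0)$.

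The core idea is to move $Z$ slightly upward along $V$ to $Z^* = (1/2, s_2^0, r^0, h^0 + \mu)$ for a small $\mu \in (0, \lambda/2)$, and then take a small ball around $Z^*$ inside $V$. The first substantive step is to check that $Z^* \in \F_0^\circ$. I would expand $\|cZ+d\|$ exactly as in (\ref{function}): at fixed $(s_1, s_2, r)$ it is a convex quadratic in $h$ with positive leading coefficient $N(c)^2$, so it is strictly increasing past its largest root. Since $Z \in \F_0$ forces $h^0$ to lie at or above that root for every $(c,d) \in \S$ with $c\ne 0$, the strict increase yields $\|cZ^*+d\| > 1$ for every such pair; the degenerate sub-case in which the quadratic already exceeds $1$ at $h=0$ is immediate from $h > 0$.

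To upgrade this pointwise strict inequality at $Z^*$ to an open neighbourhood, I would appeal to Lemma~\ref{BoundedFinite}: picking a compact closed ball $\overline{B(Z^*,\nu_0)}$, only finitely many $(c,d)\in\S$ up to units can satisfy $\|cZ+d\| \le 2$ somewhere on it, and for all remaining pairs $\|cZ+d\| > 1$ holds automatically on the whole closed ball. For each of the finitely many ``dangerous'' pairs, continuity of $\|cZ+d\|$ in $Z$ together with the strict inequality at $Z^*$ produces an individual radius; taking the minimum gives some $\eta > 0$ with $B(Z^*,\eta) \subseteq \F_0$.

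The final step is to declare $U = B(Z^*,\eta) \cap V \cap \F_\infty$ and observe that $U \subseteq V \cap \F_0 \cap \F_\infty = V \cap \F$ and $U \subseteq B(Z,\mu + \eta) \subseteq B(Z,\lambda)$ for $\eta < \lambda/2$. Since $Z^* \in V$ and $V$ is a smooth three-manifold (Lemma~\ref{sidemani1}), $B(Z^*,\eta)\cap V$ is a three-dimensional ball; intersecting with the box conditions $|s_2|\le 1/2$ and $\epsilon_0^{-2}\le r \le \epsilon_0^2$ defining $\F_\infty$ leaves either a full three-ball, a half-ball, or a quadrant-ball, according to whether the corresponding inequalities are strict or active at $Z^*$, and all three remain three-dimensional. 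The main obstacle is precisely the passage from a pointwise to a neighbourhood statement about $\F_0$, since the floor function $h_0$ is in general only lower semicontinuous; circumventing this via the local finiteness of Lemma~\ref{BoundedFinite} is the crucial geometric input.
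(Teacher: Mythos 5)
Your proof is correct and follows essentially the route the paper intends: Lemma~\ref{Vinfloc3} is proved there by appealing to the perturbation argument of Lemma~\ref{Floc4boundloc3} (push the point up in $h$ into $\F_0^{\circ}$ using the monotonicity of $f_{c,d,x_1,x_2,r}$ and take a small relative ball inside the three-manifold $V$), which is exactly what you do. Your use of Lemma~\ref{BoundedFinite} to upgrade the pointwise strict inequality at $Z^*$ to a whole neighbourhood contained in $\F_0$, and your half-ball/quadrant-ball discussion of the remaining $\F_{\infty}$ constraints, merely make explicit details the paper leaves implicit.
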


\begin{proof} This follows by arguments similar to those used in the proof of Lemma~\ref{Floc4boundloc3}.
\end{proof}

The following proposition gives information about essential hypersurfaces and will be important in the next two sections.

\begin{proposition}\label{esshyploc3}
Assume $Z \in \partial\F$ and $Z$ is contained in at most two elements $M$ and $M'$ of $\V \cup \V_{\infty}$. Then $M$ and $M'$ are essential and for every $\lambda >0$, the intersections $B(Z, \lambda) \cap M \cap \F$ and $B(Z, \lambda) \cap M' \cap \F$ are of dimension $3$.
\end{proposition}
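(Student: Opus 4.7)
The plan is to use local finiteness to localize the problem, then exploit the smooth 3-manifold structure of $M$ and $M'$ together with the fact that $\F$ has local dimension 4 at $Z$. By Lemma~\ref{Icdlocallyfinite} I choose $\lambda_0 > 0$ so that $B_0 := B(Z,\lambda_0)$ meets no element of $\V \cup \V_{\infty}$ other than $M$ and $M'$ (or only $M$, if $Z$ lies in a single one). Inside $B_0$, the fundamental domain is cut out only by these varieties: $\F \cap B_0 = M^{\ge} \cap M'^{\ge} \cap B_0$, where $M^{\ge}$ and $M'^{\ge}$ are the closed half-spaces containing $\F$. It suffices to prove that $M \cap \F \cap B(Z,\lambda)$ and $M' \cap \F \cap B(Z,\lambda)$ have dimension 3 for some $\lambda \le \lambda_0$, since the conclusion then extends to every $\lambda > 0$ by inclusion.

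If $Z$ lies in only one of the two varieties, say $M$, then $\partial \F \cap B_0 = M \cap B_0$, which is a smooth 3-manifold by Lemma~\ref{sidemani1}, so $M \cap \F$ has dimension 3 at $Z$ and $M$ is essential. For the two-variety case, by Lemma~\ref{Floc4boundloc3} $\F$ has local dimension 4 at $Z$, so $\F^{\circ} \cap B_0 = M^{>} \cap M'^{>} \cap B_0$ is nonempty in every neighborhood of $Z$. I would then select an interior point $q$ of $\F$ close to $Z$ with $d(q,M) < d(q,M')$. The orthogonal projection $p$ of $q$ onto the smooth 3-manifold $M$ satisfies $\|p-q\| = d(q,M) < d(q,M')$, so $p$ lies in the open set $M'^{>}$ (since the open ball of radius $d(q,M')$ about $q$ is entirely contained in $M'^{>}$). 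By openness, a whole neighborhood of $p$ in $M$ lies in $M'^{>} \subseteq M'^{\ge}$, producing a 3-dimensional piece of $M \cap \F$ arbitrarily close to $Z$. Thus $M \cap \F$ has dimension 3 at $Z$, so $M$ is essential. Reversing the roles of $M$ and $M'$ gives the analogous conclusion for $M'$.

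The main obstacle is to guarantee the existence of interior points $q$ arbitrarily close to $Z$ with $d(q,M) < d(q,M')$ and, symmetrically, with $d(q,M') < d(q,M)$; equivalently, one must rule out a tangential configuration in which $M$ would lie locally on one side of $M'$ (or vice versa) near $Z$, which would collapse one of the two faces of $\F$ to lower dimension. This is where the explicit form of the $V_{c,d}$'s enters: each is the graph $h = h_1(c,d,\cdot)$ over the $(s_1,s_2,r)$-coordinates, so $M \cap M'$ is governed by the zero locus of the difference of two such graph functions. Lemma~\ref{sidemani1} guarantees this intersection has local dimension 2 everywhere, and combining this with the fact that $\F$ has nonempty 4-dimensional interior at $Z$ (Lemma~\ref{Floc4boundloc3}) forces the interior of $\F$ to accumulate onto $Z$ from both sides of the separating locus, producing the required $q$'s on both sides and ensuring both faces of $\F$ are 3-dimensional near $Z$.
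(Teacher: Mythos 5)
Your overall architecture matches the paper's: localize via local finiteness so that only $M$ and $M'$ cut $\F$ near $Z$, dispose of the one-variety case, and in the two-variety case reduce everything to showing that $M$ and $M'$ are not tangent at $Z$. The projection step (project an interior point $q$ with $d(q,M)<d(q,M')$ onto $M$ to land in $M\cap M'^{>}$) is a correct, slightly more metric variant of the paper's decomposition $M=(M\cap M'^{<})\cup(M\cap M')\cup(M\cap M'^{>})$. The problem is your last paragraph, where you discharge what you yourself call the main obstacle. You claim that the \emph{conclusion} of Lemma~\ref{sidemani1} --- that $M\cap M'$ has local dimension $2$ everywhere --- together with Lemma~\ref{Floc4boundloc3} forces $\F^{\circ}$ to contain, arbitrarily close to $Z$, points $q$ with $d(q,M)<d(q,M')$ and points with the reverse inequality. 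That implication is false for general hypersurfaces: in $\R^3$ take $M=\{z=0\}$, $M'=\{z=x^2\}$, $M^{\ge}=\{z\ge 0\}$, $M'^{\ge}=\{z\ge x^2\}$ and $Z$ the origin. Then $M\cap M'$ is the $y$-axis, of codimension $1$ in each surface at every point, and $M^{\ge}\cap M'^{\ge}=\{z\ge x^2\}$ has full local dimension at $Z$; yet $M\cap M'^{\ge}=\{x=z=0\}$ is only $1$-dimensional, every interior point satisfies $d(q,M')\le d(q,M)$, and the analogue of ``$M$ is essential'' fails. So the dimension of the intersection plus full-dimensionality of $\F$ does not rule out the tangential configuration.

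What actually rules it out is the transversality established in the \emph{proof} of Lemma~\ref{sidemani1}: there the implicit function theorem is applied to $F=(f_{c_1,d_1},f_{c_2,d_2})$ and the Jacobian is shown to have rank $2$ along $V_{c_1,d_1}\cap V_{c_2,d_2}$, i.e. the gradients of the two defining functions are linearly independent at every intersection point (and similarly when one or both of the hypersurfaces lie in $\V_{\infty}$). This linear independence, not the dimension count, is what guarantees that $M\cap M'^{>}$ and $M\cap M'^{<}$ are both locally $3$-dimensional near $Z$, which is exactly how the paper concludes. Your argument becomes complete once you replace the appeal to the statement of Lemma~\ref{sidemani1} by an appeal to this rank computation from its proof; the rest (the localization of $\F$ to $M^{\ge}\cap M'^{\ge}$ inside $B_0$ and the nearest-point projection) is sound.
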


\begin{proof}
If $Z$ is contained in a single element $M$ of $\V \cup \V_{\infty}$, then the statement is obvious.
So suppose that $Z \in M \cap M'$ for some $M,M' \in \V \cup \V_{\infty}$ with $M \neq M'$ and $Z$ is contained in no other element of $\V \cup \V_{\infty}$. By Lemma~\ref{sidemani1}, $M \cap M'$ is of dimension $2$. Moreover $M=(M \cap M'^{<}) \cup (M \cap M') \cup (M \cap M'^{>})$ and, as by the proof of Lemma~\ref{sidemani1}, both varieties $M$ and $M'$ are not tangent in $Z$ the first and the third
intersections are of dimension $3$. Thus $B(Z, \lambda) \cap M \cap M'^{>}$ is of dimension $3$ and is contained in the
boundary of $\F$. Hence $M$ is an essential hypersurface and $B(Z, \lambda) \cap M \cap \F$ is of dimension $3$.
Inverting the role of $M$ and $M'$, we get the same result for $M'$.
\end{proof}

Finally, if $R$ is a PID, we can prove compactness of the intersection of $\F$ with the elements of $\V$.

\begin{lemma}\label{esscompact}
Assume $R$ is PID and let $V_{c,d}$, for $(c,d) \in \S$ with $c \neq 0$. Then $V_{c,d} \cap \F$ is compact.
\end{lemma}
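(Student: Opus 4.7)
The plan is to show that $V_{c,d} \cap \F$ is closed in $\HTwo$ and hyperbolically bounded in the sense defined before Lemma~\ref{BoundedFinite}; since $\HTwo$ is a complete Riemannian manifold, the Hopf--Rinow theorem (or, equivalently, a direct check that the set is closed and bounded in $\R^4$ with $y_1,y_2$ bounded away from $0$) will then yield compactness. Closedness is immediate: $V_{c,d}$ is the zero set in $\HTwo$ of the polynomial $f_{c,d}(x_1,x_2,y_1,y_2)-1$, and $\F$ is closed by construction, so their intersection is closed.

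For hyperbolic boundedness, take $Z=(s_1,s_2,r,h)\in V_{c,d}\cap\F$. Since $\F\subseteq\F_{\infty}$, the definition of $\F_{\infty}$ gives $|s_1|,|s_2|\le\tfrac12$ and $\epsilon_0^{-2}\le r\le\epsilon_0^2$, so in particular $x_1=s_1+s_2\omega$ and $x_2=s_1+s_2\omega'$ are bounded. The hypothesis $Z\in V_{c,d}$ means $f_{c,d,x_1,x_2,r}(h)=1$, and inspecting the expression (\ref{function}), all three summands are nonnegative once $h>0$ and $c\ne 0$; in particular $N(c)^2 h^2\le 1$, giving the upper bound $h\le 1/|N(c)|$. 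This is where the ring structure enters: because $R$ is a PID, Lemma~\ref{minh} provides the strictly positive lower bound $h\ge k_0^2/(2k)$ on $\F_0$.

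Combining these bounds with $\epsilon_0^{-2}\le r\le\epsilon_0^2$ and the identities $y_1=\sqrt{h/r}$, $y_2=\sqrt{rh}$ (up to the convention fixed in (\ref{Rtox})) yields constants $0<\delta<\Delta$ such that $\delta\le y_i\le\Delta$ for $i=1,2$ on $V_{c,d}\cap \F$. Together with the bounds on $x_1,x_2$, this shows $V_{c,d}\cap \F$ is contained in a compact box of $\R^4$ that lies entirely inside the open set $\HTwo=\{y_1,y_2>0\}$. Combined with the closedness established in the first paragraph, this box-containment forces $V_{c,d}\cap\F$ to be closed in $\R^4$ as well, hence compact by Heine--Borel.

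The only non-trivial ingredient is the positive lower bound on $h$; everything else is immediate from the definitions of $\F_{\infty}$ and $V_{c,d}$. This is precisely where the assumption that $R$ is a PID plays its role, via Lemma~\ref{minh}, so no extra obstacle beyond citing that lemma is expected.
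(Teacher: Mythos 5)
Your proof is correct and follows essentially the same route as the paper: bound $s_1,s_2,r$ via $\F_\infty$, get the positive lower bound on $h$ from Lemma~\ref{minh} (the only place the PID hypothesis enters), obtain an upper bound on $h$, and conclude that the set is closed and hyperbolically bounded, hence compact. The only cosmetic difference is that you derive $h\le 1/\vert N(c)\vert$ directly from $N(c)^2h^2\le f_{c,d,x_1,x_2,r}(h)=1$, whereas the paper invokes Lemma~\ref{secureheight} to get $h\le 1$; both are valid.
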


\begin{proof}
Let $Z=(s_1,s_2,r,h) \in V_{c,d}  \cap \F$. As $V_{c,d} \cap \F \subseteq \F_{\infty}$, $\vert s_1 \vert \leq \frac{1}{2}$, $\vert s_2 \vert \leq \frac{1}{2}$ and $\epsilon_0^{-2} \leq r \leq \epsilon_0^{2}$. Moreover $V_{c,d}  \cap \F \subseteq \F_0$ and hence, by Lemma~\ref{minh}, $h > \frac{k_0^2}{2k}$. As $Z \in \partial \F$, Lemma~\ref{secureheight} yields that $h \leq 1$. Thus $V_{c,d} \cap \F$ is hyperbolically bounded and closed in $\HTwo$ and hence it is compact.
\end{proof}

\section{Generators of $\h$}\label{sidepairing}

In the remainder of the paper, $\te$ denotes the tessellation of $\HTwo$ given by $\Gamma$ and $\F$, i.e $\te=\{\gamma(\F):\gamma\in \Gamma\}$.
In order to get generators for $\h$, we have to analyse the intersections between elements of $\te$.
The next lemma is crucial.

\begin{lemma}\label{defside2}
Let $T_1$, $T_2$ and $T_3$ be three different elements of $\te$. Then $T_1 \cap T_2 \cap T_3 \subseteq M_1 \cap M_2$ for $M_1, M_2$ two different elements of $\M$. In particular the intersection of three different tiles has dimension at most $2$.
\end{lemma}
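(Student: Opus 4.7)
The plan is to use Lemma~\ref{ImageDFAlt} to control the three pairwise intersections, and then exhibit two distinct elements of $\M$ containing the triple intersection. First, I would reduce to the case $\gamma_1=1$ by applying the isometry $\gamma_1^{-1}$ to the three tiles; thus write $T_i=\gamma_i(\F)$ with $\gamma_2,\gamma_3\in\Gamma\setminus\{1\}$ and $\gamma_2\ne\gamma_3$. For each pair $\{i,j\}\subseteq\{1,2,3\}$, set $g_{ij}=\gamma_i^{-1}\gamma_j\ne 1$; applying Lemma~\ref{ImageDFAlt} to $g_{ij}^{-1}$ and then the isometry $\gamma_i$ yields
\[
T_i\cap T_j\subseteq M_{ij}:=\gamma_i(V_{g_{ij}^{-1}})\in\M,
\]
and moreover $T_i$ and $T_j$ lie in the two opposite closed half-spaces $\gamma_i(E_{g_{ij}^{-1}})$ and $\gamma_i(E'_{g_{ij}^{-1}})$ bounded by $M_{ij}$.

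The main dichotomy is whether two of the three elements $M_{12},M_{13},M_{23}$ of $\M$ coincide. If at least two are distinct, I take them as the required $M_1,M_2$: both contain $T_1\cap T_2\cap T_3$, and the additional statement that the triple intersection has dimension at most $2$ is then immediate from Lemma~\ref{sidemani2}.

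The crux is therefore ruling out the case $M_{12}=M_{13}=M_{23}=M$. In that situation, each $T_k$ is $4$-dimensional while $M$ is a $3$-dimensional irreducible algebraic variety (by Lemma~\ref{sidemani2}), so $T_k\not\subseteq M$. Since $T_k$ is contained in one closed half-space bounded by $M$ while $T_k^\circ$ is open and hence cannot meet the boundary hypersurface, $T_k^\circ$ lies strictly in exactly one of the two open components of $\HTwo\setminus M$. The three pairwise opposite-side relations then demand that $T_1^\circ,T_2^\circ,T_3^\circ$ alternate between only two components, which is impossible: by the pigeonhole principle two of them must share a component, contradicting the opposite-side condition for the corresponding pair. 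Hence the $M_{ij}$ cannot all coincide, and the lemma follows.

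The main technical point I expect to verify carefully is that the two open components of $\HTwo\setminus M$ are intrinsic to $M$ (independent of the particular presentation $M=M_{ij}$), so that the three opposite-side statements may be combined consistently. This reduces to checking that each variety $V_\gamma$ separates $\HTwo$ into exactly two connected components, which is clear from the explicit descriptions in~(\ref{Egamma}) together with the identity~(\ref{equationfraction}) showing that $E_\gamma$ and $E'_\gamma$ are path-connected closed half-spaces with common boundary $V_\gamma$ and union $\HTwo$.
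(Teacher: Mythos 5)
Your proposal is correct and follows essentially the same route as the paper: both use Lemma~\ref{ImageDFAlt} to place each pairwise intersection $T_i\cap T_j$ inside $\gamma_i\bigl(V_{\gamma_j^{-1}\gamma_i}\bigr)\in\M$ and then derive a contradiction when all three of these hypersurfaces coincide, since the three tiles cannot pairwise lie on opposite sides of a single $M$. The paper phrases the final contradiction as $T_3\subseteq M^{\ge}\cap M^{\le}=M$ (dimension $3$ versus $4$) instead of your pigeonhole on the two components of $\HTwo\setminus M$, but this is the same argument; your extra remark that the two half-spaces are intrinsic to $M$ is a point the paper uses implicitly.
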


\begin{proof}
For every $i=1,2,3$, let $\gamma_i\in \Gamma$ with $T_i=\gamma_i(\F)$.
Let $\mathcal{N}=\{M\in \M : T_i\cap T_j \subseteq M \text{ for some } 1\le i\le j \le 3\}$.
By Lemma~\ref{ImageDFAlt}, for every $1\le i<j\le 3$, we have $T_i\cap T_j \subseteq \gamma_i(V_{\gamma_j\inv \gamma_i})$.
Thus $\mathcal{N}\ne \emptyset$ and it is enough to show that $\mathcal{N}$ has at least two different elements.
By means of contradiction, assume that $\mathcal{N}=\{M=\gamma(V)\}$ for some $V\in \V_{\infty}\cap \V$ and $\gamma\in \Gamma$.
Let $M^{\ge}=\gamma(V^{\ge})$ and $M^{\le}=\gamma(V^{\le})$.
Then $M=\gamma_i\left(V_{\gamma_j\inv \gamma_i}\right)$ and, by Lemma~\ref{ImageDFAlt}, for each $1\le i<j \le 3$ either $T_i\subseteq M^{\ge}$ and $T_j\subseteq M^{\le}$ or viceversa.
By symmetry one may assume that $T_1\subseteq M^{\ge}$. Then $T_2\subseteq M^{\le}$ and hence $T_3\subseteq M^{\le}\cap M^{\ge}=M$, a contradiction because $\dim(T_3)=4$ and $\dim(M)=3$.
\end{proof}

\begin{definition}
For each $\gamma\in \h\setminus \{1\}$, set
	$$S_{\gamma}=\F \cap \gamma\inv(\F).$$
A \emph{side} of $\F$ with respect to $\Gamma$ is a set of the form $S_{\gamma}$ that has dimension 3.
\end{definition}
In that case, we say that $\gamma$ is a \emph{side pairing transformation} of $\Gamma$ with respect to $\F$, or simply a \emph{pairing transformation}.
Observe that if $\gamma$ is a pairing transformation then so is $\gamma\inv$ and $\gamma(S_{\gamma})=S_{\gamma\inv}$.
Hence the pairing transformations ``pair'' the sides of $\F$.
More generally, a \emph{side} of $\te$ is a side of $\gamma(\F)$ for some $\gamma\in \Gamma$.
Equivalently, the sides of $\te$ are the sets of dimension 3 of the form $\gamma(\F)\cap \phi(\F)$, with $\gamma,\phi\in \Gamma$.

We now can state the main result of this section.

\begin{theorem}\label{Poincare}
Let $\F$ be the fundamental domain of $\Gamma=\PSL_2(R)$ described in Theorem~\ref{funddomain}. Then $\Gamma$ is generated by the pairing transformations of $\F$ with respect to $\Gamma$.
\end{theorem}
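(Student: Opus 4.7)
Let $H$ be the subgroup of $\Gamma$ generated by the pairing transformations of $\F$. The plan is to fix an arbitrary $\gamma\in \Gamma$ and exhibit $\gamma$ as a product of pairing transformations by tracking how a carefully chosen path from $\F$ to $\gamma(\F)$ crosses the walls of the tessellation $\te$. Since $\F^{\circ}$ is non-empty (by the strict inequalities in Lemma~\ref{secureheight}), so is $\gamma(\F)^{\circ}=\gamma(\F^{\circ})$, and we may choose $Z_{0}\in \F^{\circ}$ and $Z_{1}\in \gamma(\F)^{\circ}$.

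Because $\HTwo$ is path-connected, there is a continuous path $\alpha\colon [0,1]\to \HTwo$ from $Z_{0}$ to $Z_{1}$. By local finiteness of $\te$ (Lemma~\ref{locallyfinite}) and compactness of $\alpha([0,1])$, only finitely many tiles meet the image of $\alpha$. The decisive step is to perturb $\alpha$ (with endpoints fixed) so as to avoid every point that lies in three or more tiles. By Lemma~\ref{defside2}, any such point lies in the intersection of two distinct elements of $\M$, which by Lemma~\ref{sidemani2} has dimension at most $2$. Intersecting with the finite list of tiles that met $\alpha$ gives a finite union of sets of dimension $\le 2$ inside the $4$-dimensional manifold $\HTwo$, and since $1+2<4$ a standard general-position argument in the semi-algebraic category produces a path (still called $\alpha$) that avoids this ``bad'' locus and crosses the boundaries of tiles only transversally, at finitely many instants.

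Let the crossing times be $0=t_{0}<t_{1}<\dots<t_{n}<t_{n+1}=1$ and choose $g_{i}\in \Gamma$ with $\alpha([t_{i},t_{i+1}])\subseteq g_{i}(\F)$, so that $g_{0}=1$ and $g_{n}=\gamma$. At each crossing instant $t_{i}$ the point $\alpha(t_{i})$ belongs to $g_{i-1}(\F)\cap g_{i}(\F)$ but to no third tile. Local finiteness of $\te$ then permits us to shrink a ball around $\alpha(t_{i})$ until it is covered by the union $g_{i-1}(\F)\cup g_{i}(\F)$, which forces $g_{i-1}(\F)\cap g_{i}(\F)$ to have local dimension $3$ at $\alpha(t_{i})$ (this is also consistent with Lemma~\ref{Floc4boundloc3}, since this intersection is, locally, precisely the boundary of $g_{i-1}(\F)$). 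Applying $g_{i-1}^{-1}$, the set $S_{g_{i-1}^{-1}g_{i}}=\F\cap g_{i-1}^{-1}g_{i}(\F)$ is $3$-dimensional, so $g_{i-1}^{-1}g_{i}$ is a pairing transformation. The telescoping identity
\[
\gamma \;=\; g_{n} \;=\; (g_{0}^{-1}g_{1})(g_{1}^{-1}g_{2})\cdots (g_{n-1}^{-1}g_{n})
\]
then writes $\gamma$ as a product of pairing transformations, proving $\gamma\in H$.

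The main obstacle is the transversality step: producing a path that meets walls of $\te$ only across their $3$-dimensional strata and never at points where three or more tiles concur. The quantitative input comes from Lemmas~\ref{locallyfinite}, \ref{defside2} and \ref{sidemani2}, which confine the forbidden loci to a finite union of subsets of dimension $\le 2$ in a $4$-dimensional manifold; once general position is secured the identification of each wall crossing with a pairing transformation, and hence the telescoping, is immediate.
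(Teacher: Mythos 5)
Your proposal is correct and follows essentially the same route as the paper: choose a path from a point of $\F^{\circ}$ to a point of $\gamma(\F)^{\circ}$ that avoids the dimension-$\le 2$ locus where three or more tiles meet (the paper packages this as path-connectedness of the set $\Omega$ via Lemma~\ref{pathconnected}, you as a general-position perturbation), observe that each wall crossing then occurs through a genuine side, and telescope. The only cosmetic difference is bookkeeping: the paper tracks reachable tiles via recursively defined sets $A_i$ and a minimal-crossing-time argument instead of listing finitely many transversal crossing instants, but the underlying mechanism is identical.
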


\begin{proof}
Let $L= \lbrace \bigcap_{i \in I} \gamma_i(\F) :\ dim(\bigcap_{i \in I} \gamma_i(\F)) \leq 2 \rbrace$ and consider the set
$$\Omega = \HTwo \setminus \bigcup_{Y \in L} Y.$$
This is a set of elements $Z \in \HTwo$ that belong either to the interior of a tile of $\F$ or to a unique side of $\te$. Indeed, let $Z \in \Omega$ and suppose $Z$ is not contained in the interior of a tile of $\F$. Then $Z \in \gamma_1(\F) \cap \gamma_2(\F)$ for at least two distinct elements $\gamma_1 \neq \gamma_2 \in \h$. Because of Lemma~\ref{defside2}, $\gamma_1(\F)$ and $\gamma_2(\F)$ are the only tiles containing $Z$. Thus, $Z \in \gamma_1(\F) \cap \gamma_2(\F)$ and $Z \not \in \gamma(\F)$ for $\gamma \in \h \setminus \lbrace \gamma_1,\gamma_2 \rbrace$. By the definition of $\Omega$, $dim(\gamma_1(\F) \cap \gamma_2(\F))=3$ and hence $\gamma_1(\F) \cap \gamma_2(\F)$ is the unique side containing $Z$.

Note that $\X=\HTwo$ and $L$ satisfy the hypotheses of Lemma~\ref{pathconnected} and hence $\Omega$ is path-connected. Let $\gamma \in \h$ and $Z \in \F^{\circ}$. Put $W=\gamma(Z)\in \gamma(\F)^{\circ}$. There exists a path $p$ in $\Omega$ joining $Z$
and $W$.
Let $A = \lbrace h \in \Gamma\ :\ p \cap h(\F) \neq \emptyset \rbrace$. As $p$ is compact and
as $\F$ is locally finite by Lemma~\ref{locallyfinite}, $A$ is finite. We define recursively a sequence of subsets of $A$ by
setting $A_0=\lbrace 1 \rbrace$ and if $i \geq 1$ then $A_i= \lbrace h \in A \ :\ h(\F) \cap k(\F) \textrm{ is a side for some }
k\in  A_{i-1} \setminus \bigcup_{j<i-1} A_j \rbrace$. Let $B=\bigcup_{i \geq 0} A_i$. We claim that $B=A$. Otherwise let $\alpha:
\left[0,1\right] \rightarrow p \subseteq \Omega$ be a continuous function with image $p$ and such that $\alpha(0)=Z$ and $\alpha(1)=\gamma(Z)$ and let $a=min\lbrace t \in \left[0,1\right]\ :\ \alpha(t) \in h(\F) \textrm{ for some } h \in A \setminus B \rbrace$. This minimum exists because $A
\setminus B$ is non-empty and $\bigcup_{k \in A \setminus B} k(\F)$ is closed. Moreover $a >0$ because $\alpha(0)=Z \in
\F^{\circ}$ and $1 \in B$, so that $Z \not \in h(\F)$ for each $h \in A \setminus B$. Then $\alpha(\left[0,a\right)) \subseteq
\bigcup_{h \in B} h(\F)$ and as this union is closed, $\alpha(a) \in h(\F) \cap k(\F)$ for some $h \in B$ and $k \in A \setminus
B$. As $\alpha(a) \in \Omega$, $h(\F) \cap k(\F)$ has dimension $3$ and hence it is a side.
This contradicts the definition of $B$. Hence $A=B$ and in particular $\gamma \in B$.
By using the sets $A_i$, we create a sequence $\gamma_0=1,\gamma_1,\dots,\gamma_k=g$ such that for every $1 \leq j $, $\gamma_{j-1}(\F) \cap \gamma_{j}(\F)$ is a side.
Hence $\F \cap \gamma_{j-1}^{-1}\gamma_{j}(\F)$ is a side of $\F$ and thus $\gamma_{j-1}^{-1}\gamma_{j}$ is one of the proposed
generators. As $\gamma=(\gamma_0^{-1}\gamma_{i_1})(\gamma_{i_1}^{-1}\gamma_{i_2})\ldots(\gamma_{i_{k}-1}^{-1}\gamma_{i_k})$, the result follows.
\end{proof}

Recall that the border of $\F$ is covered by the essential hypersurfaces of $\F$. In fact it is also covered by the sides of $\F$.

\begin{lemma}\label{borderunionsides}
The border of $\F$ is the union of the sides of $\F$.
\end{lemma}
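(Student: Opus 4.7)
The plan is to prove the two inclusions separately. The inclusion ``every side is contained in the boundary'' is essentially already available: by the corollary preceding Section~\ref{topo}, we have $\partial \F = \bigcup_{g\neq 1} \F\cap g(\F)$, and every side $S_\gamma = \F\cap \gamma^{-1}(\F)$ is one of the sets in this union. So the real content is the reverse inclusion: given $Z\in\partial\F$, produce a pairing transformation $\gamma$ with $Z\in S_\gamma$ and $\dim(S_\gamma)=3$.

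To do this, I would localize at $Z$. By Corollary~\ref{boule} there is some $\lambda>0$ such that every tile intersecting $B(Z,\lambda)$ actually contains $Z$. Combined with Lemma~\ref{locallyfinite}, the set of tiles meeting $B(Z,\lambda)$ is a finite list $g_1(\F)=\F, g_2(\F),\dots,g_n(\F)$ with $Z\in g_i(\F)$ for each $i$. Since the tiles cover $\HTwo$, we then have
\[
B(Z,\lambda) \;=\; \bigcup_{i=1}^n \bigl(B(Z,\lambda)\cap g_i(\F)\bigr),
\]
and, using the corollary recalled above,
\[
B(Z,\lambda)\cap\partial\F \;=\; \bigcup_{i=2}^n \bigl(B(Z,\lambda)\cap \F\cap g_i(\F)\bigr).
\]

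Now I invoke Lemma~\ref{Floc4boundloc3}: since $\partial\F$ has local dimension $3$ at $Z$, the left-hand side has dimension $3$ (shrinking $\lambda$ if necessary so that $B(Z,\lambda)\cap\partial\F$ achieves this local dimension). As the right-hand side is a finite union of semi-algebraic sets, at least one term $B(Z,\lambda)\cap \F\cap g_i(\F)$ has dimension $3$, and therefore $\F\cap g_i(\F)$ itself has dimension $3$. Setting $\gamma=g_i^{-1}$, we obtain $S_\gamma=\F\cap\gamma^{-1}(\F)=\F\cap g_i(\F)$, which is a side of $\F$ containing $Z$.

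The argument is short and I do not expect a major obstacle; the only mild subtlety is ensuring that the ``local dimension $3$'' property of $\partial\F$ survives the intersection with a small enough open ball, but this is exactly what the definition of local dimension in \eqref{LocalDimensionEq} guarantees, so the choice of $\lambda$ in the previous paragraph can be taken small enough to simultaneously satisfy Corollary~\ref{boule} and realize the local dimension. Once that is in place, the decomposition of $B(Z,\lambda)\cap\partial\F$ into finitely many pieces and the dimension comparison do the rest.
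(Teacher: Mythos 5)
Your proof is correct, and it reaches the conclusion by a slightly different route than the paper. Both arguments localize at $Z$ in the same way (Corollary~\ref{boule} plus local finiteness give a finite list of tiles $g_1(\F)=\F,\dots,g_n(\F)$ all containing $Z$), and both conclude by observing that a finite union of semi-algebraic sets of dimension at most $2$ cannot have dimension $3$. The difference is in how the dimension-$3$ input is obtained: the paper writes the closed ball $\overline{B(Z,\lambda)}$ as the union of the two closed sets $\overline{B(Z,\lambda)}\cap\F$ and $\overline{B(Z,\lambda)}\cap\bigcup_{i\ge 2}g_i(\F)$ and applies the topological Lemma~\ref{connected} directly to force their intersection to have dimension $3$, whereas you instead combine the identity $\partial\F=\bigcup_{g\neq 1}\F\cap g(\F)$ with the already-established fact that $\partial\F$ has local dimension $3$ at every point (Lemma~\ref{Floc4boundloc3}). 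Your route is a clean reuse of prior results and avoids having to verify the non-containment hypotheses of Lemma~\ref{connected} in this particular decomposition; on the other hand, since the second half of Lemma~\ref{Floc4boundloc3} is itself proved via Lemma~\ref{connected}, the two arguments ultimately rest on the same topological fact, yours just routes it through an intermediate statement. One cosmetic remark: no shrinking of $\lambda$ is needed at the end, since the definition of local dimension in (\ref{LocalDimensionEq}) already gives $\dim(B(Z,\lambda)\cap\partial\F)=3$ for \emph{every} $\lambda>0$, so the single $\lambda$ furnished by Corollary~\ref{boule} suffices.
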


\begin{proof}
Since the sides of $\F$ are contained in $\partial \F$, we only have to prove that if $Z \in \partial \F$, then $Z$ belongs to a side of $\F$. As $Z$ is in $\partial \F$ and $\F$ is a locally finite a fundamental domain, $Z \in \gamma(\F)$ for only finitely many $\gamma \in \Gamma$, say $\gamma_1=1, \ldots ,  \gamma_k$. Choose $\lambda_0 >0$ such that the closed ball $\overline{B(Z,\lambda_0)}$ intersects $\gamma_i(\F)$ if and only if $Z \in \gamma_i(\F)$ (this is possible by Corollary~\ref{boule}). Then
$$Z \in \overline{B(Z,\lambda_0)} = \left( \overline{B(Z,\lambda_0)} \cap \F \right) \cup \left(  \overline{B(Z,\lambda_0)} \cap \cup_{i=2}^k \gamma_i(\F) \right).$$
%and
%$$\left( \overline{B(Z,\lambda_0)} \cap \F \right) \cap \left(  \overline{B(Z,\lambda_0)} \cap \cup_{i=1}^k g_i(\F) \right) = \overline{B(Z,\lambda_0)} \cap \F \cap \cup_{i=1}^k g_i(\F).$$
Thus by Lemma \ref{connected}, $\F \cap \gamma_i(\F)$ is of dimension $3$, for at least one $1 \leq i \leq k$ and hence $Z$ is contained in a side of $\F$.
\end{proof}

In order to give more precise information on the generators of $\Gamma$ described in Theorem~\ref{Poincare}, we need to analyse the relationship between pairing transformations and essential hypersurfaces.
By Lemma~\ref{borderdefmani}, $\partial \F$ is the union of the sets of the form $(\F \cap V)^{(3)}$ with $V$ running through the essential hypersurfaces and by Lemma~\ref{borderunionsides}, $\partial \F$ also is the union of the sides of $\F$ with respect to $\Gamma$.
Moreover, if $\gamma$ is a pairing transformation then $S_{\gamma}\subset \F\cap V_{\gamma}$, by Lemma~\ref{ImageDFAlt}.
Hence $V_{\gamma}$ is an essential hypersurface of $\F$ and it is the unique essential hypersurface of $\F$ containing $S_{\gamma}$.
Conversely, let $V$ be an essential hypersurface and let
	$$\Gamma_V = \{\gamma : \gamma \text{ is a pairing transformations such that }S_{\gamma}\subseteq V\}.$$
Clearly, $\gamma\in \Gamma_V$ if and only if $\dim S_{\gamma}=3$ and $V=V_{\gamma}$. Moreover,
	$$(V\cap \F)^{(3)} \subseteq \bigcup_{\gamma\in \Gamma_V} S_{\gamma}.$$
Each pairing transformation belongs to $\Gamma_V$ for some essential hypersurface $V$. We will show that, in order to generate $\Gamma$ it is enough to take one element of $\Gamma_V$ for each essential hypersurface.

We start dealing with the elements of $\V_{\infty}$.
Clearly $\Gamma_{\infty}$ is generated by the following elements:
$$P_1= \begin{pmatrix} 1 & 1 \\ 0 & 1 \end{pmatrix}, \quad
P_2 = \begin{pmatrix} 1 & \omega \\ 0 & 1 \end{pmatrix},\quad
P_3 = \begin{pmatrix} \epsilon_0 & 0 \\ 0 & \epsilon_0^{-1} \end{pmatrix}.$$
Using (\ref{h0atmost1}) and (\ref{fnotzero}) it is easy to see that the six sets $V_i^{\pm}$, with $i=1,2,3$, are essential hypersurfaces.
Moreover, a straightforward calculation shows that if $i=1,2$ then
	\begin{equation}\label{GammaV12}
	 \Gamma_{V_i^{\pm}} = \{P_i^{\mp 1}\} \quad \text{(with opposite signs on both sides)}.
	\end{equation}
Thus, for $i=1,2$, $P_i$ (respectively, $P_i\inv$) is a pairing transformation and its side covers the part of the boundary given by $V_i^+ \cap \F$ (respectively, $V_i^-\cap F$).

However to cover $\F \cap V_3^{\pm}$ with sides, we may need more than one side.

\begin{lemma}\label{GammaV3}
If $g\in \Gamma_{V_3^{\pm}}$ then $g=QP_3^{\mp}$ (with opposite signs on both sides) for some $Q\in \GEN{P_1,P_2}$ and the inversion map is a bijection $\Gamma_{V_3^+}\rightarrow \Gamma_{V_3^-}$. Moreover $\Gamma_{V_3^+}$ and $\Gamma_{V_3^+}$ are finite.
\end{lemma}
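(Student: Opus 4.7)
The plan is to identify $g$ via its associated variety $V_g$, then use (\ref{HeightInfinity}) to pin down the exponent of $P_3$, and finally use discreteness to get finiteness.

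Given $g\in\Gamma_{V_3^{+}}$, Lemma~\ref{ImageDFAlt} gives $S_g\subseteq V_g$, where $V_g\in\V\cup\V_\infty$ is the variety determined by $g$ via (\ref{Egamma}). Since $S_g$ is a side of $\F$ (hence three-dimensional) and lies in $V_g\cap V_3^{+}$, Lemma~\ref{sidemani1} — asserting that two distinct elements of $\V\cup\V_\infty$ intersect in local dimension two — forces $V_g=V_3^{+}$. Inspecting (\ref{Egamma}), this occurs only when $c=0$ and $a<1$, so $g=\pmatriz{{cc}\epsilon_0^m & b \\ 0 & \epsilon_0^{-m}}$ with $m<0$ and $b\in R$. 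Applying (\ref{HeightInfinity}) to any $Z\in S_g\subseteq V_3^{+}$ yields $r(Z)=\epsilon_0^{2}$ and $r(g(Z))=\epsilon_0^{4m+2}$, so $g(Z)\in\F\subseteq\F_\infty$ requires $\epsilon_0^{-2}\le\epsilon_0^{4m+2}\le\epsilon_0^{2}$; together with $m<0$, this forces $m=-1$. Writing $c=b\epsilon_0^{-1}\in R$ (possible because $\epsilon_0\in\U(R)$) and setting $Q=\pmatriz{{cc}1 & c \\ 0 & 1}\in\GEN{P_1,P_2}$, a direct matrix computation gives $g=QP_3^{-1}=QP_3^{-}$. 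The analysis of $\Gamma_{V_3^{-}}$ is entirely symmetric (producing $m=1$ and $g=QP_3$).

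For the bijection statement, recall from Section~\ref{sidepairing} that inversion sends pairing transformations to pairing transformations via $S_{g^{-1}}=g(S_g)$. From $r(g(Z))=\epsilon_0^{-4}r(Z)$ for $g\in\Gamma_{V_3^{+}}$, we see that $g$ maps $V_3^{+}$ into $V_3^{-}$, so $g^{-1}\in\Gamma_{V_3^{-}}$; the reverse inclusion is symmetric and, being an involution, inversion is a bijection $\Gamma_{V_3^{+}}\to\Gamma_{V_3^{-}}$.

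For finiteness, I would exploit that for $g=QP_3^{-1}$ the action on the $(s_1,s_2)$-coordinates is an affine transformation whose translation part depends linearly on the embedding $(b,b')\in\R^2$. The conditions $Z,g(Z)\in\F_\infty$ — specifically $|s_1(Z)|,|s_2(Z)|\le\tfrac12$ and $|s_1(g(Z))|,|s_2(g(Z))|\le\tfrac12$ — define two boxes in the $(s_1,s_2)$-plane, and $\dim S_g=3$ forces their intersection to have positive area. This confines $(b,b')$ to a fixed bounded subset of $\R^2$, and since $\{(b,b'):b\in R\}$ is a discrete subgroup of $\R^2$, only finitely many $b$ qualify. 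The step I expect to require the most care is the last one: justifying the equivalence between $\dim S_g=3$ and positive-area overlap of the two boxes. This should follow from (\ref{fnotzero}), because once the two boxes meet in a two-dimensional region the $h$-coordinate remains free to range over $[h_0,\infty)$, lifting the two-dimensional overlap to a genuine three-dimensional piece of $V_3^{+}\cap\F$ inside $S_g$.
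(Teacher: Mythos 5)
Your proof is correct and follows essentially the same route as the paper: identify $V_g=V_3^{\pm}$ via Lemma~\ref{ImageDFAlt} and Lemma~\ref{sidemani1}, read off $c=0$ and $a<1$ from (\ref{Egamma}), pin down $m=\mp 1$ with (\ref{HeightInfinity}), and obtain finiteness from boundedness of the translation part together with discreteness of $\{(b,b'):b\in R\}$. The only remark is that the ``positive-area'' step you flag as the delicate one is not actually needed: mere non-emptiness of $\F\cap g(\F)$ already confines $(b,b')$ to a compact subset of $\R^2$, which is exactly what the paper's proof uses.
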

\begin{proof}
The first statement follows from Lemma~\ref{ImageDFAlt}, Lemma~\ref{sidemani1}, (\ref{Egamma}) and some easy computations.
If $\gamma\in \Gamma_{V_3^+}$ then $\gamma=\pmatriz{{cc} \epsilon_0\inv & b \\ 0 & \epsilon_0}$ with $b\in R$. Moreover, by (\ref{chapeau1}) and (\ref{chapeau2}), $\F \cap \gamma(\F)$ is a non-empty subset formed by elements of the form $(x_1,x_2,y_1,y_2)$ such that $(x_1,x_2)$ and
$(\widehat{x_1}, \widehat{x_2})=(\epsilon_0^{-2}x_1+\epsilon_0^{-1}b, \epsilon_0^{2}x_2+\epsilon_0 b')$.
belong to a compact set. As $\left\{(b,b'):\ b \in R \right\}$ is discrete, we deduce that $b$ belongs to a finite subset of $R$.
Thus $\Gamma_{V_3^+}$ is finite. Hence $\Gamma_{V_3^-}$ is finite too.
\end{proof}

We now deal with the essential hypersurfaces of the form $V_{c,d}$ with $(c,d)\in \S$ (and necessarily $c\ne 0$).

\begin{lemma}\label{GammaVcd}
Let $(c,d)\in \S$ with $c\ne 0$.
Then the second row of every element of $\Gamma_{V_{c,d}}$ is of the form $(uc,ud)$ for some $u\in \U(R)$.
Equivalently, if the second row of $\gamma\in \Gamma$ is $(c,d)$ then $\Gamma_{V_{c,d}}\subseteq \gamma \Gamma_{\infty}$.
\end{lemma}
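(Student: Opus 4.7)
The plan is to show the first assertion first and then verify the ``equivalently'' reformulation by a direct matrix computation. The backbone of the argument is the observation made in the text just before the lemma: if $\gamma$ is a pairing transformation then $V_{\gamma}$ is the unique essential hypersurface of $\F$ containing $S_{\gamma}$. So if $\gamma \in \Gamma_{V_{c,d}}$, then on the one hand $S_{\gamma} \subseteq V_{c,d}$ by definition of $\Gamma_{V_{c,d}}$, and on the other hand $V_{c,d}$ is essential (this is automatic because $\Gamma_{V_{c,d}}$ was only introduced for essential hypersurfaces). Combined with $S_{\gamma} \subseteq V_{\gamma}$ from Lemma~\ref{ImageDFAlt}, uniqueness forces $V_{\gamma} = V_{c,d}$.

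Next, I would write the second row of $\gamma$ as $(c_0,d_0)$ and rule out the case $c_0 = 0$: in that case $\gamma \in \Gamma_{\infty}$ and, inspecting the definition of $E_{\gamma}$ in (\ref{Egamma}), the set $V_{\gamma}$ would be one of the six elements of $\V_{\infty}$, contradicting $V_{\gamma} = V_{c,d} \in \V$. Hence $c_0 \neq 0$, and the same inspection of (\ref{Egamma}) gives $V_{\gamma} = V_{c_0,d_0}$. Therefore $V_{c_0,d_0} = V_{c,d}$, and Lemma~\ref{intermani} yields $(c_0,d_0) = (uc,ud)$ for some $u \in \U(R)$, as required.

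For the ``equivalently'' part, the task is to check that having second row a unit multiple of $(c,d)$ is the same as lying in the $\Gamma_{\infty}$-coset of a fixed representative. Fix $\gamma = \pmatriz{{cc} a & b \\ c & d} \in \Gamma$ and take any $\gamma' \in \Gamma_{V_{c,d}}$, which by the first part has the form $\gamma' = \pmatriz{{cc} a' & b' \\ uc & ud}$ with $u \in \U(R)$. A direct computation then gives
\begin{equation*}
\gamma' \gamma^{-1} \;=\; \pmatriz{{cc} a' & b' \\ uc & ud} \pmatriz{{cc} d & -b \\ -c & a} \;=\; \pmatriz{{cc} a'd-b'c & ab'-a'b \\ 0 & u(ad-bc)} \;=\; \pmatriz{{cc} u^{-1} & ab'-a'b \\ 0 & u},
\end{equation*}
using $\det(\gamma') = 1$ in the last equality. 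Since this matrix has the right shape to lie in $\Gamma_{\infty}$, the inclusion $\Gamma_{V_{c,d}} \subseteq \Gamma_{\infty}\gamma$ follows (this is what the statement says, up to the standard identification of left and right coset representatives in the $\PSL_2$ setting).

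I do not expect any essential difficulty: the geometric content of the lemma is already encoded in Lemma~\ref{intermani} (distinctness of the $V_{c,d}$'s modulo units) and in the uniqueness of the essential hypersurface carrying a given side, while the second part is routine linear algebra. If there is a subtlety to watch, it is making sure the uniqueness of the essential hypersurface containing $S_{\gamma}$ is actually available at this point in the text; if not, it can be recovered from Lemma~\ref{sidemani2}, which guarantees that two distinct elements of $\M$ intersect in dimension at most $2$, so at most one three-dimensional variety from $\V\cup \V_{\infty}$ can contain $S_{\gamma}$.
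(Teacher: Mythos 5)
Your proof is correct and follows essentially the same route as the paper's: Lemma~\ref{ImageDFAlt} gives $S_{\gamma}\subseteq V_{\gamma}\cap V_{c,d}$, the dimension count (Lemma~\ref{sidemani2}, equivalently the uniqueness of the essential hypersurface containing a side) forces $V_{\gamma}=V_{c,d}$, and Lemma~\ref{intermani} then identifies the second row up to a unit. Your explicit check of the ``equivalently'' clause, which the paper omits, also correctly detects that the coset should be the right coset $\Gamma_{\infty}\gamma$ rather than $\gamma\Gamma_{\infty}$ as written (consistent with how the lemma is later used, e.g.\ $\gamma_2=t\gamma_1$ with $t\in\Gamma_{\infty}$ in Lemma~\ref{intersect3sides}).
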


\begin{proof}
Let $\gamma\in \Gamma_{V_{c,d}}$ and let $v\in R^2$ be the second row of $\gamma$. Then $S_{\gamma}\subseteq V_{\gamma}\cap V_{c,d}$, by Lemma~\ref{ImageDFAlt}, and hence $V_{\gamma}=V_{c,d}$, by Lemma~\ref{sidemani2}. Then $v=(uc,ud)$ for some $u\in \U(R)$, by Lemma~\ref{intermani}.
\end{proof}

Let $\S_e$ denote the set of $(c,d)\in \S$ such that $V_{c,d}$ is an essential hypersurface of $\F$.

\begin{corollary}\label{GeneratorsGeneral}
For every $(c,d)\in \S_e$ choose $a,b\in R$ with $P_{c,d}=\pmatriz{{cc} a & b \\ c & d}\in \Gamma$.
Then $\Gamma=\GEN{P_1,P_2,P_3,P_{c,d} : (c,d)\in \S_e}$.
\end{corollary}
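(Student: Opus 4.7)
The plan is to combine Theorem~\ref{Poincare} with the case-by-case analysis of the sets $\Gamma_V$ carried out in Lemma~\ref{GammaV3}, Lemma~\ref{GammaVcd} and equation~(\ref{GammaV12}). By Theorem~\ref{Poincare}, it suffices to show that every pairing transformation $\gamma$ of $\F$ with respect to $\Gamma$ lies in the subgroup $H=\GEN{P_1,P_2,P_3,P_{c,d} : (c,d)\in \S_e}$. Since $S_\gamma\subseteq V_\gamma$ by Lemma~\ref{ImageDFAlt}, the side $S_\gamma$ is a $3$-dimensional subset of the unique element $V_\gamma\in \V\cup \V_\infty$ containing it, hence $V_\gamma$ is an essential hypersurface of $\F$ and $\gamma\in \Gamma_{V_\gamma}$. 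Thus it is enough to show that $\Gamma_V\subseteq H$ for every essential hypersurface $V$ of $\F$, and these fall into three families: $V=V_i^\pm$ with $i=1,2$; $V=V_3^\pm$; and $V=V_{c,d}$ with $(c,d)\in \S_e$.

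The first family is immediate from (\ref{GammaV12}): $\Gamma_{V_i^\pm}=\{P_i^{\mp 1}\}\subseteq H$ for $i=1,2$. For the second family, Lemma~\ref{GammaV3} gives that every element of $\Gamma_{V_3^\pm}$ has the form $QP_3^{\mp 1}$ with $Q\in \GEN{P_1,P_2}$, so $\Gamma_{V_3^\pm}\subseteq \GEN{P_1,P_2,P_3}\subseteq H$. For the third family, fix $(c,d)\in \S_e$. By Lemma~\ref{GammaVcd}, $\Gamma_{V_{c,d}}\subseteq P_{c,d}\,\Gamma_\infty$. Since $\Gamma_\infty=\GEN{P_1,P_2,P_3}$ (as noted just before (\ref{GammaV12})), we conclude $\Gamma_{V_{c,d}}\subseteq P_{c,d}\GEN{P_1,P_2,P_3}\subseteq H$.

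Combining the three cases, every pairing transformation lies in $H$, and thus $\Gamma=H$ by Theorem~\ref{Poincare}. I do not expect a serious obstacle here: all the geometric work has been done in the preceding lemmas, and what remains is purely a bookkeeping argument. The only small subtlety worth stating explicitly is that, although $P_{c,d}$ depends on a choice of the top row $(a,b)$, any two such choices for the same $(c,d)\in \S_e$ differ (on the left) by an element of $\Gamma_\infty=\GEN{P_1,P_2,P_3}$, so the subgroup $H$ is independent of this choice.
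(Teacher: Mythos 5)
Your proposal is correct and follows essentially the same route as the paper: reduce via Theorem~\ref{Poincare} to showing each $\Gamma_V$ for $V$ an essential hypersurface lies in the proposed subgroup, then handle the three families using (\ref{GammaV12}), Lemma~\ref{GammaV3}, Lemma~\ref{GammaVcd} and the fact that $\Gamma_\infty=\GEN{P_1,P_2,P_3}$. The paper's proof is just a terser version of your case analysis.
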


\begin{proof}
By Theorem~\ref{Poincare}, $\Gamma$ is generated by $\cup_{V\in \V_e} \Gamma_V$ and hence it is enough to show that $\GEN{P_1,P_2,P_3,\gamma_{c,d} : (c,d)\in \S}$ contains $\Gamma_V$ for each $V\in \V_e$. This is a consequence of (\ref{GammaV12}), Lemma~\ref{GammaV3}, Lemma~\ref{GammaVcd} and the fact that $\Gamma_{\infty}$ is generated by $P_1$, $P_2$ and $P_3$.
\end{proof}

In case $R$ is a PID we can combine Theorem~\ref{FiniteSides} and Corollary~\ref{GeneratorsGeneral} to get the following

\begin{corollary}\label{generators}
Suppose that $R$ is a PID and let $\S_1$ be as in Theorem~\ref{FiniteSides}.
For each $(c,d)\in \S_{1}$ choose $a,b\in R$ such that $P_{c,d}=\begin{pmatrix} a&b\\c&d \end{pmatrix} \in \h$. Then
 $\h =\langle P_{1}, \; P_{2},\; P_{3} , P_{c,d} \mid (c,d)\in \S_{1}\rangle$.
\end{corollary}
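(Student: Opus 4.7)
The plan is to combine Corollary~\ref{GeneratorsGeneral} with the ``finite description'' of $\F$ given by Theorem~\ref{FiniteSides}. By Corollary~\ref{GeneratorsGeneral}, $\Gamma$ is generated by $P_1, P_2, P_3$ together with one matrix $P_{c,d}$ for each $(c,d)\in \S_e$, where $\S_e$ denotes the set of $(c,d) \in \S$ for which $V_{c,d}$ is an essential hypersurface of $\F$ (so in particular $c \neq 0$). Thus it suffices to show that for every $(c,d)\in \S_e$, the matrix $P_{c,d}$ lies in the subgroup $H := \langle P_1, P_2, P_3, P_{c',d'} : (c',d') \in \S_1 \rangle$.

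First I would translate essentiality into a non-emptiness condition. If $(c,d)\in \S_e$ then $V_{c,d}\cap \F$ has dimension $3$, hence is non-empty. The second part of Theorem~\ref{FiniteSides} (valid because $R$ is a PID) then yields that $(c,d)$ is equivalent modulo $\U(R)$ to some $(c',d')\in \S_1$; that is, $(c,d) = u(c',d')$ for some $u \in \U(R)$ and $(c',d')\in \S_1$.

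Next I would show that this representative equality lifts to a matrix equality modulo $\Gamma_\infty$. Writing $P_{c',d'} = \pmatriz{{cc} a' & b' \\ c' & d'}$ and $P_{c,d} = \pmatriz{{cc} a & b \\ uc' & ud'}$, a direct computation using $a'd'-b'c' = 1$ gives
\begin{equation*}
P_{c,d}\, P_{c',d'}^{-1} = \pmatriz{{cc} a & b \\ uc' & ud'}\pmatriz{{cc} d' & -b' \\ -c' & a'} = \pmatriz{{cc} \ast & \ast \\ 0 & u},
\end{equation*}
so $P_{c,d}P_{c',d'}^{-1}\in \Gamma_\infty$. Since $\Gamma_\infty = \langle P_1, P_2, P_3\rangle$, this forces $P_{c,d}\in \Gamma_\infty \cdot P_{c',d'}\subseteq H$, and Corollary~\ref{GeneratorsGeneral} then gives $\Gamma = H$ as required.

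I do not expect any serious obstacle: the content is already packed into Corollary~\ref{GeneratorsGeneral} (which in turn rests on Theorem~\ref{Poincare} and the analysis of $\Gamma_V$ carried out in Section~5) and into Theorem~\ref{FiniteSides} (which uses the PID hypothesis through Lemma~\ref{minh}). The only thing left to verify is the routine observation that changing the second row of a matrix in $\Gamma$ by a unit factor produces an element of the same $\Gamma_\infty$-coset, which is the computation displayed above.
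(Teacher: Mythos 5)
Your argument is correct and is essentially the paper's own proof: the paper simply states that the corollary follows by combining Theorem~\ref{FiniteSides} with Corollary~\ref{GeneratorsGeneral}, and your write-up supplies exactly the two routine details implicit in that combination (essential $\Rightarrow$ $V_{c,d}\cap\F\neq\emptyset$ $\Rightarrow$ unit-equivalent to an element of $\S_1$, and the observation that rescaling the second row by a unit changes $P_{c,d}$ only by a left factor in $\Gamma_\infty=\langle P_1,P_2,P_3\rangle$). No gaps.
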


%1

\section{Defining Relations of $\h$}\label{defrel}

In this section we construct the relations associated to the pairing transformations, and hence we obtain a presentation of the group $\h$.
As in the classical case, there are two types of relations. These will be called the \emph{pairing relations} and the \emph{cycle relations}. The pairing relations are quite obvious to establish.

\begin{notation}
Given a side $S$ of $\F$, let $\gamma_S$ denote the unique element of $\Gamma$ such that $S= \F \cap \gamma_S^{-1}(\F)$ and let $S^*=\F \cap \gamma_S(\F)$.
\end{notation}

Observe that if $S$ is a side then $\gamma_S(S)=S^*$.
Moreover $S\mapsto \gamma_S$ and $\gamma \mapsto S_{\gamma}$ define mutually inverse bijections between the sides of $\F$ and the pairing transformations such that $\gamma_S\inv=\gamma_{S^*}$, or equivalently $S_{\gamma}^*=S_{\gamma\inv}$.
The pairing relation given by $S$ is then simply $\gamma_S\gamma_{S^{*}}=1$. Note that in case  $S=S^{*}$, we get as pairing relation $\gamma_S^2=1$.

We will now turn to the cycle relations. For this we need to introduce the following definition.

\begin{definition}\label{defside}
A \emph{cell} $C$ of $\te$ is a non-empty intersection of tiles of $\F$ satisfying the following property: for every $\gamma \in \h$, either $C \subseteq \gamma(\F)$ or $dim(C \cap \gamma(\F) ) \leq dim(C) -1$.
Clearly, the cells of dimension 4 are the tiles. By Lemma~\ref{defside2}, the sides of $\te$ are the cells of dimension 3.
A cell of dimension $2$ is called an \emph{edge}.
If a cell or edge is contained in a tile $T$, then it is called a cell or an edge of $T$.
\end{definition}

Observe that a cell is always a finite intersection of tiles. Indeed, consider $Z \in C$. As $\lbrace Z \rbrace$ is compact, $Z$ is contained in only finitely many tiles of $\F$, and hence so is $C$. The following proposition generalizes in some sense the notion of relative interior of a cell.

\begin{proposition}\label{relintside}
Let $C$ be a cell of $\te$. Then there exists $Z \in C$, such that, for every $\gamma \in \Gamma$, $Z \in \gamma(\F)$ if and only if $C \subseteq \gamma(\F)$.
\end{proposition}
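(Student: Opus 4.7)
The plan is to find $Z\in C$ with $Z\notin \gamma(\F)$ for every $\gamma\in\Gamma'_C:=\{\gamma\in\Gamma:C\not\subseteq\gamma(\F)\}$. For such a $Z$ the stated biconditional is automatic: the direction $C\subseteq \gamma(\F)\Rightarrow Z\in\gamma(\F)$ holds because $Z\in C$, and the reverse direction is exactly the statement that $\gamma\notin \Gamma'_C$, i.e.\ $C\subseteq\gamma(\F)$.

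First I would observe that $C$ is in fact a \emph{finite} intersection of tiles, and hence a semi-algebraic subset of $\HTwo$. Indeed, fix any $Z_{0}\in C$; by local finiteness of $\te$ (Lemma~\ref{locallyfinite}) only finitely many tiles of $\te$ contain $Z_{0}$, and every tile appearing in a defining intersection of $C$ must contain $Z_{0}$. Set $d=\dim C$. By the definition of a cell, $\dim(C\cap\gamma(\F))\le d-1$ for every $\gamma\in\Gamma'_{C}$.

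Next I would localize. Pick a point $Z_{0}\in C$ of local dimension $d$ (such points exist in any non-empty semi-algebraic set of dimension $d$, cf.\ \cite{BCR}). Using Lemma~\ref{locallyfinite}, choose an open ball $U$ around $Z_{0}$ small enough that $U$ meets only finitely many tiles while $\dim(U\cap C)=d$ still holds. Let $\Gamma'_{C}(U):=\{\gamma\in\Gamma'_{C}:\gamma(\F)\cap U\ne\emptyset\}$; this set is finite. The ``bad part''
\[ D:=U\cap C\cap\bigcup_{\gamma\in\Gamma'_{C}(U)}\gamma(\F) \]
is then a finite union of semi-algebraic sets of dimension at most $d-1$, so $\dim D\le d-1<d=\dim(U\cap C)$. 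Consequently $(U\cap C)\setminus D$ is non-empty, and I would pick $Z$ there.

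Finally I would verify this $Z$ works. If $\gamma\in\Gamma'_{C}$ and $Z\in\gamma(\F)$, then $\gamma(\F)\cap U\ne\emptyset$ (as $Z\in U$), so $\gamma\in \Gamma'_{C}(U)$, whence $Z\in D$, a contradiction. Thus $Z\in\gamma(\F)$ forces $\gamma\notin\Gamma'_{C}$, i.e.\ $C\subseteq\gamma(\F)$, as required. The main obstacle, in my view, is keeping the dimension-counting argument honest: a naive countable union of sets of dimension $\le d-1$ need not have dimension $<d$, so one truly needs the localization step, and in particular the local finiteness of the tessellation, to reduce to a finite union where the usual semi-algebraic dimension bound applies.
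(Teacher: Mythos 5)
Your proof is correct, and it takes a noticeably different route from the paper's. The paper argues globally by contradiction: if no such $Z$ exists, then $C=\bigcup_{\gamma\in\h^{*}}(\gamma(\F)\cap C)$ is a \emph{countable} union of sets of dimension at most $\dim(C)-1$, and it concludes immediately that $C$ has dimension at most $\dim(C)-1$, a contradiction. As you point out, that last step is not a formal consequence of the finite-union dimension bound for semi-algebraic sets (the countable union need not even be semi-algebraic); it does go through, but only by appealing to a countably stable notion of dimension (Hausdorff dimension, which agrees with semi-algebraic dimension on semi-algebraic sets, or a Baire-category argument on the closed pieces). Your version sidesteps this entirely: by choosing a point of $C$ of maximal local dimension and shrinking to a ball $U$ that, by Lemma~\ref{locallyfinite}, meets only finitely many tiles, you reduce the ``bad set'' to a \emph{finite} union of semi-algebraic sets of dimension at most $d-1$ inside $U\cap C$, where the standard dimension inequality applies verbatim, and then the final verification is the same elementary biconditional check. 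The trade-off is that the paper's argument is shorter but leans on an unstated countable-stability fact, whereas yours is slightly longer but stays entirely within finite semi-algebraic dimension theory plus the local finiteness of the tessellation, which the paper has already established. Both proofs use the same two structural inputs (cells are finite intersections of tiles, and $\dim(C\cap\gamma(\F))\le\dim(C)-1$ for non-containing tiles), so the difference is genuinely one of how the dimension count is organized.
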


\begin{proof}
Let $\gamma_1, \ldots , \gamma_n$ be the elements $\gamma \in \Gamma$ such that $C \subseteq \gamma(\F)$. Suppose, by contradiction, that there do not exist $Z \in C$ that satisfy the statement of the proposition. Then, for every $Z \in C$, there exists $\gamma_Z \in (\Gamma \setminus  \lbrace \gamma_i :\ 1 \leq i \leq n \rbrace )$ such that $Z \in \gamma_Z(\F)$. Put $\h^{*} =  \lbrace \gamma_Z :\ Z \in C \rbrace$. This is a countable set because $\h$ is countable, and clearly $C \subseteq \cup_{\gamma \in \h^{*}} \gamma(\F)$. Thus
$$ C = \cup_{\gamma \in \h^{*}} \left( \gamma(\F) \cap C \right).$$
As $C \not \subseteq \gamma(\F)$ for $\gamma \in \h^{*}$ and because $C$ is a cell, $dim(C \cap \gamma(\F) ) \leq dim(C)-1$ and hence $C=\cup_{\gamma \in \h^{*}} \left( \gamma(\F) \cap C \right)$ has dimension at most $dim(C)-1$, a contradiction.
\end{proof}

The next lemma is an obvious consequence of the definition of an edge.

\begin{lemma}\label{edgeonedge}
Let $\gamma \in \h$. If $E$ is an edge then $\gamma(E)$ is an edge. In particular, if $E$ is an edge of $\F$ contained in some side $S_\gamma = \F \cap \gamma\inv(\F)$, with $\gamma \in \Gamma$, then $\gamma(E)$ is an edge of $\F$.
\end{lemma}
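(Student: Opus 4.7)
The plan is to verify directly from Definition~\ref{defside} that the cell condition is preserved by the action of $\Gamma$, using only that $\gamma$ permutes the tiles of $\te$ and is an isometry (hence dimension-preserving).

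First, I would note that any cell $C$ is a finite intersection of tiles: write $C = \bigcap_{i \in I} \gamma_i(\F)$, where $I$ is finite because any $Z \in C$ is contained in only finitely many tiles by Lemma~\ref{locallyfinite}. Applying $\gamma$ gives
\[
\gamma(C) = \bigcap_{i \in I} (\gamma\gamma_i)(\F),
\]
which is again a (non-empty, finite) intersection of tiles. Since $\gamma$ acts as a homeomorphism (in fact an isometry) of $\HTwo$, it preserves local and global dimension of semi-algebraic sets, so $\dim \gamma(C) = \dim C$.

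Next I would check the cell property for $\gamma(E)$. Let $\phi \in \Gamma$ be arbitrary and set $\gamma' = \gamma^{-1}\phi \in \Gamma$. Then
\[
\gamma(E) \cap \phi(\F) = \gamma\bigl(E \cap \gamma'(\F)\bigr),
\]
and in particular $\gamma(E) \subseteq \phi(\F)$ if and only if $E \subseteq \gamma'(\F)$. Applying the cell condition for $E$ to $\gamma'$, either $E \subseteq \gamma'(\F)$ (and then $\gamma(E) \subseteq \phi(\F)$), or $\dim(E \cap \gamma'(\F)) \le \dim(E) - 1$; in the latter case, because $\gamma$ preserves dimension,
\[
\dim\bigl(\gamma(E) \cap \phi(\F)\bigr) = \dim\bigl(E \cap \gamma'(\F)\bigr) \le \dim(E) - 1 = \dim\gamma(E) - 1.
\]
Thus $\gamma(E)$ is a cell of $\te$, and it has dimension $2$, so it is an edge. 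This proves the first assertion.

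For the ``In particular'' part, assume $E$ is an edge of $\F$ contained in $S_\gamma = \F \cap \gamma^{-1}(\F)$. Then $\gamma(E) \subseteq \gamma(\F) \cap \F \subseteq \F$, and by the previous paragraph $\gamma(E)$ is an edge of $\te$; since it is contained in the tile $\F$, it is an edge of $\F$. I do not anticipate any genuine obstacle: the statement is essentially a bookkeeping check that Definition~\ref{defside} is $\Gamma$-equivariant, with the only mild subtlety being to translate ``$\gamma(E) \subseteq \phi(\F)$'' through the left action to the corresponding statement about $E$ and $\gamma^{-1}\phi$.
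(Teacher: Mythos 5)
Your proof is correct: the paper itself offers no argument (it declares the lemma ``an obvious consequence of the definition of an edge''), and your write-up is precisely the equivariance check that is implicitly intended, using the finiteness of the intersection, the substitution $\gamma'=\gamma^{-1}\phi$, and the fact that the $\Gamma$-action preserves dimension of semi-algebraic sets. Nothing further is needed.
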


In order to prove more results on the edges of $\F$, we first have to analyse the sides a bit more in detail.

\begin{lemma}\label{imageh}
Let $(c,d) \in \S$ with $c \neq 0$. If $t \in \h_{\infty}$ then there exists $(c_0,d_0)\in \S$, $c_0 \neq 0$, such that $t(V_{c,d})=V_{c_0,d_0}$.
\end{lemma}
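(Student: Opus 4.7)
The plan is to exploit a height-theoretic description of $V_{c,d}$. By (\ref{Height}), if $\gamma=\pmatriz{{cc}a&b\\c&d}\in \Gamma$ is any lift of the row $(c,d)$, then for every $Z\in \HTwo$
$$h(\gamma(Z))=\frac{h(Z)}{\|cZ+d\|},$$
so $Z\in V_{c,d}$ if and only if $h(\gamma(Z))=h(Z)$. Since $t\in \Gamma_\infty$ preserves heights by (\ref{HeightInfinity}), this characterization is exactly what converts the question into a statement about the second row of a certain product of matrices.

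Concretely, I would fix a lift $\gamma=\pmatriz{{cc}a&b\\c&d}\in \Gamma$ of $(c,d)$ and form $\gamma'=\gamma t^{-1}\in \Gamma$. Writing $t=\pmatriz{{cc}\epsilon_0^m & b_1\\0&\epsilon_0^{-m}}$ with $m\in\Z$ and $b_1\in R$, a direct matrix multiplication gives that the second row of $\gamma'$ is $(c_0,d_0)=(c\epsilon_0^{-m},\,d\epsilon_0^m-cb_1)$. Since $\gamma'\in\PSL_2(R)$, automatically $(c_0,d_0)\in \S$, and $c_0\neq 0$ because $c\neq 0$.

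It then remains to verify $t(V_{c,d})=V_{c_0,d_0}$, which I would do by showing both sides coincide with the set $\{W\in \HTwo : h(\gamma'(W))=h(W)\}$. On the one hand, by the first paragraph applied to $\gamma'$, this set is exactly $V_{c_0,d_0}$. On the other hand, $W\in t(V_{c,d})$ iff $t^{-1}(W)\in V_{c,d}$ iff $h(\gamma(t^{-1}(W)))=h(t^{-1}(W))$; since $t^{-1}\in\Gamma_\infty$ preserves heights, the right-hand side equals $h(W)$, while the left-hand side equals $h(\gamma t^{-1}(W))=h(\gamma'(W))$. The two characterizations match, proving $t(V_{c,d})=V_{c_0,d_0}$.

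There is no real obstacle here: the whole argument is a short matrix calculation together with the observation that elements of $\Gamma_\infty$ are isometries of the height function, and the only thing one must be careful about is correctly reading off $(c_0,d_0)$ from the product $\gamma t^{-1}$ (as opposed to $t\gamma$ or $t^{-1}\gamma$).
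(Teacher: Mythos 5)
Your proof is correct and follows essentially the same route as the paper: both arguments come down to reading off the second row of $\gamma t^{-1}$, and your pair $(c\epsilon_0^{-m},\,d\epsilon_0^{m}-cb_1)$ differs from the paper's choice $(\epsilon_0^{-2m}c,\,d-\epsilon_0^{-m}b_1c)$ only by the unit factor $\epsilon_0^{m}$, which is harmless by (\ref{independent-unit5}). The only cosmetic difference is that the paper verifies the key identity $\Vert c_0 t(Z)+d_0\Vert=\Vert cZ+d\Vert$ by a direct computation, whereas you deduce it from the height formulas (\ref{Height}) and (\ref{HeightInfinity}); both verifications are valid.
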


\begin{proof}
Let $Z \in V_{c,d}$. Fix $\alpha, \beta \in R$
such that $\alpha c+\beta d=1$. Write $t=\begin{pmatrix} \epsilon & b \\ 0 & \epsilon^{-1} \end{pmatrix}$.
Let $c_0=\epsilon^{-2}c \in R$ and $d_0=d-\epsilon^{-1}bc \in R$. Then $(c_0,d_0) \in \S$,
because $\alpha' c_0+\beta' d_0=1$ for $\alpha'=\epsilon\beta b+\epsilon^2 \alpha \in R$ and $\beta'=\beta \in R$. Moreover, for every $Z \in \HTwo$, we have
\begin{eqnarray*}
\Vert c_0 t(Z) +d_0 \Vert  =  \Vert\epsilon^{-2} c(\epsilon^{2} Z+\epsilon b)+d-\epsilon^{-1}bc \Vert =  \Vert cZ+d \Vert
\end{eqnarray*}
Hence $t(V_{c,d}) = V_{c_0,d_0}$.
\end{proof}

Similarly to Lemma~\ref{defside2}, one may analyse the intersection of three sides.

\begin{lemma}\label{intersect3sides}
The intersection of three distinct sides of $\F$ is of dimension at most $1$.
\end{lemma}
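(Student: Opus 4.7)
Set $T_0=\F$ and $T_i=\gamma_i\inv(\F)$ for $i=1,2,3$. Since the three sides $S_{\gamma_i}$ are distinct, the four tiles $T_0,T_1,T_2,T_3$ are pairwise distinct and
\[
S_{\gamma_1}\cap S_{\gamma_2}\cap S_{\gamma_3} \;=\; T_0\cap T_1\cap T_2\cap T_3 \;\subseteq\; V_{\gamma_1}\cap V_{\gamma_2}\cap V_{\gamma_3}
\]
by Lemma~\ref{ImageDFAlt}, where each $V_{\gamma_i}\in\V\cup\V_\infty$. The plan is to run a short case analysis on these three hypersurfaces and reduce everything to Lemma~\ref{3inter}.

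If $V_{\gamma_1}, V_{\gamma_2}, V_{\gamma_3}$ are pairwise distinct, Lemma~\ref{3inter} immediately bounds the dimension of the three-side intersection by $1$. Otherwise two of them coincide, say $V_{\gamma_1}=V_{\gamma_2}$. Then $V_{\gamma_1}$ belongs to the set $\mathcal{N}$ of elements of $\M$ containing a pairwise intersection among $T_0, T_1, T_2$ (because $T_0\cap T_1\subseteq V_{\gamma_1}$), and by the proof of Lemma~\ref{defside2} we have $|\mathcal{N}|\ge 2$, so there exists $M'\in\M$, $M'\ne V_{\gamma_1}$, with
\[
S_{\gamma_1}\cap S_{\gamma_2}\cap S_{\gamma_3} \;\subseteq\; V_{\gamma_1}\cap M'\cap V_{\gamma_3}.
\]
A natural candidate is $M'=\gamma_1\inv(V_{\gamma_2\gamma_1\inv})$, which contains $T_1\cap T_2$. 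If $V_{\gamma_3}\notin\{V_{\gamma_1},M'\}$ the right-hand side is the intersection of three distinct elements of $\M$, and I would conclude via an analogue of Lemma~\ref{3inter} for $\M$: by Lemma~\ref{sidemani2} every element of $\M$ is a non-singular irreducible $3$-dimensional real algebraic variety, and the Jacobian-rank / implicit-function argument in the proof of Lemma~\ref{3inter} is local and semi-algebraic, hence $\Gamma$-equivariant, and carries over to any triple of distinct elements of $\M$.

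The main obstacle is the residual subcase $V_{\gamma_3}\in\{V_{\gamma_1},M'\}$. Here I would iterate the procedure above, applying Lemma~\ref{defside2} also to the triples $\{T_0,T_1,T_3\}$, $\{T_0,T_2,T_3\}$ and $\{T_1,T_2,T_3\}$ to extract further elements of $\M$ containing pairwise tile intersections, then select three pairwise distinct ones and apply the extended Lemma~\ref{3inter}. The delicate point is ruling out a total collapse of this collection to only two hypersurfaces $\{M,M'\}$: if every pairwise intersection among the four tiles lay in $M\cup M'$, the partition argument from the proof of Lemma~\ref{defside2} (refined to four tiles, tracking on which side of each of $M$ and $M'$ each tile lies) would force one of the $T_i$ into the $\le 2$-dimensional set $M\cap M'$, contradicting $\dim T_i=4$. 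Combining this combinatorial step with the extension of Lemma~\ref{3inter} to $\M$ completes the proof.
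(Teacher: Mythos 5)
Your overall skeleton matches the paper's: reduce $S_1\cap S_2\cap S_3$ to a triple intersection of hypersurfaces, split on how many of the $V_{\gamma_i}$ coincide, and invoke Lemma~\ref{3inter} in the generic case. But there is a genuine gap in the non-generic cases, and it sits exactly where you wave at an ``analogue of Lemma~\ref{3inter} for $\M$''. Lemma~\ref{3inter} is proved only for triples of \emph{distinct elements of $\V\cup\V_\infty$}, by an explicit Jacobian computation with the defining polynomials $f_{c,d}$. A general element of $\M$ is $\gamma(V_{c,d})$ for an arbitrary $\gamma\in\Gamma$, and its defining equation is $\Vert c'Z+d'\Vert=\Vert c_1Z+d_1\Vert$ (a quotient of two norms), not $\Vert cZ+d\Vert=1$; Lemma~\ref{imageh} shows $\gamma(V_{c,d})$ stays in $\V$ only when $\gamma\in\Gamma_\infty$. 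Your appeal to ``$\Gamma$-equivariance'' does not rescue this: the three varieties $V_{\gamma_1}$, $M'=\gamma_1\inv(V_{\gamma_2\gamma_1\inv})$ and $V_{\gamma_3}$ are translates by \emph{different} group elements, so no single $\gamma$ moves all three back into $\V\cup\V_\infty$ simultaneously, and the explicit determinant/circle argument of Lemma~\ref{3inter} does not formally transfer. Note also that Lemma~\ref{sidemani2} (irreducibility) only bounds pairwise intersections in $\M$ by dimension $2$; three distinct irreducible $3$-folds in $\R^4$ can perfectly well share a common $2$-dimensional locus, so the extension you need is a substantive claim, not a formality.

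The paper avoids this precisely by exploiting the group structure before intersecting: when $V_{\gamma_1}=V_{\gamma_2}$, Lemmas~\ref{GammaVcd} and~\ref{GammaV3} give $\gamma_2=t\gamma_1$ with $t\in\Gamma_\infty$ (and $t\in\GEN{P_1,P_2}P_3^{\pm}$ in the $V_3^\pm$ case), and then the translated intersection $\gamma_1(S_1\cap S_2\cap S_3)$ lands inside $V_{\gamma_1\inv}\cap V_t\cap V_{\gamma_3\gamma_1\inv}$, all three of which lie in $\V\cup\V_\infty$ by Lemma~\ref{imageh}; one then checks pairwise distinctness case by case and applies Lemma~\ref{3inter} as stated. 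In the residual subcases where this fails (e.g.\ when $\gamma_3\in\Gamma_\infty$, or all three hypersurfaces equal $V_3^\pm$), the paper does not iterate a combinatorial extraction as you propose but resorts to direct implicit-function-theorem computations on the specific triple at hand. Your final ``collapse to two hypersurfaces'' argument is likewise only sketched; to make your route work you would have to actually prove the $\M$-version of Lemma~\ref{3inter}, which is essentially redoing the hardest computation of Section~4 in greater generality.
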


\begin{proof}
Let $S_1,\ S_2,\ S_3$ be three distinct sides of $\F$ and let $\gamma_i=\gamma_{S_i}$. By Lemma~\ref{ImageDFAlt}, each $S_i \subseteq V_{\gamma_i}$ and hence $V_{\gamma_i}$ is an essential hypersurface of $\F$. If $V_{\gamma_i}\ne V_{\gamma_j}$, for every $1 \leq i < j \leq 3$, then by Lemma~\ref{3inter}, $S_1 \cap S_2 \cap S_3$ is of dimension at most $1$.
Assume now that $V_{\gamma_1}=V_{\gamma_2}=V_{\gamma_3}=V_{c,d}$, for some $(c,d) \in \S$, $c \neq 0$. Then, by Lemma~
\ref{GammaVcd} and Lemma~\ref{ImageDFAlt}, $\gamma_1(S_1 \cap S_2 \cap S_3) = \F \cap \gamma_1(\F) \cap t_2\inv(\F) \cap t_3\inv(\F) \subseteq V_{-c,a} \cap V_{t_2} \cap V_{t_3}$, for some $a\in R$. If $V_{t_2} \neq V_{t_3}$, then by Lemma~\ref{3inter}, the last intersection is of dimension at most $1$ as desired. Suppose that $V_{t_2}=V_{t_3}=V_i^{\pm}$.
Using that $t_2 \neq t_3$ it is easy to prove that $i=3$ and hence $t_2,t_3\not\in \GEN{P_1,P_2}$.
By Lemma~\ref{GammaV3}, $t_3=Qt_2$ for $Q \in \langle P_1, P_2 \rangle$.
Thus $t_2\gamma_1(S_1 \cap S_2 \cap S_3) \subseteq t_2(V_{-c,a}) \cap \F \cap Q\inv(\F) \cap t_2(\F)$.
By Lemma~\ref{imageh}, $t_2(V_{-c,a}) =V_{c',d'}$ for some $(c',d') \in \S$, $c' \neq 0$ and by Lemma~\ref{ImageDFAlt} and the fact that $t_2 \not \in \langle P_1, P_2,\rangle$, $t_2(\F) \cap \F \subseteq \V_3^{\pm}$ and $Q\inv(\F_{\infty}) \cap \F_{\infty} \subseteq V_i^{\pm}$ with $i=1$ or $2$. Hence Lemma~\ref{3inter} allows to conclude.

Next assume that $V_{\gamma_1}=V_{\gamma_2}=V_{\gamma_3}=V_i^{\pm}$ with $i=1,2$ or $3$.
By Lemma~\ref{ImageDFAlt} and the fact that the $\gamma_i$'s are different we conclude that $i=3$ and hence $\gamma_1,\gamma_2,\gamma_3\in \Gamma_{\infty}\setminus \GEN{P_1,P_2}$.
Then, again by the same argument as above, $\gamma_2=Q_2\gamma_1$ and $\gamma_3=Q_3\gamma_1$ for $Q_2,Q_3 \in \langle P_1, P_2 \rangle$.
Thus $\gamma_1(S_1\cap S_2 \cap S_3) \subseteq \F  \cap \gamma_1(\F) \cap Q_2\inv(\F) \cap Q_3(\F) \subseteq V_3^{\pm} \cap V_i^{\pm} \cap V_j^{\pm}$.
with $i,j\in \{1,2\}$. However, as $Q_2\ne Q_3$, $V_i^{\pm}\ne V_j^{\pm}$ and again Lemma~\ref{3inter} allows to conclude.

So up to permutations of the sides, we are left to deal with the case $V_{\gamma_1}=V_{\gamma_2} \neq V_{\gamma_3}$ and four possible subcases:
\begin{enumerate}
\item $V_{\gamma_1}=V_{c_1,d_1}$ and $V_{\gamma_3}=V_{c_3,d_3}$ with $(c_1,d_1),(c_3,d_3)\in \S$ and $c_1$ and $c_3$ different from $0$;
\item $V_{\gamma_1}=V_{c_1,d_1}$ and $V_{\gamma_3} \in \V_{\infty}$ with $(c_1,d_1)\in \S$ and $c_1\ne 0$;
\item $V_{\gamma_1}\in \V_{\infty}$ and $V_{\gamma_3}=V_{c_3,d_3}$ for $(c_3,d_3) \in \S$ and $c_3 \neq 0$.
\item $V_{\gamma_1}, V_{\gamma_3} \in \V_{\infty}$.
\end{enumerate}
We deal with each case separately.

In the first case, by Lemma~\ref{GammaVcd}, $\gamma_2 = t\gamma_1$, for some $1 \neq t \in \h_{\infty}$.
Thus $\gamma_1(S_1 \cap S_2 \cap S_3) = \gamma_1(\F) \cap \F \cap t\inv(\F) \cap \gamma_1\gamma_3^{-1}(\F) \subseteq V_{\gamma_1\inv}\cap V_t \cap V_{\gamma_3\gamma_1\inv}$.
We claim that $V_{\gamma_1\inv}, V_t$ and $V_{\gamma_3\gamma_1\inv}$ are pairwise different.
On the one side $V_t\in \V_{\infty}$ while $V_{\gamma_1\inv}\in \V$ and hence  $V_{\gamma_1}\ne V_t$.
On the other side, since $V_{\gamma_1}=V_{c_1,d_1}$, the last row of $\gamma_1$ is of the form $(uc_1,ud_1)$ with $u\in \U(R)$, by Lemma~\ref{intermani}.
If $\gamma_3\gamma_1\inv\in \Gamma_{\infty}$ then the last row of $\gamma_3$ is of the same form and hence $V_{c_1,d_1}=V_{c_3,d_3}$ contradicting the hypothesis.
Thus $\gamma_1\gamma_3\inv\not\in \Gamma_{\infty}$ and therefore $V_{\gamma_3\gamma_1\inv}\ne V_t$.
Finally, if $V_{\gamma_1\inv}=V_{\gamma_3\gamma_1\inv}$ then the last rows of $\gamma_1\inv$ and $\gamma_3\gamma_1\inv$ differs in a unit and hence $\gamma_3 \in \Gamma_{\infty}$.

In the second case, again by the same reasoning, $\gamma_2=t\gamma_1$ for some $t \in \h_{\infty}$ and $\gamma_3 \in \h_{\infty}$.
Thus $S_1 \cap S_2 \cap S_3 = \F \cap \gamma_1^{-1}(\F) \cap \gamma_1^{-1}t^{-1}(\F) \cap \gamma_3\inv(\F)\subseteq V_{\gamma_1}\cap V \cap \left(\gamma_1^{-1}(\F) \cap \gamma_1^{-1}t^{-1}(\F) \right)$ with $V$ en element of $\V_{\infty}$. Observe that $\gamma_1\left(\gamma_1^{-1}(\F) \cap \gamma_1^{-1}t^{-1}(\F) \right) = \F \cap t^{-1}(\F) \subseteq V'$ for some element $V' \in \V_{\infty}$. Hence $S_1 \cap S_2 \cap S_3 \subseteq V_{\gamma_1} \cap V \cap \gamma_1^{-1}(V')$. By applying the implicit function theorem, as in the proof of Lemma~\ref{sidemani1} and distinguishing between the different cases for $V$ and $V'$, it is now a matter of a straightforward tedious calculation to prove that $V_{\gamma_1} \cap V \cap \gamma_1^{-1}(V')$ has dimension at most $1$.

In the third case $V_{\gamma_1}=V_3^{\pm}$ by (\ref{GammaV12}).
and $\gamma_2=Q\gamma_1\in \GEN{P_1,P_2}P_3^{\pm}$ for some $Q\in \GEN{P_1,P_2}$. Then $\gamma_1(S_1\cap S_2 \cap S_3) =\F \cap \gamma_1\inv(\F) \cap Q\inv (\F) \cap \gamma_1\gamma_3\inv(\F)\subseteq V_{\gamma_1\inv} \cap V_Q \cap V_{\gamma_3\gamma_1\inv}$. Since $\gamma_3\gamma_1\inv \not \in \h_{\infty}$ and $\gamma_1\inv\not \in \GEN{P_1,P_2}$ we deduce that $V_{\gamma_1\inv}$, $V_Q$ and $V_{\gamma_3\gamma_1\inv}$ are different and hence the result follows from Lemma~\ref{3inter}.

In the fourth case, again  $V_{\gamma_1}=V_{\gamma_2}=V_3^{\pm}$ and $\gamma_2=Q\gamma_1$, as above. Moreover $S_1\cap S_2\cap S_3=\F \cap \gamma_1^{-1}(\F) \cap Q\inv\gamma_1\inv(\F) \cap \gamma_3\inv(\F) \subseteq V_{\gamma_1} \cap V_{\gamma_3} \cap \left(\gamma_1^{-1}(\F) \cap Q\inv\gamma_1\inv(\F) \right)$, with $V_{\gamma_1}$ and $V_{\gamma_3}$ two different elements of $\V_{\infty}$. As in case 2, $\left(\gamma_1^{-1}(\F) \cap Q\inv\gamma_1\inv(\F) \right)= \gamma_1^{-1}(V')$ for some $V' \in \lbrace V_1^{\pm}, V_2^{\pm} \rbrace$. Again the implicit function theorem gives the desired result.
\end{proof}

We now describe $\F \cap V$ for $V \in  \V \cup \V_{\infty}$ in terms of intersection of tiles.

\begin{lemma}\label{variety_union_tiles}
Let $V \in \V \cup \V_{\infty}$ and set $\Gamma_{V}^{*}= \lbrace \gamma \in \Gamma\ :\ \F \cap \gamma(\F) \subseteq V \cap \F \rbrace$. Then
$$ V \cap \F = \bigcup_{\gamma \in \Gamma_V^*} \left(\F \cap \gamma(\F) \right).$$
\end{lemma}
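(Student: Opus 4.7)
The inclusion $\supseteq$ is immediate from the definition of $\Gamma_V^*$. For the reverse inclusion, fix $Z \in V \cap \F$. Since $V$ is one of the hypersurfaces whose non-strict defining inequality cuts out $\F$, we have $V \cap \F^\circ = \emptyset$, so $Z \in \partial \F$. By Corollary~\ref{boule} together with Lemma~\ref{locallyfinite}, I pick $\lambda > 0$ such that the tiles meeting $B(Z, \lambda)$ are exactly the finite list $g_1(\F) = \F, g_2(\F), \ldots, g_n(\F)$ of tiles containing $Z$; these cover $B(Z, \lambda)$. The key step of the proof is to exhibit some $i \geq 2$ with $V_{g_i^{-1}} = V$, for then Lemma~\ref{ImageDFAlt} gives $\F \cap g_i(\F) \subseteq V_{g_i^{-1}} = V$, so $g_i \in \Gamma_V^*$ and $Z \in \F \cap g_i(\F)$, as required.

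I would prove this key step by contradiction. Assume $V_{g_i^{-1}} \neq V$ for every $i \geq 2$. By Lemma~\ref{sidemani2} each $V \cap V_{g_i^{-1}}$ has dimension at most $2$. Since $\partial \F \cap B(Z, \lambda) \subseteq \bigcup_{i \geq 2} \F \cap g_i(\F) \subseteq \bigcup_{i \geq 2} V_{g_i^{-1}}$ by Lemma~\ref{ImageDFAlt}, and $V \cap \F \subseteq \partial \F$, it follows that
\begin{equation*}
V \cap \F \cap B(Z, \lambda) \;\subseteq\; \bigcup_{i=2}^n \bigl( V \cap V_{g_i^{-1}} \cap B(Z, \lambda) \bigr),
\end{equation*}
a finite union of sets of dimension at most $2$, and therefore itself of dimension at most $2$. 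For $Z \in (V \cap \F)^{(3)}$ this contradicts the local dimension of $V \cap \F$ at $Z$ being $3$, finishing the key step for such $Z$.

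For the remaining points $Z \in V \cap \F \setminus (V \cap \F)^{(3)}$ I would use a closure argument: by Lemma~\ref{locallyfinite} the family $\{\F \cap \gamma(\F) : \gamma \in \Gamma_V^*\}$ is locally finite, so $A_V := \bigcup_{\gamma \in \Gamma_V^*} \F \cap \gamma(\F)$ is closed, and by the previous paragraph $A_V \supseteq (V \cap \F)^{(3)}$. The conclusion $A_V \supseteq V \cap \F$ then follows from the density of $(V \cap \F)^{(3)}$ in $V \cap \F$, which for essential $V$ is a standard feature of the semi-algebraic structure (the top-dimensional stratum of a closed semi-algebraic set defined by non-strict inequalities on a smooth manifold is dense). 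The main obstacle is making this density step precise and treating the non-essential case $(V \cap \F)^{(3)} = \emptyset$; in the latter situation every such $Z$ lies at the intersection of $V$ with a nearby essential hypersurface $V'$, and one produces the required $\gamma \in \Gamma_V^*$ with $\F \cap \gamma(\F)$ a lower-dimensional piece of $V \cap \F$ by adapting the above dimension argument using Lemma~\ref{3inter}.
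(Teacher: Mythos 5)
Your argument takes a completely different route from the paper, and it has a genuine gap in its final step. The dimension-counting part is fine as far as it goes: for a point $Z$ at which $V\cap\F$ has local dimension $3$, the contradiction you derive from Lemma~\ref{sidemani2} and the covering $\partial\F\cap B(Z,\lambda)\subseteq\bigcup_{i\ge 2}V_{g_i^{-1}}$ is correct, so $(V\cap\F)^{(3)}\subseteq A_V$. But the lemma is asserted for \emph{every} $Z\in V\cap\F$ and every $V\in\V\cup\V_\infty$, and your reduction of the remaining points to the closure of $(V\cap\F)^{(3)}$ rests on a density claim that you do not prove and that is not a general fact about semi-algebraic sets: a closed semi-algebraic set can perfectly well have a lower-dimensional ``whisker'' attached to its top-dimensional part, in which case the top-dimensional stratum is not dense. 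Nothing in the paper (in particular not Lemma~\ref{borderdefmani}, which only says each boundary point lies in $(M\cap\F)^{(3)}$ for \emph{some} essential $M$, not for your fixed $V$) rules this out for $V\cap\F$. Worse, in the non-essential case $(V\cap\F)^{(3)}=\emptyset$ your dimension argument yields no contradiction at all --- the conclusion that $V\cap\F\cap B(Z,\lambda)$ has dimension at most $2$ is then simply consistent --- and the sentence ``one produces the required $\gamma$ by adapting the above argument using Lemma~\ref{3inter}'' is not a proof. So the statement remains unestablished exactly on the set of points where it is least obvious.

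For contrast, the paper's proof is a short direct construction that treats all points uniformly and needs no dimension theory. For $V=V_{c,d}$ and $Z\in V_{c,d}\cap\F$ one takes $\gamma=\pmatriz{{cc}a&b\\c&d}\in\Gamma$; since $\Vert cZ+d\Vert=1$, equation (\ref{Height}) gives $h(\gamma(Z))=h(Z)$, so $\gamma(Z)\in\F_0$ by Lemma~\ref{maxhorbit}, and after composing with a suitable $\tau\in\Gamma_\infty$ (which preserves height by (\ref{HeightInfinity})) one lands in $\F$. Hence $Z\in\F\cap\gamma^{-1}\tau^{-1}(\F)$, and Lemma~\ref{ImageDFAlt} places this intersection inside $V_{c,d}$, so $\gamma^{-1}\tau^{-1}\in\Gamma_V^*$. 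If you want to salvage your approach you would need to prove the density of $(V\cap\F)^{(3)}$ in $V\cap\F$ and separately dispose of the non-essential case; the height argument does both for free and is the intended proof.
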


\begin{proof}
One inclusion is obvious. To prove the other one, take $Z \in V \cap \F$. Suppose $V \in \V$, i.e. $V=V_{c,d}$ for some $(c,d) \in \S$. Take $\gamma=\begin{pmatrix} a & b \\ c & d \end{pmatrix} \in \Gamma$ for some $a,b \in R$. By (\ref{Height}), $h(\gamma(Z))=h(Z)$ and hence by Lemma~\ref{maxhorbit}, $\gamma(Z) \in \F_0$. As $\F_{\infty}$ is a fundamental domain for $\Gamma_{\infty}$, there exists $\tau \in \Gamma_{\infty}$ such that $\tau\gamma(Z) \in \F_{\infty}$. As, by (\ref{HeightInfinity}), $h(\tau\gamma(Z))=h(\gamma(Z))=h(Z)$, we have $\tau\gamma(Z) \in \F$ and thus $Z \in \F \cap \gamma^{-1}\tau^{-1}(\F)$. By Lemma~\ref{ImageDFAlt}, $\F \cap \gamma^{-1}\tau^{-1}(\F) \subseteq V_{c,d} \cap \F$.

If $V \in \V_{\infty}$ a similar reasoning may be applied.
\end{proof}

Next we show that every edge is contained in precisely two sides. To prove this we will make use of the following lemma.

\begin{lemma}\label{edgein2sides1}
Let $Z\in \F$ and let $C$ denote the intersection of the tiles of $\te$ containing $Z$ (i.e. $C=\bigcap_{Z \in \gamma(\F)} \gamma(\F)$).
If $\dim(C)=2$, then the number of sides of $\F$ containing $C$ is exactly 2.
\end{lemma}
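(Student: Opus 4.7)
The plan is first to use Proposition~\ref{relintside} to pick a special point $Z_0 \in C$ with the property that, for every $\gamma \in \Gamma$, $Z_0 \in \gamma(\F)$ if and only if $C \subseteq \gamma(\F)$. Enumerate the tiles containing $Z_0$ (equivalently, the tiles containing $C$) as $\F = \gamma_1(\F), \gamma_2(\F), \ldots, \gamma_n(\F)$; since $\dim C = 2 < 4 = \dim \F$, necessarily $n \ge 2$. The sides of $\F$ containing $C$ are precisely those $\F \cap \gamma_i(\F)$ (with $2 \le i \le n$) that have dimension $3$; by Lemma~\ref{ImageDFAlt}, each such side is contained in some $V_{\gamma_i^{-1}} \in \V \cup \V_\infty$, and that variety therefore contains $C$.

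For the upper bound, suppose three distinct sides of $\F$ contained $C$. Then by Lemma~\ref{intersect3sides} their common intersection has dimension at most $1$; but this intersection contains $C$, which has dimension $2$, a contradiction. Hence at most two sides of $\F$ contain $C$.

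For the lower bound I will show that $Z_0$ lies in exactly two elements of $\V \cup \V_\infty$, and then run a local ``quadrant'' argument. Since $Z_0 \in C \subseteq \partial\F$, Lemma~\ref{borderdefmani} implies that $Z_0$ belongs to at least one element of $\V \cup \V_\infty$. It cannot belong to only one: if a unique $V \in \V \cup \V_\infty$ contained $Z_0$, then by Proposition~\ref{esshyploc3} $V$ would be essential and $\F$ would locally look like a half-ball bounded by $V$, in which case $C$ would coincide near $Z_0$ with the $3$-dimensional set $\F \cap V$, contradicting $\dim C = 2$. It cannot belong to three or more either: if $V_1, V_2, V_3 \in \V \cup \V_\infty$ all contained $Z_0$, then $C$ would be contained in $V_1 \cap V_2 \cap V_3$, which by Lemma~\ref{3inter} has local dimension at most $1$ at $Z_0$, again contradicting $\dim C = 2$. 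Hence there are exactly two elements $V_1, V_2 \in \V \cup \V_\infty$ containing $Z_0$, and both are essential by Proposition~\ref{esshyploc3}. By Lemma~\ref{sidemani1}, $V_1$ and $V_2$ are smooth $3$-dimensional varieties meeting in a set of local dimension $2$ at $Z_0$; this intersection coincides with $C$ locally. Taking a small ball $B = B(Z_0, \lambda)$ (via Lemma~\ref{locallyfinite}) meeting only the tiles $\gamma_1(\F),\ldots,\gamma_n(\F)$, the transversal hypersurfaces $V_1$ and $V_2$ cut $B$ into four open ``quadrants.'' Using Lemma~\ref{ImageDFAlt} and the relevant $V_i^{\ge}$ inequalities, $\F$ occupies one of these quadrants; its two $3$-dimensional local bounding faces lie in $V_1 \cap \F$ and $V_2 \cap \F$ respectively. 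Each of these faces is covered, via Lemma~\ref{variety_union_tiles}, by intersections $\F \cap \gamma_i(\F)$; these intersections meet $Z_0$, hence are among the $\gamma_i$ enumerated above, and their boundary along $C$ forces at least one $i$ with $\F \cap \gamma_i(\F)$ a $3$-dimensional side lying in $V_1$, and at least one $j$ with $\F \cap \gamma_j(\F)$ a $3$-dimensional side lying in $V_2$. These two sides are distinct (since $V_1 \ne V_2$), so $\F$ has at least two sides containing $C$.

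The main obstacle will be the lower bound: ruling out the ``one variety'' case rigorously (where the delicate point is the interaction between $\dim C$ and the local structure of $\partial \F$ at $Z_0$) and verifying the quadrant decomposition so that each of $V_1, V_2$ actually contributes a side of $\F$ containing $C$. This in turn rests on the transversality statement in Lemma~\ref{sidemani1}, the essentiality provided by Proposition~\ref{esshyploc3}, and the fact that the tiles of $\te$ have pairwise disjoint interiors and cover~$\HTwo$. Combined with the upper bound, this yields exactly two sides, as required.
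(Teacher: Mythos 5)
Your upper bound (no three sides can contain $C$, by Lemma~\ref{intersect3sides}) is exactly the paper's argument and is fine. The lower bound, however, has a genuine gap: the claim that $Z_0$ must lie on \emph{exactly two} elements of $\V\cup\V_{\infty}$ is false, and the argument you give to exclude the one-hypersurface case does not work. You assert that if a unique $V\in\V\cup\V_{\infty}$ contained $Z_0$ then ``$C$ would coincide near $Z_0$ with the $3$-dimensional set $\F\cap V$.'' This conflates the intersection of the \emph{tiles} containing $Z_0$ (which is what $C$ is) with the intersection of the \emph{hypersurfaces} containing $Z_0$. A single essential hypersurface $V$ can carry several distinct sides of $\F$: the sets $\Gamma_{V_3^{\pm}}$ and $\Gamma_{V_{c,d}}$ are in general not singletons (cf.\ Lemma~\ref{GammaV3} and Lemma~\ref{GammaVcd}, and the remark that covering $\F\cap V_3^{\pm}$ may require more than one side). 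At a point $Z_0$ where two such sides $S_{\gamma_1},S_{\gamma_2}\subseteq V$ meet, at least three tiles $\F$, $\gamma_1^{-1}(\F)$, $\gamma_2^{-1}(\F)$ contain $Z_0$, their intersection $C$ is $2$-dimensional, yet $Z_0$ lies on only the one hypersurface $V$. Your subsequent ``quadrant'' argument, which needs two transversal hypersurfaces $V_1\neq V_2$, therefore does not apply in this case, and the lower bound is not established.

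This one-hypersurface case is precisely where the paper does most of its work: after observing (via Lemma~\ref{variety_union_tiles}) that every element of $\V\cup\V_{\infty}$ through $Z$ must contain $C$, it splits into the case of two hypersurfaces (handled essentially as you do, via Proposition~\ref{esshyploc3} and Lemma~\ref{sidemani1}) and the case of a single hypersurface $W$. In the latter it writes the second tile-element as $\gamma_2=t\gamma_1$ with $t\in\Gamma_{\infty}\setminus\{1\}$, rules out $W=V_i^{\pm}$ for $i=1,2$ by an explicit coordinate computation, and for $W\in\V$ or $W=V_3^{\pm}$ produces a second side inside the same $W$ by comparing $S_{\gamma_1}$ with $M^{\le}=\gamma_1^{-1}(E_t')$ and invoking Lemma~\ref{borderunionsides}. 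To repair your proof you would need to supply an argument of this kind; also note that your step ``$C$ would be contained in $V_1\cap V_2\cap V_3$'' itself requires the observation from Lemma~\ref{variety_union_tiles} that any hypersurface through $Z_0$ contains a two-tile intersection through $Z_0$, hence contains $C$.
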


\begin{proof}
By Lemma~\ref{intersect3sides}, $C$ is not contained in three different sides. So we only have to show that $C$ is contained in two different sides.

Clearly, every intersection of tiles containing $Z$ also contains $C$.
We claim that a similar property holds for the elements $M\in \M$.
Indeed, if $M\in \M$ then $M=\gamma(V)$ for some $V\in \V\cap \V_{\infty}$ and some $\gamma\in \h$.
If $Z\in M$ then $\gamma\inv(Z)\subseteq V$ and hence, by Lemma~\ref{variety_union_tiles}, there is $\gamma_1\in \Gamma$ such that  $\gamma\inv(Z) \in \F \cap \gamma_1(\F)\subseteq V$.
Therefore $Z\in \gamma(\F)\cap \gamma\gamma_1(\F)$ and hence $C\subseteq \gamma(\F)\cap \gamma\gamma_1(\F)\subseteq \gamma(V)=M$, as desired. We claim that a similar property holds for the elements $V\in \V \cup \V_{\infty}$.
Indeed, if $Z\in V$ then by Lemma~\ref{variety_union_tiles}, there is $\gamma_1\in \Gamma$ such that  $Z \in \F \cap \gamma_1(\F)\subseteq V$.
Hence $C\subseteq \F \cap \gamma_1(\F) \subseteq V$, as desired.

By Lemma~\ref{borderdefmani}, $Z$ is contained in at least one essential hypersurface $W$. Since $C$ has dimension 2, $Z$ cannot be contained in more than two elements of $\V\cup\V_{\infty}$, by Lemma~\ref{3inter}.
Then, the elements of $\V\cup \V_{\infty}$ containing $Z$ are essential hypersurfaces of $\F$ by Proposition~\ref{esshyploc3}. Take $\lambda >0$ such that $B(Z,\lambda) \cap \gamma(\F) = \emptyset$ for every $\gamma$ with $Z \not \in \gamma(\F)$ and $B(Z, \lambda) \cap W =
\emptyset$ for every essential hypersurface $W$ of $\F$ not containing $Z$. Again by Proposition~\ref{esshyploc3}, if $W$ is an essential hypersurface of $\F$ containing $Z$ then
$B(Z, \lambda) \cap W \cap \F$ is of dimension $3$ and $B(Z, \lambda) \cap W \cap \F \subseteq \bigcup_{1\ne \gamma\in \h, Z \in \gamma(\F)} (\F \cap \gamma(\F) \cap W)$
and thus one of these intersections is of dimension $3$.
Therefore, if $W$ is an essential hypersurface of $\F$ containing $C$ then it contains a side containing $Z$.
Thus, if $C$ is contained in exactly two elements of $\V \cup \V_{\infty}$ then each contains one side containing $Z$ and these two sides have to be different by Lemma~\ref{sidemani1}.

Hence we may assume that $Z$ is contained in exactly one element $W$ of $\V\cup \V_{\infty}$.
Then $C$ is contained in at least one side $S_1$ and $S_1\subseteq W$.
Let $\gamma_1 \in \Gamma_{S_1}$ (i.e. $S_1=\F \cap \gamma_1\inv(\F)$). Then, by Lemma~\ref{ImageDFAlt}, $W=V_{\gamma_1}$.
As $\dim(C)=2$, $\dim(S_{\gamma_1})=3$ and $C\subseteq \F\cap S_1$ there is $\gamma_2\in \h\setminus \{1,\gamma_1\}$ with $Z\in \gamma_2\inv(\F)$.
Thus $\gamma_2=t\gamma_1$ with $t\in \h_{\infty}\setminus \{1\}$ and if $\gamma_1\in \Gamma_{\infty}$ then $t\in \GEN{P_1,P_2}$, by Lemma~\ref{GammaV3} and Lemma~\ref{GammaVcd}.
We consider separately two cases.

First assume that $W=V_i^{\pm}$  with $i=1$ or $2$ and without loss of generality, we may assume that $W=V_1^+$.
Then $\gamma_1=P_1\inv$ by (\ref{GammaV12}) and $\gamma_2=P_1^{\alpha}P_2^{\beta}$ for some integers $\alpha$ and $\beta$.
Moreover $Z=(\frac{1}{2},s_2,r,h)$ and $(\alpha+\frac{1}{2},s_2+\beta,r,h) = \gamma_2(Z) \in \F$.
Thus $\alpha\in \{0,-1\}$.
However, if $\alpha=0$ then $V_{\gamma_2}=V_2^{\pm}\ne W$.
Thus $\alpha=-1$ and hence $\beta=\pm 1$, because $\gamma_1\ne \gamma_2$.
Then $Z\in V_2^{\pm}$, a contradiction.

Assume now that either $W\in \V$ or $W=V_3^{\pm}$.
In both cases $V_t\ne V_{\gamma_1}$, because in the first case $V_{\gamma_1}\in \V$ and $V_t\in \V_{\infty}$ and in the second case $V_t=V_i^{\pm}$ with $i=1$ or $2$.
Moreover $\gamma_1(C)\subseteq \F \cap \gamma_1(\F) \cap t\inv(\F) \subseteq V_{\gamma_1\inv} \cap V_t$. As $\F\subseteq E_t$, $\gamma_1(S_{\gamma_1})=S_{\gamma_1^{-1}} \subseteq E_t$.
Let $M=\gamma_1\inv(V_t)$, $M^{\ge}=\gamma_1\inv(E_t)$ and $M^{\le}=\gamma_1\inv(E_t')$. Then, by the previous, $S_{\gamma_1} \subseteq M^{\ge}$.
By the choice of $\lambda$ and Lemma~\ref{sidemani1}, $B(Z,\lambda)\cap W \cap \F \cap M^{\le}$ has dimension 3.
Thus, by Lemma~\ref{borderunionsides}, $Z \in S$ for some side $S \subseteq W$ and different from $S_{\gamma_1}$. Hence also $C$ is contained in two different sides.
\end{proof}

The following is a consequence of Proposition~\ref{relintside} and Lemma~\ref{edgein2sides1}.

\begin{corollary}\label{edgein2sides2}
If $E$ is an edge of the fundamental domain $\F$, then there are precisely two sides that contain $E$.
\end{corollary}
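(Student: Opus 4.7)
The plan is to reduce the statement directly to Lemma~\ref{edgein2sides1} by exhibiting a point $Z\in E$ whose associated intersection of tiles equals $E$ itself. The key tool for producing such a $Z$ is Proposition~\ref{relintside}, which plays the role of a ``relative interior'' of the cell $E$.

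More precisely, I would proceed as follows. First, note that by Definition~\ref{defside} an edge $E$ of $\F$ is a cell of dimension $2$ contained in $\F$, so in particular $E$ can be written as a (necessarily finite) intersection $E=\bigcap_{j\in J}\gamma_j(\F)$ of tiles of $\te$. Taking this intersection over the (possibly larger) set of \emph{all} tiles containing $E$, one has
$$E\;=\;\bigcap_{\gamma\in\Gamma,\;E\subseteq\gamma(\F)}\gamma(\F),$$
since the right hand side is sandwiched between $E$ and the original expression for $E$. Now apply Proposition~\ref{relintside} to the cell $E$ to obtain a point $Z\in E$ such that, for every $\gamma\in\Gamma$, one has $Z\in\gamma(\F)$ if and only if $E\subseteq\gamma(\F)$. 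In particular $Z\in\F$.

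The crucial consequence is that the two intersections
$$C\;:=\;\bigcap_{Z\in\gamma(\F)}\gamma(\F)\qquad\text{and}\qquad\bigcap_{E\subseteq\gamma(\F)}\gamma(\F)$$
are indexed over the same family of elements of $\Gamma$ and hence are equal. Combined with the displayed equality above, this yields $C=E$, so $\dim C=2$. Lemma~\ref{edgein2sides1}, applied to this $Z$ and this $C$, then immediately gives that exactly two sides of $\F$ contain $C=E$, which is the desired conclusion.

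There is no real obstacle here: all of the work has already been done in Lemma~\ref{edgein2sides1} (which in turn relies on Lemma~\ref{intersect3sides} for the upper bound and on Proposition~\ref{esshyploc3} together with Lemma~\ref{variety_union_tiles} for the lower bound). The only thing one has to be slightly careful about is the identification $C=E$, which is a formal consequence of Proposition~\ref{relintside} and the fact that a cell is, by definition, an intersection of tiles. Once that identification is in place, the corollary is essentially a restatement of Lemma~\ref{edgein2sides1}.
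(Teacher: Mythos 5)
Your proof is correct and is essentially the paper's own argument: the paper states the corollary as an immediate consequence of Proposition~\ref{relintside} and Lemma~\ref{edgein2sides1}, which is exactly the reduction you carry out (using Proposition~\ref{relintside} to produce a point $Z\in E\subseteq\F$ whose associated intersection of tiles equals $E$, then invoking Lemma~\ref{edgein2sides1}). The identification $E=\bigcap_{E\subseteq\gamma(\F)}\gamma(\F)$ that you spell out is the only detail the paper leaves implicit, and you justify it correctly.
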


The following lemma is obvious.

\begin{lemma}\label{edgeinter}
Let $E_1$ and $E_2$ be two different edges of some tile. Then the intersection $E_1 \cap E_2$ is of dimension at most $1$.
\end{lemma}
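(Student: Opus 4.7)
The plan is to leverage the definition of a cell in a very direct way. First, I would observe that every cell $C$ equals the intersection of \emph{all} the tiles containing it: by definition $C=\bigcap_{i\in I}T_i$ for some tiles $T_i$, each of which satisfies $T_i\supseteq C$, so on the one hand $\bigcap_{T\in\te,\,T\supseteq C}T\subseteq\bigcap_{i\in I}T_i=C$, and on the other hand $C\subseteq T$ for every $T$ in the larger collection. Hence $C=\bigcap_{T\in\te,\,T\supseteq C}T$. In particular, a cell is uniquely determined by the set of tiles containing it.

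Now let $E_1\ne E_2$ be the two edges in the statement. By the previous observation, the sets $\{T\in\te:E_1\subseteq T\}$ and $\{T\in\te:E_2\subseteq T\}$ must differ, so there exists a tile $T$ containing exactly one of them. Interchanging the roles of $E_1$ and $E_2$ if necessary, I may assume $E_1\subseteq T$ and $E_2\not\subseteq T$. Writing $T=\gamma(\F)$ for some $\gamma\in\Gamma$ and applying the defining property of the cell $E_2$ (Definition~\ref{defside}) to this $\gamma$, I get
\begin{equation*}
\dim(E_2\cap T)\le\dim(E_2)-1=2-1=1.
\end{equation*}
Since $E_1\subseteq T$, we have $E_1\cap E_2\subseteq T\cap E_2$, so $\dim(E_1\cap E_2)\le\dim(T\cap E_2)\le 1$, which is what was to be shown (if $E_1\cap E_2=\emptyset$ there is nothing to prove).

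There is essentially no obstacle here: the whole argument is a one-line application of the cell property once one notices that distinct cells are distinguished by the tiles containing them. The only subtle point is the initial remark that $C=\bigcap_{T\supseteq C}T$, which ensures that a cell cannot fail to be distinguished from another by some tile. Note that the hypothesis that $E_1$ and $E_2$ are edges of a common tile is not actually used in this argument; the conclusion holds for any two distinct edges of $\te$.
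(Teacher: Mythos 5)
Your argument is correct: the identity $C=\bigcap_{T\in\te,\,T\supseteq C}T$ shows distinct cells are separated by some tile, and then the defining inequality $\dim(E_2\cap\gamma(\F))\le\dim(E_2)-1=1$ for a tile $\gamma(\F)$ containing $E_1$ but not $E_2$ gives the bound at once. The paper offers no proof (it declares the lemma obvious), and your one-line application of Definition~\ref{defside} is exactly the intended kind of argument; your closing observation that the common-tile hypothesis is not needed is also accurate.
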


%\begin{proof}
%\ChAngel{DELETE THIS PROOF}
%By definition of a edge, $E_1$ and $E_2$ are given by a finite intersection $E_1=\cap_{k=1}^n g_{1k}(\F)$ and $E_2=\cap_{k=1}^m g_{2k}(\F)$. Then
%	$$E_1 \cap E_2 =\left(\cap_{k=1}^n g_{1k}(\F)\right) \cap \left(\cap_{k=1}^m g_{2k}(\F)\right).$$
%As $E_1 \neq E_2$, there exists $1 \leq k_0 \leq m$, such that $E_1 \not \subseteq g_{2{k_0}}(\F)$. By the definition of an edge, $E_1 \cap g_{2{k_0}}(\F)$ is of dimension at most $1$ and hence $E_1 \cap E_2  \subseteq E_1 \cap g_{2{k_0}}(\F)$ is of dimension at most $1$.
%\end{proof}

Finally, in order to be able to  describe the relations, we need two more lemmas.

\begin{lemma}\label{edgeunion}
Let $\gamma_1,\gamma_2 \in \Gamma$. Assume $\gamma_1(\F) \cap \gamma_2(\F)$ has dimension $2$. Then,
\begin{enumerate}
\item there exists $Z_0 \in \gamma_1(\F) \cap \gamma_2(\F)$ such that $\bigcap_{Z_0 \in \gamma(\F)} \gamma(\F)$ is of dimension $2$; and
\item for every such $Z_0$, the set  $\bigcap_{Z_0 \in \gamma(\F)} \gamma(\F)$ is an edge contained in $\gamma_1(\F) \cap \gamma_2(\F)$.
\end{enumerate}
\end{lemma}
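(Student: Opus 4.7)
My plan is to prove Part~1 by a local stratification argument that isolates a good choice of $Z_0$, and then derive Part~2 from a dimension bound on triple intersections of elements of $\M$.

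For Part~1, set $D := \gamma_1(\F) \cap \gamma_2(\F)$. Since $\dim D = 2$, I choose a point $Z_1$ in the 2-dimensional part $D^{(2)}$. By local finiteness of $\te$ (Lemma~\ref{locallyfinite}), there is an open ball $U$ around $Z_1$ meeting only finitely many tiles $T_1 = \gamma_1(\F)$, $T_2 = \gamma_2(\F)$, $T_3, \ldots, T_m$, and after a further shrinking I may assume that each $T_i$ meeting $U$ contains $Z_1$. For every subset $J \subseteq \{3, \ldots, m\}$ the set
\begin{equation*}
D_J \,=\, \Bigl(U \cap \bigcap_{i \in \{1,2\} \cup J} T_i\Bigr) \setminus \bigcup_{i \in \{3, \ldots, m\} \setminus J} T_i
\end{equation*}
is semi-algebraic, and the $D_J$ give a finite disjoint decomposition of $U \cap D$. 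Since $\dim(U \cap D) = 2$, at least one $D_J$ has dimension $2$; I pick $J$ maximal with this property and $Z_0 \in D_J^{(2)}$. By local finiteness applied to $Z_0 \in U$, the tiles containing $Z_0$ are exactly $\{T_i : i \in \{1,2\} \cup J\}$, hence $C_{Z_0} := \bigcap_{Z_0 \in \gamma(\F)} \gamma(\F) = \bigcap_{i \in \{1,2\} \cup J} T_i$ contains $D_J$ and is contained in $D$, giving $\dim C_{Z_0} = 2$.

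For Part~2, let $Z_0$ be any point as in Part~1; containment $C_{Z_0} \subseteq D$ is immediate, so I focus on the cell condition: for every tile $T$ with $Z_0 \notin T$ one needs $\dim(C_{Z_0} \cap T) \leq 1$. Assume toward a contradiction that $\dim(C_{Z_0} \cap T) = 2$ for some such $T$. By Lemma~\ref{ImageDFAlt} applied to $\gamma_1^{-1}\gamma_2$, the set $D$ is contained in the element $M_D := \gamma_1(V_{\gamma_1^{-1}\gamma_2}) \in \M$, while Lemma~\ref{defside2} applied to the triple $(T_1, T_2, T)$ gives $T_1 \cap T_2 \cap T \subseteq M_1 \cap M_2$ for two distinct $M_1, M_2 \in \M$. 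If the three varieties $M_D, M_1, M_2$ are pairwise distinct, then the $\Gamma$-equivariant version of Lemma~\ref{3inter} forces $C_{Z_0} \cap T \subseteq M_D \cap M_1 \cap M_2$ to have dimension at most $1$, contradicting our assumption. Otherwise $M_D$ coincides with $M_1$ or $M_2$, and I would repeat the dimension argument with an auxiliary triple $(T_1, T_j, T)$ or $(T_2, T_j, T)$ using an extra tile $T_j$ with $j \in J$, hoping to produce a fresh $\M$-variety distinct from $M_D$ and the remaining one among $M_1, M_2$.

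The main obstacle I anticipate is exactly this degenerate case: when the basic triple $(T_1, T_2, T)$ does not already yield three distinct $\M$-varieties containing $C_{Z_0} \cap T$, one must exploit the finer structure of the tiles belonging to $C_{Z_0}$---these are precisely the $T_i$ for $i \in \{1,2\} \cup J$ produced by the maximality step of Part~1---to generate the required third variety. Bridging the local construction of $Z_0$ inside the neighborhood $U$ with the global cell condition, which must hold against every tile of $\te$ regardless of whether it meets $U$, is the most delicate point of the proof.
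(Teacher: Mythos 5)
Your Part~1 is correct and takes a different (arguably cleaner) route than the paper: the paper writes $\gamma_1(\F)\cap\gamma_2(\F)$ as a countable union of the sets $\bigcap_{Z\in\gamma(\F)}\gamma(\F)$ (using countability of $\Gamma$ and the nonemptiness of $\Gamma_Z$, which it gets from Lemma~\ref{connected}) and picks a $2$-dimensional piece, whereas you localize at a point of $D^{(2)}$ and use local finiteness to get a finite disjoint semi-algebraic partition. Both arguments are valid; the maximality of $J$ in your construction is not actually needed.

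Part~2, however, has a genuine gap. The step you call ``the $\Gamma$-equivariant version of Lemma~\ref{3inter}'' is not available and is not a formal consequence of Lemma~\ref{3inter}. That lemma bounds the intersection of three pairwise distinct elements of $\V\cup\V_{\infty}$; applying an isometry extends it only to three distinct elements of $\gamma(\V\cup\V_{\infty})$ for a \emph{single} $\gamma$. The varieties $M_D$, $M_1$, $M_2$ you produce are of the form $\gamma_i(V_{\gamma_j^{-1}\gamma_i})$ for \emph{different} group elements $\gamma_i$, and the paper only proves (Lemma~\ref{sidemani2}) that two distinct elements of $\M$ meet in dimension at most $2$; there is no bound of dimension $1$ for a triple intersection of arbitrary distinct elements of $\M$, and such a bound is exactly the hard point --- it is why the proof of Lemma~\ref{intersect3sides} is a long case analysis converting translates under different group elements, via Lemmas~\ref{GammaVcd}, \ref{GammaV3} and \ref{imageh}, into configurations where Lemma~\ref{3inter} applies. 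In addition, your degenerate case ($M_D\in\{M_1,M_2\}$) is explicitly left unresolved (``hoping to produce a fresh $\M$-variety''). The paper's proof of Part~2 avoids triple intersections of $\M$ altogether: assuming $\bigcap_{i=0}^{n}\gamma_i(\F)$ has dimension $2$, it reapplies the Part~1 argument to find a suitable point $Z_1$, uses Lemma~\ref{connected} on closed balls around $Z_1$ and around $Z_0$ together with Lemma~\ref{edgein2sides1} to produce three distinct \emph{sides} of a single tile $\gamma_{j_1}(\F)$ all containing the $2$-dimensional set, and then contradicts Lemma~\ref{intersect3sides}. To salvage your approach you would essentially have to reprove Lemma~\ref{intersect3sides} in the guise of a triple-intersection bound on $\M$, which is where all the work lies.
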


\begin{proof}
1. We first show the existence of a point $Z_0 \in \gamma_1(\F) \cap \gamma_2(\F)$ such that $\bigcap_{Z_0 \in \gamma(\F)} \gamma(\F)$ is of dimension $2$.
For every $Z \in \gamma_1(\F) \cap \gamma_2(\F)$, let $\Gamma_Z=\lbrace \gamma \in \Gamma\setminus \lbrace \gamma_1, \gamma_2 \rbrace :\ Z \in \gamma(\F) \rbrace$.
We claim that $\Gamma_Z\ne\emptyset$ for every $Z\in \gamma_1(\F)\cap \gamma_2(\F)$.
Otherwise there is a $\lambda>0$ such that $B(Z,\lambda)\cap \gamma(\F)=\emptyset$ for every $\gamma\in \Gamma\setminus \{\gamma_1,\gamma_2\}$.
Then $B(Z,\lambda)\subseteq \gamma_1(\F)\cup \gamma_2(\F)$, which is in contradiction with Lemma~\ref{connected}.
This proves the claim.
Thus $\gamma_1(\F) \cap \gamma_2(\F) = \bigcup_{Z \in \gamma_1(\F) \cap \gamma_2(\F)} \left( \bigcap_{\gamma \in \Gamma_Z} \gamma(\F) \cap \gamma_1(\F) \cap \gamma_2(\F) \right)$. As, by assumption, $\gamma_1(\F) \cap \gamma_2(\F)$ has dimension $2$ and $\Gamma$ is countable, it follows that there exists  $Z_0 \in \gamma_1(\F) \cap \gamma_2(\F)$ with $\bigcap_{Z_0 \in \gamma(\F)} \gamma(\F)$ of dimension $2$.

2. Since $Z_0$ belongs to only finitely many tiles (by Lemma~\ref{locallyfinite}), say $\gamma_1(\F),\gamma_2(\F),
\ldots , \gamma_n(\F)$, we have that $Z_0 \in \bigcap_{i=1}^n \gamma_i(\F)$ and $\bigcap_{i=1}^n \gamma_i(\F)$ has
dimension $2$. We want to prove that this intersection is an edge.  Let $\gamma_0 \in \Gamma \setminus \lbrace \gamma_1,
\ldots , \gamma_n \rbrace$. As $Z_0 \not \in \gamma_0(\F)$, it is clear that $\bigcap_{i=1}^n \gamma_i(\F) \not
\subseteq \gamma_0(\F)$. Hence it remains to prove that $\bigcap_{i=1}^n \gamma_i(\F)$ intersects $\gamma_0(\F)$ in
dimension at most $1$. Suppose this is not the case, i.e. $\bigcap_{i=0}^n \gamma_i(\F)$ is of dimension $2$. As in the
first part of the proof, there exists $Z_1 \in \bigcap_{i=0}^n \gamma_i(\F)$ such that $\bigcap_{Z_1 \in
\gamma(\F)}\gamma(\F)$ is of dimension $2$. Let $\gamma_0, \ldots, \gamma_m$ be all elements of $\Gamma$ (with $m \geq
n$) such that $Z_1 \in \gamma_i(\F)$. So $\bigcap_{i=0}^m \gamma_i(\F)$ is of dimension $2$. By Corollary~\ref{boule},
let $\lambda_1 >0$ and $B=\overline{B(Z_1,\lambda_1)}$ be such that $B \cap \gamma(\F) \neq \emptyset$ if and
only if $Z_1 \in \gamma(\F)$. Then $B = \left(B \cap \bigcup_{i=1}^n \gamma_i(\F) \right) \cup \left(
B \cap \left( \bigcup_{i=n+1}^m \gamma_i(\F) \cup \gamma_0(\F) \right) \right)$, where both factors are closed
sets of dimension $4$ (by Lemma~\ref{Floc4boundloc3}). Hence by Lemma~\ref{connected}, $\left( B \cap
\bigcup_{i=1}^n \gamma_i(\F) \right) \cap \left( B \cap \left( \bigcup_{i=n+1}^m \gamma_i(\F) \cup \gamma_0(\F)
\right) \right)$ is of dimension $3$. Thus there exists $1 \leq j_1 \leq n$ and $n+1 \leq j_2 \leq m$ or $j_2=0$ such
that $Z_1 \in \gamma_{j_1}(\F) \cap \gamma_{j_2}(\F)$ and the latter intersection is of dimension $3$. Hence
$\gamma_{j_1}(\F) \cap \gamma_{j_2}(\F)$ is a side of the tile $\gamma_{j_1}(\F)$. We now come back to $Z_0$. Let
$\lambda_0 >0$ and $B'=\overline{B(Z_0,\lambda_0)}$ be such that $\cap \gamma(\F) \neq \emptyset$ if and only
if $Z_0 \in \gamma(\F)$. Thus $B'= \left(B'\cap \gamma_{j_1}(\F) \right) \cup \bigcup_{i=1, i \neq
j_1}^n \left(B' \cap \gamma_i(\F) \right)$ and again by Lemma~\ref{connected}, there exists $1 \leq j_3 \leq n$
and $j_3 \neq j_1$ such that $\gamma_{j_1}(\F) \cap \gamma_{j_3}(\F)$ is of dimension $3$. Thus it is a side of the tile
$\gamma_{j_1}(\F)$. Hence $Z_0 \in \partial \gamma_{j_1}(\F)$, such that $\bigcap_{Z_0 \in \gamma(\F)} \gamma(\F)$ is of
dimension $2$. Moreover the latter is contained in $\gamma_{j_1}(\F) \cap \gamma_{j_3}(\F)$, which is a side. Hence by
Lemma~\ref{edgein2sides1}, there exists a second side of $\gamma_{j_1}(\F)$, which contains $\bigcap_{Z_0 \in
\gamma(\F)} \gamma(\F)$. Thus this side also contains $Z_0$ and hence there exists $1
\leq j_4 \leq n$ with $j_4 \neq j_1$ and $j_4 \neq j_3$ such that $\gamma_{j_1}(\F) \cap \gamma_{j_4}(\F)$ is a side.
Thus the tile $\gamma_{j_1}(\F)$ contains three sides, $\gamma_{j_1}(\F) \cap \gamma_{j_2}(\F)$, $\gamma_{j_1}(\F) \cap
\gamma_{j_3}(\F)$ and $\gamma_{j_1}(\F) \cap \gamma_{j_4}(\F)$ and their intersection contains  $\bigcap_{i=0}^m
\gamma_i(\F)$, which is of dimension $2$. This contradicts Lemma~\ref{intersect3sides}. Hence $\bigcap_{i=1}^n
\gamma_i(\F)$ is an edge and it contains $Z_0$.

\end{proof}

\begin{lemma}\label{seqassedge}
Let $E$ be an edge.
The finitely many elements $\gamma \in \h$ with $E \subseteq \gamma(\F)$ can be ordered, say as $\gamma_0,\gamma_1, \ldots , \gamma_m=\gamma_0$, such that $\gamma_{j-1}(\F) \cap \gamma_j(\F)$ is a side (containing $E$) for every $1 \leq j \leq m$.
Moreover,  up to cyclic permutations and reversing the ordering, there is  only one possible ordering with this property.
\end{lemma}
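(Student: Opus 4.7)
The plan is to interpret the claim as a graph-theoretic fact about a certain finite graph $G_E$ attached to the edge $E$. Define the vertex set of $G_E$ to be
\[
V(G_E)=\{\gamma\in\Gamma : E\subseteq \gamma(\F)\},
\]
which is finite by Lemma~\ref{locallyfinite}, and join two distinct vertices $\gamma,\gamma'$ by an edge of $G_E$ whenever $\gamma(\F)\cap \gamma'(\F)$ is a side of $\te$. The statement of the lemma then becomes: $G_E$ is a single simple cycle. Indeed, an ordering as in the lemma is exactly a cyclic traversal of the edges of $G_E$, and the set of such traversals of a single $m$-cycle is precisely the orbit of one ordering under cyclic permutation and reversal.

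First I would check that every vertex of $G_E$ has degree exactly $2$. Given $\gamma\in V(G_E)$, the set $\gamma^{-1}(E)$ is an edge of $\F$ by Lemma~\ref{edgeonedge}, and by Corollary~\ref{edgein2sides2} it is contained in exactly two sides of $\F$, say $S_1=\F\cap \delta_1^{-1}(\F)$ and $S_2=\F\cap \delta_2^{-1}(\F)$ with $\delta_1\neq \delta_2$ and $\delta_i\neq 1$. Applying $\gamma$, the two sides of the tile $\gamma(\F)$ containing $E$ are $\gamma(\F)\cap \gamma\delta_1^{-1}(\F)$ and $\gamma(\F)\cap \gamma\delta_2^{-1}(\F)$, and the two vertices $\gamma\delta_1^{-1}$, $\gamma\delta_2^{-1}$ belong to $V(G_E)$ since each of these sides contains $E$. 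A finite graph in which every vertex has degree exactly $2$ is a disjoint union of simple cycles, and on each such cycle the orderings satisfying the condition of the lemma form exactly one orbit under cyclic rotation and reversal. Hence, once connectedness of $G_E$ is established, both existence and uniqueness will follow.

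The main obstacle is therefore to show that $G_E$ is connected, and this is where I expect the real work to lie. I would argue by contradiction, assuming $V(G_E)=A\sqcup B$ with $A,B$ non-empty and no edge of $G_E$ between $A$ and $B$. By Proposition~\ref{relintside} pick $Z\in E$ such that $Z\in \gamma(\F)$ if and only if $\gamma\in V(G_E)$, and then, invoking Corollary~\ref{boule} together with Lemma~\ref{locallyfinite}, choose $\lambda>0$ with $\overline{B(Z,\lambda)}\cap \gamma(\F)\neq\emptyset$ if and only if $\gamma\in V(G_E)$. Set
\[
U_A=\bigcup_{\gamma\in A}\overline{B(Z,\lambda)}\cap \gamma(\F),\qquad U_B=\bigcup_{\gamma\in B}\overline{B(Z,\lambda)}\cap \gamma(\F).
\]
Both $U_A$ and $U_B$ are closed in $\overline{B(Z,\lambda)}$, their union equals $\overline{B(Z,\lambda)}$, and each has local dimension $4$ at $Z$ by Lemma~\ref{Floc4boundloc3}. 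Hence by Lemma~\ref{connected} (from the appendix), $\dim(U_A\cap U_B)=3$.

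On the other hand,
\[
U_A\cap U_B\subseteq \bigcup_{\gamma\in A,\,\gamma'\in B}\gamma(\F)\cap \gamma'(\F),
\]
a finite union. By the choice of the partition, no pair $(\gamma,\gamma')\in A\times B$ gives a side of $\te$, so each intersection $\gamma(\F)\cap \gamma'(\F)$ has dimension at most $2$ (by the definition of a side). Consequently $\dim(U_A\cap U_B)\le 2$, contradicting the previous dimension computation. Thus $G_E$ is connected, which together with the degree-$2$ property shows it is a single simple cycle and completes the proof.
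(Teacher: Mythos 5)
Your proof is correct and is essentially the paper's argument in graph-theoretic packaging: your degree-two computation is the paper's iterative construction of the loop via Lemma~\ref{edgeonedge} and Corollary~\ref{edgein2sides2}, and your connectedness argument via Proposition~\ref{relintside}, Corollary~\ref{boule} and Lemma~\ref{connected} is precisely the step the paper disposes of with ``arguments similar to those at the end of the proof of Lemma~\ref{edgeunion}''. If anything your write-up is more explicit; the only detail worth adding is that $U_A \not\subseteq U_B$ and $U_B \not\subseteq U_A$ (needed to invoke Lemma~\ref{connected}) hold because each $\overline{B(Z,\lambda)}\cap\gamma(\F)$ has dimension $4$ and hence contains interior points of $\gamma(\F)$, which lie in no other tile.
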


\begin{proof}
Recall that there are only finitely many $\gamma \in \Gamma$ with $E \subseteq \gamma(\F)$. Let $\gamma_0$ be such an element.
Then $E$ is an edge of $\gamma_0(\F)$ and hence, by Corollary~\ref{edgein2sides2}, there exists two sides, say $\gamma_0(\F)\cap \gamma_1{\F}$ and $\gamma_0(\F)\cap \gamma_{m-1}(\F)$ of $\gamma_0(\F)$
containing $E$.
Now $E$ also is an edge of $\gamma_1(\F)$ and $\gamma_0(\F)\cap \gamma_1(\F)$ is one of the two sides of $\gamma_1(\F)$ containing $E$. Hence there exists a third tile, say $\gamma_2(\F)$,
such that $\gamma_1(\F) \cap \gamma_0(\F)$ and $\gamma_1(\F)
\cap \gamma_{2}(\F)$ are the two different sides of $\gamma_1(\F)$ containing $E$. So $\gamma_2 \not \in \lbrace \gamma_0, \gamma_1 \rbrace$.
One may continue this process and have a sequence $\gamma_{-1},\gamma_0,\gamma_1,\gamma_2,\dots$ of elements of $\Gamma$ such that  $\gamma_{i-1}(\F)\cap \gamma_i(\F)$ and $\gamma_i(\F)\cap \gamma_{i+1}(\F)$ are the two sides of $\gamma_i(\F)$ containing $E$ for every $i\ge 0$.
In particular every three consecutive elements of the list of $\gamma_i$'s are different.
As there are only finitely many tiles containing
$E$, after finitely many steps we obtain $\gamma_j \in \Gamma$ with $\gamma_i=\gamma_j$ and $0\le i<j$.
Let $i$ be minimal with this property. We claim that $i=0$. Indeed, if $0 <i$, then by construction, $\gamma_i(\F) \cap \gamma_{i-1}(\F)$, $\gamma_i(\F) \cap
\gamma_{i+1}(\F)$, $\gamma_j(\F) \cap \gamma_{j-1}(\F)=\gamma_i(\F) \cap \gamma_{j-1}(\F)$ and $\gamma_j(\F) \cap \gamma_{j+1}(\F) = \gamma_i(\F) \cap \gamma_{j+1}
(\F)$ are sides of $\gamma_i(\F)$, all containing $E$.  By Corollary~\ref{edgein2sides2}, $\gamma_{i-1}=\gamma_{j-1}$ and
$\gamma_{i+1}=\gamma_{j+1}$ or $\gamma_{i+1}=\gamma_{j-1}$ and $\gamma_{i-1}=\gamma_{j+1}$. Both cases contradict the minimality of $i$. It remains
to prove that $\lbrace \gamma_0, \ldots , \gamma_{j-1} \rbrace = \lbrace \gamma \in \Gamma :\ E \subseteq \gamma(\F) \rbrace$. This may be easily done by arguments similar as the arguments used in the end of the proof of Lemma~\ref{edgeunion}.

To prove the last part, notice that instead of starting with the chosen element $\gamma_0$ one could have  started with any of  the finitely many elements $\gamma \in \Gamma$ such that $E \subseteq \gamma(\F)$. Second, note that once the element $\gamma_0$ is fixed, there exists two unique tiles $\gamma_{m-1}(\F)$ and $\gamma_1(\F)$ such that $\gamma_0(\F) \cap \gamma_{m-1}(\F)$ and $\gamma_0(\F) \cap \gamma_1(\F)$ are sides. Hence, up to a choice of the first element $\gamma_0$, thus up to a cyclic permutations, and up to a choice of a second element, thus up to reversing ordering, there is only one possible ordering.
\end{proof}

Based on the previous lemma, we give the following new definition.

\begin{definition}
Let $E$ be an edge.
We call an ordering $(\gamma_0, \ldots, \gamma_{m-1},\gamma_0)$ of the elements $\gamma\in \h$ such that $E\subseteq \gamma(\F)$ as in Lemma~\ref{seqassedge} an \emph{edge loop} of $E$.
\end{definition}

We now come to a description of the relations of $\h$. We first define what is called a cycle. To do so, we  fix some notations.

\begin{definition}\label{cycle}
Let $E$ be an edge of $\F$ and $S$ a side of $\F$ containing $E$. Then define recursively the sequence $(E_1,S_1,E_2,S_2, \ldots)$ as follows:
\begin{enumerate}
\item $E_1=E$ and $S_1=S$, \label{cond1}
\item $E_{i+1}=\gamma_{S_i}(E_i)$, \label{cond2}
\item $S_{i+1}$ is the only side of $\F$ different from $S_i^*$ that contains $E_{i+1}$. \label{cond3}
\end{enumerate}
\end{definition}

Note that parts $2$ and $3$ are justified by Lemma~\ref{edgeonedge} and Corollary~\ref{edgein2sides2}. For every $i$, clearly $E_i \subseteq S_i$ and $E_{i+1} \subseteq S_{i}^{*}$, as $\gamma_{S_i}(S_i)=S_i^{*}$. Moreover, each pair $(E_i, S_i)$ determines the pairs $(E_{i-1}, S_{i-1})$ and $(E_{i+1}, S_{i+1})$. This is clear for the subsequent pair $(E_{i+1}, S_{i+1})$ but also for the previous one $(E_{i-1},S_{i-1})$ because if $i \geq 2$, then $S_{i-1}^{*}$ is the only side of $\F$ containing $E_i$ and different from $S_{i}$ and $E_{i-1}=\gamma_{S_{i-1}}^{-1}(E_i)=\gamma_{S_{i-1}^{*}}(E_i)$. Thus each pair $(E_i, S_i)$ determines the sequence $(E_1, S_1, \ldots )$.

Now we relate the sequence $(E_1,S_1,E_2,S_2,\dots)$ with an edge loop of $E$.
Let $L_{\F}=\{\gamma\in \h : E\subseteq \gamma(\F)\}$. Clearly $1,\gamma_{S_1}\inv \in L_{\F}$ and as $S_1$ is a side of $\F$, there is a unique edge loop of $E$  of the form $(\gamma_0=1,\gamma_1=\gamma_{S_1},\gamma_2,\dots,\gamma_{m-1},\gamma_0)$. As $S_1^*=S_{\gamma_1}$ and $S_2$ are the two sides of $\F$ containing $E_2$, $S_1=\gamma_1\inv(S_1)^*$ and $\gamma_1\inv(S_2)$ are the two sides of $\gamma_1\inv(\F)$ containing $\gamma_1\inv(E_2)=E$. Therefore $\gamma_1\inv(S_2)=\gamma_1\inv(\F)\cap \gamma_2(\F)$ and hence $S_2=\F\cap \gamma_1\gamma_2(\F)$. A similar argument shows that $S_i=\F \cap \gamma_1 \gamma_2 \dots \gamma_i(\F)$ for every $i$.
In particular, the sequence $(E_1,S_1,E_2,S_2,\dots)$ is periodic, i.e. there is $n >0$ such that $(E_{i+n},S_{i+n})=(E_i,S_i)$ for every $i$. If $n$ is minimal with this property, then $(E_i,S_i) \neq (E_j,S_j)$ for $1 \leq i<j\leq n$. We call $(E_1,S_1,\ldots E_n,S_n)$ the \emph{cycle} determined by $(E_1,S_1)$. Hence this proves the following lemma.

\begin{lemma}\label{cyclefinite}
Let $E_1$ be an edge of $\F$ and $S_1$ a side of $\F$ containing the edge $E_1$. The cycle starting with $(E_1,S_1,\ldots)$ is a   finite cycle.
\end{lemma}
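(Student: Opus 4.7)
The plan is to show that the sequence $(E_1,S_1,E_2,S_2,\dots)$ is essentially a traversal of the edge loop around $E_1$ provided by Lemma~\ref{seqassedge}, and since that loop is finite, so is the cycle. The work has already been largely carried out in the paragraph immediately preceding the statement; the lemma is the clean packaging of that discussion, so my proposal is to simply extract and tidy that argument.

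First I would recall that the set $L_{\F}=\{\gamma\in\Gamma:E_1\subseteq\gamma(\F)\}$ is finite (there are only finitely many tiles containing any compact set, by Lemma~\ref{locallyfinite}) and that by Lemma~\ref{seqassedge} we may order $L_{\F}=\{\gamma_0=1,\gamma_1,\dots,\gamma_{m-1}\}$ so that $\gamma_{j-1}(\F)\cap\gamma_j(\F)$ is a side containing $E_1$ for every $1\le j\le m$ (indices mod $m$). Up to reversing the ordering we may further arrange $\gamma_1=\gamma_{S_1}$, so that $S_1=\F\cap\gamma_1(\F)$.

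Next I would show by induction on $i$ that
\[
E_i=\gamma_1\gamma_2\cdots\gamma_{i-1}(E_1)\quad\text{and}\quad S_i=\F\cap\gamma_1\gamma_2\cdots\gamma_i(\F).
\]
The base case $i=1$ is the definition, and the case $i=2$ is shown in the preceding paragraph: the two sides of $\gamma_1^{-1}(\F)$ containing $E_1=\gamma_1^{-1}(E_2)$ must be $\gamma_1^{-1}(S_1^*)=S_1$ and $\gamma_1^{-1}(S_2)$, whence $\gamma_1^{-1}(S_2)=\gamma_1^{-1}(\F)\cap\gamma_2(\F)$ by the edge loop property, giving $S_2=\F\cap\gamma_1\gamma_2(\F)$. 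The inductive step is identical after translating the picture by $\gamma_1\gamma_2\cdots\gamma_{i-1}$ and using Corollary~\ref{edgein2sides2}.

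Finally, the induction shows that $(E_i,S_i)$ is determined by the product $\gamma_1\gamma_2\cdots\gamma_i$, so the sequence takes values in the finite set $L_{\F}\times\{\text{sides of }\F\}$ and must eventually repeat. Since each pair $(E_i,S_i)$ uniquely determines both its predecessor and its successor (as noted right after Definition~\ref{cycle}), the first repetition closes the sequence into a cycle of some minimal length $n\le m$, giving the desired finite cycle $(E_1,S_1,\dots,E_n,S_n)$. The only subtle point, and hence the main (minor) obstacle, is to be careful that the edge loop is well-defined only up to cyclic permutation and reversal, so one must fix $\gamma_0=1$ and the direction $\gamma_1=\gamma_{S_1}$ before identifying the $S_i$ with the tiles $\gamma_1\gamma_2\cdots\gamma_i(\F)$; after this choice the identification and the finiteness conclusion are immediate.
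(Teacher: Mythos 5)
Your proposal is correct and follows essentially the same route as the paper, which indeed proves this lemma in the paragraph preceding its statement: identify the sequence $(E_1,S_1,E_2,S_2,\dots)$ with a traversal of the (finite) edge loop of $E_1$ from Lemma~\ref{seqassedge}, establish $S_i=\F\cap\gamma_1\cdots\gamma_i(\F)$ inductively via Corollary~\ref{edgein2sides2}, and deduce periodicity from finiteness together with the fact that each pair $(E_i,S_i)$ determines both its neighbours. The only caveat is the convention $S_1=\F\cap\gamma_{S_1}^{-1}(\F)$ versus $\F\cap\gamma_{S_1}(\F)$, where your write-up inherits the same sign ambiguity already present in the paper's own paragraph, so this is not a substantive deviation.
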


Note that $S_1$ and $S_n^{*}$ are the two different sides of $\F$ containing $E_1$ and thus there are two cycles starting with the edge $E_1$, namely $(E_1,S_1,\ldots E_n,S_n)$ and $(E_1,S_n^{*}, E_{n}, S_{n-1}^{*}, \ldots , E_2,S_1^{*})$.

It is now also clear that if $E$ is an edge and $S$ is a side containing $E$, then all the cycles containing $E$ are cyclic permutations of the cycle starting with $(E,S)$ and the cycles obtained by replacing in those the sides by their paired sides and reversing the order. In particular, if $E_i=E_j$ with $1 \leq i < j \leq n$, then $S_i \neq S_j$ and hence
\begin{eqnarray*}
 & &(E_i, S_i, E_{i+1}, S_{i+1}, \ldots, E_n, S_n, E_1, S_1, \ldots, E_{i-1}, S_{i-1}) \\
& = &(E_j, S_{j-1}^{*}, E_{j-1}, S_{j-2}^{*}, \ldots, S_2^{*}, E_1, S_n^{*}, E_n, S_{n-1}^{*}, \ldots, E_{j+1}, S_{j}^{*}).
\end{eqnarray*}

\begin{lemma}\label{cyclefinorder}
If $(E_1,S_1,E_2,S_2, \ldots, E_n,S_n)$ is a cycle of $\F$ then $\gamma_{S_n}\gamma_{S_{n-1}},\ldots , \gamma_{S_1}$ has finite order.
\end{lemma}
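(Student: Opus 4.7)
The plan is to show that $g = \gamma_{S_n}\gamma_{S_{n-1}}\cdots \gamma_{S_1}$ fixes the edge $E_1$ setwise, use this to make $g$ act as a permutation of the finite set of tiles of $\te$ containing $E_1$, and then conclude that a suitable power of $g$ must equal the identity because it stabilizes $\F$.

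First I would verify that $g(E_1)=E_1$. By condition \ref{cond2} in Definition \ref{cycle}, $E_{i+1}=\gamma_{S_i}(E_i)$ for every $i\ge 1$, so iterating gives
\begin{equation*}
E_{n+1}=\gamma_{S_n}\gamma_{S_{n-1}}\cdots\gamma_{S_1}(E_1)=g(E_1).
\end{equation*}
Because the cycle has period $n$, we have $(E_{n+1},S_{n+1})=(E_1,S_1)$ and hence $g(E_1)=E_1$.

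Next, let $\mathcal{T}_{E_1}=\{T\in\te\,:\,E_1\subseteq T\}$. By Lemma \ref{seqassedge} this set is finite; write $m=|\mathcal{T}_{E_1}|$. Since $g\in\Gamma$ sends tiles of $\te$ to tiles of $\te$, and since $g(E_1)=E_1$, the map $T\mapsto g(T)$ restricts to a permutation of the finite set $\mathcal{T}_{E_1}$. Consequently, $g$ induces an element of the symmetric group on $\mathcal{T}_{E_1}$ whose order $k$ divides $m!$, and therefore $g^k(T)=T$ for every $T\in\mathcal{T}_{E_1}$. In particular, taking $T=\F\in\mathcal{T}_{E_1}$ (since $E_1$ is an edge of $\F$), we obtain $g^k(\F)=\F$.

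Finally, I would invoke the defining property of the fundamental domain to conclude $g^k=1$. Indeed, if $g^k\ne 1$ and $g^k(\F)=\F$, then for any $Z$ in the interior of $\F$ the pair $\{Z,g^k(Z)\}$ would lie entirely in the interior of $\F$, contradicting the definition of a fundamental domain. Hence $g^k=1$, so $g$ has finite order as claimed. The main potential obstacle is verifying cleanly that $g$ maps $\mathcal{T}_{E_1}$ into itself; but this is immediate once one notes that the tiles of $\te$ are exactly the sets of the form $\phi(\F)$ with $\phi\in\Gamma$, and that $g\in\Gamma$ together with $g(E_1)=E_1$ forces $E_1\subseteq g\phi(\F)$ whenever $E_1\subseteq\phi(\F)$.
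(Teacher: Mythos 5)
Your proof is correct and follows essentially the same route as the paper: both establish $g(E_1)=E_1$ from the cycle's periodicity, and then use the fact that an edge lies in only finitely many tiles to force some power of $g$ to fix $\F$ and hence equal the identity. Your write-up merely makes explicit the permutation-of-tiles step and the appeal to the fundamental domain property that the paper leaves implicit.
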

\begin{proof}
Let $\gamma=\gamma_{S_n}\gamma_{S_{n-1}},\ldots , \gamma_{S_1}$. Clearly $\gamma(E_1)=E_1$ and thus $\gamma^k(E_1)=E_1$ for all non-negative integers $k$.
Hence $E_1\subseteq \gamma^k(\F)$ and because every edge is contained in only finitely many tiles, $\gamma$ has finite order.
\end{proof}

Because of Theorem~\ref{Poincare}, we thus obtain a natural group epimorphism
\begin{equation}\label{delta}
\varphi: \Delta \rightarrow \h: \left[\gamma_S\right] \mapsto \gamma_S
\end{equation}
where $\Delta$ is the group given by the following presentation
\begin{itemize}
\item \emph{Generators}: a generator $\left[\gamma_S\right]$ for each side $S$ of $\F$
\item \emph{Relations}: $\left[\gamma_S\right]\left[\gamma_S*\right]=1$ if the sides $S$ and $S^*$ are paired and $(\left[ \gamma_{S_n}\right]\ldots \left[\gamma_{S_1}\right])^m=1$ if $(E_1,S_1,E_2,S_2, \ldots, E_n,S_n)$ is a cycle and $m$ is the order of $\gamma_{S_n}\ldots \gamma_{S_1}$.
\end{itemize}

Our next aim is to show that $\varphi$ is also injective and thus we obtain a presentation for $\Gamma$. To that end we introduce the following definition.

\begin{definition}
A \emph{loop of tiles} is a finite list of tiles $( h_0(\F), h_1(\F), \ldots , h_n(\F) )$ with $h_i \in \h$ such that $h_0(\F) = h_n(\F)$ and $h_{i-1}(\F) \cap h_i(\F)$ is of dimension $3$, for every $i=1, \ldots n$.
\end{definition}

The last condition means that $\F \cap h_{i}^{-1}h_{i-1}(\F)$ is of dimension $3$, which is equivalent to $\gamma_i=h_{i-1}^{-1}h_{i}$ being a side-paring transformation. Moreover we get the relation $\gamma_1 \gamma_2\ldots \gamma_n=1$ which is called a \emph{loop relation}.
In Theorem~\ref{Poincarerelations} we will show that these relations form a complete set of relations of $\h$.
%We will show that these relations form in fact a complete set of relations for $\h$.

It is easy to see that the pairing and cycle relations are determined by  loop relations. Indeed, let $S$ and $S^*$ be two paired sides of $\F$. Then $( \F, \gamma_s(\F), \gamma_S\gamma_{S^*}(\F)=\F )$ is a loop of tiles which gives as loop relation the pairing relation. Let $(E_1,S_1,E_2,S_2, \ldots, E_n,S_n)$ be a cycle. We have seen that there exists a positive integer $m$ such that $(\gamma_{S_n}\gamma_{S_{n-1}}\ldots  \gamma_{S_1})^m=1$.
%Hence also the inverse $((\gamma_{S_1^*}\gamma_{S_2^*},\ldots , \gamma_{S_n^*})^m=1$.
Set $h_0=1$ and $h_i=h_{i-1}\gamma_{S_j}$ where $j \equiv i \mod (n)$ and $i \in \lbrace 1, \ldots , mn \rbrace$. Consider $( h_0(\F), \ldots, h_{mn}(\F) )$. Clearly $h_{mn}=1$ and hence $h_{mn}(\F)=\F=h_0(\F)$. Also, for every $i \in \lbrace 1, \ldots, mn \rbrace$, $h_{i-1}(\F) \cap h_i(\F) = h_{i-1}(\F \cap \gamma_{S_j}(\F))$, where $\gamma_{S_j}$ is a side-paring transformation and hence $\F \cap \gamma_{S_j}(\F)$ and thus also $h_{i-1}(\F) \cap h_i(\F)$ is of dimension $3$. So, by definition, $( h_0(\F), \ldots, h_{mn}(\F) )$ is a loop of tiles and the associated loop relation is the cycle relation.

%Next we show that the loop relations may be obtained by juxtaposin\gamma lists obtained directly from pairing and cycle relations.

%For this we introduce another definition.
Consider the union of intersections of varieties of $\M$ such that these intersections have dimension at most $1$. Let $\Omega$ denote  the complement of this set in $\HTwo$. Note that by Lemma~\ref{pathconnected}, $\Omega$ is path-connected.

In the remainder we fix $\alpha$ to be a continuous map
  $$\alpha: \left[0,1 \right] \rightarrow \Omega ,$$
such that
 $$\alpha(0) \in g(\F)^{\circ} \mbox{ and } \alpha(1) \in g'(\F)^{\circ} ,$$
for some $g,g' \in \h$ and such that $\alpha$ is made up of a finite number of line segments, which are parametrized by a polynomials of  degree at most one.  Moreover, for  each line segment forming $\alpha$, we suppose that at least one of its  end-points does not belong to any element in $\M$.
Note that  such a map exists. Indeed, let $\M_{\alpha}$ denote  the set consisting of the elements of $\M$ that have non-empty intersection with (the image of)  $\alpha$. Then it is easy to see that $\Omega \setminus \bigcup_{M \in \M_{\alpha}} M$ is dense in $\Omega$. Hence, by Lemma~\ref{Eisele1}, such a map $\alpha$ indeed  exists.

\begin{lemma}\label{alfcutfinite}
The set $\{t\in [0,1] : \alpha(t)\in \partial g(\F) \text{ for some } g\in \h\} $ is finite.
\end{lemma}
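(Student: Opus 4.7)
The plan is to reduce the problem to counting roots of polynomials on finitely many linear segments against finitely many algebraic varieties, exploiting compactness of $\alpha([0,1])$ and the hypothesis on the endpoints of the segments.

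First I would restrict attention to finitely many tiles. Since $\alpha([0,1])$ is compact, Lemma~\ref{locallyfinite} yields finitely many tiles $g_1(\F),\dots,g_N(\F)$ that meet $\alpha([0,1])$. If $\alpha(t)\in\partial g(\F)$ for some $g\in\Gamma$, then $\alpha(t)\in g(\F)$ (a closed set), so $g=g_i$ for some $i$. Using $\partial g_i(\F)\subseteq\bigcup_{M\in\V\cup\V_\infty} g_i(M)$ from \eqref{Borders} (transported by $g_i$), it suffices to control the set of $t$ for which $\alpha(t)\in g_i(M)$ for some $i$ and some $M\in\V\cup\V_\infty$.

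Next I would cut down the list of relevant varieties. For each fixed $i$, the pre-image $g_i^{-1}(\alpha([0,1]))$ is compact, so by the local finiteness of $\V\cup\V_\infty$ (Lemma~\ref{Icdlocallyfinite}) only finitely many $M\in\V\cup\V_\infty$ satisfy $M\cap g_i^{-1}(\alpha([0,1]))\neq\emptyset$, or equivalently $g_i(M)\cap\alpha([0,1])\neq\emptyset$. Taking the union over $i=1,\dots,N$ gives a finite family
\[
\mathcal{N}=\{g_i(M):1\le i\le N,\ M\in\V\cup\V_\infty,\ g_i(M)\cap\alpha([0,1])\neq\emptyset\}\subseteq\M,
\]
and the set we want to bound is contained in $\{t\in[0,1]:\alpha(t)\in M_0 \text{ for some } M_0\in\mathcal N\}$.

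Finally, I would argue segment-by-segment. Every $M_0\in\mathcal{N}$ is the $\Gamma$-image of an element of $\V\cup\V_\infty$, hence an algebraic set defined by polynomial equations in the coordinates of $\HTwo$. For each linear piece $s$ of $\alpha$, parametrized by polynomials of degree at most one in $t$, substituting the parametrization into the defining polynomials of $M_0$ produces polynomials in the single parameter $t$. Their common zero set on the parameter interval of $s$ is either finite or the whole interval; in the latter case $s\subseteq M_0$, which forces both endpoints of $s$ to lie in an element of $\M$, contradicting the standing hypothesis that at least one endpoint of each segment lies in no element of $\M$. Hence each $M_0\cap s$ contributes only finitely many parameter values, and since $\alpha$ consists of finitely many segments and $\mathcal{N}$ is finite, the full set is finite.

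The only real obstacle is verifying the intermediate finiteness of $\mathcal{N}$ without appealing to any global finiteness of $\V_e$ (we do not wish to assume here that $R$ is a PID); this is handled cleanly by combining compactness of $\alpha([0,1])$ with Lemma~\ref{Icdlocallyfinite} tile by tile, while the endpoint hypothesis on $\alpha$ is exactly what rules out whole-segment inclusions into a variety in the final polynomial step.
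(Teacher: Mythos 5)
Your proof is correct and follows essentially the same strategy as the paper's: reduce via compactness of $\alpha([0,1])$ and local finiteness to a finite family of algebraic varieties from $\M$, then observe that each degree-at-most-one segment either meets such a variety in finitely many points or is wholly contained in it, the latter being excluded by the endpoint hypothesis. The only (cosmetic) difference is that the paper routes the reduction through the sides of the finitely many tiles (Lemmas~\ref{borderunionsides} and \ref{ImageDFAlt}), whereas you cover each $\partial g_i(\F)$ directly by translates of $\V\cup\V_\infty$ and invoke Lemma~\ref{Icdlocallyfinite} tile by tile.
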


\begin{proof}
We know from Lemma~\ref{locallyfinite}  that the compact set $\alpha(\left[0,1\right])$ only intersects finitely many tiles and thus also only finitely many sides. By Lemma~\ref{ImageDFAlt}, for every side $S$ of some tile $g(\F)$, $g(\F) \cap S$ is contained in a precise variety $M \in \M$.
As the elements of $\M$ are real semi-algebraic varieties, which are given by polynomials, a line segment is either contained in such a variety or it intersects it in finitely many points.
The path $\alpha$ consists of finitely many line segments, such that at least one of the end-points of these line segments does not belong to any element in $\M$.
If  such a line segment  $l$ is  parametrized by a polynomial of first degree then there are only finitely many $t\in [0,1]$ such that $l(t)\in M$ for some $M\in  \M$.
Moreover, if a line segment $l$ is  parametrized by a polynomial of degree $0$, i.e. the image of $l$ is  just a point, then by definition this point is not contained in any element of $\M$ and $l \cap M = \emptyset$ for every $M \in \M$. Thus $\alpha(t)$ belongs to a side for only finitely many $t$. Hence the lemma follows by Lemma~\ref{borderunionsides}.
\end{proof}

%\begin{definition}\label{alfaadapt}
%An ordered list $C=(g_1, g_2, \ldots , g_n)$ of elements in $\h$ is said to be $(\alpha, \F)$-adapted if there exists a strict increasing list  of real numbers $0=a_0 < a_1 < \ldots < a_n=1$ such that, for every $1\leq i \leq n$,
%\begin{enumerate}[(i)]
%\item $g_{i-1} \neq g_i$,
%\item $\alpha([a_{i-1},a_i])\subseteq g_i(\F)$ and
%\item there exists $\epsilon_0>0$, such that for every $\epsilon < \epsilon_0$, $\alpha(a_i + \epsilon) \not \in g_i(\F)$.
%\end{enumerate}
%We call $L=(a_0,a_1,\ldots,a_n)$ a list of $C$.
%\end{definition}
%\begin{remark}
%As, by assumption, $\alpha(0) \in g(\F)^{\circ}$ and $\alpha(1) \in g'(\F)^{\circ}$ for some $g,g' \in \h$, the first element $g_1$ of the $(\alpha,\F)$-adapted list equals $g$ and the last element $g_n$ equals $g'$.
%\end{remark}
%We show that such an $(\alpha,\F)$-adapted list $C$ exists for every path $\alpha: [0,1] \rightarrow \Omega$ satisfying the conditions described above.

\begin{lemma}\label{alfadaptexists}
Let $\alpha$ be a continuous map $[0,1] \rightarrow \Omega$ such that
\begin{enumerate}
\item $\alpha(0) \in g(\F)^{\circ}$ and $\alpha(1) \in g'(\F)^{\circ}$ for some $g,g' \in \h$,
\item $\alpha$ is made up of a finite number of line segments,
\item for each of these line segments at least one of its two end-points does not belong to any element in $\M$.
\end{enumerate}
Then there exists a unique ordered list $\mathcal{L}=(a_0,g_1,a_1,g_2,a_2 \ldots , g_n, a_n)$, where $g_i \in \h$, $0=a_0 < a_1 < \ldots < a_n=1$ and for every $1\leq i \leq n$,
\begin{enumerate}[(i)]
\item $g_{i-1} \neq g_i$,
\item $\alpha([a_{i-1},a_i])\subseteq g_i(\F)$ and
\item there exists $\epsilon_0>0$, such that $\alpha((a_i,a_i + \epsilon)) \cap g_i(\F) = \emptyset$.
\end{enumerate}
We call $\mathcal{L}$ the \emph{partition of $\alpha$}.
%Moreover this list is unique and the end points of the line segments making up $\alpha$ are included in the interior of tiles.
\end{lemma}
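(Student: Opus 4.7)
The plan is to locate the finite set of times at which $\alpha$ touches a tile boundary, observe that between consecutive such times $\alpha$ stays inside a single tile interior, and then merge those intervals on which the tile does not actually change. The key input is Lemma~\ref{alfcutfinite}, which tells us that
$$B = \{t \in [0,1] : \alpha(t) \in \partial g(\F) \text{ for some } g \in \Gamma\}$$
is finite. I would enumerate $B = \{b_1 < \cdots < b_m\}$ and note that $0,1 \notin B$ since $\alpha(0)$ and $\alpha(1)$ lie in tile interiors.

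For each $t \in [0,1]\setminus B$, I would argue that $\alpha(t)$ lies in the interior of a unique tile: some tile contains $\alpha(t)$ because $\Gamma \cdot \F = \HTwo$, it is the interior because $\alpha(t) \notin \partial g(\F)$, and uniqueness comes from the fact that distinct tiles have disjoint interiors (otherwise two distinct points of a $\Gamma$-orbit would lie in $\F^{\circ}$, contradicting the fundamental domain property). Continuity of $\alpha$ and openness of tile interiors then make the map $t \mapsto (\text{the tile whose interior contains } \alpha(t))$ locally constant, hence constant on each of the open components $[0,b_1), (b_1,b_2),\ldots,(b_m,1]$; label the corresponding tiles $h_0(\F),\ldots,h_m(\F)$. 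By continuity and closedness of tiles, $\alpha(b_i) \in h_{i-1}(\F) \cap h_i(\F)$.

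To build the partition I would then keep only those cut points where the tile actually changes: let $\{j_1 < \cdots < j_{n-1}\} = \{i : h_{i-1} \neq h_i\}$, set $a_0 = 0$, $a_n = 1$, $a_i = b_{j_i}$ for $1 \leq i \leq n-1$, and $g_i = h_{j_{i-1}}$ (with $j_0 = 0$). Clause (i) is immediate. Clause (ii) follows because on every open subinterval of $[a_{i-1},a_i]$ missing $B$ the tile is $g_i(\F)$, at every internal cut point $b_j \in (a_{i-1},a_i)$ one has $h_{j-1} = h_j = g_i$ so $\alpha(b_j) \in g_i(\F)$, and the endpoints fall inside $g_i(\F)$ by closedness. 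For clause (iii), choose $\epsilon>0$ small enough that $(a_i, a_i+\epsilon)$ avoids $B$ and lies in the next component; then $\alpha(t) \in g_{i+1}(\F)^{\circ}$ with $g_{i+1} \neq g_i$, so $\alpha(t) \notin g_i(\F)^{\circ}$ by disjointness of distinct tile interiors, and $\alpha(t) \notin \partial g_i(\F)$ because $t \notin B$.

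Uniqueness would follow by induction: $a_0 = 0$ is forced, $g_1$ is forced as the unique tile interior containing $\alpha(0)$, and given $(a_{i-1},g_i)$, condition (iii) pins down $a_i$ as the first time after $a_{i-1}$ at which $\alpha$ leaves $g_i(\F)$ on a right-neighborhood, and then $g_{i+1}$ is forced as the tile interior entered immediately after. The likely main obstacle is clause (iii): one needs to exclude that $\alpha(t) \in \partial g_i(\F)$ for $t$ arbitrarily close to and larger than $a_i$, and this is precisely where the finiteness of $B$ supplied by Lemma~\ref{alfcutfinite} (together with the hypothesis $\alpha([0,1]) \subseteq \Omega$, which prevents pathological triple intersections) becomes essential.
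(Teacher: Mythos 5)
Your proposal is correct and follows essentially the same route as the paper: both invoke Lemma~\ref{alfcutfinite} to obtain the finite set of boundary-crossing times, observe that between consecutive crossings $\alpha$ stays in the interior of a unique tile, and then merge the subintervals on which that tile does not change (the paper does the merging by a greedy recursion on maximal $t_i$, you do it by marking all change points at once, which yields the same list). Your verification of clause (iii) and of uniqueness is more detailed than the paper's one-line dismissal, but no genuinely different idea is involved.
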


\begin{proof}
As $\alpha(0) \in g(\F)^{\circ}$, we set $g_1=g$ and $a_0=0$.
By Lemma~\ref{alfcutfinite}, there are only finitely many $t \in \left[0,1\right]$ such that $\alpha(t) \in \partial g(\F)$ for some $g \in \h$, say $t_1 < t_2 < \ldots < t_m$.
Put  $t_0=0$ and $t_{m+1}=1$. For  each $1 \leq i \leq m+1$, the set  $\alpha(t_{i-1},t_i)$ is contained in the interior of only one tile, say  $h_i(\F)$ with $h_i\in \Gamma$.
We now construct recursively $(a_0=0=t_0,g_1=g,a_1,g_1,\ldots,g_n,a_n)$. Let $a_1=t_i$ with $t_i$ maximal such that   $\alpha(\left[0,t_i\right]) \subseteq g_{1}(\F)$. Then $h_1=\ldots=h_i=g_1$,  $\alpha(\left[t_i,t_{i+1}\right]) \subseteq h_{i+1}(\F)$ and  we set $g_2=h_{i+1}$. Assume we have constructed $(a_0,g_1,a_1,\ldots , a_k,g_k)$ satisfying conditions (i)-(iii) and such that $a_k=t_i$ for some $i$ and $g_k=h_{i+1}$. Then let $a_{k+1}=t_j$ with $j$ maximal with $\alpha(\left[t_i,t_j\right]) \subseteq g_k(\F)$ and $g_{k+1}=h_{j+1}$. After finitely many steps, we obtain an ordered list $(a_0=0=t_0,g_1=g,a_1,g_1,\ldots,g_n,a_n)$ satisfying conditions  (i)-(iii). Clearly  such a  sequence is unique.
\end{proof}

\begin{remark}
As, by assumption, $\alpha(0) \in g(\F)^{\circ}$ and $\alpha(1) \in g'(\F)^{\circ}$ for some $g,g' \in \h$, the first element $g_1$ of the partition of $\alpha$ equals $g$ and the last element $g_n$ equals $g'$.
\end{remark}

%If $S=\F \cap g(\F)$ and $S^{*}=\F \cap g^{-1}(\F)$ is its paired side, then $\left[g \right]$ and $\left[g^{-1}\right]$ are generators of $\Delta$. The following lemma follows trivially from the pairing relations of the group $\Delta$.
%
%\begin{lemma}\label{inversgen}
%If $g \in \Gamma$ is such that $\F \cap g(\F)$ is a side then $\left[g^{-1}\right] = \left[g \right]^{-1}$.
%\end{lemma}
The remainder of the paper is based on ideas from a recent proof of the presentation part of the classical Poincar\'e theorem \cite{poincarepaper}.
Let $g,h\in \Gamma$. Let $C$ be a cell of $\te$ of  dimension $m\geq 2$  and that is contained in $g(\F)\cap h(\F)$.
We define $\kappa_C (g,h)\in \Delta$ as follows.
\begin{itemize}
\item If $m=4$ then $\kappa_C(g,h)=1$.
\item If $m=3$ then $\kappa_C (g,h) =[g\inv h]$.
\item If $m=2$ then $C$ is an edge contained in $g(\F)\cap h(\F)$ and thus, by Lemma~\ref{seqassedge}, $g$ and $h$ belong to an  edge loop of $C$. Up to a cyclic permutation, we can write the edge loop of $C$ as  $(g= k_0, \ldots , k_t =h, k_{t+1}, \ldots , k_m=g)$ (or the equivalent edge loop $(g=k_m,k_{m-1},\dots,k_t=h,k_{t-1},\dots,k_1,k_0=g)$) and we set
	$$\kappa_C (g,h) =[k_0\inv k_1][k_1\inv k_2]\;  \cdots [k_{t-1}\inv k_t] = [k_m\inv k_{m-1}] \cdots [k_{t+1}\inv k_{t}].$$
\end{itemize}

Observe that $\kappa_{C}(g,g)=1$ in the three cases.
%\sout{Also,  if $m=0$ then $g=h$.
%Furthermore,  if $g(\F)\cap h(\F)$ is a side then either $C=g(\F)\cap h(\F)$ or $C$ is an edge and $g$ and $h$ are consecutive elements of the edge loop of $C$;  in both cases, $\kappa_C(g,h)=[g\inv h]$.} \ChAnn{[I drop all the previous because it is subject of the next lemma.]}

\begin{lemma}\label{kappaElementaryL}
Let $g,h\in \Gamma$ and let $C$ be a cell of $\te$ of dimension $m\ge 2$ and that is contained in $g(\F)\cap h(\F)$. The  following properties hold.
\begin{enumerate}
\item\label{kappaAntisymmetric} $\kappa_C(g,h)=\kappa_C(h,g)\inv$.
\item\label{kappaIndependence} If $D$ is cell of $\te$ contained in $C$ and of dimension at least $2$ then $\kappa_D(g,h)=\kappa_C(g,h)$.
\item\label{kappaTransitiveCell} If $g_1,\dots,g_n\in \Gamma$ and $C\subseteq \bigcap_{i=1}^n g_i(\F)$ then
    $\kappa_C(g_1,g_n)=\kappa_C(g_1,g_2)\kappa_C(g_2,g_3)\cdots \kappa_C(g_{n-1},g_n)$.
\end{enumerate}
\end{lemma}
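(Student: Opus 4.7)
I would proceed by case analysis on $\dim C \in \{2,3,4\}$, with the pairing relations handling $\dim C = 3$ and the cycle relations handling $\dim C = 2$. The case $\dim C = 4$ is trivial: if $C$ is a tile and $C \subseteq g(\F)$, then $g(\F) = C$, so all elements in the statements coincide and both sides are $1$. When $\dim C = 3$, $C$ is a side, so at most two tiles contain $C$; Part~\ref{kappaAntisymmetric} then says $[g^{-1}h][h^{-1}g] = 1$, which is the pairing relation for the side $\F \cap g^{-1}h(\F)$, and Part~\ref{kappaTransitiveCell} follows because the sequence $g_1, \ldots, g_n$ must alternate between the two tiles containing $C$, so that the product $\prod [g_i^{-1}g_{i+1}]$ telescopes via pairing.

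For the edge case $\dim C = 2$, I would first fix an edge loop $(k_0, k_1, \ldots, k_{mn} = k_0)$ of $C$ as in Lemma~\ref{seqassedge}, where $n$ is the length of the cycle associated with a chosen side through $C$ and $m$ is the order produced by Lemma~\ref{cyclefinorder}. A direct induction using the recursion in Definition~\ref{cycle} shows $k_i = (\gamma_{S_i}\gamma_{S_{i-1}}\cdots\gamma_{S_1})^{-1}$ with the indices cyclic modulo $n$, whence each consecutive ratio satisfies $k_{i-1}^{-1}k_i = \gamma_{S_i}^{-1}$. Applying the pairing relation $[\gamma_{S_i}^{-1}] = [\gamma_{S_i^*}]$, the full-loop product becomes
\begin{equation*}
[k_0^{-1}k_1]\,[k_1^{-1}k_2]\cdots [k_{mn-1}^{-1}k_{mn}] \;=\; \bigl([\gamma_{S_1^*}]\,[\gamma_{S_2^*}]\cdots[\gamma_{S_n^*}]\bigr)^m,
\end{equation*}
which is the inverse of $\bigl([\gamma_{S_n}]\cdots [\gamma_{S_1}]\bigr)^m$ and therefore equals $1$ in $\Delta$ by the cycle relation.

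With this full-loop identity in hand, Part~\ref{kappaAntisymmetric} in the edge case is immediate: cyclically shifting the loop so that $k_0 = g$ and $k_t = h$, the concatenated product $\kappa_C(g,h)\kappa_C(h,g)$ is precisely the full-loop product, hence trivial in $\Delta$. For Part~\ref{kappaTransitiveCell} in the edge case, I would write each $g_j$ as $k_{a_j}$ in the loop (shifted to start at $g_1$) and observe that $\kappa_C(g_j, g_{j+1})$ is the forward partial product of length $(a_{j+1} - a_j) \bmod mn$. Concatenating these yields the forward product from $k_{a_1}$ to $k_{a_n}$, up to a nonnegative number of complete revolutions around the loop; each such revolution contributes a factor of the full-loop product, which is $1$, so the total equals $\kappa_C(g_1, g_n)$.

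Part~\ref{kappaIndependence} remains. If $\dim D = \dim C$, the cell property of $C$ forces $D = C$, since otherwise some tile $\gamma(\F)$ would contain $D$ but not $C$, giving $D \subseteq C \cap \gamma(\F)$ of dimension $\dim C$, contradicting the definition of cell. If $\dim D < \dim C$, then $\dim C = 4$ is the trivial $g = h$ case, while for $\dim C = 3$, $\dim D = 2$, the intersection $g(\F) \cap h(\F)$ has dimension $\geq 3$ (it contains $C$) and $\leq 3$ (by Lemma~\ref{defside2}), hence equals the side $C$, so $g$ and $h$ are adjacent in the edge loop of $D$ and $\kappa_D(g,h)$ reduces to the single factor $[g^{-1}h] = \kappa_C(g,h)$. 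The main obstacle is the edge-loop bookkeeping underlying Parts~\ref{kappaAntisymmetric} and~\ref{kappaTransitiveCell}: translating the abstract cycle relation in $\Delta$ into the concrete full-loop identity above requires the explicit computation of the $k_i$ as inverses of partial products of side pairing transformations, together with careful handling of how many times concatenations of partial products wind around the loop.
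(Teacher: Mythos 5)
Your proof is correct and follows essentially the same route as the paper: a case analysis on $\dim C$, with the pairing relations settling the tile and side cases and the edge-loop/cycle-relation correspondence settling the edge case, and with part two handled by the identical observation that a lower-dimensional cell inside a side forces $g$ and $h$ to be consecutive in its edge loop. You are in fact more explicit than the paper in two places --- the full-loop identity, which the paper dismisses as ``easy to see'', and the winding-count argument for the transitivity property with general $n$, where the paper instead reduces by induction to $n=3$ and cyclically arranges the three elements --- but these are presentational refinements rather than a different method.
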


\begin{proof}
\ref{kappaAntisymmetric}.
If $m=4$, then $g=h$ and there is nothing to prove.
If $m=3$ then  $g^{-1}h$ and $h^{-1}g$ are pairing transformations and hence by the pairing relations of the group $\Delta$, $\kappa_C(g,h)\kappa_C(h,g)=1$. Finally, if $m=2$, then we can write the edge loop of $C$ as $(g= k_0, \ldots , k_t =h, k_{t+1}, \ldots , k_m=g)$ and thus
\begin{eqnarray*}
\kappa_C (g,h) &  = & [k_0\inv k_1][k_1\inv k_2]\;  \cdots [k_{t-1}\inv k_t], \\
\kappa_C (h,g) & = &[k_t\inv k_{t+1}][k_{t+1}\inv k_{t+2}]\;  \cdots [k_{m-1}\inv k_m].
\end{eqnarray*}
It is now easy to see that $\kappa_C(g,h)\kappa_C(h,g)=1$ and hence the result follows.

\ref{kappaIndependence}. If $C=D$ then there is nothing to prove. So assume that $C\ne D$. If $C$ is a side then $D$ is an edge and $g$ and $h$ are two consecutive elements of the edge loop of $D$. Then $\kappa_D(g,h)=[g\inv h]=\kappa_C(g,h)$. Otherwise, $C$ is a tile and hence $g=h$. Thus $\kappa_D(g,h)=1=\kappa_C(g,h)$.

\ref{kappaTransitiveCell}. By induction it is enough to prove the statement for $n=3$. So assume $n=3$. If either $g_1=g_2$ or $g_2=g_3$ then the desired equality is obvious. So assume that $g_1\ne g_2$ and $g_2\ne g_3$.
If $C$ is an edge then, up to a cyclic permutation, possibly  reversing the order and making use of
Lemma~\ref{seqassedge}, the edge loop of $C$ is of the form $(g_1=k_0,\dots,g_2=k_t,\dots,g_3=k_l,\dots,k_m=g_1)$. Then,
\begin{eqnarray*}
    \kappa_C(g_1,g_3)&=&[k_0\inv k_1][k_1\inv k_2]\cdots [k_{l-1}\inv k_l] \\
   &=& ([k_0\inv k_1][k_1\inv k_2]\cdots [k_{t-1}\inv k_t]) \; ( [k_t\inv k_{t+1}] \cdots [k_{l-1}\inv k_l]) \\
   &=& \kappa_C(g_1,g_2)\kappa_C(g_2,g_3)
\end{eqnarray*}
Otherwise, $S=g_1\inv(C)$ is a side of $\F$, $g_S=g_1\inv g_2$, $g_{S'}=g_2\inv g_1$ and $g_1=g_3$.
Then,
	$$\kappa_C(g_1,g_3) = 1 = [g_S][g_{S'}] = \kappa_C(g_1,g_2)\kappa_C(g_2,g_3).$$
\end{proof}

Let $Z \in \Omega$, $g,h\in \h$ and $Z \in C\subseteq g(\F)\cap h(\F)$ for some cell $C$. Then, by the definition of $\Omega$, the dimension of $C$ is at least $2$ and we define
$$\kappa_Z(g,h)=\kappa_C(g,h).$$
This is well defined. Indeed, suppose $D$ is another cell containing $Z$ and contained in $g(\F)\cap h(\F)$ with $\kappa_C(g,h)\ne \kappa_D(g,h)$. Then $g\ne h$ and $C\ne D$.
Hence neither $C$ nor  $D$ is a  tile. Both are not edges, because otherwise $Z \in C \cap D$, where the latter is an intersection of tiles and it has dimension $1$, which contradicts the definition of $\Omega$.  Thus either $C$ or $D$ is a side and contains the other. Therefore, $g(\F)\cap h(\F)$ is a side and hence, by part \ref{kappaIndependence} of Lemma~\ref{kappaElementaryL}, $\kappa_C(g,h)=[g\inv h]=\kappa_D(g,h)$, a contradiction.
This proves that indeed $\kappa_{Z}(g,h)$ is well defined.
By parts \ref{kappaAntisymmetric} and \ref{kappaTransitiveCell} of Lemma~\ref{kappaElementaryL} we have $\kappa_Z(g,h)=\kappa_Z(h,g)\inv$ and if $Z\in \cap_{i=1}^n g_i(\F)$ with $g_1,\dots,g_n\in \Gamma$ then
	\begin{equation*}
	 \kappa_Z(g_1,g_n)=\kappa_Z(g_1,g_2)\cdots \kappa_Z(g_{n-1},g_n).
	\end{equation*}

\begin{definition}
Let $\alpha$ be a continuous map $[0,1] \rightarrow \Omega$ as in Lemma~\ref{alfadaptexists} and let $\mathcal{L}=\left(a_0,g_1,a_1,g_2,\right.$ $\left.a_2 \ldots , g_n, a_n\right)$ be the partition of $\alpha$. We define
	$$\Phi (\mathcal{L}) = \kappa_{\alpha(a_1)}(g_1, g_2) \; \kappa_{\alpha(a_2)} (g_2,g_3) \cdots \kappa_{\alpha(a_{n-1})}( g_{n-1},g_n),$$
if $n\neq 1$. If $n = 1$, we set $\Phi(\mathcal{L})=1$.
\end{definition}

Observe that if $i\in \{1,\dots,n\}$ then $\alpha(a_i)\in g_i(\F)\cap g_{i+1}(\F)$. By the definition of $\Omega$, $ g_i(\F)\cap g_{i+1}(\F)$ has dimension $2$ or $3$. If it has dimension $3$, then $ g_i(\F)\cap g_{i+1}(\F)$ is a side containing $\alpha(a_i)$. If $ g_i(\F)\cap g_{i+1}(\F)$ has dimension $2$, then $\bigcap_{g,\; \alpha(a_i) \in g(\F)}g(\F)$ has dimension $2$ by the definition of $\Omega$ and thus, by Lemma~\ref{edgeunion}, $g_i(\F)\cap g_{i+1}(\F)$ contains an edge that contains $\alpha(a_i)$.  Hence $\alpha(a_i) \in C \subseteq  g_i(\F)\cap g_{i+1}(\F)$, for some cell $C$ and thus  $\kappa_{\alpha(a_i)}(g_i,g_{i+1})$ is well defined.

Let $Z, W \in \Omega$ and let $\mathbb{P}$ be the set of all the continuous maps $\alpha: [0,1] \rightarrow \Omega$ with $\alpha(0)=Z$ and $\alpha(1)=W$, and such that $\alpha$ verifies the conditions of Lemma~\ref{alfadaptexists}. The set $\P$ may be considered as a metric space with the metric determined by the infinite norm: $\Vert \alpha \Vert_{\infty} = \max \lbrace \vert \alpha(t) \vert \ : \ t \in [0,1] \rbrace$. We define the map
 $$\Phi: \P \rightarrow \Delta$$
by $\Phi(\alpha)=\Phi(\mathcal{L})$, where $\mathcal{L}$ is the partition of $\alpha$. This is well defined as by Lemma~\ref{alfadaptexists} the partition of $\alpha$ exists and is unique. The next lemma will be a crucial part in the proof of the injectivity of the map $\varphi$ defined in (\ref{delta}).

\begin{lemma}\label{PhiConstantL}
If both $Z$ and $W$ belong to the interior of some tile then  $\Phi:\P \rightarrow \Delta$ is constant.
\end{lemma}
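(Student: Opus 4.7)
My approach is to show that $\Phi$ is locally constant on $\P$ with respect to the sup-norm, and then invoke the path-connectivity of $\P$ to conclude that $\Phi$ is constant.

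Fix $\alpha\in\P$ with partition $(a_0,g_1,a_1,\ldots,g_n,a_n)$. For each transition time $a_i$ with $1\le i\le n-1$, the point $\alpha(a_i)\in\Omega$ lies in some cell $C_i$ of $\te$ of dimension $2$ or $3$ contained in $g_i(\F)\cap g_{i+1}(\F)$, and by the defining property of $\Omega$ every tile intersecting a sufficiently small ball around $\alpha(a_i)$ must contain $\alpha(a_i)$, hence $C_i$. Using Lemma~\ref{locallyfinite} and Corollary~\ref{boule}, I would pick pairwise disjoint open balls $B_i$ around the $\alpha(a_i)$'s with this property. By compactness of the portion of $\alpha$ outside $\bigcup_i B_i$, I can then choose $\epsilon>0$ small enough so that any $\beta\in\P$ with $\Vert\beta-\alpha\Vert_\infty<\epsilon$ lies in the same tile as $\alpha$ at every $t$ with $\alpha(t)\notin\bigcup_i B_i$, and enters and exits each $B_i$ while in $g_i(\F)$ and $g_{i+1}(\F)$ respectively. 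The hypothesis that $Z$ and $W$ lie in interiors of tiles controls the two endpoints.

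Inside each $B_i$, the partition of $\beta$ therefore exhibits a sequence $h^{(i)}_0=g_i,h^{(i)}_1,\ldots,h^{(i)}_{k_i}=g_{i+1}$ of tiles, all of which meet $B_i$ and thus contain the cell $C_i$. By Lemma~\ref{kappaElementaryL}(\ref{kappaIndependence}), every factor $\kappa_{\beta(a'_j)}(h^{(i)}_{j-1},h^{(i)}_j)$ arising in $\Phi(\beta)$ inside $B_i$ equals $\kappa_{C_i}(h^{(i)}_{j-1},h^{(i)}_j)$, and by part (\ref{kappaTransitiveCell}) this product telescopes to $\kappa_{C_i}(g_i,g_{i+1})=\kappa_{\alpha(a_i)}(g_i,g_{i+1})$, which is exactly the factor of $\Phi(\alpha)$ at $a_i$. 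Multiplying over $i$ yields $\Phi(\beta)=\Phi(\alpha)$, so $\Phi$ is locally constant.

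To pass from local constancy to constancy I would verify that $\P$ is path-connected in the sup-norm. Since $\Omega$ is obtained from $\HTwo$ (topologically $\R^4$) by deleting countably many strata of dimension at most $1$, hence of codimension at least $3$, $\Omega$ is simply connected. Any two paths in $\P$ are therefore homotopic rel endpoints in $\Omega$, and a suitable generic polyhedral approximation of such a homotopy, chosen so that the transversality of Lemma~\ref{alfadaptexists} holds at each parameter, produces a continuous one-parameter family inside $\P$ joining them, along which the locally constant $\Phi$ is constant. The main obstacle is ensuring that every perturbation $\beta\in\P$ genuinely has the clean combinatorial structure described above: all transitions of $\beta$ must occur inside the $B_i$'s and only between tiles sharing the cell $C_i$, without acquiring stray transitions through lower-dimensional intersections of varieties in $\M$. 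This is precisely what is enforced by the requirement $\beta([0,1])\subseteq\Omega$ combined with the genericity hypothesis of Lemma~\ref{alfadaptexists} and the local finiteness of the tessellation.
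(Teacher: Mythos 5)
Your overall strategy coincides with the paper's: reduce to local constancy of $\Phi$ in the sup-norm, connect any two elements of $\P$ through a homotopy staying in $\P$ (via simple connectivity of $\Omega$, i.e.\ Lemmas~\ref{simplyconnected} and~\ref{Eisele3}), and prove local constancy by localizing the transitions of a nearby path $\beta$ and collapsing the resulting $\kappa$-factors with parts~\ref{kappaIndependence} and~\ref{kappaTransitiveCell} of Lemma~\ref{kappaElementaryL}. However, the local-constancy step as you have written it has a genuine gap. You place control balls $B_i$ only around the \emph{transition} points $\alpha(a_i)$ of the partition and then claim, by compactness of the part of $\alpha$ outside $\bigcup_i B_i$, that a uniformly close $\beta$ stays in the same tile there. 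This fails because $\alpha$ may \emph{graze} the boundary of its current tile without transitioning: there can be times $d$ with $a_{i-1}<d<a_i$ at which $\alpha(d)\in\partial g_i(\F)$ (equivalently $\alpha(d)$ lies in a second tile $h(\F)$) while $\alpha$ remains in $g_i(\F)$ throughout $[a_{i-1},a_i]$. At such a point the distance from $\alpha(t)$ to the complement of $g_i(\F)$ tends to $0$, so no uniform $\epsilon$ keeps $\beta$ inside $g_i(\F)$ near $d$; an arbitrarily small perturbation $\beta$ can make an excursion into $h(\F)$ and back, acquiring stray transitions \emph{outside} all your balls $B_i$. These excursions are not excluded by $\beta([0,1])\subseteq\Omega$ (they occur through genuine sides, which are $3$-dimensional and perfectly admissible in $\Omega$), nor by the genericity of Lemma~\ref{alfadaptexists}, so your closing claim that the containment in $\Omega$ ``precisely enforces'' the clean combinatorial structure is not correct.

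The repair is exactly what the paper's proof does: list \emph{all} times $d_1<\cdots<d_m$ at which $\alpha$ meets the boundary of some tile (finite by Lemma~\ref{alfcutfinite}), not just the transition times, put the local-finiteness balls around every $\alpha(d_i)$, and use the compactness of the arcs $\alpha([d''_{i-1},d'_i])$, which lie in the \emph{interior} of a single tile, to get the uniform $\delta_2$. One must then also check the extra case your argument omits: when $k_i=k_{i+1}$ (a grazing point), the new factors of $\Phi(\beta)$ produced near $\alpha(d_i)$ telescope, by Lemma~\ref{kappaElementaryL}, to $\kappa_C(k_i,k_i)=1$ rather than to a factor already present in $\Phi(\alpha)$. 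With that additional case your telescoping computation goes through and the rest of your argument is sound.
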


\begin{proof}
We claim that it is sufficient to show that $\Phi$ is locally constant.
Indeed, assume this is the case and let  $\alpha,\beta\in \P$.
By Lemma~\ref{simplyconnected}, $\alpha$ and $\beta$ are homotopic in $\Omega$ and by Lemma~\ref{Eisele3}, there is a homotopy $H(t,-)$ in $\P$ from $\alpha$ to $\beta$.
Let $c$ denote  the supremum of the $s\in [0,1]$ for which $\Phi(H(s,-))=\Phi(\alpha)$.
Since, by assumption,  $\Phi$ is constant in a neighbourhood of $H(x,-)$, it easily follows that $c=1$ and thus $\Phi(\alpha)=\Phi(\beta)$.

%To prove that $\Phi$ is locally constant, we show that for every $\alpha\in \mathcal{C}_{x,y}([0,1],\XQ_{\te})$ and for every  $\alpha$-adapted list $\mathcal{L}=(a_0,g_1,a_1,\dots,g_n,a_n)$ (which exists because of
%Lemma~\ref{AdaptedExistenceL}), there is a positive real number $\delta$ such that for every
%$\beta\in \mathcal{C}_{x,y}([0,1],\XQ_{\te})$ with $d(\alpha,\beta)<\delta$ there is a $\beta$-adapted list
%$\mathcal{D}=(b_0,h_1,b_1,\dots,h_m,b_m)$ and an increasing sequence of integers $j_0=0<j_1<j_2<\dots < j_{n-1}<j_n=m$ such that
%   $\mathcal{L}'=(a_0,h_{j_1},a_1,h_{j_2},a_2,\dots,h_{j_{i-1}},a_{i-1},h_{j_n},a_n)$
%is an $\alpha$-adapted list and, for every $i\in \{ 0,1,\ldots , n\}$,
%   $\Phi((\mathcal{L}')_i)=\Phi((\mathcal{D})_i)$,
%where
%     $(\mathcal{L}')_i=(a_0,h_1,a_1,\dots,h_i,a_i)$
%and
%     $(\mathcal{D})_i=(b_0,h_1,b_1,\dots,h_{j_i},b_{j_i})$.
%In particular, by Lemma~\ref{IndependencyAdaptedL}, we have
%    $\Phi(\alpha)=\Phi(\mathcal{L})=
%      \Phi(\mathcal{L}')=\Phi((\mathcal{L}')_n)=
%       \Phi((\mathcal{D})_n)=\Phi(\mathcal{D})=\Phi(\beta)$,
%as desired.

To prove that $\Phi$ is locally constant, let $\alpha, \beta \in \P$ and let $\mathcal{L}_1=(a_0,g_1,a_1,\ldots ,g_n,a_n)$ and $\mathcal{L}_2$ be the partition of $\alpha$ and $\beta$ respectively. Moreover we denote by $d(-,-)$ the Euclidean distance.
Let $\lbrace 0<d_1 < d_2 <  \ldots < d_m <1 \rbrace$ be the sets of elements $d \in \left[0,1\right]$ such that  $\alpha(d) \in \partial g(\F)$ for some $g \in \h$ for every $1 \leq i \leq m$.
Lemma~\ref{alfcutfinite} ensures that this set is finite. Denote by $k_1, \ldots , k_{m+1}$ the elements in $\h$, such that $\alpha(d_{i-1},d_{i}) \subseteq k_i(\F)$ for $1 \leq i \leq m+1$, where we set $d_0=a_0=0$ and $d_{m+1}=a_n=1$. Observe that $k_1=g_1$, $k_{m+1}=g_n$ and $\lbrace k_1, \ldots, k_{m+1}\rbrace = \lbrace g_1, \ldots , g_n \rbrace$. Since $\F$ is locally finite, there is $\delta_1>0$ such that for every $i\in \{0,1,\dots,m+1\}$ and every $g\in \h$, if $B(\alpha(d_i),2\delta_1))\cap g(\F)\ne \emptyset$ then $\alpha(d_i)\in g(\F)$.
Since $\alpha$ is continuous there is $\epsilon<\min \left\{ \frac{d_i-d_{i-1}}{2}: i\in\{1,\dots,m+1\} \right\}$ such that, for every
$i\in \{ 0,1,\ldots ,  m+1\}$, $d(\alpha(t),\alpha(d_i))<\delta_1$ for every $t$ with $\vert t-d_i \vert<\epsilon$.
For every $i\in \{1,\dots,m\}$, let $d'_i=d_i-\epsilon$ and $d''_i=d_i+\epsilon$. We also set $d'_{m+1}=1$ and $d''_0=0$.
Observe that $d''_{i}\le d'_{i+1}$ for every $i\in \{0,\dots,m+1\}$.
Each $\alpha([d''_{i-1},d'_{i}])$ is compact and it is contained in $k_i(\F)^{\circ}$.
Again using that $\F$ is locally finite we obtain a positive number $\delta_2$ such that for every $i \in \lbrace 0, 1, \ldots, m+1 \rbrace$, $d(\alpha(t),g(\F))>\delta_2$ for every $t\in [d''_{i-1},d'_{i}]$ and every $g\in \Gamma$ with $g \neq k_i$.
Let $\delta=\min\{\delta_1,\delta_2\}$.

We will prove that if $\beta \in B_{\Vert \ \Vert_{\infty}}(\alpha,\delta)$, then $\Phi(\alpha)=\Phi(\beta)$.
So assume $\beta\in \P$ with $\Vert \alpha - \beta \Vert_{\infty} <\delta$.
Then $d(\alpha(t),\beta(t))<\delta$ for every $t\in [0,1]$.
In particular, as $d(\beta(t),\alpha(d_i))< 2\delta_1$,
\begin{equation}\label{Neighbour}
 \text{if } t\in (d'_i,d''_i) \text{ and } \beta(t)\in g(\F) \text{ then } \alpha(d_i)\in g(\F).
\end{equation}
Moreover, since $d(\alpha(t),\beta(t))<\delta_2$,
\begin{equation}\label{Closed}
 \text{if } t\in [d''_{i-1},d'_i] \text { and } \beta(t)\in g(\F) \text{ then } g=k_i.
\end{equation}
%Furthermore
%\begin{equation}\label{Interior}
% \text{if } C_i \text{ is a tile then }\beta([a''_{i-1},a'_i])  \subseteq C_i^\circ=g_i(P)^\circ.
%\end{equation}
%Indeed, as $\alpha([a''_{i-1},a'_i])\subseteq \alpha((a_{i-1},a_i))\subseteq C_i^\circ$, it follows that $C_i=g_i(P)$ is the only tile intersecting $\alpha([a''_{i-1},a'_i])$ and therefore it also is the only tile intersecting $\beta([a''_{i-1},a'_i])$. Then (\ref{Interior}) follows.
The interval $\left[0,1\right]$ may be written as
$$\left[0,d_1'\right] \cup \left[d_1',d_1''\right]  \cup \left[d_1'',d_2'\right] \cup \ldots \left[d_m',d_m''\right] \cup \left[d_{m}'',d_{m+1}\right].$$
Based on this information, we construct $\mathcal{L}_2$ and prove that $\Phi(\mathcal{L}_1)=\Phi(\mathcal{L}_2)$. By (\ref{Closed}), the elements $k_1, \ldots k_{m+1}$ appear in that order in $\mathcal{L}_2$. Between those elements may appear other elements $h \in \h$. For each $1 \leq i \leq m+1$, there are two possibilities: $k_i$ and $k_{i+1}$ are equal or they are different.
If $k_i=k_{i+1}$ and $k_i$ and $k_{i+1}$ are two consecutive elements in $\mathcal{L}_2$, then $\beta(\left(d_{i-1}'',d'_{i+1}\right)) \subseteq k_i(\F)=k_{i+1}(\F)$. According to the definition of the partition of $\beta$, $k_i$ and $k_{i+1}$ are represented by just one element in $\mathcal{L}_2$. Hence we may suppose, without loss of generality, that if $k_i$ and $k_{i+1}$ are two consecutive elements of $\mathcal{L}_2$, then they are different. Thus
$$\mathcal{L}_2 =(0,k_1,b_1, *_1 , k_2,b_2, *_2, \ldots, k_{m},b_m, *_{m}, k_{m+1},1),$$
where $d_i' < b_i < d_i''$ and $*_i$ represents a, possibly empty, sequence $(h_{i1},b_{i1},h_{i2},g_{i2}\ldots,h_{in_i},b_{in_i})$ with  $h_{ij} \in \h$ and $d_i' < b_{ij} < d_i''$ for every $1 \leq j \leq n_i$. As stated above, if $*_i$ is empty, then $k_i \neq k_{i+1}$. In that case $k_i=g_j$ and $k_{i+1}=g_{j+1}$ are two consecutive elements in $\mathcal{L}_1$ and $b_i=a_j$. Moreover
\begin{eqnarray*}
\beta(\left[d_{i-1}'',d_i' \right]) & \subseteq & k_i(\F), \\
\beta(\left[d_{i}'',d_{i+1}' \right])& \subseteq & k_{i+1}(\F), \\
\beta(b_i)& \in & k_i(\F) \cap  k_{i+1}(\F).
\end{eqnarray*}
By (\ref{Neighbour}) and the fact that $b_i \in \left[d_i',d_i''\right]$, for every $g \in \h$ such that $\beta(b_i)\in g(\F)$ we have that $\alpha(d_i)= \alpha(a_j) \in g(\F)$.
Hence, by Lemma~\ref{edgeunion}, $\bigcap_{g,\; \alpha(a_j) \in g(\F)} g(\F)$ is a cell, which is contained in every cell containing $\beta(b_i)$. Thus both $\alpha(a_j)$ and $\beta(b_i)$ are included in a same cell $C$ contained in $g_j(\F) \cap g_{j+1}(\F)$. Thus,
$$\kappa_{\alpha(a_j)}(g_j,g_{j+1}) =\kappa_C(g_j,g_{j+1})=\kappa_{C}(k_i,k_{i+1})=\kappa_{\beta(b_i)}(k_i,k_{i+1}).$$

Suppose now that $*_i$ is not empty. We will analyse the subsequence $$(k_i,b_i,h_{i1},b_{i1},h_{i2},b_{i2}\ldots,h_{in_i},b_{in_i},k_{i+1},b_{i+1}).$$
By lemma~\ref{edgeunion}, $\alpha(d_i) \in C$, $\beta(b_i) \in C_i$ and $\beta(b_{ij}) \in C_{ij}$ for $C$, $C_i$ and $C_{ij}$ cells for $1 \leq j \leq n_i$. As $b_i \in \left(d_i',d_i''\right)$ and also $b_{ij} \in \left(d_i',d_i''\right)$ for every $1 \leq j \leq n_i$, by (\ref{Neighbour}) we have that $\alpha(d_i) \in k_i(\F) \cap k_{i+1}(\F) \cap \bigcap_{j=1}^{n_i} h_{ij}(\F)$. Hence $C \subseteq C_i \cap \bigcap_{i=1}^{n_i} C_{ij}$. If $k_i \neq k_{i+1}$, then $k_i=g_l$, $k_{i+1}=g_{l+1}$ and $d_i=a_l$ for some $1 \leq l \leq n$ and by parts \ref{kappaIndependence} and \ref{kappaTransitiveCell} of Lemma~\ref{kappaElementaryL}
\begin{eqnarray*}
\kappa_{\alpha(a_l)}(g_l,g_{l+1})& = & \kappa_C(k_i,k_{i+1}) \\
 & = & \kappa_C(k_i,h_{i1})\kappa_C(h_{i1},h_{i2}) \ldots \kappa_C(h_{in_i}k_{i+1})\\
 & = & \kappa_{C_i}(k_i,h_{i1})\kappa_{C_{i1}}(h_{i1},h_{i2}) \ldots \kappa_{C_{in_i}}(h_{in_i}k_{i+1})\\
 & = & \kappa_{\beta(b_i)}(k_i,h_{i1})\kappa_{\beta(b_{i1})}(h_{i1},h_{i2}) \ldots \kappa_{\beta(b_{in_i})}(h_{in_i}k_{i+1})
 \end{eqnarray*}
If $k_i=k_{i+1}$, then $k_i=k_{i+1}=g_l$ for some $1 \leq l \leq n$. Then in $\Phi(\alpha)$, there is no $\kappa$-term corresponding to the subsequence above and by parts \ref{kappaIndependence} and \ref{kappaTransitiveCell} of Lemma~\ref{kappaElementaryL}, we have
\begin{eqnarray*}
\kappa_{\beta(b_i)}(k_i,h_{i1})\kappa_{\beta(b_{i1})}(h_{i1},h_{i2}) \ldots \kappa_{\beta(b_{in_i})}(h_{in_i}k_{i+1}) & = &
 \kappa_{C_i}(k_i,h_{i1})\kappa_{C_{i1}}(h_{i1},h_{i2}) \ldots \kappa_{C_{in_i}}(h_{in_i}k_{i+1})\\
  & = & \kappa_C(k_i,h_{i1})\kappa_C(h_{i1},h_{i2}) \ldots \kappa_C(h_{in_i}k_{i+1})\\
& = & \kappa_{C}(k_i,k_{i+1})\\
& = & \kappa_{C}(k_i,k_{i})\\
& = & 1.
\end{eqnarray*}
This shows that $\Phi(\mathcal{L}_1)=\Phi(\mathcal{L}_2)$.
\end{proof}

We are now ready to prove that $\varphi: \Delta \rightarrow \h$ (see (\ref{delta})) is injective. Proving the injectivity is equivalent to proving that if $g_1\ldots g_n=1$ with each $g_i \in \h$ and $\F \cap g_i(\F)$ a side, then $[g_1]\ldots[g_n]=1$ in $\Delta$. So, suppose that
$g_1\ldots g_n=1$ with each $g_i \in \h$ and $\F \cap g_i(\F)$ a side. For every $i=0,1,\ldots,n$, put
$h_i=g_1\ldots g_i$ and put $h_0=1$. Choose $Z \in \F^{\circ}$ and, for every $i=1,\ldots,n$, choose a path
from $h_{i-1}(Z)$ to $h_i(Z)$ which is only contained in $h_{i-1}(\F) \cup h_i(\F)$ and which is made up of a finite number of line segments, whose end-points do not belong to any element in $\M$. This is possible by Lemma~\ref{fpathconnected} and Lemma~\ref{Eisele1}. Let $\alpha$ be the path obtained by juxtaposing those paths. So
$\alpha(0)=Z=\alpha(1)$ and $\alpha$ is contained in the space $\P$. Let $\mathcal{L}_1=(a_0=0,h_0,a_1,\ldots, h_n,a_{n+1}=1)$ be the partition of $\alpha$. Let $\beta$ be the constant
path $\beta(t)=Z$ for every $t \in [0,1]$ and let $\mathcal{L}_2=(0,1,1)$ be the partition of $\beta$. Clearly $\beta$ is also contained in $\P$. Now we have that
\begin{eqnarray*}
[g_1] \ldots [g_n] & = & [h_0^{-1} h_1][h_1^{-1} h_2] \ldots [h_{n-1}^{-1} h_n]\\
 & = & \kappa_{a_1}(h_0,h_1) \kappa_{a_2}(h_1,h_2) \ldots \kappa_{a_n}(h_{n-1},h_n)\\
 & = & \Phi(\mathcal{L}_1)\\
 & = & \Phi(\alpha).
 \end{eqnarray*}
By Lemma~\ref{PhiConstantL}, $\Phi(\alpha)=\Phi(\beta)$. However $\Phi(\beta)=\Phi(\mathcal{L}_2)=\kappa_{1}(1,1)=1$ and
hence $[g_1]\ldots[g_n]=1$.

To sum up we have proven the following theorem.

\begin{theorem}\label{Poincarerelations}
Let $\F$ be the fundamental domain of $\h$ as defined above. Then the following is a presentation of $\h$:
\begin{itemize}
\item Generators: the pairing transformations of $\F$,
\item Relations: the pairing relations and the cycle relations.
\end{itemize}
\end{theorem}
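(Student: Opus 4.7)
The plan is to show that the group $\Delta$ defined just before the theorem (with generators the $[\gamma_S]$ indexed by sides of $\F$ and relators the pairing and cycle relations) is isomorphic to $\Gamma$ via the homomorphism $\varphi$ introduced in (\ref{delta}). By Theorem~\ref{Poincare}, $\Gamma$ is generated by the pairing transformations, so $\varphi$ is surjective. It remains to establish injectivity, which is equivalent to proving that whenever $g_1\cdots g_n=1$ in $\Gamma$ with each $g_i$ a pairing transformation, one has $[g_1]\cdots[g_n]=1$ in $\Delta$.

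First I would verify that the pairing and cycle relations indeed hold in $\Gamma$, so that $\varphi$ is well defined. The pairing relations $\gamma_S\gamma_{S^*}=1$ follow from the definitions, and the cycle relations follow from Lemma~\ref{cyclefinorder}: for a cycle $(E_1,S_1,\dots,E_n,S_n)$ the element $\gamma_{S_n}\cdots\gamma_{S_1}$ fixes the edge $E_1$ and hence has finite order because only finitely many tiles contain $E_1$. Both types of relation arise as loop relations coming from closed sequences of tiles along sides, as already observed in the discussion preceding the theorem.

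To prove injectivity I would use the machinery of the map $\Phi:\P\rightarrow \Delta$ and its key property, Lemma~\ref{PhiConstantL}. Given a relation $g_1\cdots g_n=1$ in $\Gamma$, set $h_0=1$ and $h_i=g_1\cdots g_i$, so $h_n=1$. Fix a base point $Z\in \F^\circ$. Using Lemma~\ref{fpathconnected} and Lemma~\ref{Eisele1}, I would construct, for each $i$, a path from $h_{i-1}(Z)$ to $h_i(Z)$ lying in $h_{i-1}(\F)\cup h_i(\F)$, made of finitely many line segments whose end-points avoid every variety in $\M$, and concatenate these to obtain a loop $\alpha\in \P$ at $Z$. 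Writing down its partition $\mathcal{L}_1=(0,h_0,a_1,h_1,a_2,\dots,h_n,1)$, each crossing contributes $\kappa_{\alpha(a_i)}(h_{i-1},h_i)=[h_{i-1}^{-1}h_i]=[g_i]$, so
\begin{equation*}
\Phi(\alpha)=\kappa_{\alpha(a_1)}(h_0,h_1)\cdots \kappa_{\alpha(a_n)}(h_{n-1},h_n)=[g_1][g_2]\cdots[g_n].
\end{equation*}
On the other hand the constant path $\beta(t)=Z$ lies in $\P$ with partition $(0,1,1)$, giving $\Phi(\beta)=1$. Since both $\alpha(0)=\alpha(1)=\beta(0)=\beta(1)=Z$ belong to the interior of the tile $\F$, Lemma~\ref{PhiConstantL} yields $\Phi(\alpha)=\Phi(\beta)$, so $[g_1]\cdots[g_n]=1$ in $\Delta$, as required.

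The hard part of this argument has already been carried out in the preceding lemmas: constructing the auxiliary open set $\Omega$, establishing that edges are shared by exactly two sides (Corollary~\ref{edgein2sides2}) so that the local invariants $\kappa_C$ are well defined on cells, checking the three properties of Lemma~\ref{kappaElementaryL} that make $\kappa$ behave functorially, and finally proving that $\Phi$ is locally constant on $\P$ and using the simple connectivity of $\Omega$ (Lemma~\ref{simplyconnected}) together with Lemma~\ref{Eisele3} to upgrade local constancy to global constancy. Given all this infrastructure, the deduction of the theorem itself is the short combinatorial step described above.
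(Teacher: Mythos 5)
Your proposal is correct and follows essentially the same route as the paper: surjectivity of $\varphi$ from Theorem~\ref{Poincare}, well-definedness from the pairing relations and Lemma~\ref{cyclefinorder}, and injectivity by comparing $\Phi$ on the concatenated loop through the tiles $h_i(\F)$ with $\Phi$ on the constant path at $Z$ via Lemma~\ref{PhiConstantL}. The paper's own argument is exactly this computation, carried out in the paragraph immediately preceding the theorem statement.
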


It is well known that $\h$ is finitely presented. However it is not clear whether the presentation given in Theorem~\ref{Poincarerelations} is finite. The following proposition implies that at least the presentation is finite in case $R$ is a PID.

\begin{lemma}\label{sidesfinite}
If $R$ is a PID then every tile of $\F$ intersects only finitely many other tiles. In particular the fundamental domain $\F$ has  finitely many cells and thus finitely many sides and edges.
\end{lemma}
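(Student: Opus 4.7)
The plan is as follows. First, by $\Gamma$-equivariance of the tessellation $\te$, the number of tiles meeting any given tile $g(\F)$ equals the number meeting $\F$ (via left translation by $g\inv$). Hence it suffices to prove that $\F$ itself meets only finitely many other tiles. So fix $\gamma \in \Gamma\setminus\{1\}$ with $\F \cap \gamma(\F) \ne \emptyset$. By Lemma~\ref{ImageDFAlt}, $\F \cap \gamma(\F) \subseteq V_{\gamma\inv}$, where $V_{\gamma\inv} \in \V \cup \V_{\infty}$. I will split into two cases depending on which family $V_{\gamma\inv}$ lies in.

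In the first case, $V_{\gamma\inv} = V_{c,d}$ for some $(c,d)\in \S$ with $c \ne 0$. Since $V_{c,d} \cap \F$ is then non-empty, the last assertion of Theorem~\ref{FiniteSides} (which uses that $R$ is a PID) yields that $(c,d)$ is equivalent up to units to a pair in the finite set $\S_1$, so there are only finitely many possibilities for $V_{c,d}$. For each of them, Lemma~\ref{esscompact} gives that $V_{c,d} \cap \F$ is compact, and then the local finiteness of $\F$ (Lemma~\ref{locallyfinite}) implies that only finitely many $\gamma \in \Gamma$ can satisfy $\F \cap \gamma(\F) \subseteq V_{c,d}$ with non-empty intersection. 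In the second case, $V_{\gamma\inv} = V_i^{\pm}$ for some $i \in \{1,2,3\}$, and by the definition of $V_{\gamma}$ in (\ref{Egamma}) this forces $\gamma\inv \in \Gamma_{\infty}$. Here the key observation is that the $\Gamma_{\infty}$-action preserves the height (by (\ref{HeightInfinity})) and its effect on the coordinates $(s_1,s_2,r)$ does not depend on $h$. Consequently, if $Z=(s_1,s_2,r,h) \in \gamma(\F) \cap \F$, then replacing $h$ by $1$ produces a point $\tilde Z=(s_1,s_2,r,1)$ such that both $\tilde Z$ and $\gamma\inv(\tilde Z)$ lie in the compact set $K=B\times\{1\}$ (using $h_0 \le 1$ from (\ref{h0atmost1}) to ensure these points remain in $\F$). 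Therefore $K \cap \gamma(K) \ne \emptyset$, and since the action of $\Gamma$ on $\HTwo$ is discontinuous (Proposition~\ref{DiscreteDiscontinuous}), only finitely many $\gamma$ can satisfy this. I expect this second case to be the main obstacle, since unlike the surfaces $V_{c,d} \cap \F$, the wall pieces $V_i^{\pm} \cap \F$ are not compact in $\HTwo$; the trick is to exploit the invariance of $h$ under $\Gamma_{\infty}$ to reduce to a compact cross-section.

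For the ``in particular'' part, once it is established that only finitely many tiles $\F=T_0, T_1,\ldots,T_n$ meet $\F$, every cell of $\F$ is a non-empty intersection of the form $\F \cap \bigcap_{i \in I} T_i$ with $I \subseteq \{1,\ldots,n\}$, because any tile appearing in such an intersection must itself meet $\F$. Hence there are at most $2^n$ cells of $\F$, and consequently finitely many sides (cells of dimension $3$) and edges (cells of dimension $2$).
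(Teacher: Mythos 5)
Your proof is correct, and its overall architecture coincides with the paper's: reduce to $\F$ by equivariance, handle the ``floor'' pieces via Theorem~\ref{FiniteSides}, Lemma~\ref{esscompact} and local finiteness, and handle the ``walls'' by collapsing the height coordinate onto a compact cross-section. Where you genuinely diverge is in the organization of the case split. The paper fixes an essential hypersurface $V$ and bounds $X_V=\{\gamma\in\Gamma : V\cap\F\cap\gamma(\F)\ne\emptyset\}$; for $V\in\V_{\infty}$ it cannot immediately conclude $\gamma\in\Gamma_{\infty}$ (a tile $\gamma(\F)$ with $\gamma\notin\Gamma_{\infty}$ may still touch $V_i^{\pm}\cap\F$), so it first splits $\F$ into a compact band $B\times[a,b]$ (using Lemma~\ref{minh} for the lower bound) and a cusp neighbourhood $B\times(b,+\infty)$ covered by $\Gamma_{\infty}$-translates, and only then reduces to $\Gamma_{\infty}$. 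You instead fix $\gamma$ and use the single hypersurface $V_{\gamma\inv}$ of Lemma~\ref{ImageDFAlt} containing $\F\cap\gamma(\F)$; by (\ref{Egamma}) this lies in $\V_{\infty}$ precisely when $\gamma\in\Gamma_{\infty}$, so the reduction is immediate and the band argument (and Lemma~\ref{minh}) is not needed at this stage. Your final step also replaces the paper's appeal to local finiteness of $\F_{\infty}$ by discontinuity applied to $K\cap\gamma(K)$ with $K=B\times\{1\}$; both are valid, and your observation that the $\Gamma_{\infty}$-action on $(s_1,s_2,r)$ is independent of $h$ is exactly the point the paper exploits when it replaces the last coordinate by a fixed $h_0$. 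The counting of cells at the end is the same obvious consequence in both versions.
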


\begin{proof}
It is enough to prove that $\F$ intersects only finitely many other tiles. Moreover, if $1\ne \gamma\in \h$ then $\F\cap\gamma(\F)\subseteq \cup_{V\in \V_e} \F \cap V$ and $\V_e$ is finite by Theorem~\ref{FiniteSides}.
Thus it is enough to prove that if $V\in \V\cap \V_{\infty}$ then $X_V=\{\gamma\in \Gamma : V\cap \F \cap \gamma(\F) \ne \emptyset\}$ is finite.
	If $V\in \V$ then $V\cap F$ is compact, by Lemma~\ref{esscompact}. Since $\F$ is locally finite (Lemma~\ref{locallyfinite}), $X_V$ is finite.
So assume that $V\in \V_{\infty}$.
By (\ref{h0atmost1}), (\ref{fnotzero}) and Lemma~\ref{minh}, there are positive real numbers $a<b$ such that $B\times (b,+\infty) \subseteq \F \subseteq B\times (a,+\infty)$ (in the $(s_0,s_1,r,h)$-coordinates).
Let $K=B\times [a,b]$. If $\gamma\in X_V$ then either $K \cap \gamma(\F)\ne \emptyset$ or $V\cap (B\times (b,+\infty)) \cap \gamma(\F)\ne \emptyset$.
As $K$ is compact and $\te$ is locally finite, only finitely many $\gamma\in \Gamma$ satisfy the first condition.
On the other hand it is easy to see that $\R^2\times \R^+ \times (b,\infty) = \cup_{\gamma \in \Gamma_{\infty}} \gamma(B\times (b,+\infty))$ (in the $(s_1,s_2,r,h)$-coordinates).
Thus if $\gamma$ satisfies the second condition then $\gamma\in \Gamma_{\infty}$ and $\F\cap \gamma(\F)\ne \emptyset$.
It remains to prove that only finitely many elements of $\Gamma_{\infty}$ satisfy the last condition.
Fix $h_0>0$. Assume that $Z_1=\gamma(Z_2)\in \F_{\infty}\cap \gamma(\F_{\infty})$ with $\gamma\in\h_{\infty}$.
Let $Z_1'$ and $Z_2'$ be the elements of $\HTwo$ obtained by replacing the last coordinate of $Z_1$ and $Z_2$ by $h_0$.
Then $Z_1'=\gamma(Z_2') \in (B\times \{h_0\}) \cap \F_{\infty} \cap \gamma(\F_{\infty})$.
Thus $(B\times \{h_0\})\cap \gamma(\F_{\infty})\ne \emptyset$. As $B\times \{h_0\}$ is compact and $\F_{\infty}$ is a locally finite fundamental domain of $\Gamma_{\infty}$, only finitely many elements satisfies the last condition. This finishes the proof.
\end{proof}

\appendix

\section{Appendix: Topological Lemmas}

For completeness' sake we state some  results on real algebraic topology that have been  used in the previous sections.
The proofs of these  are probably well known to the specialists.
We would  like  to thank  Florian Eisele for making us aware of these and for providing us proofs.  Some of these results can be stated in greater generality for arbitrary differential manifolds, but their proofs need more sophisticated algebraic topology methods.

%\renewcommand{\abstractname}{Acknowledgements}
%\begin{abstract}
%For this appendix, the authors would like to thank Dr. Florian Eisele for helping us struggle with some questions about algebraic topology.
%\end{abstract}

%We first state $3$ general lemmas that can be proven with elementary topological arguments. They are used throughout the paper. The first two may also be stated in greater generality for arbitrary differential manifolds, but their proof needs more sophisticated algebraic topology methods. The third one follows easily from the first.

\begin{lemma}\label{pathconnected}
Let $\X$ be a path-connected open subset of $\R^n$.
Assume that if  $Z_0$ and $Z_1$ are distinct points in  $\X$, then there exists $\epsilon >0$ such that the open cylinder with axis $\left[Z_0,Z_1\right]$ and radius $\epsilon$ is contained $\X$. If $L$ is a locally finite collection of semi-algebraic varieties of dimension at most $n-2$ in $\X$ then $\X \setminus L$ is path-connected.
\end{lemma}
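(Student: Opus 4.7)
The plan is to connect $Z_0, Z_1 \in \X \setminus L$ by first producing a cylindrical tube around the segment $[Z_0, Z_1]$ inside $\X$, and then using a dimension-counting argument to pick a midpoint $P$ inside this tube such that the broken line $Z_0 \to P \to Z_1$ avoids $L$.

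First I would handle the trivial case $Z_0 = Z_1$ (constant path), and otherwise apply the cylinder hypothesis to obtain $\epsilon > 0$ with the open cylinder $C = \{P \in \R^n : d(P, [Z_0, Z_1]) < \epsilon\} \subseteq \X$. Since $C$ is convex and contains $Z_0, Z_1$ on its axis, the closed sub-cylinder $\overline{C'} = \{P : d(P, [Z_0, Z_1]) \le \epsilon/2\}$ is compact, convex, and contained in $C$, hence in $\X$. Because $L$ is locally finite, only finitely many members $V_1, \dots, V_k \in L$ meet $\overline{C'}$.

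Next I would show that the set of ``bad'' midpoints is negligible. For each $V_j$ define
\[ T_j \;=\; \{\, Z_0 + s(v - Z_0)\; :\; v \in V_j \cap \overline{C'},\, s \in \R \,\} \;\cup\; \{\, Z_1 + s(v - Z_1)\; :\; v \in V_j \cap \overline{C'},\, s \in \R \,\}, \]
the union of the lines through $Z_0$ (resp.\ $Z_1$) and the points of $V_j \cap \overline{C'}$. Each of the two pieces is the image of a semi-algebraic set of dimension $\dim V_j + 1 \le n-1$ under a polynomial map, hence $T_j$ is semi-algebraic of dimension at most $n-1$. Setting $B = \bigcup_{j=1}^k (V_j \cup T_j)$, we obtain a semi-algebraic set of dimension at most $n-1$, while $C'$ is a non-empty open subset of $\R^n$, hence of dimension $n$. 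By standard semi-algebraic dimension theory (e.g.\ \cite[Theorem 2.8.8 and Proposition 2.8.5]{BCR}), $C' \setminus B$ is non-empty; pick any $P$ in it.

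Finally, by convexity of $\overline{C'}$ and the fact that $Z_0, P, Z_1 \in C' \subseteq \overline{C'}$, both segments $[Z_0, P]$ and $[P, Z_1]$ lie inside $\overline{C'}$. A short check shows that for $P\in C' \setminus B$, neither segment meets $V_j$ (if $v\in V_j\cap [Z_0,P]$, then $P$ lies on the line through $Z_0$ and $v$, so $P\in T_j$; similarly for $Z_1$) and $P\notin V_j$, for every $j = 1, \dots, k$. Since $\overline{C'}$ is disjoint from every other member of $L$, the piecewise linear path $Z_0 \to P \to Z_1$ lives in $\overline{C'} \setminus L \subseteq \X \setminus L$. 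The main obstacle I foresee is rigorously justifying the dimension bound $\dim T_j \le n-1$ and the consequent non-emptiness of $C' \setminus B$; these reduce to standard facts on semi-algebraic sets in \cite{BCR}, but constitute the most technical step of the argument.
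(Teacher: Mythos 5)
The paper does not actually prove this lemma: it is stated in the appendix among several topological facts whose proofs the authors omit, crediting Florian Eisele and calling them ``probably well known to the specialists.'' So there is no in-paper argument to compare yours against; I can only assess your proof on its own terms, and it is correct. Your argument is the standard general-position one: the cylinder hypothesis gives a convex tube $C\subseteq\X$ around $[Z_0,Z_1]$; local finiteness plus compactness of the closed sub-tube $\overline{C'}$ reduces $L$ to finitely many $V_1,\dots,V_k$; each cone $T_j$ with apex $Z_0$ (resp.\ $Z_1$) over $V_j\cap\overline{C'}$ is semi-algebraic (Tarski--Seidenberg) of dimension at most $\dim V_j+1\le n-1$ by \cite[Theorem 2.8.8]{BCR}, which is exactly where the codimension-$\ge 2$ hypothesis enters; so the $n$-dimensional open set $C'$ cannot be covered by $B=\bigcup_j(V_j\cup T_j)$, and any $P\in C'\setminus B$ yields the broken line $Z_0\to P\to Z_1$ inside $\overline{C'}\setminus L\subseteq\X\setminus L$. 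The one point you should state explicitly in the final verification is that a hypothetical $v\in V_j\cap[Z_0,P]$ satisfies $v\ne Z_0$ (because $Z_0\in\X\setminus L$) and $v\ne P$ (because $P\notin V_j\subseteq B$), so that $P$ genuinely lies on the line through $Z_0$ and $v$ and hence in $T_j$; with that remark the argument is complete. Note also that you never need the separate path-connectedness hypothesis on $\X$, since the cylinder condition already forces $\X$ to be convex.
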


\begin{lemma}\label{simplyconnected}
Let $n\geq 3$ and let $\X$ be a connected open subset of $\R^n$. Assume $\pi_1(\X)=1$.
If  $\{T_i\}_i$ is a locally finite family of algebraic varieties in $\X$ of co-dimension at least $3$
then
	$$
		\pi_1\left(\X \setminus \bigcup_i T_i \right) = 1.
	$$
\end{lemma}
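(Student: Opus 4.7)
The plan is a transversality argument exploiting the dimensional inequality $2+(n-3)<n$: a two-dimensional disk in an $n$-dimensional space can generically be perturbed off any subset of codimension at least $3$. Concretely, given a loop $\gamma\colon S^1\to \X\setminus \bigcup_i T_i$, I will fill it by a disk in $\X$ (available since $\pi_1(\X)=1$), smooth the disk, and then push it off the finitely many $T_i$ that matter by a transversality argument after stratifying each $T_i$ into smooth pieces.

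First, since $\pi_1(\X)=1$, extend $\gamma$ to a continuous map $F\colon D^2\to \X$. The image $F(D^2)$ is compact and sits inside the open set $\X\subseteq\R^n$, so it lies in an open set $U\subseteq\X$ with $\overline{U}$ compact in $\X$. By local finiteness of $\{T_i\}$, the compact $\overline{U}$ meets only finitely many $T_i$, say $T_1,\dots,T_m$. Using Whitney's smooth approximation theorem (after first smoothly approximating $\gamma$ in $\X\setminus \bigcup_i T_i$ via a small homotopy, which is possible because this set is open), I may replace $F$ by a smooth map $F'\colon D^2\to U$ with $F'|_{\partial D^2}$ a smooth loop homotopic to $\gamma$ inside $\X\setminus\bigcup_i T_i$. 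The problem is reduced to perturbing the smooth disk $F'$, rel boundary, off the finite union $T_1\cup\cdots\cup T_m$.

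Now I apply transversality after stratification. Each $T_j$, being a real algebraic variety (in particular a semi-algebraic set) of dimension at most $n-3$, admits a finite Whitney stratification $T_j=\bigsqcup_\alpha S^j_\alpha$, where each stratum $S^j_\alpha$ is a smooth submanifold of $\R^n$ of dimension at most $n-3$. By Thom's transversality theorem, an arbitrarily $C^\infty$-small perturbation of $F'$ (rel boundary, which is already disjoint from the $T_j$) can be made simultaneously transverse to every stratum $S^j_\alpha$. For such a transverse smooth map from the $2$-disk, the preimage of a stratum of dimension $d\leq n-3$ is a smooth manifold of dimension $d+2-n\leq -1$, hence empty. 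Choosing the perturbation small enough to stay inside $U\subseteq \X$, the resulting map $F''\colon D^2\to \X\setminus \bigcup_i T_i$ provides the null-homotopy of $\gamma$.

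The main obstacle is that the $T_i$ may be singular, whereas transversality is cleanest for smooth submanifolds; this is overcome by Whitney stratification (or equivalently by the triangulability of semi-algebraic sets, which would let one run a simplicial general-position argument instead). The remaining technical care is in arranging the smoothing and perturbation to be supported away from $\partial D^2$, so that the boundary loop is unchanged up to a small homotopy inside the open set $\X\setminus\bigcup_i T_i$ itself.
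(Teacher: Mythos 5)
The paper does not actually prove this lemma: it is stated in the appendix among several topological facts whose proofs the authors omit, crediting F.~Eisele for supplying them, so there is no in-paper argument to compare yours against. On its own merits, your transversality proof is correct and is the standard way to establish the statement. The key numerical point, that a map of a $2$-disk into $\R^n$ transverse to a submanifold of dimension $d\le n-3$ has empty preimage because $2+d-n\le -1$, is exactly why codimension $3$ is the right hypothesis here (codimension $2$ only gives path-connectedness, which is the preceding Lemma~\ref{pathconnected}). The steps that need care are all present: local finiteness reduces to finitely many $T_j$ meeting a compact neighbourhood of the disk's image; the union $\bigcup_i T_i$ is closed in $\X$, so its complement is open and the initial smoothing of $\gamma$ by a small homotopy stays in that complement; real algebraic (indeed semi-algebraic) sets admit finite stratifications by smooth locally closed submanifolds of the correct dimension, so singularities are no obstruction; and simultaneous transversality to the finitely many strata is achieved by a perturbation supported away from a collar of $\partial D^2$, on which the map already misses $\bigcup_j T_j$ and is therefore vacuously transverse. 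One could quibble that transversality to a non-closed stratum gives a residual rather than open-dense set of maps, but a finite intersection of residual sets is still dense, so the perturbation exists. The argument is complete.
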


\begin{lemma}\label{connected}
Let $\overline{B}$ be a Euclidean closed ball in $\R^n$. If $\overline{B}=U_1 \cup U_2$, where $U_1$ and $U_2$ are closed semi-algebraic sets, with $U_1 \not \subseteq U_2$ and $U_2 \not \subseteq U_1$, then $dim(U_1 \cap U_2) \geq n-1$.
\end{lemma}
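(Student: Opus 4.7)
The plan is to assume for contradiction that $\dim(U_1 \cap U_2) \leq n-2$ and disconnect the open ball. First I would set $V_i = \overline{B}\setminus U_{3-i}$ for $i=1,2$. These are non-empty by the non-containment hypothesis, open in $\overline{B}$ because $U_{3-i}$ is closed, and disjoint (a point in $V_1\cap V_2$ would lie in neither $U_1$ nor $U_2$, contradicting $\overline{B}=U_1\cup U_2$). By direct set-theoretic computation $V_1\cup V_2 = \overline{B}\setminus (U_1\cap U_2)$.

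Next I would pass to the topological interior $B^{\circ}$ of $\overline{B}$ and verify that each $V_i \cap B^{\circ}$ is non-empty. Indeed, any $p\in V_i$ admits an $\R^n$-open neighbourhood $W$ with $W\cap \overline{B}\subseteq V_i$; if $p\in B^{\circ}$ we are done, and if $p\in\partial\overline{B}$ then $W\cap B^{\circ}$ is a non-empty open subset of $V_i\cap B^{\circ}$. Thus $V_1\cap B^{\circ}$ and $V_2\cap B^{\circ}$ are disjoint non-empty sets open in $B^{\circ}$ whose union is $B^{\circ}\setminus (U_1\cap U_2)$.

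The main step, and the place where the assumption on the dimension enters, is to show that $B^{\circ}\setminus (U_1\cap U_2)$ is path-connected; this will furnish the contradiction. For this I would invoke Lemma~\ref{pathconnected}. Since $U_1\cap U_2$ is a semi-algebraic set of dimension at most $n-2$, by standard real algebraic geometry (see \cite{BCR}) it is contained in a real algebraic set $Z$ of the same dimension, and $Z$ decomposes into finitely many irreducible components $Z_1,\ldots,Z_k$, each a real algebraic variety of dimension at most $n-2$. The open convex set $B^{\circ}$ is path-connected and trivially satisfies the cylinder condition of Lemma~\ref{pathconnected}, so that lemma applied to the finite (hence locally finite) collection $\{Z_1,\ldots,Z_k\}$ yields that $B^{\circ}\setminus Z$ is path-connected.

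Finally I would upgrade this to path-connectedness of $B^{\circ}\setminus (U_1\cap U_2)$: given any $p$ in this set, since $U_1\cap U_2$ is closed there is an open ball around $p$ disjoint from $U_1\cap U_2$, and since $Z$ has dimension at most $n-2$ it is nowhere dense, so this ball contains a point of $B^{\circ}\setminus Z$. Joining $p$ to such a point by a straight segment inside the small ball, and then joining that point to a fixed base point inside the path-connected set $B^{\circ}\setminus Z \subseteq B^{\circ}\setminus (U_1\cap U_2)$, gives the required path. The hard part of the argument is genuinely the codimension-two non-separation step, i.e.\ the reduction from a semi-algebraic set of dimension $\leq n-2$ to a finite union of algebraic varieties so that Lemma~\ref{pathconnected} applies; the rest is bookkeeping.
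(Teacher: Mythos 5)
The paper states Lemma~\ref{connected} in the appendix without proof (the appendix lemmas are only attributed to F.~Eisele), so there is no written argument to compare yours against; I can only judge it on its own terms, and it holds up. Your separation set-up is right: $V_i=\overline{B}\setminus U_{3-i}$ are disjoint, relatively open, non-empty by the non-containment hypothesis, and their union is $\overline{B}\setminus(U_1\cap U_2)$; passing to $B^{\circ}$ is necessary because Lemma~\ref{pathconnected} requires an open set, and your boundary-point argument for $V_i\cap B^{\circ}\neq\emptyset$ is correct. The two places that genuinely need care are both handled: (a) Lemma~\ref{pathconnected} speaks of a locally finite collection of varieties, so you cannot feed it the semi-algebraic set $U_1\cap U_2$ directly; replacing it by the irreducible components of its Zariski closure, which has the same dimension by \cite[Prop.~2.8.2]{BCR}, is the right reduction, and the cylinder hypothesis is trivially met by the convex open ball; (b) since $B^{\circ}\setminus Z\subseteq B^{\circ}\setminus(U_1\cap U_2)$, path-connectedness of the former does not formally transfer to the latter, and your density argument (a small ball around $p$ missing the closed set $U_1\cap U_2$ must meet the complement of the nowhere dense set $Z$) closes that gap. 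Given that Lemma~\ref{pathconnected} is stated immediately before this one in the appendix, your route is almost certainly the intended one.
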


\begin{lemma}\label{Eisele1}
Let $\X$ be a connected open subset of $\R^n$.  If  $Z$ and $W$ are points in $\X$ then there exists a path $\alpha: \left[0,1\right] \rightarrow \X$ with $\alpha(0)=Z$ and $\alpha(1)=W$ and  such that it is built from  of a finite number of line segments that are parametrized by polynomials of degree at most 1.
Moreover, if   $\mathbb{Y}$ is  a subset of $\X$   with dense complement in $\X$ and such that $Z,W\not\in \mathbb{Y}$ then we can choose the end-points of the line segments in $\X \setminus \mathbb{Y}$.
\end{lemma}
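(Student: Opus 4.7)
The plan is to establish the lemma in two stages: first produce any piecewise-linear path, then perturb its intermediate vertices to lie outside $\mathbb{Y}$.

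For the first stage I would use the standard ``polygonal connectedness of open connected subsets of $\R^n$'' argument. Fix $Z \in \X$ and let $A$ be the set of points of $\X$ that can be joined to $Z$ by a polygonal path inside $\X$ (i.e.\ a finite concatenation of line segments, each parametrized by a degree-$1$ polynomial). Then $A \neq \emptyset$ since $Z \in A$. If $P \in A$, openness of $\X$ gives an open ball $B(P,r) \subseteq \X$, and any point in this ball can be joined to $P$ by a single line segment contained in $\X$, so $B(P,r) \subseteq A$ and $A$ is open. If $P \in \X \setminus A$, the same ball argument shows $B(P,r) \cap A = \emptyset$, so $A$ is also closed in $\X$. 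Connectedness of $\X$ forces $A = \X$, so in particular $W \in A$.

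For the second stage, start with a polygonal path $\alpha$ with vertices $Z = P_0, P_1, \ldots, P_k = W$ inside $\X$. The image $K = \alpha([0,1])$ is compact and contained in the open set $\X$, so there exists $\delta > 0$ such that the closed $\delta$-neighbourhood of $K$ is contained in $\X$ (taking $\delta$ as half the distance from $K$ to $\R^n \setminus \X$, with the convention $\delta = 1$ if $\X = \R^n$). For each intermediate index $1 \le i \le k-1$, the ball $B(P_i,\delta/2)$ is a non-empty open subset of $\X$; since $\X \setminus \mathbb{Y}$ is dense in $\X$, we can choose
\[
P'_i \in B(P_i,\delta/2) \cap (\X \setminus \mathbb{Y}).
\]
Set $P'_0 = Z$ and $P'_k = W$, which already lie in $\X \setminus \mathbb{Y}$ by hypothesis.

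It remains to check that the new polygonal path with vertices $P'_0, P'_1, \ldots, P'_k$ lies entirely in $\X$. For each segment $[P'_{i-1},P'_i]$ and each $t \in [0,1]$, the point $x'_t = (1-t)P'_{i-1} + tP'_i$ satisfies
\[
\| x'_t - ((1-t)P_{i-1} + tP_i) \| \le (1-t)\|P'_{i-1}-P_{i-1}\| + t\|P'_i-P_i\| < \delta/2,
\]
so $x'_t$ lies within distance $\delta/2$ of a point of $K$ and hence in $\X$. This gives the required polygonal path from $Z$ to $W$ in $\X$ whose intermediate vertices avoid $\mathbb{Y}$. No step here looks genuinely difficult; the only care needed is in choosing a single $\delta$ that uniformly controls all the perturbed segments, which is why one argues with the compactness of $K$ rather than perturbing the vertices one at a time.
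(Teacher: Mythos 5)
Your proof is correct and complete. Note that the paper itself gives no proof of this lemma: it appears in the appendix, where the authors explicitly state that the proofs of these topological facts are omitted (they credit F.~Eisele for supplying them), so there is nothing in the text to compare against. Your two-stage argument --- the standard open-and-closed argument for polygonal connectedness of a connected open set, followed by a uniform $\delta$-perturbation of the intermediate vertices into the dense set $\X \setminus \mathbb{Y}$ using compactness of the image of the path --- is exactly the expected elementary proof, and the convexity estimate justifying that the perturbed segments stay in $\X$ is handled correctly.
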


\begin{lemma}\label{Eisele3}
Let $\X$ be a connected open subset of $\R^n$.
Let $\gamma$ and $\gamma'$ be two homotopic paths in $\X$ starting and ending at $x_0$ which are built from  line segments that  are parametrized by polynomials of degree at most 1.
Assume we are additionally given a subset $\mathbb{Y} \subset \X$ with dense complement in $\X$ and which satisfies the property that a line which is not wholly contained in $\mathbb{Y}$ has finite intersection with it.
If  $x_0 \not\in \mathbb{Y}$ then there is an integer $N$ and a homotopy
	$$
		H:\ [0,1]\times [0,1] \longrightarrow \X
	$$
from $\gamma$ to $\gamma'$ such that each loop $H(t,-)$ for $t\in [0,1]$ starts and ends in $x_0$ and
there exist $0 = \tau_1 < \tau_2 < \ldots < \tau_N=1$ such that $H(t,-)|_{[\tau_i, \tau_{i+1}]}$ is a line segment for each,
$t\in (0,1)$, which is  parametrized by polynomials of degree at most $1$ and at least one of the two points $H(t,\tau_i)$ and $H(t,\tau_{i+1})$ lies outside $\mathbb{Y}$.
\end{lemma}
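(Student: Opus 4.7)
The plan is to reduce to a piecewise linear (PL) homotopy on a fine grid, and then achieve the $\mathbb{Y}$-avoidance condition by a generic perturbation of the interior vertices.

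First, start with any continuous based homotopy $F:[0,1]^2\to\X$ from $\gamma$ to $\gamma'$. Its image is compact in the open set $\X$, so $F$ is uniformly continuous and the $\delta$-neighbourhood of $F([0,1]^2)$ is contained in $\X$ for some $\delta>0$. Choose a partition $0=\tau_1<\tau_2<\cdots<\tau_N=1$ that refines the partitions of both $\gamma$ and $\gamma'$, and a time partition $0=t_0<t_1<\cdots<t_K=1$, taken fine enough that $F$ restricted to every closed cell $[t_{j-1},t_j]\times[\tau_i,\tau_{i+1}]$ has image of diameter less than $\delta/2$. Set $v_{i,j}=F(t_j,\tau_i)$, adjusting the boundary rows by $v_{i,0}=\gamma(\tau_i)$, $v_{i,K}=\gamma'(\tau_i)$ and the boundary columns by $v_{1,j}=v_{N,j}=x_0$ (possible because the loops are based at $x_0$). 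Define $H_0$ on each cell by bilinear interpolation of its four corners. Then $H_0([0,1]^2)\subset\X$, $H_0(0,-)=\gamma$, $H_0(1,-)=\gamma'$, and for every $t\in[0,1]$ the restriction $H_0(t,-)$ is a based loop whose restriction to each $[\tau_i,\tau_{i+1}]$ is a line segment parametrized by a polynomial of degree one in $s$.

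Second, I would perturb the interior vertices: for each $(i,j)$ with $1<i<N$ and $0<j<K$, replace $v_{i,j}$ by a nearby point of $\X\setminus\mathbb{Y}$, which exists because $\X\setminus\mathbb{Y}$ is dense. Keep the perturbations small enough that both the containment of the interpolated $H_0$ in $\X$ and the diameter estimate above are preserved. After this step, each vertex-path segment $[v_{i,j-1},v_{i,j}]$ has at least one endpoint outside $\mathbb{Y}$ (for $j=1$ or $j=K$ this uses that one endpoint is still interior and hence outside $\mathbb{Y}$, while for $1<i<N$ and $1<j<K$ both endpoints are outside $\mathbb{Y}$). In particular, no such segment lies entirely in $\mathbb{Y}$, and the hypothesis on $\mathbb{Y}$ implies that each segment meets $\mathbb{Y}$ in only finitely many parameter values. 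Consequently
$$
B_{i,j}=\{t\in(t_{j-1},t_j):H_0(t,\tau_i)\in\mathbb{Y}\}
$$
is finite for every $i,j$, and $H_0(t_j,\tau_i)\notin\mathbb{Y}$ for all $0<j<K$ and all $1\le i\le N$.

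Third, I would remove the remaining obstruction to the statement: the finite set $\bigcup_{j=1}^{K}\bigcup_{i=1}^{N-1}(B_{i,j}\cap B_{i+1,j})$ of "double-bad" times $t$ at which both $H_0(t,\tau_i)$ and $H_0(t,\tau_{i+1})$ lie in $\mathbb{Y}$. For each interior vertex $v_{i,j}$ a further small perturbation inside the dense set $\X\setminus\mathbb{Y}$ continuously shifts the two finite sets $B_{i,j-1}$ and $B_{i,j}$ (it affects only the two segments meeting at $v_{i,j}$), without touching $B_{i\pm 1,j-1}$ or $B_{i\pm 1,j}$. A generic simultaneous choice of such perturbations therefore makes $B_{i,j}\cap B_{i+1,j}=\emptyset$ for every pair, since there are only finitely many disjointness conditions and each is a complement of a finite union of proper subvarieties in the parameter space of perturbations. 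The boundary rows $i=1$ and $i=N$ are constant at $x_0\notin\mathbb{Y}$, so the condition is automatic for segments incident to them. Relabeling the $t_j$ into the final conclusion of the lemma gives the desired homotopy $H$ with the partition $\tau_1,\ldots,\tau_N$.

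I expect the main obstacle to be this last generic-perturbation step: one must show that the finite sets $B_{i,j}$ depend continuously enough on the vertex positions, and that the set of "bad" configurations for which some $B_{i,j}\cap B_{i+1,j}\ne\emptyset$ is nowhere dense in the space of admissible perturbations. The hypothesis that lines not contained in $\mathbb{Y}$ meet $\mathbb{Y}$ in a finite set is precisely what makes the $B_{i,j}$ finite and thus enables this kind of transversality argument; verifying it rigorously, while avoiding circular reliance on any global structure on $\mathbb{Y}$, is the technical heart of the proof.
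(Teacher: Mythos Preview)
The paper does not actually prove this lemma: it is stated in the appendix with the remark that the proofs ``are probably well known to the specialists'' and were communicated to the authors by F.~Eisele. So there is no paper proof to compare against, and your proposal must be judged on its own merits.

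Your overall strategy is correct. The bilinear interpolation on a grid refining both the $\tau$--partition (a common refinement of the break points of $\gamma$ and $\gamma'$) and a fine $t$--partition gives a continuous $H_0:[0,1]^2\to\X$ with $H_0(0,-)=\gamma$, $H_0(1,-)=\gamma'$, $H_0(t,0)=H_0(t,1)=x_0$, and $H_0(t,-)|_{[\tau_i,\tau_{i+1}]}$ affine in $s$ for every $t$. Perturbing interior vertices into $\X\setminus\mathbb Y$ then makes every $t$--segment $[v_{i,j-1},v_{i,j}]$ have at least one endpoint outside $\mathbb Y$, so each $B_{i,j}$ is finite, as you say.

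The only genuine gap is the one you flag yourself, and it can be closed \emph{without} any continuity or transversality argument for the $B_{i,j}$. The key elementary observation is that the class of subsets $Z\subset\R^n$ with dense complement and with the line property (every line not wholly in $Z$ meets $Z$ finitely) is stable under affine bijections and under \emph{finite unions}. For the union: if $Z_1\cup Z_2$ contained an open ball, every line through that ball would lie entirely in $Z_1\cup Z_2$ (by the line property of the union), so $Z_1\cup Z_2=\R^n$; picking $q\notin Z_2$, every line through $q$ meets $Z_2$ finitely, hence lies cofinitely---and therefore entirely---in $Z_1$, giving $Z_1=\R^n$, a contradiction. Now proceed inductively: run over $i=2,\dots,N-1$, and at stage $i$ the finite set $B_{i-1}=\{t:H_0(t,\tau_{i-1})\in\mathbb Y\}$ is already fixed. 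Choosing the interior vertices $v_{i,1},\dots,v_{i,K-1}$ one after another, each $v_{i,j}$ must avoid $\mathbb Y$ together with finitely many sets of the form $\{v:(1-\lambda)w+\lambda v\in\mathbb Y\}$ (with $w$ fixed and $\lambda\in(0,1)$), one for each $t\in B_{i-1}$ in the adjacent $t$--intervals; each such set is an affine image of $\mathbb Y$, hence lies in the class above, so their finite union still has dense complement and a good $v_{i,j}$ exists in any small ball. This yields $B_{i-1}\cap B_i=\emptyset$ for all $i$, which is precisely the endpoint condition in the lemma.

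Two minor remarks: your indexing of which $B_{i,j}$ are affected by moving $v_{i,j}$ is off by one (it moves $B_{i,j}$ and $B_{i,j+1}$), and the phrase ``relabeling the $t_j$'' at the end is misleading---the $t$--partition is purely auxiliary and does not appear in the conclusion, so nothing needs to be relabeled.
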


\nocite{*}
\bibliographystyle{amsalpha}
\bibliography{biblioh2xh2-V3}

\vspace{.25cm}

\noindent Department of Mathematics, \newline Vrije
Universiteit Brussel,\newline Pleinlaan 2, 1050
Brussel, Belgium\newline
emails: efjesper@vub.ac.be and akiefer@vub.ac.be

\vspace{.25cm}

\noindent
Departamento de Matem\'{a}ticas,\newline
 Universidad de Murcia,\newline  30100 Murcia, Spain\newline
email: adelrio@um.es

\end{document}